\newcommand{\nc}{\newcommand}
\nc{\one}{\mbox{\bf 1}}
\nc{\invtensor}{\underset{\leftarrow}{\otimes}}
\nc{\const}{\operatorname{const}}
\nc{\ad}{\operatorname{ad}}
\nc{\tr}{\operatorname{tr}}
\nc{\defect}{\operatorname{defect}}
\nc{\depth}{\operatorname{depth}}
\nc{\tp}{\operatorname{top}}
\nc{\rank}{\operatorname{rank}}
\nc{\corank}{\operatorname{corank}}
\nc{\codim}{\operatorname{codim}}
\nc{\sdim}{\operatorname{sdim}}
\nc{\mult}{\operatorname{mult}}
\nc{\ds}{\operatorname{ds}}
\nc{\tail}{\operatorname{tail}}
\nc{\howl}{\operatorname{howl}}
\nc{\spn}{\operatorname{span}}
\nc{\Sym}{\operatorname{Sym}}
\nc{\sym}{\operatorname{sym}}
\nc{\id}{\operatorname{id}}
\nc{\Id}{\operatorname{Id}}
\nc{\Ree}{\operatorname{Re}}
\nc{\hi}{\operatorname{hi}}
\nc{\htt}{\operatorname{ht}}
\nc{\at}{\operatorname{at}}
\nc{\str}{\operatorname{str}}
\nc{\Iso}{\operatorname{Iso}}
\nc{\Ker}{\operatorname{Ker}}
\nc{\mspec}{\operatorname{mspec}}
\nc{\rker}{\operatorname{rKer}}
\nc{\im}{\operatorname{Im}}
\nc{\osp}{\mathfrak{osp}}
\nc{\sgn}{\operatorname{sgn}}
\nc{\F}{\operatorname{F}}
\nc{\DS}{\operatorname{DS}}
\nc{\Soc}{\operatorname{Soc}}
\nc{\Inj}{\operatorname{Inj}}
\nc{\Hom}{\operatorname{Hom}}
\nc{\End}{\operatorname{End}}
\nc{\supp}{\operatorname{supp}}
\nc{\Card}{\operatorname{Card}}
\nc{\Ann}{\operatorname{Ann}}
\nc{\Ind}{\operatorname{Ind}}
\nc{\Coind}{\operatorname{Coind}}
\nc{\wt}{\operatorname{hwt}}
\nc{\hwt}{\operatorname{wt}}
\nc{\ch}{\operatorname{ch}}
\nc{\sch}{\operatorname{sch}}
\nc{\mdim}{\operatorname{mdim}}
\nc{\Stab}{\operatorname{Stab}}
\nc{\Irr}{\operatorname{Irr}}
\nc{\Spec}{\operatorname{Spec}}
\nc{\Res}{\operatorname{Res}}
\nc{\Aut}{\operatorname{Aut}}
\nc{\Ext}{\operatorname{Ext}}
\nc{\Prec}{\operatorname{Prec}}
\nc{\Fract}{\operatorname{Fract}}
\nc{\gr}{\operatorname{gr}}
\nc{\df}{\operatorname{df}}
\nc{\core}{\operatorname{core}}
\nc{\HC}{\operatorname{HC}}
\nc{\dpth}{\operatorname{dpth}}
\nc{\sw}{\operatorname{sw}}
\nc{\red}{\operatorname{red}}
\nc{\cl}{\operatorname{cl}}
\nc{\pos}{\operatorname{pos}}
\nc{\wdchi}{\widetilde{\chi}}
\nc{\wdH}{\widetilde{H}}
\nc{\wdN}{\widetilde{N}}
\nc{\wdM}{\widetilde{M}}
\nc{\wdO}{\widetilde{O}}
\nc{\wdR}{\widetilde{R}}
\nc{\wdV}{\widetilde{V}}
\nc{\wdC}{\widetilde{C}}
\nc{\pari}{\operatorname{dex}}
\nc{\atyp}{\operatorname{atyp}}
\nc{\Core}{\operatorname{Core}}
\nc{\Obj}{\operatorname{Obj}}
\nc{\Dglie}{\operatorname{{\mathcal D}glie}}
\nc{\Fin}{\operatorname{{\mathcal F}in}}
\nc{\pr}{\operatorname{pr}}
\nc{\Adm}{\operatorname{\mathcal{A}dm}}
\nc{\Sg}{{\cS(\fg)}}
\nc{\Shg}{{\cS(\fhg)}}
\nc{\Ug}{{\cU(\fg)}}
\nc{\Uhg}{{\cU(\fhg)}}
\nc{\Sh}{{\cS(\fh)}}
\nc{\Uh}{{\cU(\fh)}}
\nc{\Uhh}{{\cU(\fhh)}}
\nc{\Zg}{{{\mathcal{Z}}(\fg)}}
\nc{\Vir}{{\mathcal{V}ir}}
\nc{\NS}{{\mathcal{N}S}}
\nc{\tZg}{{\widetilde{\mathcal Z}({\mathfrak g})}}
\nc{\Zk}{{\mathcal Z}({\mathfrak k})}
\nc{\Mod}{\mathcal{M}od}
\nc{\Up}{{\mathcal U}({\mathfrak p})}
\nc{\Ah}{{\mathcal A}({\mathfrak h})}
\nc{\Ag}{{\mathcal A}({\mathfrak g})}
\nc{\Ap}{{\mathcal A}({\mathfrak p})}
\nc{\Zp}{{\mathcal Z}({\mathfrak p})}
\nc{\cR}{\mathcal R}
\nc{\cS}{\mathcal S}
\nc{\cP}{\mathcal P}
\nc{\cT}{\mathcal{T}}
\nc{\CC}{\mathcal C}
\nc{\cA}{\mathcal A}
\nc{\cE}{\mathcal E}
\nc{\cU}{\mathcal U}
\nc{\cZ}{\mathcal Z}
\nc{\cN}{\mathcal N}
\nc{\cM}{\mathcal M}
\nc{\cL}{\mathcal L}
\nc{\cF}{\mathcal F}
\nc{\fg}{\mathfrak g}
\nc{\cB}{\mathcal{B}}
\nc{\fo}{\mathfrak o}
\nc{\CO}{\mathcal O}
\nc{\CR}{\mathcal R}
\nc{\cK}{\mathcal{K}}
\nc{\cW}{\mathcal{W}}
\nc{\bM}{\mathbf{M}}
\nc{\bL}{\mathbf{L}}
\nc{\bN}{\mathbf{N}}
\nc{\zq}{\mathpzc q}
\nc{\fl}{\mathfrak l}
\nc{\fn}{\mathfrak n}
\nc{\fm}{\mathfrak m}
\nc{\fp}{\mathfrak p}
\nc{\fh}{\mathfrak h}
\nc{\ft}{\mathfrak t}
\nc{\fk}{\mathfrak k}
\nc{\fb}{\mathfrak b}
\nc{\fs}{\mathfrak s}
\nc{\fr}{\mathfrak r}
\nc{\fB}{\mathfrak B}
\nc{\vareps}{\varepsilon}
\nc{\varesp}{\varepsilon}
\nc{\veps}{\varepsilon}
\nc{\fgl}{\mathfrak{gl}}
\nc{\fsl}{\mathfrak{sl}}
\nc{\fpgl}{\mathfrak{pgl}}
\nc{\fpsl}{\mathfrak{psl}}
\nc{\fso}{\mathfrak{so}}
\nc{\fosp}{\mathfrak{osp}}
\nc{\fsp}{\mathfrak{sp}}
\nc{\fq}{\mathfrak q}
\nc{\fsq}{\mathfrak{sq}}
\nc{\fpsq}{\mathfrak{psq}}
\nc{\fpq}{\mathfrak{pq}}
\nc{\fhg}{\hat{\fg}}
\nc{\fhn}{\hat{\fn}}
\nc{\fhh}{\hat{\fh}}
\nc{\fhb}{\hat{\fb}}
\nc{\hrho}{\hat{\rho}}
\nc{\hsl}{\hat{\fsl}}
\nc{\fpo}{\mathfrak{po}}
\nc{\dirlim}{\underset{\rightarrow}{\lim}\,}
\nc{\nen}{\newenvironment}
\nc{\ol}{\overline}
\nc{\ul}{\underline}
\nc{\ra}{\rightarrow}
\nc{\lra}{\longrightarrow}
\nc{\Lra}{\Longrightarrow}
\nc{\bo}{\bar{1}}
\nc{\Lla}{\Longleftarrow}
\nc{\Llra}{\Longleftrightarrow}
\nc{\thla}{\twoheadleftarrow}
\nc{\lang}{(}
\nc{\rang}{)}
\nc{\hra}{\hookrightarrow}
\nc{\iso}{\overset{\sim}{\lra}}
\nc{\ssubset}{\underset{\not=}{\subset}}
\nc{\vac}{|0\rangle}
\nc{\simka}{{\ \scriptscriptstyle _{{\sim}}^\text{\tiny{k}}\ }}
\nc{\Thm}[1]{Theorem~\ref{#1}}
\nc{\Prop}[1]{Proposition~\ref{#1}}
\nc{\Lem}[1]{Lemma~\ref{#1}}
\nc{\Cor}[1]{Corollary~\ref{#1}}
\nc{\Conj}[1]{Conjecture~\ref{#1}}
\nc{\Claim}[1]{Claim~\ref{#1}}
\nc{\Defn}[1]{Definition~\ref{#1}}
\nc{\Exa}[1]{Example~\ref{#1}}
\nc{\Rem}[1]{Remark~\ref{#1}}
\nc{\Note}[1]{Note~\ref{#1}}
\nc{\Quest}[1]{Question~\ref{#1}}
\nc{\Hyp}[1]{Hypoth\`ese~\ref{#1}}
\begin{document}
\setcounter{section}{-1}
\setcounter{tocdepth}{1}

\title[ ]
{Depths and cores in the light of DS-functors}

\author{ Maria Gorelik}

\address[]{Dept. of Mathematics, The Weizmann Institute of Science,Rehovot 76100, Israel}
\email{maria.gorelik@weizmann.ac.il}

\begin{abstract}
The Dulfo-Serganova functors $\DS$ are tensor  functors relating representations
of different Lie superalgebras. In
this paper  we study the behaviour of various invariants,
 such as the defect, the dual Coxeter number, the atypicality and the cores, under the $\DS$-functor.
We introduce
a notion of $\depth$ playing the role of defect for algebras
and atypicality for modules. We mainly concentrate on examples of symmetrizable Kac-Moody and $Q$-type superalgebras. 
\end{abstract}

\maketitle

\section{Introduction}\label{intro}
\subsection{}
Let $\fg$ be a Lie superalgebra over the field $\mathbb{C}$. 
We set
$$X(\fg):=\{x\in \fg_1|\ [x,x]=0\}.$$ 
For $x\in X(\fg)$ we consider the functor $\DS_x$ 
introduced by Dulfo and Serganova in~\cite{DS}: 
$$M \longmapsto \DS_x(M):=M^x/xM.$$
This is a  functor from the category of $\fg$-modules 
to the category of $\fg_x$-modules, where 
$\fg_x:=\fg^x/[x,\fg]$.

Let $\fg$ be  an indecomposable finite-dimensional Kac-Moody 
superalgebra with a fixed non-degenerate invariant bilinear form.
Consider the following notions:
\begin{enumerate}
\item
the defect of $\fg$;
\item
the dual Coxeter number;
\item
the atypicality of a central character;
\item
for a ``non-exceptional'' $\fg$: the type of the root system
($A,B,C$ or $D$) and the ``core'' of a central character (see~\ref{coreKMQ},   \ref{corechi}).
\end{enumerate}

By~\cite{DS},  $\fg_x$ is a  finite-dimensional Kac-Moody 
superalgebra.  The functor $\DS_x$ reduces the 
defect of $\fg$ and the atypicality of each central character
by  the same non-negative integer; $\DS_x$  preserves
the dual Coxeter number,
 the type of the root system and the core of a central character.
In this paper we establish the similar results  
for the queer superalgebra $\fq_n$.

Let $\fg$ be an  indecomposable affine Kac-Moody superalgebra
 with a fixed non-degenerate invariant bilinear form. For some ``bad'' values of $x$, $\DS_x(\fg)$ is
not a Kac-Moody superalgebra. We show that, for ``nice'' values of
$x$, $\DS_x$  preserves the dual Coxeter number and
 the type of the root system, 
reducing the defect of $\fg$ by a non-negative integer
which we denote by $\rank x$. In the absence of central characters, we define
the atypicality and the core for each block in the BGG-category
$\CO(\fg)$. We show that, for certain values of
$x$, $\DS_x$ reduces the atypicality 
by  $\rank x$ and  preserves the cores.

Let $\fg$ be an arbitrary Lie superalgebra. We suggest
a notion of $\depth$, which plays the role of defect for algebras
and atypicality for modules; this notion is defined in terms of 
$\DS$-functors. For a finite-dimensional Kac-Moody superalgebra $\fg$
the  formulae
\begin{equation}\label{deepdef}\begin{array}{lcl}
(i) & & \depth(\fg)=\defect \fg\\ 
(ii) & & \depth(\cB)=\atyp\cB\ \ \text{
for a 
 block  $\cB$  in $\CO(\fg)$}
 \end{array}\end{equation}
 can be easily obtained from the results of~\cite{DS}.
We will check (i) for $\fp_n,\fq_n$ and 
the symmetrizable affine Kac-Moody superalgebras. We will prove (ii)
 for $\fq_n$ and show that $\depth\cB=n$ for 
each  block in $\Fin(\fp_n)$.

\subsection{Content of the paper}
In Section~\ref{sectionDS} we recall some properties of $\DS$-functor  and
 a construction of the map $\theta_x:\cZ(\fg)\to\cZ(\fg_x)$.

In Section~\ref{sectisoset} we introduce the notion of iso-set, 
which generalizes the notion
of isotropic sets introduced in~\cite{KWnum}. We denote by
$\defect \fg$ the maximal cardinality of an iso-set for $\fg$
(for the finite-dimensional Kac-Moody superalgebras this agrees with the standard definition). We introduce the set $X_{iso}(\fg)$;
for the finite-dimensional Kac-Moody superalgebras $X_{iso}(\fg)=X(\fg)$.

In Section~\ref{sectdepthdef}
 we introduce the notion of depth  for a superalgebra and its
modules. This notion is defined recursively using the functors
$\DS_x$ for $x\in X_{iso}$.
We show that $\depth\fg\geq\defect \fg$.
We briefly consider several examples: the "relatives" of $\fgl(m|n)$
and of $\mathfrak{p}_n$. In~\ref{finpn} we prove that $\depth\cB=n$ for 
each  block in $\Fin(\fp_n)$ and give an example when $\DS_1(\DS_1(L))\not\cong \DS_2(L)$.

In Section~\ref{sectKMQ}  we consider the case
when $\fg$ is a
 symmetrizable Kac-Moody superalgebra or one of 
$Q$-type superalgebras. 
The classical results in~\cite{BGG}, \cite{J} and~\cite{KK} 
determine the list  of simple modules
for each block in the BGG-category $\CO(\fg)$ (for the $Q$-type this description is up to parity shift). In~\Thm{propblocks} we show that, up to the parity shift, the non-critical modules
$L(\lambda-\rho)$ and $L(\nu-\rho)$
lie in the same block if and only if $\nu\in W(\lambda) (\lambda+S_{\lambda})$,
where $S_{\lambda}$ is the maximal iso-set orthogonal to $\lambda$
and $W(\lambda)$ is a certain subgroup of the Weyl group $W$
(for some cases this description was established
 in~\cite{CW} and in~\cite{CCL}).  In~\Cor{blockcrit} 
we give a similar formula for the critical blocks of several affine 
superalgebras.
Using these results we introduce the following block invariants:
the atypicality and the $\Core$ (as in the finite-dimensional case, $\Core$
is defined when $\fg$ is  not exceptional).

In Section~\ref{sectQ} we obtain several results for $\DS$-functors in the case 
when $\fg$ is a $Q$-type superalgebra ($\fq_n$ or its relatives). 
 One has  
 $$\depth\fq_n=[\frac{n}{2}],\ \ \DS_x(\fq_n)\cong \fq_{n-2\rank x}.$$
We show that  the map $\theta_x$ is surjective and that the dual map
$\theta^*_x$ preserves the core of a central character and increases
the atypicality of a central character by $\rank x$.
In particular, $\DS_x$ commutes with  translation functors.

In Section~\ref{sectbasic} we recall some results of~\cite{DS} for
the  finite-dimensional Kac-Moody superalgebras and
 prove~\Thm{thmtheta} describing the image of $\theta_x$ in this case.

In Section~\ref{gxaff} we describe $\fg_x$ for the case when
$\fg$ is a
 symmetrizable Kac-Moody superalgebra and $x\in X_{iso}(\fg)$.
 As in the finite-dimensional case,
$\fg_x$ has the same type of the root system as $\fg$ (for instance, $\DS_x(A(2m|2n)^{(4)})=A(2m-2r|2n-2r)^{(4)}$).
 We check~(\ref{deepdef}) (i)  and explain why $\DS_x$
 preserves the dual Coxeter number.

 In Sections~\ref{sectO} and~\ref{Thm91} we consider 
 a symmetrizable Kac-Moody superalgebra $\fg$ and 
the case when  $x\in X(\fg)_{iso}$ has a special form. 
The main result of this section is~\Thm{thmcore} stating that $\DS_x$ 
reduces the atypicality by $\rank x$ and "preserves the core".
In addition, we obtain 
the formula~(\ref{compmult}) which can be useful for studying the $\DS$-functor
on the category $\CO(\fg)$ (note that it is not clear whether 
$\DS_x(\CO(\fg))\subset\CO(\fg_x)$).

 \subsection{Index of  frequently used notation} \label{sec:app-index}
Throughout the paper the ground field is $\mathbb{C}$; 
$\mathbb{N}$ stands 
 for the set of non-negative integers. We always assume that 
the dimension of $\fg$ is
 at most countable. We will frequently used the following notation.

\begin{center}
\begin{tabular}{ll}
$ \Mod, \cZ(\fg), \CC(\chi), \mspec_{\CC}, \theta_x, \theta_x^*$ & \ref{DScentre}\\

$\supp$, iso-set, $\defect$,\  $X_{iso}$ &\ref{defniso-set}\\

$\Omega(N)$, $\Irr(\cF)$, $\Fin(\fg)$ & \ref{notation}\\

$\depth, \rank, X(\fg)_r$, $\breve{X}(N)$ & \ref{depthdef} \\

$W, (-|-), \Delta_{red},\Delta_{nis},\Delta_{iso}, \alpha^{\vee}$,  $\mathring{h}^*,\delta$, 
$\Delta(\lambda), W(\lambda) \ \ $
  & \ref{notatKMQ}\\

$M(\lambda)$, $L(\lambda)$, $\CO^{inf}$, $\CO$, $\CO_{KK}$, 
 $\sim$  &\ref{cc}\\

 $\atyp$ & \ref{atypdef} \\

$\Core(\lambda)$ & \ref{coreKMQ}\\

$\chi_{\lambda}, \Core(\chi)$ & \ref{corechi}\\

$\Omega_h$, $\CO^{inf}_h$ &  \ref{Oinfh}\\

\end{tabular}
\end{center}

\subsection{Acknowledgments} The author is grateful to  I.~Entova-Aizenbud, V.~Hinich, T.~Heidersdorf, C.~Hoyt,  S.~Reif and V.~Serganova for numerous 
helpful discussions.

\section{$\DS$-functor}\label{sectionDS}
The $\DS$-functor was introduced in~\cite{DS}. We recall definitions
and some results of~\cite{DS} below. In~\ref{depthprop} we list some properties of $\depth$.
We retain notation of Section~\ref{intro}.

\subsection{Construction}\label{DSconst}
We set $X(\fg):=\{x\in\fg_1|\ [x,x]=0\}$.
For a $\fg$-module $M$ and $g\in\fg$ we set
$M^g:=\Ker_M g$. For $x\in X(\fg)$ we introduce
$$\DS_x(M):=M^x/xM.$$
Notice that $\fg^x$ and 
$\fg_x:=\DS_x(\fg)=\fg^{x}/[x,\fg]$
 are  Lie superalgebras. Since $M^x, xM$ are $\fg^{x}$-invariant and $[x,\fg] M^x\subset xM$,  $\DS_x(M)$ is a $\fg^{x}$-module and $\fg_x$-module. 
Thus 
$$\DS_x: M\mapsto \DS_x(M)$$ 
is a functor from the category of $\fg$-modules to
the category of $\DS_x(\fg)$-modules.
One has $\DS_x(\Pi(N))=\Pi(\DS_x(N))$ and
$$\DS_x(M)\otimes\DS_x(N)=\DS_x(M\otimes N),\ \ \ \ 
\sdim(N)=\sdim(\DS_x(N)).$$

%
%

The following result is called Hinich's Lemma:
each  exact sequence of $\fg$-modules
$$0\to M_1\to N\to M_2\to 0$$
induces a long exact  sequence of $\fg_x$-modules
$$0\to Y\to \DS_x(M_1)\to\DS_x(N)\to\DS_x(M_2)\to \Pi(Y)\to 0,$$
where $Y$ is some $\fg_x$-module.  Lemma 2.1 in~\cite{HW} gives a similar result.

\subsection{Action on the centre}\label{DScentre}
Denote by 
 $\Mod$ (resp., by $\Mod_x$)  the category
of $\fg$ (resp., $\fg_x$)-modules.  
We denote by $\cU(\fg)$  the universal enveloping algebra of $\fg$ and by $\cZ(\fg)$ its centre. We denote by $\mspec\cZ(\fg)$ the set of central characters
$\Hom(\cZ(\fg),\mathbb{C})$. 
For each central character $\chi$ we denote by $\CC(\chi)$ 
the full subcategory of $\CC$ consisting of the modules annihilated by $\Ker\chi$
and set 
$$\mspec_{\CC}\cZ(\fg):=\{\chi\in \mspec\cZ(\fg)|\ \CC(\chi)\not=0\}.$$

\subsubsection{}
Consider the following  algebra homomorphisms
$$\cZ(\fg)\hookrightarrow \cU(\fg)^{\ad x}\to \DS_x(\cU(\fg)).$$
By~\cite{DS},  $\DS_x(\cU(\fg))=\cU(\fg_x)$. Since $\cZ(\fg)=\cU(\fg)^{\ad\fg}$ we obtain the algebra homomorphism
$$\theta_x:\ \cZ(\fg)\rightarrow \cZ(\fg_x)$$
and the dual map $\Hom(\cZ(\fg_x),\mathbb{C})\to \Hom(\cZ(\fg),\mathbb{C})$;
the map $\theta_x^*$ is the restriction of the latter
to $\mspec_{\Mod_x}\cZ(\fg_x)$. One has $\DS_x(\Mod(\chi))=0$ if $\chi\not\in\, Im\theta_x^*$.
Note that a surjectivity
of $\theta_x$ implies the injectivity of $\theta_x^*$.

\subsubsection{}
\begin{prop}{propBZ}
Take  $N\in\Mod(\chi)$.
\begin{enumerate}
\item
If $\theta_x$ is surjective, then $\DS_x(N)$ lies in $\Mod_x((\theta_x^*)^{-1}\chi)$.

\item
For each simple subquotient $L'$ of $\DS_x(N)$ there exists
$\chi'\in (\theta_x^*)^{-1}(\chi)$ such that  $L'\in \Mod_x(\chi')$.
In particular, $\DS_x(N)=0$ if $\chi\not\in \, Im(\theta_x^*)$.
\end{enumerate}
\end{prop}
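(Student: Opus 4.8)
The plan is to isolate one compatibility statement and then deduce everything formally from it. The statement is: for $N\in\Mod(\chi)$ and any $z\in\cZ(\fg)$, the element $\theta_x(z)\in\cZ(\fg_x)$ acts on $\DS_x(N)$ as the scalar $\chi(z)$. I expect this to be the only step with any content; it is essentially an unwinding of the definitions of $\theta_x$ and of the $\fg_x$-action on $\DS_x(N)$, and once it is in hand both (i) and (ii) follow from Schur's Lemma and elementary manipulations with ideals of $\cZ(\fg)$ and $\cZ(\fg_x)$.

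To prove the compatibility I would argue as follows. Since $\cZ(\fg)=\cU(\fg)^{\ad\fg}\subset\cU(\fg)^{\ad x}$, and since for homogeneous $u\in\cU(\fg)^{\ad x}$ one has $x(un)=(-1)^{|u|}u(xn)$ for all $n\in N$, such a $u$ preserves $N^x$ and $xN$ and hence acts on $\DS_x(N)=N^x/xN$; moreover if $u=[x,v]$ then $un=x(vn)\in xN$ for $n\in N^x$, so $[x,\cU(\fg)]$ acts by zero. Thus the action factors through $\cU(\fg)^{\ad x}/[x,\cU(\fg)]=\DS_x(\cU(\fg))=\cU(\fg_x)$, and this is precisely the $\fg_x$-module structure carried by $\DS_x(N)$ (equivalently: apply the tensor functor $\DS_x$ to the $\fg$-module map $\cU(\fg)\otimes N\to N$, using $\DS_x(\cU(\fg)\otimes N)=\cU(\fg_x)\otimes\DS_x(N)$). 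By the construction of $\theta_x$ as the composite $\cZ(\fg)\hookrightarrow\cU(\fg)^{\ad x}\to\cU(\fg_x)$, the action of $\theta_x(z)$ on $\DS_x(N)$ is exactly the action of $z$ just described; but $z$ acts on $N$, hence on $N^x$, hence on $N^x/xN$, by $\chi(z)\,\id$, so $\theta_x(z)$ acts by $\chi(z)\,\id$ on $\DS_x(N)$.

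For part (ii): given a simple subquotient $L'$ of $\DS_x(N)$, the standing hypothesis $\dim\fg_x\le\dim\fg\le\aleph_0$ lets me invoke Dixmier's form of Schur's Lemma, so $\cZ(\fg_x)$ acts on $L'$ by a central character $\chi'\in\mspec_{\Mod_x}\cZ(\fg_x)$, i.e.\ $L'\in\Mod_x(\chi')$. The compatibility forces $\theta_x(z)$ to act on $L'$ both by $\chi(z)$ and by $\chi'(\theta_x(z))$, whence $\chi=\chi'\circ\theta_x=\theta_x^*(\chi')$ and $\chi'\in(\theta_x^*)^{-1}(\chi)$. For the last assertion, note a nonzero $\fg_x$-module always admits a simple subquotient (a nonzero cyclic submodule has a maximal proper submodule, hence a simple quotient), so $\DS_x(N)\neq0$ would give $(\theta_x^*)^{-1}(\chi)\neq\emptyset$; contrapositively $\DS_x(N)=0$ when $\chi\notin\im\theta_x^*$.

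For part (i): assume $\theta_x$ is surjective. If $\chi\notin\im\theta_x^*$ then $\DS_x(N)=0$ by (ii) and there is nothing to prove. Otherwise, surjectivity of $\theta_x$ makes $\theta_x^*$ injective, so $(\theta_x^*)^{-1}(\chi)=\{\chi'\}$ for a single character $\chi'$; from $\chi'\circ\theta_x=\chi$ together with surjectivity one checks $\Ker\chi'=\theta_x(\Ker\chi)$. By the compatibility, $\theta_x(\Ker\chi)$ annihilates $\DS_x(N)$, hence so does $\Ker\chi'$, i.e.\ $\DS_x(N)\in\Mod_x(\chi')=\Mod_x\big((\theta_x^*)^{-1}\chi\big)$. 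The main (indeed only) obstacle is getting the compatibility cleanly; after that the argument is routine.
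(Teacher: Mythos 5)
Your proof is correct and follows essentially the same route as the paper: the key compatibility $\theta_x(\Ker\chi)\subset\Ann_{\cZ(\fg_x)}\DS_x(N)$ (which the paper states without elaboration and you verify in detail), Dixmier's version of Schur's Lemma for (ii), and maximality of the image of $\Ker\chi$ under the surjection $\theta_x$ for (i).
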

\begin{proof}
Set $\fm:=\Ker\chi=\Ann_{\cZ(\fg)}N$.
One has $\theta_x(\fm)\subset \Ann_{\cZ(\fg_x)}\DS_x(N)$.

If $\theta_x$ is surjective, then  $\theta_x(\fm)$
is a  maximal ideal in $\cZ(\fg_x)$; this gives (i).

For (ii) recall that the dimension of $\fg_x$ is at most countable.
By Dixmier generalization of Schur's Lemma (see~\cite{Dix}), each simple
$\fg_x$-module admits a central character, so  $(\Ker\chi') L'=0$ for some
$\chi'\in \mspec_{\Mod}\cZ(\fg_x)$. By above,
$\theta_x(\fm)\subset \Ker\chi'$ which implies
$\theta_x(\chi')=\chi$ as required.
\end{proof}

\subsubsection{Remark}\label{Ian}
It is not clear when  $\mspec_{\Mod}\cZ(\fg)=\mspec \cZ(\fg)$, 
see Conjecture 13.5.1 in~\cite{Musbook}. In~\cite{Mus}
 I.~M.~Musson  proved the following generalization of
Duflo's Theorem: 
for a basic classical Lie superalgebra $\fg$
any primitive ideal in $\cU(\fg)$ is the annihilator of a simple highest weight module.
The proof of~\cite{Mus} works for any quasi-reductive superalgebra\footnote{
a finite-dimensional Lie superalgebra 
is called {\em quasi-reductive} if $\fg_0$ is reductive and 
$\fg_1$ is a semisimple $\fg_0$-module, see~\cite{Sred}.}. As a result, for
a quasi-reductive superalgebra $\fg$ one has
$$\mspec_{\Mod}\cZ(\fg)=\mspec_{\CO(\fg)} \cZ(\fg),$$
where $\CO(\fg)$ is the BGG-category for $\fg$.

%

\section{Iso-sets}\label{sectisoset}
In this section we introduce the  iso-sets and 
prove  Lemmatta~\ref{lem42},\ \ref{lemdsxy}
which seem to be useful for computations.
For a finite-dimensional 
Kac-Moody superalgebra $\fg$ the iso-sets are the same as isotropic sets
introduced in~\cite{KWnum} (see~\ref{bilinear}).

\subsection{Definitions and  first properties}\label{defniso-set}
Let $\fh$ be a  commutative subalgebra  of $\fg_0$ which 
 acts diagonally in the adjoint representation of $\fg$.
We introduce the multisets of even and odd roots in the usual way
($\Delta_0,\Delta_1\subset\fh^*\setminus\{0\}$). 
We write each $a\in\fg_i$ (for $i=0,1$) in the form
$$a=\sum_{\alpha\in\supp(a)} a_{\alpha},\ \ \text{ where }
\ a_{\alpha}\in\fg_{\alpha}\setminus\{0\},\ \ \supp(a)\subset \Delta_i\cup\{0\}.$$
\subsubsection{Definition}
We say that $S\subset \Delta_1$ is an {\em iso-set} if the elements of $S$ are linearly independent and for each $\alpha,\beta\in \Delta_1\cap (S\cup (-S))$ one has $\alpha+\beta\not\in\Delta_0$.
We denote by $\defect\fg$ the maximal cardinality of an iso-set
for all possible choices of $\fh$.

\subsubsection{}\label{isoxyh}
\begin{lem}{}
Let $x\in\fg$ be such that $S:=\supp(x)$ is an iso-set.
Then 
 $x\in X(\fg)$. Moreover, for each $y\in\fg_1$ with 
$\supp(y)\subset (-S)$ one has  $[x,y]\in\fh$ and 
$x,y,[x,y]$ span a subalgebra which is a quotient of $\fsl(1|1)$. 
\end{lem}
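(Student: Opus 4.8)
The plan is to unwind everything to the algebra $\fsl(1|1)$, which has a basis $e,f,h$ with $e,f$ odd, $[e,e]=[f,f]=0$, $[e,f]=h$, and $h$ central (so $[h,e]=[h,f]=0$). First I would show $x\in X(\fg)$: since $S=\supp(x)$ is an iso-set, for any $\alpha,\beta\in S$ we have $\alpha+\beta\notin\Delta_0$, so the $\fh$-weight $\alpha+\beta$ component of $[x,x]=\sum_{\alpha,\beta\in S}[x_\alpha,x_\beta]$ vanishes whenever $\alpha+\beta\neq 0$ (it would have to lie in $\fg_{\alpha+\beta}=0$), and for $\alpha+\beta=0$ the term $[x_\alpha,x_{-\alpha}]$ doesn't occur because $S$ consists of linearly independent roots, hence $S\cap(-S)=\emptyset$. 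So $[x,x]=0$.

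Next, for $y\in\fg_1$ with $\supp(y)\subset(-S)$, I would analyze $[x,y]=\sum_{\alpha\in S,\,\gamma\in\supp(y)}[x_\alpha,y_\gamma]$. Each $[x_\alpha,y_\gamma]\in\fg_{\alpha+\gamma}$, with $\gamma\in-S$. Write $\gamma=-\beta$ with $\beta\in S$. If $\alpha\neq\beta$ then $\alpha+\gamma=\alpha-\beta$; I claim this is not in $\Delta_0$: indeed $\alpha$ and $-\beta$ both lie in $\Delta_1\cap(S\cup(-S))$, so the iso-set condition forces $\alpha+(-\beta)\notin\Delta_0$. Hence only the diagonal terms $\alpha=\beta$ survive (they lie in $\fg_0$, and in fact can be arranged to land in $\fh$ — this is the routine weight bookkeeping, using that $[x_\alpha,y_{-\alpha}]$ is an $\fh$-weight-zero element of $\fg_0$ that centralizes $\fh$, hence lies in $\fh$ under the standard hypotheses on $\fh$). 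Therefore $[x,y]\in\fh$, as asserted.

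Then I would verify the subalgebra claim. We already know $[x,x]=0$; the same argument applied to $y$ (whose support $\supp(y)\subset -S$ is again an iso-set, being a subset of $-S$ with $S$ an iso-set, and iso-sets are symmetric under negation by definition) gives $[y,y]=0$. Since $[x,y]=:h\in\fh$ and $\fh$ acts diagonally, $[h,x]$ and $[h,y]$ are scalar combinations of $x$- and $y$-type weight vectors; but I would instead argue more cleanly that $\spn\{x,y,[x,y]\}$ is closed under bracket: the only brackets to check beyond $[x,x],[y,y],[x,y]$ are $[[x,y],x]$ and $[[x,y],y]$ — and by the super Jacobi identity $[[x,y],x]=[x,[x,y]]$ (signs) relates to $[[x,x],y]\pm[x,[x,y]]$, forcing $2[x,[x,y]]=[[x,x],y]=0$, so $[h,x]=0$, and symmetrically $[h,y]=0$. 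Hence the span is a three-dimensional (or smaller) Lie subalgebra with relations $x^2=y^2=0$, $[x,y]=h$ central, which is exactly $\fsl(1|1)$ or a quotient thereof (the quotient possibilities arise if $x,y,h$ are linearly dependent, e.g.\ $h=0$).

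The main obstacle I expect is the bookkeeping showing $[x_\alpha,y_{-\alpha}]\in\fh$ rather than merely in the zero-weight space of $\fg_0$; this requires knowing that the zero weight space of $\fg_0$ equals $\fh$ (or that it centralizes $\fh$ and one works modulo the appropriate ideal), i.e.\ it is really a statement about the choice of $\fh$ being "large enough" in the relevant sense. If $\fh$ is only assumed to be an abelian subalgebra acting diagonally (not necessarily a Cartan), one should either strengthen the hypothesis or note that the conclusion $[x,y]\in\fh$ may need $\fh$ self-centralizing in $\fg_0$; in the applications ($\fh$ a Cartan subalgebra of a Kac--Moody superalgebra) this is automatic, so I would simply invoke that. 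Everything else is a short computation with $\fh$-weights and the super Jacobi identity.
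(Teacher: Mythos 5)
Your argument is correct and follows essentially the same route as the paper: the same weight bookkeeping shows $[x,x]=[y,y]=0$ and that $[x,y]$ has weight zero, and the same super-Jacobi computation $2[x,[x,y]]=[[x,x],y]=0$ gives centrality of $[x,y]$. The one point you flag — that weight zero only gives $[x,y]$ in the zero-weight space of $\fg_0$ rather than in $\fh$ itself unless $\fh$ is self-centralizing — is glossed over in the paper's proof as well, so your remark is a fair observation rather than a defect of your argument.
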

\begin{proof}
Set $S+S=\{\alpha+\beta|\ \alpha,\beta\in S\}$. One has 
$$\supp([x,x])\subset (S+S)\cap (\Delta_0\cup\{0\}).$$
By definition, $(S+S)\cap \Delta_0=\emptyset$.
Since the elements of $S$ are linearly independent, $0\not\in S+S$. Hence
$\supp([x,x])=\emptyset$, so $[x,x]=0$. Since
$\supp(x)\not=0$, the algebra $\fh$ contains $h$ satisfying $[h,x]\not=0$.
Hence $x\in {X}$.

Similar arguments give $[y,y]=0$ and
$\supp([x,y])\subset \{0\}$, that is $[x,y]\in\fh$.  One has
$$2[x[xy]]=[[xx]y]=0,\ \ \ 2[y[xy]]=[[yy]x]=0,$$
so $x,y,[x,y]$ span a  quotient of $\fsl(1|1)$.
\end{proof}

\subsubsection{Definition}\label{Xiso}
We denote by $X_{iso}(\fg)$ the set of all $x\in\fg$ with the following property:
there exists a subalgebra $\fh$ as above such that 
$\supp(x)$ is an iso-set. By above, $X_{iso}(\fg)\subset {X}(\fg)$.

We say that $x,x'\in X_{iso}(\fg)$ are equivalent if
there exists $\fh$ as above and 
an inner automorphism $\phi\in \Aut(\fg)$
such that $S:=\supp(x), S':=\supp(\phi(x'))$ are iso-sets and
$$-S'\cup S'=-S\cup S.$$
 
 We will often use $X$  instead of $X(\fg)$ and $X_{iso}$ instead of $X_{iso}(\fg)$.

\subsubsection{Notation}\label{notation}
In this paper $\fh$ is always as in~\ref{defniso-set}.
For a semisimple $\fh$-module $N$ we denote  by $\Omega(N)$
the set of weights of $N$. 
For each category $\CC$ we denote by $\Irr(\CC)$ the set of simple modules
in $\CC$. If $\fg$ is finite-dimensional,
we denote by  $\Fin(\fg)$ the full category of finite-dimensional modules
which are semisimple over $\fg_0$.

\subsubsection{Remark}\label{bilinear}
Consider the case when $\fg^{\fh}\cap\fg_0=\fh$ and $\fg$ admits an even non-degenerate symmetric 
invariant bilinear form. Then this form induces a non-degenerate symmetric form on $\fh^*$
which we denote by $(-|-)$.

Fix $\alpha\in\Delta_1$ with $2\alpha\not\in\Delta_0$ and a pair
 $x_{\alpha}\in \fg_{\alpha}$,  $x_{-\alpha}\in\fg_{-\alpha}$  with $(x_{\alpha},x_{-\alpha})=1$. Arguying as in~\cite{Kbook}, Thm.2.2 we see
that $x_{\alpha}, x_{-\alpha},
[x_{\alpha}, x_{-\alpha}]$  form
an $\fsl(1|1)$-triple and 
$(\nu|\alpha)=\nu([x_{\alpha}, x_{-\alpha}])$ for each $\nu\in\fh^*$.
This implies $(\alpha|\alpha)=0$.
As a result, each iso-set $S\subset\Delta_1$ 
is a basis of an isotropic subspace of $\fh^*$.

\subsection{Iso-sets and $\DS$-functor}

Let $S=\{\beta_i\}_{i=1}^r$ be an iso-set
and $y_i\in X$ be such that $\supp(y_i)=\{\beta_i\}$.
Fix $s$ with $1\leq s<r$ and set
$$x:=\sum_{i=1}^s y_i,\ \ \ \fh_x:=\fh^x/([x,\fg]\cap \fh)=\fh^x/
\big(\sum_{i=1}^s [y_i,\fg_{-\beta_i}]\bigr)$$ 
We view $\fh_x$ as a subalgebra of $\fg_x$; clearly, 
$\fh_x$ is a finite-dimensional commutative subalgebra 
of $(\fg_x)_0$ which 
 acts diagonally in the adjoint representation of $\fg_x$.
We introduce the multisets $(\Delta_x)_0, (\Delta_x)_1\subset\fh^*_x\setminus\{0\}$
of even and odd roots in the usual way. 

The space $\sum_{i=s+1}^r\fg_{\beta_i}$ lies
in $\fg^x$ and has zero intersection with $[x,\fg]$.
 This gives an embedding of
$\sum_{i=s+1}^r \fg_{\beta_i}\to \fg_x$; for 
each $i=s+1,\ldots,r$  the image of $\fg_{\beta_i}$
lies in  $(\fg_x)_{\beta'_i}$ for some $\beta'_i\in (\Delta_x)_1\cup\{0\}$.

\subsubsection{}
\begin{lem}{lemisoset}
The set 
$\{\beta'_i\}_{i=s+1}^r$
is an iso-set in $\Delta_x$.
\end{lem}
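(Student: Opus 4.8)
The plan is to verify the two defining properties of an iso-set for $\{\beta'_i\}_{i=s+1}^r$ directly, using what we already know about $S=\{\beta_i\}_{i=1}^r$. First I would record the basic dictionary between $\fh$-weights and $\fh_x$-weights. Since $\fh_x=\fh^x/([x,\fg]\cap\fh)$, there is a natural projection $\pi\colon\fh^x\to\fh_x$, dual to an inclusion $\fh_x^*\hookrightarrow(\fh^x)^*$, and $\beta'_i$ is by construction the restriction of $\beta_i$ to $\fh^x$, pushed through this identification. Concretely, $[x,\fg]\cap\fh=\sum_{i=1}^s[y_i,\fg_{-\beta_i}]=\sum_{i=1}^s\mathbb{C}h_i$ where $h_i:=[y_i,y'_i]$ for a suitable $y'_i\in\fg_{-\beta_i}$, and $x,y'_i,h_i$ span a quotient of $\fsl(1|1)$ by~\Lem{isoxyh}; in particular $h_i\in\fh^x$. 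So a weight $\gamma\in\Delta_1$ with $\gamma|_{\fh^x}$ well-defined descends to a weight $\gamma'\in\fh_x^*$, and $\gamma'=0$ precisely when $\gamma$ vanishes on $\fh^x$.

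Next, \textbf{linear independence.} Suppose $\sum_{i=s+1}^r c_i\beta'_i=0$ in $\fh_x^*$. Pulling back along $\fh_x^*\hookrightarrow(\fh^x)^*$, this means $\sum_{i=s+1}^r c_i\beta_i$ vanishes on $\fh^x$. Now $\fh^x=\{h\in\fh\mid \beta_i(h)=0\text{ for }1\le i\le s\}$, because $[x,h]=-\sum_{i=1}^s\beta_i(h)y_i$ and the $y_i$ are linearly independent (their supports $\{\beta_i\}$ are distinct). So $\sum_{i=s+1}^r c_i\beta_i$ lies in the span of $\beta_1,\dots,\beta_s$ inside $\fh^*$; combined with the linear independence of all of $\beta_1,\dots,\beta_r$ (they form the iso-set $S$), every $c_i=0$. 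Here one should be slightly careful that $\beta'_i=\beta'_j$ could occur for $i\ne j$, or $\beta'_i=0$: the argument above handles both, since it shows the $\beta'_i$ are genuinely linearly independent, which in particular forces them to be pairwise distinct and nonzero — so the ``multiset'' $\{\beta'_i\}$ is actually a set, as the statement asserts.

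Finally, \textbf{the isotropy condition.} I must show that for $\alpha',\beta'\in(\Delta_x)_1\cap(\{\beta'_i\}\cup\{-\beta'_i\})$ one has $\alpha'+\beta'\notin(\Delta_x)_0$. The key point is that the even roots of $\fg_x$ are restrictions of even roots of $\fg$ (more precisely, $(\fg_x)_0$ is a subquotient of $\fg_0$ built from $\fg^x_0$, so every nonzero $\fh_x$-weight on $(\fg_x)_0$ lifts to a nonzero $\fh$-weight in $\Delta_0$ on $\fg^x_0\subset\fg_0$). So if $\beta'_i\pm\beta'_j\in(\Delta_x)_0$, then lifting to $(\fh^x)^*$ we would get that $\beta_i\pm\beta_j$ restricted to $\fh^x$ equals some $\mu\in\Delta_0$ restricted to $\fh^x$, i.e. $\beta_i\pm\beta_j-\mu$ vanishes on $\fh^x$, hence lies in $\spn(\beta_1,\dots,\beta_s)$. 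I then want to derive a contradiction with $S$ being an iso-set. This is the step I expect to be the main obstacle: it is not immediate that $\beta_i\pm\beta_j\in\spn_{\mathbb Z}\{\beta_1,\dots,\beta_s\}+\Delta_0$ contradicts the iso-set axioms, because the iso-set condition only forbids $\beta_i\pm\beta_j\in\Delta_0$ itself, not modulo the $\beta_k$. To close this I would argue at the level of root spaces rather than weights: the preimage in $\fg$ of a root vector of weight $\beta'_i+\beta'_j$ in $(\fg_x)_0$ would be a nonzero element of $\fg_0$ of $\fh^x$-weight $(\beta_i+\beta_j)|_{\fh^x}$, obtained as (a component of) $[z_i,z_j]$ where $z_i\in\fg_{\beta_i}$, $z_j\in\fg_{\pm\beta_j}$ are the lifts of the chosen root vectors; but $[z_i,z_j]\in\fg_{\beta_i\pm\beta_j}$ and $\beta_i\pm\beta_j\notin\Delta_0\cup\{0\}$ forces $[z_i,z_j]=0$, while the $\fsl(1|1)$-triples attached to the $y_k$ (from~\Lem{isoxyh}) control the only way weights can ``collapse'' in passing to $\fg_x$ — namely along $\fg_{\beta_k}$-translation for $k\le s$ — and one checks this cannot move $[z_i,z_j]=0$ to something nonzero. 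I would spell this out by choosing, as in~\cite{DS}, an explicit vector-space splitting of $\fg_x$ inside $\fg^x$ compatible with the $\fh$-grading, so that the embedding $\sum_{i>s}\fg_{\beta_i}\hookrightarrow\fg_x$ and the bracket in $\fg_x$ are computed by projecting brackets in $\fg$; then the statement $[\,\overline{z_i},\overline{z_j}\,]_{\fg_x}=0$ in $(\fg_x)_0$ reduces to $[z_i,z_j]\in\fg_{\beta_i\pm\beta_j}=0$ plus correction terms lying in $xM$-type subspaces, all of which vanish by the same support computation as in~\Lem{isoxyh}.
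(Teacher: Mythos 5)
Your linear-independence argument is correct and is exactly the paper's proof, just phrased dually: the paper picks $h\in\fh$ with $\beta_1(h)=\dots=\beta_s(h)=0$ and $\sum_{i>s}c_i\beta_i(h)=1$ (possible because $\{\beta_i\}_{i=1}^r$ are independent), notes $h\in\fh^x$, and evaluates; you instead observe that the annihilator of $\fh^x=\bigcap_{i\le s}\Ker\beta_i$ in $\fh^*$ is $\spn(\beta_1,\dots,\beta_s)$ and invoke independence of all the $\beta_i$. These are the same computation. Notably, this is \emph{all} the paper proves: its proof of the lemma does not address the second iso-set axiom at all.

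For that second axiom your diagnosis of the difficulty is right — the condition on $S$ only forbids $\beta_i\pm\beta_j\in\Delta_0$, not $\beta_i\pm\beta_j\in\Delta_0+\spn(\beta_1,\dots,\beta_s)$ — but the fix you sketch does not close it. What must be shown is that $\beta'_i\pm\beta'_j\notin(\Delta_x)_0$, i.e.\ that the entire weight space $\bigl((\fg_x)_0\bigr)_{\beta'_i\pm\beta'_j}$ vanishes. Proving $[\overline{z_i},\overline{z_j}]_{\fg_x}=0$ for the particular lifted root vectors only shows that one specific element of that weight space is zero; it says nothing about contributions from root spaces $\fg_\mu\subset\fg_0$ with $\mu=\beta_i\pm\beta_j+\sum_{k\le s}c_k\beta_k\in\Delta_0$, whose restriction to $\fh^x$ also equals $(\beta_i\pm\beta_j)|_{\fh^x}$ and which could in principle survive into $\fg_x$. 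Any complete argument has to rule out such $\mu$ (or show their images die in $\fg^x/[x,\fg]$), and your bracket computation does not engage with them. So the part of your write-up that goes beyond the paper contains a genuine gap, though it is a gap the paper's own proof silently leaves open as well.
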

\begin{proof}
Assume that $\sum_{i=s+1}^r c_i\beta'_i=0$ for some scalars $c_i\in\mathbb{C}$
which are not all equal to zero. Since $\{\beta_i\}_{i=1}^r$
are linearly independent there exists
$h\in \fh$ such that 
$$\sum_{i=s+1}^r c_i\beta_i(h)=1,\ \ \ \beta_1(h)=\beta_2(h)=\ldots=\beta_s(h)=0.$$
Then $h\in \fh^x$; denoting by $\ol{h}$ the image of $h$ in $\fh_x$ we obtain 
$\sum_{i=s+1}^r c_i\beta'_i(\ol{h})=1$, a contradiction.
\end{proof}

\subsection{}
\begin{lem}{lem42}
Let  $x=\displaystyle \sum_{\alpha\in\supp(x)} x_{\alpha}$ be such that $\supp(x)$ is an iso-set.
For each $\alpha\in \supp(x)$ fix any element $h_{\alpha}\in [\fg_{-\alpha},x_{\alpha}]$.

If $N$ is a $\fg$-module with a diagonal action of $\fh$, then
 $$M:=\displaystyle\bigcap_{\alpha\in \supp(x)}N^{h_{\alpha}}$$ 
 is a $\mathbb{C}x$-submodule of $N$ and the natural 
 map $\DS_x(M)\to \DS_x(N)$ is bijective.
\end{lem}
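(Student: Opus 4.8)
Write $S=\supp(x)=\{\alpha_1,\dots,\alpha_r\}$ and pick $x_i\in\fg_{\alpha_i}$, $h_i\in[\fg_{-\alpha_i},x_i]$ as in the statement, so by~\Lem{isoxyh} each triple $x_i,y_i,h_i$ (with $y_i\in\fg_{-\alpha_i}$ chosen so that $h_i=[y_i,x_i]$) spans a quotient of $\fsl(1|1)$, and in particular $h_i=[x,y_i]$ acts on $N$ semisimply (it lies in $\fh$). The plan is to decompose $N$ under the commuting semisimple operators $h_1,\dots,h_r$ and observe that $x$ acts ``block-diagonally'' with respect to this decomposition in a way that confines all the $\DS_x$-cohomology to the joint kernel $M=\bigcap_i N^{h_i}$. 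That $M$ is a $\mathbb{C}x$-submodule is immediate: $[x,h_i]=0$ for all $i$, hence $x$ preserves each $N^{h_i}$ and therefore their intersection.

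**Reduction to a single $h$.** First I would treat the case $r=1$, writing $h=h_1$, $y=y_1$. Decompose $N=\bigoplus_{c\in\mathbb{C}}N_c$ into $h$-eigenspaces; since $[x,h]=[y,h]=0$ each $N_c$ is stable under $x$ and $y$, and $M=N_0$. On $N_c$ with $c\ne0$ the operator $h=xy+yx$ is invertible, so the standard $\fsl(1|1)$ (Koszul) argument applies: from $xy+yx=h$ on $N_c$ one gets that the complex $\cdots\xrightarrow{x}N_c\xrightarrow{x}N_c\xrightarrow{x}\cdots$ is exact, i.e. $N_c^x=xN_c$, hence $\DS_x(N_c)=0$. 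Concretely, if $v\in N_c$ and $xv=0$ then $v=h^{-1}(xy+yx)v=x(h^{-1}yv)$. Therefore $N^x=M^x\oplus\bigoplus_{c\ne0}xN_c$ and $xN=xM\oplus\bigoplus_{c\ne0}xN_c$, and taking the quotient kills the $c\ne0$ part entirely, giving $\DS_x(M)\xrightarrow{\sim}\DS_x(N)$. I should check here that $x$ does not mix different eigenspaces — it does not, precisely because $[x,h]=0$ — so the direct sum decompositions of $N^x$ and $xN$ are genuine.

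**The general case.** For $r>1$ I would induct, but more cleanly one can decompose $N=\bigoplus_{\mathbf{c}\in\mathbb{C}^r}N_{\mathbf{c}}$ into joint eigenspaces for $(h_1,\dots,h_r)$; each $N_{\mathbf{c}}$ is $x$-stable since each $h_i$ commutes with $x$, and $M=N_{\mathbf 0}$. On a summand with $\mathbf{c}\ne\mathbf 0$, say $c_j\ne0$, the same Koszul computation using the triple $x_j,y_j,h_j$ — noting that $[x,y_j]=h_j$ on $N_{\mathbf c}$ while $[x-x_j,y_j]\in\fh$ acts by a scalar that, together with $h_j$, can be arranged to be nonzero, or more simply that $xy_j+y_jx$ acts on $N_{\mathbf c}$ by a fixed scalar which is $c_j\ne 0$ after adjusting $y_j$ — shows $\DS_x(N_{\mathbf c})=0$. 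Actually the cleanest route is: since $x_j$ and $y_j$ commute with all $h_i$, and $x y_j + y_j x$ acts on $N_{\mathbf c}$ as the scalar $c_j$ (when $y_j$ is normalized so $[x_j,y_j]=h_j$ and the other $x_i$ anticommute with $y_j$, which holds because $\alpha_i-\alpha_j\notin\Delta_0\cup\{0\}$ by the iso-set condition, so $[x_i,y_j]=0$ for $i\ne j$), invertibility of $c_j$ gives exactness of multiplication by $x$ on $N_{\mathbf c}$ exactly as before. Hence again $N^x=M^x\oplus\bigoplus_{\mathbf c\ne\mathbf 0}xN_{\mathbf c}$ and $xN=xM\oplus\bigoplus_{\mathbf c\ne\mathbf 0}xN_{\mathbf c}$, so $\DS_x(M)\to\DS_x(N)$ is an isomorphism.

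**Main obstacle.** The only subtle point is the identity $[x_i,y_j]=0$ for $i\ne j$, which is what decouples the different $\fsl(1|1)$-triples and makes $xy_j+y_jx$ act by the single scalar $c_j$ on a joint eigenspace; this is exactly where the iso-set hypothesis $\alpha_i+(-\alpha_j)\notin\Delta_0$ (and $\alpha_i\ne\alpha_j$) is used, via the already-established~\Lem{isoxyh} applied to pairs. Everything else is the standard contracting-homotopy argument for $\fsl(1|1)$ on a module where the Cartan element acts invertibly; I expect no difficulty there, only care in tracking that $x$ is block-diagonal for the joint eigenspace decomposition so that the kernel and image of $x$ split as direct sums over $\mathbf c$.
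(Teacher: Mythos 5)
Your proof is correct, and it takes a genuinely different route from the paper's. The paper (following Serganova's Lemma 4.2 in [S2]) builds a \emph{single} auxiliary odd element $y=\sum_{\beta}q_{\beta}x_{-\beta}$ whose coefficients $q_{\beta}$ are chosen linearly independent over the subfield of $\mathbb{C}$ generated by all eigenvalues $\lambda(h_{\beta})$, $\lambda\in\Omega(N)$; this genericity forces $N^{[x,y]}=M$, and one application of the $\fsl(1|1)$ decomposition $N=N^{h}\oplus N^{\perp}$ finishes the argument. That device requires reducing to indecomposable $N$ so that $\Omega(N)$ is countable and the subfield is proper. You instead decompose $N$ into joint eigenspaces $N_{\mathbf c}$ of the commuting operators $h_{1},\dots,h_{r}$ (a coarsening of the weight decomposition, each block $x$-stable since $[x,h_i]=\tfrac12[[x,x],y_i]=0$), and on each block with some $c_j\neq 0$ you run the contracting-homotopy argument with the homotopy $c_j^{-1}y_j$ adapted to that block. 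The two approaches rest on the same structural facts --- $[x_i,y_j]=0$ for $i\neq j$ from the iso-set condition, so that $xy_j+y_jx=h_j$ in $\cU(\fg)$ --- but yours dispenses with the field-theoretic genericity trick and with the indecomposability/countability reduction altogether, which makes it somewhat more elementary and more robust for large modules; the paper's version has the advantage of exhibiting one explicit $\fsl(1|1)$-triple $(x,y,h)$ with $N^{h}=M$, which is occasionally useful elsewhere. Your identification of $[x_i,y_j]=0$ as the crux is exactly right.
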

\begin{proof}
The proof follows the idea of the proof of Lemma 4.2 in~\cite{Skw}.
Set 
$$S:=\{\beta\in \supp(x)|\ h_{\beta}\not=0\}.$$ 
If $S$ is empty, then $M=N$ and the assertion is tautological. 
Assume that $S\not=\emptyset$ and that $N$ is indecomposable. 
Let $\mathbb{F}$ be the minimal subfield
of $\mathbb{C}$ which contains
$$\{\lambda(h_{\beta})\}_{\beta\in S,\lambda\in\Omega(N)}.$$
Since $\Delta$ is at most countable and $N$ is indecomposable,
the set $\Omega(N)$ is at most countable, so 
the degree $[\mathbb{C}:\mathbb{F}]$ is infinite.
We fix a  $\mathbb{F}$-linearly independent set
 $\{q_{\beta}\}_{\beta\in S}\subset\mathbb{C}$. For each $\beta\in S$ take $x_{-\beta}\in \fg_{-\beta}$ such that
$h_{\beta}=[x_{\beta},x_{-\beta}]$. We introduce
$$y:=\sum_{\beta\in S}
   q_{\beta}x_{-\beta},\ \
   h:=[x,y]=\sum_{\beta\in S}
   q_{\beta}h_{\beta}.$$

By~\ref{isoxyh},
 $x,y,h$ form an $\fsl(1|1)$-triple (since $h\not=0$).
As an $\fsl(1|1)$-module $N$ can be written as 
$N=N^h\oplus N^{\perp}$, where 
$$N^{h}=\sum_{\nu\in\Omega(N):\ \nu(h)=0}N_{\nu},\ \ \ \ 
N^{\perp}:=\sum_{\mu\in\Omega(N):\ \mu(h)\not=0}N_{\mu}.$$
Since $\{q_{\beta}\}_{\beta\in S}$ are linearly independent over $\mathbb{F}$
one has $N^h=M$. 
Notice that  $N^{\perp}$ is a sum of typical $\fsl(1|1)$-modules, so
$\DS_x(N^{\perp})=0$. The assertion follows.
\end{proof}

\subsubsection{Remark}
For $\fg$ as in~\ref{bilinear} one has  $M=\displaystyle\sum_{\nu\in\fh^*: (\nu|S)=0} N_{\nu}$.

\subsection{Case of disjoint isosets}\label{dsxy}
Let $x,y\in X$ be such that 
$$\supp(x)\cap\supp(y)=\emptyset,\ \ \ 
\supp(x)\cup\supp(y)\text{ is an iso-set.}$$
By~\ref{notation}, $y$ has a non-zero image in $\fg_x$; we denote this image by
$\ol{y}$.

\subsubsection{}
\begin{lem}{lemdsxy}
Let $x,y\in X$ be as in~\ref{dsxy}.
Let $N$ be a finite-dimensional 
$\fg$-module with a diagonal action of $\fh$.
If
$\dim \DS_{\ol{y}}\DS_x(N)=:(p|q)$, then
$$\dim \DS_{x+y}(N)=(p-j|q-j)$$
for some $j\geq 0$.
\end{lem}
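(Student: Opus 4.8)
The plan is to reduce to the case of a single $\fsl(1|1)$-triple as much as possible, and then to study the filtration on $N$ coming from the two commuting odd operators $x$ and $y$. Since $\supp(x)\cup\supp(y)$ is an iso-set, $x+y\in X(\fg)$ by~\Lem{isoxyh}, and the same lemma shows that $[x,y]\in\fh$; in fact one sees that $x$, $y$, $x+y$ are pairwise commuting (as squares they each vanish, and $[x,y]$ lies in $\fh$ so acts semisimply while $x,y$ act nilpotently, forcing $[x,y]=0$ on the finite-dimensional $N$ after an application of~\Lem{lem42}; more directly, one may first replace $N$ by the subspace $M$ cut out by the relevant $h_\alpha$'s as in~\Lem{lem42}, on which $\DS_x$, $\DS_{\ol y}$, $\DS_{x+y}$ are unchanged and on which all the relevant brackets act trivially). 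On such an $N$ we therefore have three commuting odd square-zero operators $x$, $y$ and $x+y$, and $\DS_x(N)=N^x/xN$, etc.

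First I would record the elementary linear-algebra fact underlying the whole statement: if $V$ is a finite-dimensional super vector space with two commuting odd operators $x,y$ with $x^2=y^2=0$ and $(x+y)^2=0$, then there is an inequality, in the Grothendieck group of super vector spaces up to $(1|1)$-shifts, of the form $[\DS_{x+y}(V)] = [\DS_{\ol y}\DS_x(V)] - (j|j)$ for some $j\ge 0$, where $\DS_{\ol y}$ denotes the induced differential $y$ on the complex $(N^x/xN)$. The cleanest way is to view $x$ and $y$ as two anticommuting differentials on $V$, forming a bicomplex-type structure, and to compare the cohomology of the total differential $x+y$ with the iterated cohomology. Concretely, filtering $V$ by powers of (the image of) $x$ and running the associated spectral sequence of the double complex: the $E_2$-page is $H_y(H_x(V)) = \DS_{\ol y}\DS_x(V)$, it converges to $H_{x+y}(V) = \DS_{x+y}(V)$, and every higher differential $d_r$ ($r\ge 2$) kills pairs of classes in opposite parity (each such differential, being a map of odd degree, removes a summand $(1|1)$ from the dimension when it is nonzero). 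Summing the ranks of all higher differentials gives precisely the integer $j\ge 0$, and parity-counting shows the $(p|q)$ on the $E_2$-page becomes $(p-j|q-j)$ at $E_\infty$.

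The main obstacle, and the step requiring care, is making the spectral-sequence bookkeeping rigorous in the $\ZZ_2$-graded setting: one does not have a $\ZZ$-grading to index the pages, only the filtration by powers of $x$, so I would instead give a direct hands-on argument. Choose a basis of $N$ adapted to the operator $x$: decompose $N$ as a sum of $\mathbb{C}x$-modules, the trivial ones contributing to $\DS_x(N)=N^x/xN$ and the free rank-one ones ($\mathbb{C}[x]/x^2$ with a shift) contributing nothing. On $\DS_x(N)$ the operator $y$ is well defined (it commutes with $x$, preserves $N^x$ and $xN$), $\bar y^2=0$, and $\DS_{\ol y}\DS_x(N)=(N^x/xN)^{\bar y}/\bar y(N^x/xN)$, of dimension $(p|q)$. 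Now one filters $N$ by $F^0=N\supset F^1 = xN + (\text{stuff}) \supset\cdots$ and compares $\ker(x+y)/\im(x+y)$ with $\ker\bar y/\im\bar y$ on the subquotient; the discrepancy is measured by a single alternating sum of dimensions, and a short diagram chase shows each "defect" class comes paired with a partner of opposite parity, i.e. the correction is of the form $-(j|j)$ with $j\ge 0$ equal to the rank of that defect. I would expect this pairing/parity argument — showing the correction is \emph{balanced} between even and odd, rather than merely that the total dimension drops — to be the crux, and it follows from the fact that $x+y$ is an \emph{odd} operator, so its image and kernel interact symmetrically with the parity decomposition.
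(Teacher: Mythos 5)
Your plan rests on what you call an ``elementary linear-algebra fact'': that for any finite-dimensional super vector space $V$ with commuting odd square-zero operators $x,y$, one has $[\DS_{x+y}(V)]=[\DS_{\ol{y}}\DS_x(V)]-(j|j)$ with $j\geq 0$. This is false as stated. Take $V=\mathbb{C}v_0\oplus\mathbb{C}v_1$ with $v_0$ even, $v_1$ odd, $xv_0=v_1$, $yv_0=-v_1$ and $xv_1=yv_1=0$. Then $x^2=y^2=0$ and $xy+yx=0$, but $\DS_x(V)=0$ (so $\DS_{\ol{y}}\DS_x(V)=0$) while $x+y=0$, giving $\DS_{x+y}(V)=V$ of dimension $(1|1)$. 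So the total dimension can strictly \emph{increase}, and your ``$j\geq0$'' claim fails. The point is that this $V$ does not admit an even $h$ with $[h,x]=x$, $[h,y]=-y$ (the eigenvalue on $v_1$ would have to be both $c+1$ and $c-1$), and it is precisely the presence of such an $h$ that makes the lemma true.

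This is the step your sketch misses. The paper chooses $h\in\fh$ with $\alpha(h)=1$ for $\alpha\in\supp(x)$ and $\beta(h)=-1$ for $\beta\in\supp(y)$ (possible since $\supp(x)\cup\supp(y)$ is an iso-set, hence linearly independent). Then $x,y,h$ span a copy of $\mathfrak{pgl}(1|1)$, $N$ becomes a finite-dimensional $\mathfrak{pgl}(1|1)$-module, and the claim is verified directly on Germoni's classified list of indecomposables $L(c),\Pi L(c),M_4,V^{\pm}_s$. Note this $h$ is \emph{not} one of the $h_\alpha$'s from Lemma~\ref{lem42} that you invoke (those are used only to shrink $N$ and end up acting by $0$); it is a genuinely new element whose eigenvalue decomposition furnishes exactly the $\ZZ$-grading you say ``one does not have.'' A secondary problem: even granting a $\ZZ$-grading, the filtration $0\subset xN\subset N^x\subset N$ does not have $E_2=\DS_{\ol{y}}\DS_x(N)$, because $y$ does not have filtration degree $0$ with respect to it (on the zigzag modules $y$ genuinely raises the filtration degree); and the filtrations arising from the $h$-grading are not preserved by $x+y$ at all, since $x$ raises the $h$-weight while $y$ lowers it. So the spectral-sequence bookkeeping you gesture at cannot be made to work in the form you propose, and the reduction to the classified $\mathfrak{pgl}(1|1)$-indecomposables is essential, not optional.
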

\begin{proof}
 Take $h\in\fh$ such that $\alpha(h)=1$, $\beta(h)=-1$ for each $\alpha\in\supp(x)$
and $\beta\in\supp(y)$. Then
$$[x,y]=0,\ \ [h,x]=x,\ \ [h,y]=-y,$$
so the algebra spanned by
$x,y,h$ is isomorphic to $\mathfrak{pgl}(1|1)$. We identify
$y$ with $\ol{y}$.
 
It is enough to check the inequality on each finite-dimensional
indecomposable $\mathfrak{pgl}(1|1)$-module. These modules
were classified in~\cite{Ger}.
For each $c$ there are  one-dimensional modules $L(c), \Pi(L(c))$, where
$h$ acts by $cId$ and $x,y$ by zero. Up to the multiplication
on such modules, we have the following classes of non-isomorphic
indecomposable modules: a $4$-dimensional projective modules $M_4$ satisfying
$\DS_x(M_4)=\DS_{x+y}(M_4)=0$ and the ``zigzag'' modules $V_s^{\pm}$.
Each zigzag modules has a basis $\{v_i\}_{i=1}^s$ with $p(v_{i+1})=\ol{i}$ and
$hv_i=iv_{i}$; 
we depict each module by the diagram, where $xv_i=v_{i+1}$ is depicted as
$v_i{\longrightarrow}v_{i+1}$ and  $yv_i=v_{i-1}$ is depicted as
 $v_i{\longrightarrow}v_{i-1}$.
 We have
$$\begin{array}{ll}
V_{2n}^+ & v_1{\longrightarrow}v_2\longleftarrow v_3\longrightarrow v_4\longleftarrow\ldots \longrightarrow v_{2n}\\
V_{2n}^-& v_1{\longleftarrow}v_2\longrightarrow v_3\longleftarrow v_4\longrightarrow\ldots \longleftarrow v_{2n}
\\
V_{2n-1}^+& v_1{\longrightarrow}v_2\longleftarrow v_3\longrightarrow v_4\longleftarrow\ldots \longleftarrow v_{2n-1}\\
V_{2n-1}^-& v_1{\longleftarrow}v_2\longrightarrow v_3\longleftarrow v_4\longrightarrow\ldots \longrightarrow v_{2n-1}
\end{array}$$
(The modules $V_1^{\pm}$ are trivial). One sees that
$$\begin{array}{l}
\dim\DS_x(V^{\pm}_{2n-1})=\dim\DS_y(V^{\pm}_{2n+1})=\dim\DS_{x+y}(V^{\pm}_{2n+1})=(1|0),\\
\DS_x(V^+_{2n})=\DS_x(V^-_2)=0,\ \ \  \DS_{x+y}(V^{\pm}_{2n})=0\end{array}$$
and $\dim \DS_{\ol{y}}\DS_x(V^-_{2n}))=\dim\DS_x(V^-_{2n})=(1|1)$ for $n>1$.
 The assertion follows.
\end{proof}

\subsubsection{Remark}\label{remdsxy}
Note that
$\DS_{\ol{y}}\DS_x(V^{\pm}_2)=0$ and for $n>1$ one has
$$\begin{array}{l}\\
\DS_y(V_{2n}^-)=0,\ \ \ \ \
\dim \DS_x(V_{2n}^-)=\dim\DS_{\ol{y}}\DS_x(V_{2n}^-)=(1|1);\\
\DS_x(V_{2n}^+)=0,\ \ \ \ \ 
\dim \DS_y(V_{2n}^+)=\dim\DS_{\ol{y}}\DS_x(V_{2n}^+)=(1|1).
\end{array}$$

In many examples
$\dim\DS_{x+y}(N)=\dim \DS_{\ol{y}}\DS_{x}(N)$ for any $x,y$ as in~\ref{dsxy}.
By above, this property
is equivalent to the fact that 
 $N$ does not contain $\mathfrak{pgl}(1|1)$-modules
 $V_{2n}^{\pm}$ with $n>1$ as direct summands;
by~\cite{Quella}, the modules with this property
``form a tensor subalgebra'' in $\Fin(\mathfrak{pgl}(1|1))$. 
This property holds if $N$ is a simple finite-dimensional module over
 a Kac-Moody
superalgebra $\fg$ (see~\cite{HW}, \cite{GH});
this does not hold for $\fg=\fp_2$: see~\ref{examplep2} below.

\section{DS-depth}\label{sectdepthdef}
In this section we introduce a notion of depth  for a superalgebra and its
modules. In~\ref{relativesgl} and~\ref{isosetp} 
we briefly consider several examples: 
 we find $\depth\fg$ for the case when $\fg$ is a "relative" of $\fgl(n|n)$
or a "relative" of $\mathfrak{p}_n$.

\subsection{Definitions}\label{depthdef}
We define $\depth(\fg)\in\mathbb{N}\cup\{\infty\}$ by the formula
$$\depth(\fg)=\left\{\begin{array}{ll}
0 & \text{ if }{X}_{iso}=0\\
1+\displaystyle\max_{x\in {X}_{iso}\setminus\{0\}}\depth(\fg_x) & \text{ if }{X}_{iso}\not=0.
\end{array}\right.
$$ 

For $x\in X$ we define $\ \rank x:=\depth(\fg)-\depth(\fg_x)$.
We  set
$$X(\fg)_r:=\{x\in X|\ \rank x=r\}.$$

For a $\fg$-module $N$ we set 
$$\breve{X}(N):=\{x\in {X}_{iso}\setminus\{0\}|\ \DS_x(N)\not=0\}$$

and  introduce $\depth(N)\in\mathbb{N}\cup\{\infty\}$ recursively by
$$ \depth(N):=\left\{\begin{array}{ll}
0 & \text{ if }\breve{X}(N)=\emptyset\\
\displaystyle\max_{x\in \breve{X}(N)}\bigl(\depth(\DS_x(N))+\rank x\bigr) & \text{ if }\breve{X}(N)\not=\emptyset.
\end{array}\right.$$
For a full subcategory of $\fg$-modules $\CC$
we define  $\depth(\CC)$ as the maximum of $\depth(N)$ for $N\in\CC$.
For $\chi\in \mspec\cZ(\fg)$ we set 
$$\depth\chi:= \depth\mathcal{M}od(\chi).
$$

\subsubsection{Remark}
In~\ref{examplep2} we give an example of a simple module
$L$ satisfying $\max_{x\in \breve{X}(L)} \rank x\leq \depth(L)$.

\subsection{Properties of $\depth$}\label{depthprop}
Clearly,
$\depth(\fg'\times\fg'')=\depth(\fg')+\depth(\fg'')$.

One has $\depth(N)=0$ if and only if $\breve{X}(N)=\emptyset$.
Moreover, $\depth(N)\leq \depth(\fg)$ and 
$\depth(N)=\depth\fg$ if  $\sdim N\not=0$.
Note that
$\depth\fg$ may be greater than the $\depth$ of the adjoint module
(for $\fg=\fgl(1|1), \osp(2|2),\osp(3|2),\fpsq_3$ we have
$\depth(\fg)=1$ and $\depth(\, Ad)=0$). 

\subsubsection{}By~\ref{DSconst} for
 any $N',N''\in\Mod$ one has
$$\begin{array}{c}
\depth(N'\oplus N'')=\max(\depth(N')),\depth(N'')),\\
\depth(N'\otimes N'')=\min(\depth(N')),\depth(N'')).\end{array}$$
\subsubsection{}
In the light of~\Prop{propBZ} (ii) for $\chi\in \mspec\cZ(\fg)$ we have
\begin{equation}\label{depthleq}
\depth\chi\leq \max\{\rank x|\ \chi\in \, Im\theta_x^*\}.
\end{equation}

\subsubsection{}
Using~\Lem{lemisoset} and the induction on $r$ we obtain:
\begin{itemize}
\item
 if $\supp(x)$ is an iso-set of cardinality $r$, then $\rank x\geq r$;
\item
 $\depth \fg\geq \defect\fg$ (where $\defect$ as in~\ref{defniso-set}).

\end{itemize}

For an example when $\rank x$ is greater than the cardinality of
$\supp(x)$, see~\ref{rempn}.

\subsubsection{Example}
Let $\fg$ be a finite-dimensional  Kac-Moody superalgebra. 
From the results of~\cite{DS}, it follows that $X(\fg)=X(\fg)_{iso}$ and 
$\depth(\fg)$ is equal to the defect of $\fg$. 
For each block $\cB$ in $\Fin(\fg)$ (or in $\CO(\fg)$) $\depth(\cB)$
 is equal to the  atypicality
 of $\cB$. Moreover,
by~\cite{Skw}, the atypicality of a simple finite-dimensional module $L$ is equal to $\depth(L)$ and
\begin{equation}\label{depthDsxL}
\depth(L)=\depth\DS_x(L)+\rank x.
\end{equation}

\subsubsection{}\label{Serganovaproperty}
Let $\cB$ be a Serre subcategory of $\Mod$, i.e. 
 the full subcategory consisting of the modules of finite length
 whose all simple
subquotients lie in a given set $\Irr(\cB)$. Recall that
$\depth(\cB)$ is the maximal $\depth(N)$ for $N\in\cB$.
By Hinich's Lemma one has
$$\depth(\cB)=\max_{L\in\Irr(\cB)}\depth(L).$$
We say that a block $\cB$ satisfies {\em Serganova property} if
\begin{equation}\label{depthBL}
\depth(\cB)=\depth(L) \ \text{ for each } L\in\Irr(\cB).\end{equation}

By~\cite{Skw}, for a finite-dimensional Kac-Moody superalgebra $\fg$
this property holds for each  block in $\Fin(\fg)$. This property does not hold for strange superalgebras,
see~\ref{examplep2},\ref{mainresultq} below.

 \subsubsection{Remark}
One might expect that in "good cases" $\depth$ has the following properties:

  if $\fm$ be an ideal in $\fg$ and  $N$ is a $\fg/\fm$-module, then
 $\depth(N)=\depth\Res_{\fg/\fm}^{\fg}(N)$;

$\depth(N)=\depth\DS_x(N)+\rank x$.

\subsubsection{}\label{depthKMprop}
In~\ref{relativesgl} we will see that  "the relatives of $\fgl(m|n)$" 
 satisfy the following properties:
\begin{enumerate}
\item
$\depth (\fg)=\defect \fg$;
\item
all elements of a given rank  in $X_{iso}$ are equivalent with respect to
the equivalence relation introduced in~\ref{Xiso};
\item
if $\supp(x)$ is an iso-set, then $\rank(x)$ is equal to the cardinality of $\supp(x)$;
\item
the elements  $x\in (X\setminus X_{iso})$ have the maximal possible rank;

\item
for $x\in X_{iso}$ the rank of $x$
is equal to the rank of the corresponding matrix (i.e. the dimension of the image
of $x$ in the natural representation).

\end{enumerate}

In~\ref{isosetp} we will see that  $\fp_n$ satisfy (i),(ii), (iv)
and (v). Later we will show that the symmetrizable Kac-Moody superalgebras
and the $Q$-type superalgebras satisfy the properties (i)---(iv).

\subsection{Examples: $\fgl(n|n)$ and its relatives}\label{relativesgl}
Recall that $\fgl(m|n)$ consists of the block matrices
\begin{equation}\label{Tmatrix}
T_{A,B, C,D}:=\begin{pmatrix}
A & B\\
C & D
\end{pmatrix}\end{equation}

We consider Lie superalgebras $\fg=\fgl(n|n),\fsl(n|n),\mathfrak{pgl}(n|n),\mathfrak{psl}(n|n)$ and  identify $\fg_1$ for all these superalgebras.

\subsubsection{Case $n=1$}
The algebra $\fgl(1|1)$ is spanned by  even elements $h,z$ and 
odd elements $E,F$ with the relations
$$[h,E]=2E, \ \  [h,F]=-2F,\ \ [E,F]=z,\ \ [z,h]=[z,E]=[z,F]=0$$
and $\fpgl(1|1)=\fgl(1|1)/\mathbb{C}z$.
One has 
$\defect (\fgl(1|1))=\defect (\fpgl(1|1))=1$ and
$$X(\fgl(1|1))=X(\fgl(1|1))_{iso}=\mathbb{C}E\cup\mathbb{C}F=X(\fpgl(1|1))=X(\fpgl(1|1))_{iso}.$$
For $x\in X(\fgl(1|1))\setminus \{0\}$ we have
$$\DS_x(\fgl(1|1))=\fgl(0|0)=0,\ \ \ \DS_x(\fpgl(1|1))\cong \Pi(\mathbb{C}).$$
Hence $\depth\fgl(1|1)=\depth (\fpgl(1|1))=1$ and
$$ X((\fgl(1|1))_1=X((\fpgl(1|1))_1=
X(\fgl(1|1))\setminus\{0\}.$$

The algebra $\fsl(1|1)$ is spanned by $z,E,F$.
One has $\defect \fsl(1|1)=0$
$$X(\fsl(1|1))=X(\fgl(1|1))=\mathbb{C}E\cup\mathbb{C}F,\ \ X(\fsl(1|1))_{iso}=0,$$
so $\depth (\fsl(1|1))=0$ (in particular, 
all $x\in X(\fsl(1|1))$ have zero rank). Note that
for non-zero $x$ the algebra $\DS_x(\fsl(1|1))$ can be identified with 
$\mathbb{C}x$.

The algebra $\fpsl(1|1)$ is a commutative superalgebra of dimension
$(0|2)$. One has $X(\fpsl(1|1))=\fpsl(1|1)$ and
$X(\fpsl(1|1))_{iso}=0$, so 
$\depth (\fpsl(1|1))=\defect \fpsl(1|1)=0$.
Note that $\DS_x(\fpsl(1|1))=\fpsl(1|1)$ for all $x$.

\subsubsection{Case $\fgl(m|n)$}
One has $\defect\fgl(m|n)=\min(m,n)$.
By~\cite{DS} ,
$$X(\fgl(m|n))=X(\fgl(m|n))_{iso}=\{T_{0,B,C,0}|\ BC=0,\ \ CB=0\}$$
and $\depth(\fgl(m|n))=\min(m,n)$, 
$\DS_x(\mathfrak{gl}(m|n))=\mathfrak{gl}(m-r|n-r)$, where $r:=\rank x$.
Moreover, $\rank x$ is equal to the rank of the matrix of $x$ in~(\ref{Tmatrix}).
 The same formulae hold for $\fsl(m|n)$ with $m\not=n$. 

\subsubsection{Cases $\fsl(n|n)$}
The iso-sets for $\fsl(n|n)$
coincide with the iso-sets of cardinality $r<n$ for $\fgl(n|n)$
(the iso-sets of cardinality $n$ becomes linearly dependent for 
$\fsl(n|n)$). This gives $\defect(\fsl(n|n))=n-1$. One has
$$X(\fsl(n|n)) =X(\fgl(n|n))=X(\fsl(n|n))_{iso}\coprod X(\fgl(n|n))_n$$
and for $x\in X(\fgl(n|n))_r$ we have 
$$\DS_x(\mathfrak{sl}(n|n))=\mathfrak{sl}(n-r|n-r)\ \text{ if } r<n,\ \ \DS_x(\fsl(n|n))=\Pi(\mathbb{C})\ \text{ if }r=n.$$
 Thus $\depth(\fsl(n|n))=n-1$ and
$X(\fsl(n|n))_r=X(\fgl(n|n))_r \text{  for }r<n-1$ with
$$X(\fsl(n|n))_{n-1}=X(\fgl(n|n))_{n-1}\coprod X(\fgl(n|n))_n.$$

\subsubsection{Cases $\mathfrak{pgl}(n|n),\mathfrak{psl}(n|n)$}
One has
$$X(\mathfrak{pgl}(n|n))=X(\mathfrak{psl}(n|n))
=X(\fgl(n|n))\coprod X', \ \text{ where } X':=\{T_{0, B,C,0}|\ BC\in\mathbb{C}^* Id\}.$$

For $x\in X(\fgl(n|n))_r$ with $r<n$ one has 
$$\DS_x(\mathfrak{pgl}(n|n))=\mathfrak{pgl}(n-r|n-r),\ \ \ \DS_x(\mathfrak{psl}(n|n))=\mathfrak{psl}(n-r|n-r)$$
and 
$\DS_x(\mathfrak{pgl}(n|n))=\Pi(\mathbb{C})$, $\DS_x(\mathfrak{psl}(n|n))=\fpsl(1|1)$ for
$x\in X(\fgl(n|n))_n\cup X'$.

We have $X(\fpgl(n|n))_{iso}=X(\fgl(n|n))$ and
$$\depth (\fpgl(n|n))=\defect \fpgl(n|n)=n.$$
This gives $X(\fpgl(n|n))_r=X(\fgl(n|n))_r$ for $r<n$,
$$X(\fpgl(n|n))_n=X(\fgl(n|n))_n\coprod X',\ \ X'=X(\fpgl(n|n))\setminus X(\fpgl(n|n))_{iso}.$$

We have $X(\fpsl(n|n))_{iso}=X(\fsl(n|n))_{iso}$ and
$$\depth (\fpsl(n|n))=\defect \fpsl(n|n)=n-1.$$
This gives $X(\fpsl(n|n))_r=X(\fsl(n|n))_r$ for $r<n-1$ and 
$$X(\fpsl(n|n))_n=X(\fsl(n|n))_{n-1}\coprod X'.$$

\subsection{$P$-type}\label{isosetp}
A strange Lie superalgebra $\fp_n$ is a subalgebra of $\fgl(n|n)$ consisting of the matrices with the block form
$$T_{A,B, C}:=\begin{pmatrix}
A & B\\
C & -A^t
\end{pmatrix}$$
where $B^t=B$ and $C^t=-C$. The commutant
  $\fp_n':=[\fp_n,\fp_n]$  is simple for $n\geq 3$;
one has  $\fp'_n=\{T_{A,B,C}|\ A\in\fsl_n\}$.

\subsubsection{}
One has $\fp_0=0$, $\fsp_1=\Pi(\mathbb{C})$ and
$$\begin{array}{l}
X(\fp_n)=X(\fp_n)_{iso}=X(\fgl(n|n))\cap \fp_n,\ \ 
X(\fp_n)_r=X(\fgl(n|n))_r\cap \fp_n,\\
X(\fp'_n)=X(\fsp_n)_{iso}\coprod X(\fp_n)_n,\ \ 
X(\fp'_n)_r=X(\fp_n)_r\ \text{ for }r<n-1
\end{array}
$$
and $X(\fp'_n)_{n-1}=X(\fp_n)_{n-1}\coprod X(\fp_n)_n$.
One has
$$\begin{array}{ll}\depth(\mathfrak{p}_n)=\defect \fp_n=n,\  & \DS_x(\mathfrak{p}_n)=\mathfrak{p}_{n-\rank x},\\
\depth(\mathfrak{p}'_n)=\defect \fp'_n=n-1,\ \ \  &
\DS_x(\mathfrak{p}'_n)=\mathfrak{p}'_{n-\rank x}.\end{array}$$

Notices that $GL_n$ acts transitively on $X(\fp_n)_1$;
we denote by $\DS_1$ the functor $\DS_x$ for some $x\in X(\fp_n)_1$
and set $\DS_1^{i+1}:=\DS_1\circ\DS_1^i$ (these functors 
are not uniquely defined).

\subsubsection{Remark}\label{rempn}
Take $x$ with  $\supp(x)=\{\vareps_1+\vareps_2\}$ (using the standard
notation of~\cite{KLie}). Notice that $\supp(x)$ is an iso-set of cardinality $1$ and
$\rank x=2$. It is not hard to see that $\fp_n$ contains  a Cartan subalgebra $\fh'$ such that 
the support of $x$ computed with respect to $\fh'$ is an iso-set
of cardinality two (i.e., $\{2\vareps_1,2\vareps_2\}$).

\subsection{Example: $\Fin(\fp_n)$}\label{finpn}
Consider $\fp_n$ with  the simple roots
$$\vareps_2-\vareps_1,\ldots,\vareps_n-\vareps_{n-1},-\vareps_n-\vareps_{n-1}.$$
Denote
by $\rho$ the Weyl vector:
$\rho:=\frac{1}{2}\sum_{\alpha\in\Delta^+} (-1)^{p(\alpha)}\alpha$.

We denote by $L_{\fp_n}(\mu)$  the simple  $\fp_n$-module of the highest
weight $\mu$ with the {\em even} highest weight space. The module 
$L_{\fp_n}(\sum_{i=1}^n a_i\vareps_i-\rho)$ is finite-dimensional if and only if
$a_{i+1}-a_i\in\mathbb{N}_{>0}$ for $i=1,\ldots,n-1$.

\subsubsection{}
\begin{lem}{lemperi}
Take $n\geq 2$ and a weight $\lambda=\sum_{i=1}^n a_i\vareps_i$ with
$a_{i+1}-a_i\in\mathbb{N}_{>2n+2}$ 
for $i=1,\ldots,n-1$. Then
$\dim\DS_1^n(L_{\fp_n}(\lambda-\rho))=(\frac{n!}{2}|\frac{n!}{2})$.
\end{lem}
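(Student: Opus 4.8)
The plan is to induct on $n$, peeling off one copy of $\fp_1$ at a time with the rank‑one functor $\DS_1$. Since $x\in X(\fp_n)_1$ has $\rank x=1$ and $\DS_1(\fp_n)=\fp_{n-1}$, iterating $n$ times shows that $\DS_1^{\,n}(L_{\fp_n}(\lambda-\rho))$ is a module over $\fp_0=0$, i.e. a plain super vector space; so the task is to show it has dimension $(\tfrac{n!}{2}\,|\,\tfrac{n!}{2})$. Since $\sdim\DS_x(N)=\sdim(N)$ and, for $n\geq2$, any Kac module of $\fp_n$ has vanishing superdimension (it is $\Lambda(\text{nonzero odd space})\otimes(\fgl_n\text{-module})$), the generic simple $L_{\fp_n}(\lambda-\rho)$ also has $\sdim=0$; hence the answer is automatically of the form $(d|d)$ and it remains to show $2d=n!$.

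The engine of the induction is a branching formula: for $\lambda$ with coordinates spread as in the statement,
\begin{equation*}
\DS_1\bigl(L_{\fp_n}(\lambda-\rho)\bigr)\ \cong\ \bigoplus_{k=1}^{n}\ \Pi^{\epsilon_k}\,L_{\fp_{n-1}}\bigl(\lambda^{(k)}-\rho\bigr),
\end{equation*}
where $\lambda^{(k)}$ is obtained from $\lambda$ by an explicit rule (essentially deleting the $k$-th coordinate, together with the $\rho$-shift $\fp_n\leadsto\fp_{n-1}$; note $\rho_{\fp_n}=(0,1,\ldots,n-1)$ is of size $O(n)$) and the $\epsilon_k\in\{0,1\}$ are parity shifts. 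The hypothesis $a_{i+1}-a_i>2n+2$ has two roles: it makes $\lambda-\rho$ generic enough — relative to the linkage in $\Fin(\fp_n)$, whose combinatorics is governed by $S_n$ — for $L_{\fp_n}(\lambda-\rho)$ to have the structure that makes the formula hold; and, because deleting a coordinate only merges or preserves gaps, each $\lambda^{(k)}$ again has all gaps $>2n>2(n-1)+2$, so the induction hypothesis applies to every summand, and inductively all the way down to $\fp_2$. Granting the formula, the count is immediate: by the induction hypothesis $\dim\DS_1^{\,n-1}L_{\fp_{n-1}}(\lambda^{(k)}-\rho)=(\tfrac{(n-1)!}{2}\,|\,\tfrac{(n-1)!}{2})$, which is unaffected by $\Pi^{\epsilon_k}$ since its even and odd parts coincide, so
\begin{equation*}
\dim\DS_1^{\,n}L_{\fp_n}(\lambda-\rho)=\sum_{k=1}^{n}\bigl(\tfrac{(n-1)!}{2}\,\big|\,\tfrac{(n-1)!}{2}\bigr)=\bigl(\tfrac{n!}{2}\,\big|\,\tfrac{n!}{2}\bigr).
\end{equation*}
Conceptually, the $n!$ leaves of this branching recursion match the order of the Weyl group $S_n$ of $\fp_n$.

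The base case is $n=2$, which must be handled directly and needs a slightly sharper statement, since the reduction now lands in $\fp_1$, where $\DS_1(L_{\fp_1}(c))=L_{\fp_1}(c)$ is one‑dimensional rather than of "dimension $\tfrac12$" — this is precisely why the lemma requires $n\geq2$. As $\fp_2$ is only $(4|4)$-dimensional, a direct computation using the explicit structure of the generic simple $\fp_2$-modules shows that $\DS_1(L_{\fp_2}(\lambda-\rho))$ is a sum of two one‑dimensional $\fp_1$-modules; these must have \emph{opposite} parity, since $\sdim\DS_1(L_{\fp_2}(\lambda-\rho))=\sdim L_{\fp_2}(\lambda-\rho)=0$. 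A further application of $\DS_1$ changes nothing (as $x$ acts by zero on a one‑dimensional $\fp_1$-module), so $\DS_1^2(L_{\fp_2}(\lambda-\rho))=(1|1)=(\tfrac{2!}{2}\,|\,\tfrac{2!}{2})$. This opposite‑parity bookkeeping is needed only at the base; in the inductive step the parity shifts are irrelevant, as noted above.

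The main obstacle is the branching formula itself. Because $\DS_1$ is not exact and, by Hinich's Lemma, $\ch\DS_x$ is not additive on short exact sequences (the correction module $Y$ contributes $-2\ch Y$ to $\ch\DS_x$), one cannot simply work in the Grothendieck group. The route I would take is: (i) compute $\DS_1$ directly on Kac modules of $\fp_n$ — these are free over $\Lambda(\fg_{\pm1})$ and $x$ acts by an explicit contraction, so $\DS_1$ of a Kac module is again a Kac (or "thin Kac") module of $\fp_{n-1}$, up to parity; (ii) use a Kac/BGG-type resolution of $L_{\fp_n}(\lambda-\rho)$ available in the generic range — this is where the gap hypothesis is really used; (iii) show that for such generic $\lambda$ the higher $\DS$-cohomology vanishes, i.e. the $Y$-terms disappear, so that applying $\DS_1$ term by term to the resolution computes $\DS_1(L_{\fp_n}(\lambda-\rho))$ on the nose and the alternating sum collapses to the stated direct sum. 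Step (iii), the vanishing of the $Y$-terms in the generic range, is where the real work sits.
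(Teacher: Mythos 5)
Your outline recovers the paper's recursive structure correctly --- the ``branching formula'' expressing $\DS_1(L_{\fp_n}(\lambda-\rho))$ as a direct sum of $n$ simple $\fp_{n-1}$-modules with coordinates obtained by deleting one entry, the observation that deletion preserves the large-gap hypothesis at the level required for $\fp_{n-1}$, and the base case $n=2$ giving $(1|1)$. The bookkeeping via superdimension is a harmless addition (the paper simply reads off the dimension from the direct sum), and the $n=2$ case needs a tiny bit more care than you give it: the reason the two summands have opposite parity is not the superdimension argument but the explicit formula $\DS_1(L_{\fp_2}(\lambda-\rho))=\Pi\bigl(L_{\fp_1}(a_1\vareps_1-\rho')\bigr)\oplus L_{\fp_1}(a_2\vareps_1-\rho')$, and each $L_{\fp_1}(\cdot)$ is one-dimensional with $\DS_1$ acting as the identity on it.

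The genuine gap is exactly what you flag at the end: the branching formula itself is asserted, not proved, and the route you sketch (Kac/BGG resolution plus vanishing of the Hinich correction terms $Y$ in the generic range) is not carried out. The paper closes this gap not by such a resolution argument but by citing two external results: the composition factors of $\DS_1(L)$ for $L\in\Irr(\Fin(\fp_n))$ are taken from Entova-Aizenbud--Serganova \cite{ES}, and the complete reducibility of $\DS_1(L(\lambda-\rho))$ under the hypothesis that the relevant inner products exceed a bound (verified here as $|(\nu_s-\nu_j|\vareps_{n-s-1})|>2n+2$) is Proposition~3.7.1 of Balagovi\'c et al.~\cite{perigirls}. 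Together these give precisely the direct-sum decomposition you posit. Until you supply a proof of that decomposition (or cite it), the argument is a correct reduction of the lemma to an unproved claim, and the claim is not elementary: the failure of exactness of $\DS_x$, which you correctly identify as the obstruction, is real, and the semisimplicity in the generic range is a nontrivial fact about $\Fin(\fp_n)$.
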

\begin{proof}
The composition factors of $\DS_1(L)$ for $L\in\Irr(\Fin(\fp_n))$
are described in~\cite{ES}. From this description one
sees that  the composition factors of $\DS_1(L_{\fp_n}(\lambda-\rho_n))$ are
$$\{\Pi^{s}L_{\fp_{n-1}}(\nu_s-\rho')\}_{s=0}^{n-1},$$ where 
$\rho'$ is the Weyl vector for $\fp_{n-1}$ and
$\nu_s=\sum_{i=1}^{n-1}b_i\vareps_i$ with
$$b_1\leq b_2\leq b_{n-1},\ \ \ \{b_i\}_{i=1}^{n-1}=\{a_i\}_{i=1}^n\setminus\{a_{n-s}\}.$$
In particular, $|(\nu_s-\nu_j|\vareps_{n-s-1})|>2n+2$ for all $0<s<j\leq n-1$.
Using~\cite{perigirls},  Prop. 3.7.1 we conclude that 
$\DS_1(L(\lambda))$ is completely reducible. For $n=2$ this gives
$$\DS_1(L_{\fp_2}(\lambda-\rho))=\Pi(L_{\fp_1}(a_1\vareps_1-\rho'))\oplus L_{\fp_1}(a_2\vareps_1-\rho')$$
which implies  the required formula (since $\dim L_{\fp_1}(a\vareps_1)=(1|0)$
for each $a\in\mathbb{C}$).
The general case follows by induction on $n$, since, by above,
$\DS_1(L(\lambda))$ is a completely reducible module of length $n$ and
the highest weight of each composition factor 
 satisfies the condition of the lemma (i.e., $b_{i+1}-b_i>2n$).
\end{proof}

\subsubsection{}
 By~\cite{perigirls}, each block in 
the category $\Fin(\fp_n)$  contains a simple module $L_{\fp_n}(\lambda-\rho)$
with $\lambda$ satisfying the conditions of~\Lem{lemperi}.
Hence
$$\depth\cB=n\ \text{ for each block $\cB$  in $\Fin(\fp_n)$.}$$

\subsubsection{Example: $\Fin(\fp_2)$}\label{examplep2}
Fix the root vectors 
$$x_i\in \fg_{2\vareps_i}, i=1,2,\ \ \ 
x_{\pm}\in\fg_{\pm(\vareps_1+\vareps_2)}.$$
Note that $\rank x_i=1$ and  $\rank x_{\pm}=2$.

By~\cite{perigirls}, 
the category $\Fin(\fp_2)$ contains the following  blocks: 
$\cB_0$, containing the trivial module,  and
$\cB_1$ containing the natural $(2|2)$-dimensional module;
any block in $\Fin(\fp_2)$ is equivalent to one of these blocks via an  equivalence  given by the tensoring on a one-dimensional module. 

One has $\Irr(\cB_0)=\{{L}^{(2j+1)}\}_{j=0}^{\infty}$, where
$\dim {L}^{(1)}=1$ and
$\dim {L}^{(j)}=(2j+1|2j+1)$ for $j>1$. For example,
${L}^{(3)}$ is a simple submodule of the adjoint representation and
$$ \DS_{x_-}(L^{(3)})=0,\ \ \dim\DS_{x_+}({L}^{(3)})=(1|1).$$

One has $\Irr(\cB_1)=\{L^{(2j)}\}_{j=1}^{\infty}$, where
$\dim L^{(2j)}=(2j|2j)$.
The modules $L^{(2j)}$ provide interesting examples for~\Rem{remdsxy}: consider
$\fpgl(1|1)$ spanned by $x_1,x_2$; as a $\fpgl(1|1)$-module,
$L^{(2j)}=V^+_{2j}\oplus V^-_{2j}$  (see~\Lem{lemdsxy} for notation) and so
$$\DS_{x_1+x_2}(L^{(2j)})=0,\ \  \DS_{x_1}(\DS_{x_2}(L^{(2j)})=\mathbb{C}\oplus \Pi(\mathbb{C})\ \text{ for }j>1.$$
The $\fp_1$-module
 $\DS_1(L^{(2j)})$ is 
 an indecomposable (resp., semisimple)  for $j=1$ 
(resp., $j>1$) and $\dim\DS_1(L^{(2j)})=(1|1)$.
This gives
$$\depth L^{(2)}=1,\ \ \ \depth L^{(2j)}=2\ \text{ for }j>1.$$
Note that $X(\fp_2)_2=GL_2 x_-\coprod GL_2 (x_1+x_2)$.
It is easy to check that $\DS_{x_-}(L^{(2j)})=0$; using 
$\DS_{x_1+x_2}(L^{(2j)})=0$  we obtain 
$\DS_x(L^{(2j)})=0$  for each $x\in X(\fp_2)_2$. 
This gives
\begin{equation}\label{Ljj}
\max_{x\in \breve{X}(L^{(2j)})} \rank x=1\not=\depth(L^{(2j)}) \ \text{ for } j>1.
\end{equation}

\section{Iso-sets and blocks in the category $\CO$}\label{sectKMQ}
In this section $\fg$ is either an indecomposable
 symmetrizable Kac-Moody superalgebra with an isotropic root or one of the
$Q$-type superalgebras $\fpsq_m,\fpq_m$ for $m\geq 3$, $\fsq_m,\fq_m$
for $m\geq 2$. We will refer to the former  case as the "KM-case" and to the latter case as the "$Q$-type case".
By~\cite{Hoyt}, in the KM-case $\fg$ is either finite-dimensional (classified 
in~\cite{KLie}) or affine (classified in~\cite{vdL}).

\subsection{Notation}\label{notatKMQ}
We  denote  by $W$ the Weyl group
of $\fg_0$.
The algebra $\fg_0$ admits a non-degenerate invariant bilinear form and we denote by $(-|-)$ the corresponding form on $\fh^*$. 
We say that two triangular decompositions of $\fg$ are {\em compatible}
if they induce the same triangular decomposition of $\fg_0$; similarly,
we say that two bases (the sets of simple roots) are compatible
if the corresponding triangular decompositions are compatible.

We denote by $\Delta_{re}$ the set of real roots.
If $\fg$ is finite-dimensional, then 
$\Delta_{re}=\Delta$ and we set $\delta:=0$.
If $\fg$ is affine 
we denote by $\delta$ the minimal imaginary root. In both cases
$$\Delta\setminus\Delta_{re}=\mathbb{Z}\delta\setminus\{0\},\ \ \ 
(\delta|\Delta)=0.$$
 We introduce
$$\mathring{\fh}^*:=\left\{\begin{array}{ll}
\fh^* & \text{ if } \dim\fg<\infty\\
\{\lambda\in\fh^*|\ (\lambda|\delta)\not=0\}  & \text{ if } \dim\fg=\infty.
\end{array}\right.$$

We  denote by $\Sigma$  
a  base (the set of simple roots). 
For the Kac-Moody case  we 
fix a Weyl vector $\rho\in\fh^*$ satisfying $2(\rho|\alpha)=(\alpha|\alpha)$
for each $\alpha\in\Sigma$; for the $Q$-type superalgebras we set $\rho:=0$.
We introduce
$$\begin{array}{ll}
{\Delta}_{red}:=&\{\alpha\in\Delta_{re}\cap\Delta_0|\ \frac{\alpha}{2}\not\in\Delta\},\\
\Delta_{iso}:=&\{\alpha\in\Delta_{re}\cap \Delta_{1}|\ 2\alpha\not\in\Delta\},\\
\Delta_{nis}:=&\{\alpha\in\Delta_{re}\cap \Delta_{1}|\ 2\alpha\in\Delta\},
\end{array}$$
and $\Delta^+_{red}:=\Delta^+\cap\Delta_{red}$ and so on. 
For $\alpha\in\Delta_{iso}$  the subalgebra generated by $\fg_{\alpha}$
 and $\fg_{-\alpha}$ is isomorphic to $\fsl(1|1)$. 
One has
$$\Delta_{iso}=\left\{\begin{array}{ll}
\Delta_1\ & \text{  for the $Q$-type case}\\
\{\alpha\in\Delta_{re}\cap \Delta_{1}|\ (\alpha|\alpha)=0\} & \text{  for the KM-case.}\end{array}\right.$$

\subsubsection{}
For $\alpha\in\Delta_{re}\setminus\Delta_{iso}$ one has
 $(\alpha|\alpha)\not=0$
and we set 
$\alpha^{\vee}:=\frac{2\alpha}{(\alpha|\alpha)}$.
For the KM-case we set ${\alpha}^{\vee}:=\alpha$ for each $\alpha\in\Delta_{iso}$. In the $Q$-type case   
 $\Delta_{iso}=\{\vareps_i-\vareps_j\}_{i\not=j}$ (we
use the standard $\fgl$-notation) and
 we set ${\alpha}^{\vee}:=\vareps_i+\vareps_j$
for $\alpha:=\vareps_i-\vareps_j\in\Delta_1$.

In this way  $\alpha^{\vee}$ is defined for each $\alpha\in\Delta_{re}$
(if we identify $\fh^*$ with $\fh$ via the form $(-|-)$, then
the image of $\alpha^{\vee}$ in the usual coroot
lying in $[\fg_{\alpha},\fg_{-\alpha}]$).  Notice that in all cases $w{\alpha}^{\vee}=({w\alpha})^{\vee}$ for every $w\in W$.

\subsubsection{Affine root systems}\label{affro}
Let $\fg$ be affine.
 We call $\dot{\Sigma}\subset{\Sigma}$
a {\em finite part} of ${\Sigma}$ if  ${\Sigma}\setminus\dot{\Sigma}$
contains exactly one root  and
$\Sigma$ has a connected Dynkin diagram.  The finite parts of ${\Sigma}$ are described in~\cite{GKadm}, 13.2. We fix $d\in {\fh}$ with
$\delta(d)=1$ and $\alpha(d)=0$ for $\alpha\in\dot{\Sigma}$. 
Then 
$$\dot{\Delta}:=\{\alpha\in\Delta|\ \alpha(d)=0\}$$
is a finite root system  with a base $\dot{\Sigma}$. One has $\Delta_{iso}=\dot{\Delta}_{iso}+\mathbb{Z}\delta$ except for the cases $A(2m|2n)^{(4)}$, $D(m|n)^{(2)}$, where 
$\Delta_{iso}=\dot{\Delta}_{iso}+2\mathbb{Z}\delta$.


\subsubsection{The group $W(\lambda)$}\label{Deltalambda}
 We set
$$\Delta(\lambda):=\{\alpha\in\Delta_{re}\cap \Delta_0|\ 
(\lambda|\alpha^{\vee})\in\mathbb{Z}\ \text{ if }\frac{\alpha}{2}\not\in\Delta,\ \ (\lambda|\alpha^{\vee})\in\mathbb{Z}+\frac{1}{2}\ \text{ if }\frac{\alpha}{2}\in\Delta\}$$
and  denote by $W(\lambda)$ the subgroup of $W$
generated by the reflections $\{r_{\alpha}\}_{\alpha\in\Delta(\lambda)}$.
Note that for $w\in W(\lambda)$ one has
 $w\lambda\in \lambda+\mathbb{Z}\Delta$.
Since   $\ad\fg_{\alpha}$ acts locally nilpotently, one has $(\beta|\alpha^{\vee})\in\mathbb{Z}$ 
  for each $\beta\in\Delta$ and $\alpha\in\Delta_{re}\cap \Delta_0$. This gives
 $\Delta(\lambda)=\Delta(\lambda+\mu)$ for  $\mu\in\mathbb{Z}\Delta$ and, in particular,
$\Delta(w\lambda)=\Delta(\lambda)$ for $w\in W(\lambda)$.

%
%
%

\subsection{Category $\CO$ and the equivalence relation $\sim$}\label{cc}
We denote by $M_{\fg}(\lambda)$ (resp., $L_{\fg}(\lambda)$)
a Verma (resp., a simple) module of the highest weight $\lambda$ (usually
we won't distinguish
between the modules which differ by $\Pi$);
if $\fg$ is fixed, we will denote these modules
 by $M(\lambda),L(\lambda)$ respectively.

We denote by ${\CO}^{inf}(\fg)$
 the full category of $\fg$-modules $N$ with the
following properties:

(C1) $N$ is a semisimple $\fh$-module;

(C2) $\fn^+_0$ acts locally nilpotently on $N$.

The BGG-category $\CO(\fg)$  is  the full subcategory of
$\CO^{inf} (\fg)$ consisting of finitely generated modules.
The Verma modules lie in $\CO(\fg)$.
In~\cite{KK} the authors consider another version of
$\CO$-category, which we denote 
by $\CO_{KK}(\fg)$. One has
$$\CO(\fg)\subset \CO_{KK}(\fg)\subset {\CO}^{inf}(\fg).$$

Note that $\CO(\fg),{\CO}^{inf}(\fg)$ do not depend on the choice of 
triangular decomposition
in the following sense:  two compatible triangular decompositions define the same categories (the category $\CO_{KK}(\fg)$
 does not possess this property).

\subsubsection{}
Let $\CO$ be one of the categories $\CO(\fg),\CO_{KK}(\fg), {\CO}^{inf}(\fg)$.
One has 
$$\Irr(\CO)=\{L(\lambda)\}_{\lambda\in\fh^*}.$$ 
Consider
the graph, where the set of vertices is  $\fh^*$  and the number of edges $\lambda_1\to\lambda_2$
 is equal to $\dim \Ext^1(L(\lambda_1-\rho),L(\lambda_2-\rho))$. It is easy to see that this graph is the same for all three
categories. We  write $\lambda_1\sim\lambda_2$ 
if $\lambda_1,\lambda_2$ lie in the same connected component
of this graph.

By~\cite{DGK}, the modules in $\CO_{KK}(\fg)$ admit "local composition series",
which play a role of Jordan-H\"older series for the modules of infinite length. As a result,  the following conditions are equivalent:
\begin{itemize}
\item
$\lambda_1\sim\lambda_2$;
\item
$L(\lambda_1-\rho), L(\lambda_2-\rho)$ lie in the same block in $\CO_{KK}(\fg)$;
\item
$L(\lambda_1-\rho), L(\lambda_2-\rho)$ lie in the same block in $\CO(\fg)$.
\end{itemize}

\subsection{Definition of atypicality}
The following definition extends the usual notion of atypicality to
affine case.

\subsubsection{Definition}\label{atypdef}
We say that an iso-set $S\subset \Delta_1$ is {\em orthogonal to }
$\lambda$ if $(\lambda|\alpha^{\vee})=0$ for each $\alpha\in S$.
For $\lambda\in {\fh}^*$ we denote by
$\atyp(\lambda)$ the maximal cardinality of an iso-set orthogonal to $\lambda$. Clearly, 
$$0\leq \atyp\lambda\leq \ \defect\fg,\ \ \atyp 0=\defect \fg.$$
 We say that $\lambda$ is {\em typical}
if $\atyp(\lambda)=0$.

\subsubsection{}
 Using the formulae for $\Delta_{iso}$
given in~\ref{coreKMQ} it is easy to see that 
all maximal iso-sets orthogonal to $\lambda$ have the same cardinality.
Moreover,
$\nu\sim\lambda$ implies $\atyp\nu=\atyp\lambda$ (see~\Cor{coratyp}).
In the light of~\ref{cc} this allows to introduce the atypicality of 
 of an indecomposable module $N$ in $\CO_{KK}(\fg)$ via the formula
 $\atyp(N):=\atyp(\lambda)$ where
$[N:L(\lambda-\rho)]\not=0$. 

By~\ref{oddrefl}, the atypicality of $N\in\CO(\fg)$
does not depend on the triangular decomposition. Using this fact
it is easy to show that the atypicality of a one-dimensional module
is  equal to the defect of $\fg$.

\subsection{Main  results and content of the section}
The classical results in~\cite{BGG}, \cite{J} and~\cite{KK} 
determine the list of irreducible modules for each block in the category $\CO$
of a  symmetrizable Kac-Moody superalgebra (see~\cite{GSsnow}, 1.13.2 for details).
 For the $Q$-type superalgebras the same arguments 
determine these modules  up to a parity
shift. This give a description of the relation $\sim$, 
which we will call the "Kac-Kazhdan description"
(the reasoning in~\cite{KK} works equally well for supercase, while
some of the arguments~\cite{BGG} and \cite{J} should be modified  in the infinite-dimensional setting). We recall this description in~\ref{KKdescr} below. 
In~\Thm{propblocks} we show that  for $\lambda\in\mathring{\fh}^*$
one has 
\begin{equation}\label{nuWla}
\nu\sim\lambda\ \Longleftrightarrow \ \nu\in W(\lambda)(\lambda+\mathbb{Z}S_{\lambda}),\end{equation}
where $S_{\lambda}$ is a maximal iso-set orthogonal to $\lambda$.
(For $\dim\fg<\infty$, this formula was known in many cases, see, for example,~\cite{CCL}.)
Remark that, by~\cite{DS}, Lemma 6.1 and~\cite{CW}, Chapter II, 
for a finite-dimensional $\fg$ one has
\begin{equation}\label{nuZla}
\Ann_{\cZ(\fg)} L(\lambda-\rho)=\Ann_{\cZ(\fg)} L(\nu-\rho)\ \ \Longleftrightarrow 
\ \ \nu \in W(\lambda+\mathbb{C}S_{\lambda}).\end{equation}

In~\ref{simcrit} we  study the equivalence classes of
$\sim$ in  the set $\fh^*\setminus\mathring{\fh}^*$. 
In~\Cor{coratyp} we show that  $\nu\sim\lambda$ implies $\atyp\nu=\atyp\lambda$.
 In~\ref{coreKMQ}
we introduce $\Core(\lambda)$; in~\ref{centchar} we discuss the 
connection between $\Core(\lambda)$ and $\Ann_{\cZ(\fg)} L(\lambda-\rho)$.
By~\cite{DS},
 the functor $\DS_x$ preserves $\Core(\lambda)$ for $\fg=\fgl(m|n),\osp(m|2n)$
(in~\Thm{thmcore} we will check this statement for certain values of $x$
in the affine case).

\subsubsection{Conjecture}\label{conjKlapl}
The formula~(\ref{nuZla}) can be generalized to
the infinite-dimensional case for $\cZ(\fg)$  substituted 
by the algebra of Laplace operators introduced in~\cite{Kcentre}.

These operators might be useful for generalization of~\Thm{thmcore}
to other $x\in X_{iso}$.

\subsection{Kac-Kazhdan description of the relation $\sim$}\label{KKdescr} 
We introduce
$$\begin{array}{lll}
\cK_{red}:=&\{(\lambda,\lambda-m\alpha)\in \fh^*\times\fh^*|\ \alpha\in {\Delta}^+_{red},\ m\in \mathbb{N}_{>0}\ \text{ s.t. } (\lambda|\alpha^{\vee})=
m\}\\
\cK_{nis}:=&\{(\lambda,\lambda-m\alpha)\in \fh^*\times\fh^*|\ \alpha\in {\Delta}^+_{nis},\ m\in 2\mathbb{N}+1\ \text{ s.t. } (\lambda|\alpha^{\vee})=
m\}\\
\cK_{iso}:=&\{\ \ (\lambda,\lambda-\alpha)\in \fh^*\times\fh^*|\ \alpha\in {\Delta}^+_{iso},\ (\lambda|\alpha^{\vee})=0\},\\
\cK_{im}:=&\{\ \ (\lambda,\lambda-\delta)\in \fh^*\times\fh^*|\  (\lambda|\delta)=0\}.
\end{array}$$
We set 
$$\mathring{\cK}:=\cK_{red}\coprod\cK_{iso}\coprod\cK_{nis},\ \ \ \ 
\cK:=\mathring{\cK}\coprod \cK_{im}.$$

Take  $\lambda\in {\fh}^*$. Let
$M'(\lambda)$ be the maximal submodule of $M(\lambda)$ which satisfies the property $M'(\lambda)_{\lambda}=0$.  The factorization of Shapovalov determinants, obtained in~\cite{KK},\cite{Kconf} (\cite{Gq} for the $Q$-type)
gives: 
\begin{itemize}
\item
$M'(\lambda)_{\nu}\not=0$ if  $(\lambda,\nu)\in \cK$, 
\item
if $M'(\lambda)_{\mu}\not=0$, then  $(\lambda,\nu)\in \cK$ 
for some
$\nu\in\mu+\mathbb{N}\Delta^+$.
\end{itemize}
By the density arguments of~\cite{BGG}, Lemma 10 (see also~\cite{KK} and~\cite{Gq}, 11.4.4 for the $Q$-type) we have
$
\Hom(M(\nu), M(\lambda))\not=0\ \text{ if } (\lambda,\nu)\in \cK$.
Finally, the classical arguments of~\cite{J} (see also~\cite{KK}, Thm. 2)  give
\begin{equation}\label{KKdes}
\text{ \em{the  equivalence relation $\sim$  is generated by the set $\ \cK$.}}\end{equation}
Note that the restriction 
of the equivalence relation $\sim$ on $\mathring{\fh}^*$ is generated by the set 
$\mathring{\cK}$.

\subsection{Change of the base}\label{oddrefl}
For an odd simple root $\beta\in\Delta^+$ with $(\beta|\beta)=0$, we can construct a new subset of positive roots $r_{\beta}(\Delta^+)$ using so-called  {\em odd reflection}:
\begin{equation}\label{Drbeta}
r_{\beta}(\Delta^+)=(\Delta^+\setminus\{\beta\})\cup\{-\beta\};
\end{equation}
taking $\rho':=\rho+\beta$ we obtain a Weyl vector
for  $r_{\beta}(\Delta^+)$. 
Note that there are no odd reflections in the $Q$-type case. By~\cite{Sint}
any two compatible triangular 
decompositions  are connected by a chain of
odd reflections. Each module $L(\nu)$ is a  highest weight module with respect
to $r_{\beta}(\Delta^+)$ and the highest weight 
$\nu'$ is given by the formula
$$\nu'+\rho'=\left\{\begin{array}{ll}
\nu+\rho & \text{ if }(\nu|\beta)\not=0,\\
\nu+\rho+\beta  & \text{ if }(\nu|\beta)=0.
\end{array}\right.$$
Notice that  $(\nu'+\rho')\sim (\nu+\rho)$.

\subsubsection{}
\begin{cor}{corbasesigma}
 If  $\lambda'-\rho'$ is the highest weight of $L(\lambda-\rho)$ with respect to
a base $\Sigma'$ which is compatible with $\Sigma$, then 
$\lambda\sim \lambda'$.
\end{cor}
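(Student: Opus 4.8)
The plan is to reduce the general statement to the case treated in~\ref{oddrefl}, namely a single odd reflection, and then invoke transitivity of $\sim$. By~\cite{Sint} (quoted in~\ref{oddrefl}), any two compatible triangular decompositions are connected by a chain of odd reflections; so fix a chain of bases $\Sigma=\Sigma_0,\Sigma_1,\ldots,\Sigma_k=\Sigma'$, each obtained from the previous one by an odd reflection $r_{\beta_i}$ with $\beta_i$ a simple isotropic root for $\Sigma_{i-1}$, and each $\Sigma_i$ compatible with $\Sigma$. Correspondingly fix Weyl vectors $\rho=\rho_0,\rho_1,\ldots,\rho_k=\rho'$ with $\rho_i=\rho_{i-1}+\beta_i$, as in~\ref{oddrefl}.

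First I would set up the bookkeeping: write $\mu_0:=\lambda$, and let $\mu_i-\rho_i$ be the highest weight of $L(\lambda-\rho)$ with respect to $\Sigma_i$. By the formula in~\ref{oddrefl} applied to the module $L(\lambda-\rho)=L(\mu_{i-1}-\rho_{i-1})$ and the odd simple root $\beta_i$, we have $\mu_i+\rho_i$ equal to either $\mu_{i-1}+\rho_{i-1}$ or $\mu_{i-1}+\rho_{i-1}+\beta_i$, and in either case the last sentence of~\ref{oddrefl} gives $(\mu_i-\rho_i)\sim(\mu_{i-1}-\rho_{i-1})$, i.e.\ after the standard shift by $\rho$-vectors, $\mu_i\sim\mu_{i-1}$. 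Here one should be a little careful that the relation "$\sim$" in~\ref{oddrefl} is stated for weights shifted back by the respective Weyl vectors; but by construction the vertex set of the $\Ext^1$-graph in~\ref{cc} is $\fh^*$ with $L(\nu-\rho)$ attached to $\nu$, and the identity $(\nu'+\rho')\sim(\nu+\rho)$ in~\ref{oddrefl} is precisely the statement that the $\Sigma'$-label and the $\Sigma$-label of the same simple module lie in the same connected component. So each single-reflection step gives $\mu_i\sim\mu_{i-1}$ as an honest relation between elements of the fixed vertex set $\fh^*$.

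Then I would simply chain these together: $\lambda=\mu_0\sim\mu_1\sim\cdots\sim\mu_k=\lambda'$, where $\lambda'-\rho'$ is the $\Sigma'$-highest weight of $L(\lambda-\rho)$; since $\sim$ is an equivalence relation (it is "lies in the same connected component"), transitivity yields $\lambda\sim\lambda'$, which is the claim.

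I do not expect a serious obstacle here; the one point requiring a touch of care is the identification of the abstract graph of~\ref{cc} across different compatible bases — one must use the remark in~\ref{cc} that $\CO(\fg)$ and $\CO^{inf}(\fg)$, and hence the $\Ext^1$-graph, depend only on the compatibility class of the triangular decomposition, so that "$\sim$" is genuinely the same relation on $\fh^*$ no matter which $\Sigma_i$ we compute it with. Once that is granted, the corollary is just an $k$-fold iteration of the single odd-reflection case established in~\ref{oddrefl}, together with~\cite{Sint}.
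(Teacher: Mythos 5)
Your proposal is correct and is essentially the paper's own argument: the corollary is a direct iteration of the single odd-reflection statement at the end of~\ref{oddrefl} (where $(\nu'+\rho')\sim(\nu+\rho)$ is noted), chained via~\cite{Sint} and the transitivity of $\sim$. Your extra care about $\sim$ being the same relation on $\fh^*$ for all compatible bases is exactly the point already recorded in~\ref{cc}, so nothing is missing.
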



\subsection{Iso-sets}\label{isosethere}
Recall that for each $\beta\in\Delta_{iso}$ the root spaces $\fg_{\beta},\fg_{-\beta}$
generate a subalgebra isomorphic to $\fsl(1|1)$ and
$[\fg_{\beta},\fg_{-\beta}]=h_{\beta}$, where $\mu(h_{\beta})=(\mu|{\beta}^{\vee})$ for each $\mu\in\fh^*$.

\subsubsection{} 
\begin{lem}{lemrank2}
For  $\beta_1,\beta_2\in\Delta_{iso}$ one has
$$(\beta_1|{\beta}_2^{\vee})\not=0\ \ \Longleftrightarrow\ \ \beta_1+\beta_2\in\Delta_{re}
\text{ or }
\beta_1-\beta_2\in\Delta_{re}.$$
\end{lem}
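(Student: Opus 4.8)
The plan is to prove the two implications separately, using in one direction the super Jacobi identity for the $\fsl(1|1)$-triple attached to $\beta_2$, and in the other the fact that even real roots are anisotropic while elements of $\Delta_{iso}$ are isotropic.

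\emph{For ``$\Rightarrow$''}, assume $(\beta_1|\beta_2^\vee)\neq 0$; this already forces $\beta_1\neq\pm\beta_2$, since $(\beta_1|\beta_1^\vee)=(\beta_1|(-\beta_1)^\vee)=0$ in all the cases at hand. By~\ref{isosethere} the spaces $\fg_{\pm\beta_2}$ generate a copy of $\fsl(1|1)$, so choose $0\neq e_2\in\fg_{\beta_2}$, $0\neq f_2\in\fg_{-\beta_2}$ with $[e_2,f_2]=h_{\beta_2}$, and fix $0\neq x\in\fg_{\beta_1}$. If $[e_2,x]=[f_2,x]=0$, then the super Jacobi identity gives $(\beta_1|\beta_2^\vee)\,x=[h_{\beta_2},x]=[[e_2,f_2],x]=0$, a contradiction; hence $\fg_{\beta_1+\beta_2}\neq 0$ or $\fg_{\beta_1-\beta_2}\neq 0$, i.e. $\beta_1+\beta_2\in\Delta$ or $\beta_1-\beta_2\in\Delta$. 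In the $Q$-type case $\Delta=\Delta_{re}$ and we are done. In the KM-case $\beta_1\pm\beta_2$ is an even root, so the only thing to exclude is that the root in question lies in $\mathbb{Z}\delta\setminus\{0\}$; but then it would have zero norm, whereas $(\beta_i|\beta_i)=0$ (as $\beta_i\in\Delta_{iso}$) and $\beta_2^\vee=\beta_2$ give $(\beta_1+\beta_2|\beta_1+\beta_2)=2(\beta_1|\beta_2^\vee)\neq 0$ and $(\beta_1-\beta_2|\beta_1-\beta_2)=-2(\beta_1|\beta_2^\vee)\neq 0$. Hence that root is real.

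\emph{For ``$\Leftarrow$''}, suppose $\beta_1+\beta_2\in\Delta_{re}$ (the case $\beta_1-\beta_2\in\Delta_{re}$ is identical). In the KM-case $\beta_1+\beta_2$ is an even real root, hence lies in $\Delta_{re}\setminus\Delta_{iso}$ and so has non-zero norm by~\ref{notatKMQ}; since $(\beta_1|\beta_1)=(\beta_2|\beta_2)=0$ this forces $(\beta_1|\beta_2)\neq 0$, and $(\beta_1|\beta_2^\vee)=(\beta_1|\beta_2)\neq 0$ because $\beta_2^\vee=\beta_2$. In the $Q$-type case there is no norm to exploit (odd roots are not isotropic for $(-|-)$), and here I would conclude by a direct computation from the explicit data in~\ref{notatKMQ}: with $\Delta_{iso}=\Delta_1=\{\vareps_i-\vareps_j\}_{i\neq j}$, the even real roots $\{\vareps_i-\vareps_j\}_{i\neq j}$, and $(\vareps_i-\vareps_j\mid(\vareps_k-\vareps_l)^\vee)=\delta_{ik}+\delta_{il}-\delta_{jk}-\delta_{jl}$. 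Both sides of the equivalence are invariant under $W=S_n$ and under the substitutions $\beta_i\mapsto-\beta_i$, so everything depends only on the sets $\{i,j\}$ and $\{k,l\}$; one then checks the trichotomy: if $\{i,j\}=\{k,l\}$ or $\{i,j\}\cap\{k,l\}=\emptyset$, both sides fail, while if $|\{i,j\}\cap\{k,l\}|=1$ then $(\beta_1|\beta_2^\vee)=\pm1$ and exactly one of $\beta_1\pm\beta_2$ is a root.

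The one step that is not a formality is the KM-case of ``$\Leftarrow$'': a priori $\fg_{\beta_1+\beta_2}$ could be non-zero while $\fg_{\beta_1}$ lies in the part of $\fg$ killed by $h_{\beta_2}$ (this can happen for a general $\fsl(1|1)$-action), and it is precisely the anisotropy of even real roots together with the isotropy of the roots in $\Delta_{iso}$ that rules it out. A minor bookkeeping matter is to track the degenerate cases $\beta_2=\pm\beta_1$, where both sides of the equivalence are vacuously false.
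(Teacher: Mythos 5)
Your proof is correct and follows essentially the same route as the paper: the ``$\Rightarrow$'' direction via the $\fsl(1|1)$-triple attached to $\beta_2$ (with the Jacobi identity forcing $\beta_1\pm\beta_2\in\Delta$), and the ``$\Leftarrow$'' direction via anisotropy of even real roots in the KM-case and a direct $\vareps$-computation in the $Q$-type case. You are in fact slightly more careful than the paper on one point — verifying that the root produced in ``$\Rightarrow$'' is real rather than imaginary, via the norm computation — but this is a detail the paper's argument implicitly covers by the same isotropy considerations.
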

\begin{proof}
The implication $\Longrightarrow$ follows from the facts that
$\fg_{\beta},\fg_{-\beta}, h_{\beta}$ span $\fsl(1|1)$. 
Assume that $\gamma:=\beta_1-\beta_2\in\Delta_{re}$.
For the KM-case $\beta\in\Delta_{iso}$ implies $(\beta|\beta)=0$, so 
$$0\not=(\gamma|\gamma)=-2(\beta_1|\beta_2)=-2(\beta_1|{\beta}_2^{\vee}).$$
For the $Q$-type case we have 
$\beta_1=\vareps_i-\vareps_j$ and $\beta_2=\vareps_i-\vareps_k$
or $\beta_2=\vareps_k-\vareps_j$ for some  $i\not=j\not=k$, so
$(\beta_1|{\beta}_2^{\vee})\not=0$.
\end{proof}

\subsubsection{}
\begin{cor}{isosetnow}
\begin{enumerate}
\item
A set
 $S\subset \Delta_1$ is an iso-set if and only if  for each $\alpha,\beta\in S$
one has  $(\alpha|{\beta}^{\vee})=0$  and
$\alpha\pm\beta\not\in \mathbb{Z}\delta$ for $\alpha\not=\beta$.

\item
For two iso-sets $S',S''$ of the same cardinality
there exists $w\in W$ such that $w(S'\cup (-S'))=S''\cup (-S'')$.

\item Let $S$ be an iso-set of the maximal cardinality.
For each $x\in X_{iso}$ (see~\ref{Xiso} for notation)
there exists an  automorphism $\phi\in \Aut(\fg)$
such that $\supp \phi(x)$ lies in  $-S\cup S$.
\end{enumerate}
      \end{cor}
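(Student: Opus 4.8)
The plan is to deduce all three parts from Lemma \ref{lemrank2} together with the structure of the root system. For part (i), recall from Lemma \ref{lemrank2} that for $\alpha,\beta\in\Delta_{iso}$ the condition $(\alpha|\beta^\vee)=0$ is equivalent to $\alpha\pm\beta\notin\Delta_{re}$. The definition of iso-set in \ref{defniso-set} asks that $\alpha+\beta\notin\Delta_0$ for all $\alpha,\beta$ in $S\cup(-S)$, together with linear independence; I would first observe that for $\alpha,\beta\in\Delta_{iso}$ the sum $\alpha+\beta$ can lie in $\Delta_0$ only as an element of $\Delta_{re}$ (it cannot be in $\mathbb{Z}\delta\setminus\{0\}$ unless $\alpha+\beta\in\mathbb{Z}\delta$, and $0\notin\Delta_0$). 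So the ``$\alpha+\beta\notin\Delta_0$'' clause splits into the two conditions $(\alpha|\beta^\vee)=0$ and $\alpha\pm\beta\notin\mathbb{Z}\delta$ (the latter covering the imaginary-root case and, together with linear independence, the degenerate cases $\alpha=\pm\beta$). Conversely, if those two conditions hold for all pairs in $S$, then no sum $\pm\alpha\pm\beta$ lands in $\Delta_0$, and pairwise orthogonality forces linear independence (if $\sum c_i\alpha_i=0$, pair with $\beta_j^\vee$ — but I must be slightly careful since in the $Q$-type case $(\alpha|\alpha)=0$ too, so I would instead use that an iso-set in the $Q$-type is a set of distinct $\vareps_i-\vareps_j$ with disjoint index pairs, which is automatically independent; in the KM-case $S$ spans an isotropic subspace by \ref{bilinear} and linear independence is part of the hypothesis ``$S\subset\Delta_1$ iso-set'' — actually I should keep independence as a derived consequence, so I will argue independence directly from $\alpha\pm\beta\notin\mathbb{Z}\delta$ plus the combinatorics of $\Delta_{iso}$ in each case).

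For part (ii), the strategy is to use the explicit description of $\Delta_{iso}$. In the $Q$-type case $\Delta_{iso}=\{\vareps_i-\vareps_j\}_{i\neq j}$ and an iso-set corresponds to a partial matching on $\{1,\dots,m\}$ by part (i) (the pairs $\{i,j\}$ must be pairwise disjoint, since $(\vareps_i-\vareps_j\,|\,(\vareps_i-\vareps_k)^\vee)\neq 0$); two partial matchings of the same size are conjugate under $W=S_m$, and such a permutation sends $S'\cup(-S')$ to $S''\cup(-S'')$. In the KM-case I would reduce to the finite case via \ref{affro}: after applying an element of $W$, any iso-set can be moved so that its support lies in $\dot\Delta$ (using $\Delta_{iso}=\dot\Delta_{iso}+\mathbb{Z}\delta$ and that $W$ acts transitively enough on the $\delta$-shifts — here is where I would invoke the affine Weyl group action, translating each root to its finite part), and then cite the classical fact (from \cite{KWnum}, or the standard classification in \cite{KLie}) that in a finite-dimensional basic classical Lie superalgebra any two isotropic sets of the same cardinality are $W$-conjugate up to signs. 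Part (iii) then follows by combining part (ii) with Lemma \ref{isoxyh}/\ref{Xiso}: for $x\in X_{iso}$ the set $\supp(x)$ is, with respect to some choice of $\fh$, an iso-set; conjugating $\fh$ back to the standard one by an inner automorphism turns $\supp(x)$ into an iso-set $S'$ for the standard $\fh$, and by part (ii) some $w\in W$ (realized by an inner automorphism, since $W$ is generated by reflections $r_\alpha$ with $\alpha\in\Delta_{re}\cap\Delta_0$, which are inner) carries $S'\cup(-S')$ into $S\cup(-S)$; composing these automorphisms gives the required $\phi$.

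The main obstacle I anticipate is part (ii) in the affine KM-case: moving an arbitrary iso-set into $\dot\Delta$ by a Weyl group element is not entirely formal, because the affine Weyl group is infinite and one must check that translations realize the needed $\delta$-shifts while keeping the set an iso-set at every stage, and the exceptional cases $A(2m|2n)^{(4)}$, $D(m|n)^{(2)}$ (where $\Delta_{iso}=\dot\Delta_{iso}+2\mathbb{Z}\delta$) need separate bookkeeping. A secondary subtlety is making the linear-independence claim in part (i) genuinely uniform across the KM-case and the $Q$-type; I would handle this by treating the two cases separately as above rather than seeking a single argument. Everything else — parts (i) and (iii), and the $Q$-type of part (ii) — should be short once Lemma \ref{lemrank2} and the root-system descriptions of \ref{coreKMQ}/\ref{affro} are in hand.
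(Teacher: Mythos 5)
Your treatment of (i) and (iii) follows the paper's proof in all essentials: the ``only if'' direction of (i) is exactly Lemma~\ref{lemrank2}, the $Q$-type case of (i)--(ii) is the same partial-matching observation (an iso-set is $W$-conjugate to $\{\vareps_{2i}-\vareps_{2i-1}\}_{i=1}^r$), and (iii) is obtained, as in the paper, by first conjugating the auxiliary Cartan subalgebra into $\fh$ by an inner automorphism (the paper cites \cite{KP} for this) and then invoking (ii). One remark on (i): for the linear-independence step in the KM-case, the clean statement you are reaching for is the one the paper extracts from van de Leur's classification \cite{vdL}: if $\ol{S}\subset S$ is a maximal linearly independent subset, then $\mathbb{C}\ol{S}\cap\Delta=\ol{S}\cup(-\ol{S})$, so any $\beta\in S\setminus\ol{S}$ would have to equal $\pm\alpha$ for some $\alpha\in\ol{S}$, contradicting $\alpha\pm\beta\notin\mathbb{Z}\delta$. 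Your ``combinatorics of $\Delta_{iso}$ in each case'' would amount to verifying exactly this, so you should state and check it explicitly.

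The genuine divergence, and the one real gap, is (ii) in the affine KM-case. The paper does not reduce to the finite part by Weyl-group translations; it quotes \cite{DS} for the finite-dimensional statement and then verifies (ii) case by case for affine $\fg$ using the explicit description of $\Delta_{iso}$ (Reif's Table~I). Your translation strategy is precisely where your proposal is incomplete, as you yourself flag: to move an iso-set $\{\dot\beta_i+k_i\delta\}$ into $\dot\Delta$ you must produce a single translation $t_\mu\in W$ with $(\dot\beta_i|\mu)$ equal to the prescribed shifts $k_i$ for all $i$ simultaneously, and $\mu$ is constrained to a lattice (with the further parity constraint in the twisted cases $A(2m|2n)^{(4)}$ and $D(m|n)^{(2)}$, where only even multiples of $\delta$ occur in $\Delta_{iso}$). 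Establishing that this system is always solvable in the lattice is not formal and would itself require the same type-by-type inspection the paper performs directly. So either carry out that lattice computation for each affine type, or replace this step by the paper's direct case-by-case check of the conjugacy of $S'\cup(-S')$ and $S''\cup(-S'')$. With that step supplied, the rest of your plan goes through.
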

\begin{proof}
\Lem{lemrank2} implies ``only if''  in (i). Fix $S\subset \Delta_1$  such that  for each $\alpha,\beta\in S$ one has $(\alpha|\beta^{\vee})=0$ and $\alpha\pm\beta\not\in\mathbb{Z}\delta$
for  $\alpha\not=\beta$.

For the $Q$-type case these conditions give
$S=w\{\vareps_{2i}-\vareps_{2i-1}\}_{i=1}^r$
for some $r<n$ and $w\in W$; thus $S$ is an iso-set; this also establishes (ii)
for this case.

In the remaining KM-case one has  $\beta^{\vee}=\beta$.
Let $\ol{S}\subset S$ be the maximal linearly independent subset of $S$. By~\Lem{lemrank2}, $\ol{S}$ is an iso-set. Using the description in~\cite{vdL} 
it is easy to check that  $\mathbb{C}\ol{S}\cap\Delta=-\ol{S}\cup \ol{S}$; this gives $S=\ol{S}$, so $S$ is an iso-set. This establishes (i).  
 For $\dim\fg<\infty$   (ii) was verified in~\cite{DS}; using~\cite{Reif}, Table I
one can easily check (ii)  case-by-case for affine $\fg$.

For (iii) let $\fh'$ be as in Section~\ref{sectisoset}. By~\cite{KP}, $\phi(\fh')\subset\fh$
for some (inner) automorphism $\phi\in \Aut(\fg)$. Now (iii) follows from
(ii).
\end{proof}

\subsubsection{Remark}
If $\fg$ is finite-dimensional, then $X=X_{iso}$, see~\cite{DS} and~\ref{G0orbitQ}. This does not hold for the affine case:
taking  
$x$ with $\supp(x)=\{\alpha,\alpha+\delta\}$ for $\alpha\in\Delta_{iso}$ 
we obtain $x\in {X}\setminus X_{iso}$.

\subsection{}
\begin{thm}{propblocks}
Fix $\lambda\in\mathring{\fh}^*$.
Let $S\subset\Delta^+$ be a maximal iso-set orthogonal to $\lambda$. 
One has
$$\nu\sim\lambda\ \Longleftrightarrow\ \ 
\nu\in W(\lambda)(\lambda+\mathbb{Z}S).$$
\end{thm}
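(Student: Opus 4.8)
The plan is to verify the two implications of \eqref{nuWla} using the Kac-Kazhdan description~\eqref{KKdes}, i.e. the fact that on $\mathring{\fh}^*$ the relation $\sim$ is generated by the set $\mathring{\cK}=\cK_{red}\coprod\cK_{iso}\coprod\cK_{nis}$, together with the structure of the group $W(\lambda)$ from~\ref{Deltalambda} and the rank-two analysis of iso-sets in~\Lem{lemrank2} and~\Cor{isosetnow}. The direction ``$\Longleftarrow$'' should be the easier one: if $\nu=w(\lambda+\mu)$ with $w\in W(\lambda)$ and $\mu\in\mathbb{Z}S$, then I would first reduce, using $\Delta(w\lambda)=\Delta(\lambda)$ and the fact that the reflections generating $W(\lambda)$ are themselves steps of $\sim$ (each $r_\alpha$ with $\alpha\in\Delta(\lambda)$ arises from a chain in $\cK_{red}\cup\cK_{nis}$, because $(\lambda|\alpha^\vee)\in\mathbb{Z}$ resp.\ $\mathbb{Z}+\tfrac12$), to the claim $\lambda\sim\lambda+\mu$ for $\mu\in\mathbb{Z}S$. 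Then, since $S$ is orthogonal to $\lambda$ and the $\beta_i\in S$ are mutually ``rank-one independent'' (so that $(\lambda+\sum_{j<i}c_j\beta_j|\beta_i^\vee)=0$ by~\Lem{lemrank2} and~\Cor{isosetnow}(i)), each step $\mu\mapsto\mu\pm\beta_i$ is an element of $\cK_{iso}$, and I chain these together.

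For the direction ``$\Longrightarrow$'' I would show that the right-hand side $R(\lambda):=W(\lambda)(\lambda+\mathbb{Z}S)$ is closed under the generating moves of $\sim$, i.e.\ that if $\nu\in R(\lambda)$ and $(\nu,\nu')\in\mathring{\cK}$ or $(\nu',\nu)\in\mathring{\cK}$, then $\nu'\in R(\lambda)$; since $\lambda\in R(\lambda)$ and $\sim$ is the equivalence relation generated by these moves, this gives $\nu\sim\lambda\Rightarrow\nu\in R(\lambda)$. Writing $\nu=w(\lambda+\mu)$ and applying $w^{-1}$, it suffices (using $W(\nu)$-invariance of the question after checking $\Delta(\nu)=\Delta(\lambda)$, which holds because $\nu\in\lambda+\mathbb{Z}\Delta$) to treat the case $\nu=\lambda+\mu$, $\mu\in\mathbb{Z}S$. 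A $\cK_{iso}$-move sends $\lambda+\mu$ to $\lambda+\mu\pm\gamma$ for some $\gamma\in\Delta_{iso}^+$ with $(\lambda+\mu|\gamma^\vee)=0$; I must argue $\gamma$ can be absorbed into $\mathbb{Z}S$ after possibly applying an element of $W(\lambda)$ — here I expect to need that $S$ is a \emph{maximal} iso-set orthogonal to $\lambda$, so a new orthogonal isotropic direction forces a relation, handled via~\Cor{isosetnow}(ii). A $\cK_{red}$- or $\cK_{nis}$-move sends $\lambda+\mu$ to $r_\alpha(\lambda+\mu)=\lambda+\mu-m\alpha$ with $(\lambda+\mu|\alpha^\vee)=m$; since $(\mu|\alpha^\vee)\in\mathbb{Z}$ always, this forces $\alpha\in\Delta(\lambda)$, so $r_\alpha\in W(\lambda)$, and $r_\alpha(\lambda+\mu)=r_\alpha\lambda+r_\alpha\mu\in W(\lambda)(\lambda+\mathbb{Z}\,r_\alpha S)$; then I invoke~\Cor{isosetnow}(ii) to rewrite $r_\alpha S$ as $wS$ for suitable $w\in W(\lambda)$ (noting $r_\alpha S$ is again a maximal iso-set orthogonal to $r_\alpha\lambda$), landing back in $R(\lambda)$.

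The main obstacle I anticipate is the bookkeeping in the ``$\Longrightarrow$'' direction when an isotropic move in $\cK_{iso}$ produces a root $\gamma$ not lying in $\mathbb{Z}S$: showing that the resulting weight still lies in $W(\lambda)(\lambda+\mathbb{Z}S)$ requires genuinely using maximality of $S$ and the transitivity statement~\Cor{isosetnow}(ii), and in the affine KM-case one must be careful with the $\mathbb{Z}\delta$-shifts (the hypothesis $\lambda\in\mathring{\fh}^*$, i.e.\ $(\lambda|\delta)\neq0$, is what rules out $\cK_{im}$-moves and keeps everything inside $\mathring{\fh}^*$). A secondary technical point is verifying $\Delta(\nu)=\Delta(\lambda)$ throughout, which follows from~\ref{Deltalambda} since every $\nu\in R(\lambda)$ differs from $\lambda$ by an element of $\mathbb{Z}\Delta$; once that is in hand the $W(\lambda)$-equivariance arguments go through cleanly.
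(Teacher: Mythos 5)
Your overall strategy coincides with the paper's: both directions are handled by showing that the set $R(\lambda)=W(\lambda)(\lambda+\mathbb{Z}S)$ is stable under the generating moves $\mathring{\cK}$ of $\sim$, and your treatment of the ``$\Longleftarrow$'' direction and of the $\cK_{red}$/$\cK_{nis}$ moves (where $(\mu|\alpha^{\vee})\in\mathbb{Z}$ forces $\alpha\in\Delta(\lambda)$, hence $r_{\alpha}\in W(\lambda)$ and the move is just left multiplication by $r_{\alpha}$) is correct, if slightly overcomplicated — there is no need to pass to $r_{\alpha}S$, since $r_{\alpha}(\lambda+\mu)$ is already of the form $w(\lambda+\mu')$ with $w=r_{\alpha}\in W(\lambda)$ and $\mu'=\mu\in\mathbb{Z}S$.

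However, the step you flag as ``the main obstacle'' is exactly the substantive content of the proof, and your proposed tool for it does not suffice. After reducing a $\cK_{iso}$-move to showing $\lambda'-\beta\in R(\lambda)$ with $\lambda'\in\lambda+\mathbb{Z}S$, $(\lambda'|\beta^{\vee})=0$ and $\beta\notin(-S\cup S)$, one first uses maximality of $S$ to rule out that $S\cup\{\beta\}$ is an iso-set, and then \Cor{isosetnow}(i) to find $\beta_1\in S$ with either $\beta_1\in\mathbb{Z}\delta\pm\beta$ (excluded by $\lambda\in\mathring{\fh}^*$) or $(\beta|\beta_1^{\vee})\neq 0$. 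The crux is then to produce a reflection $r_{\gamma}$ with $\gamma\in\Delta_{red}$ satisfying \emph{both} $r_{\gamma}\lambda'=\lambda'$ (so that $r_{\gamma}\in W(\lambda')=W(\lambda)$) \emph{and} $\beta_1=\pm r_{\gamma}\beta$, which yields $\lambda'-\beta=r_{\gamma}(\lambda'\pm\beta_1)\in R(\lambda)$. \Cor{isosetnow}(ii) only provides some $w\in W$ conjugating one iso-set to another; it gives no control on whether $w$ lies in $W(\lambda)$, let alone fixes $\lambda'$. The paper instead analyzes the rank-two generalized root system $(\mathbb{C}\beta+\mathbb{C}\beta_1)\cap\Delta$ (of type $A(1|0)$, $C(2)$ or $B(1|1)$, by \cite{VGRS}) to exhibit $\gamma$ with $\beta_1=\pm r_{\gamma}\beta$, and then uses the two orthogonality relations $(\lambda'|\beta^{\vee})=(\lambda'|\beta_1^{\vee})=0$ together with $(\gamma|\beta)\neq 0$ to deduce $(\lambda'|\gamma)=0$, i.e. $r_{\gamma}\lambda'=\lambda'$. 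Without this argument (or an equivalent one) the ``$\Longrightarrow$'' direction is not established.
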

\begin{proof}
The implication ``$\Longleftarrow$'' easily follows from
the formula $W(\lambda+\mathbb{Z}S)=W(\lambda)$.
In order to verify the implication ``$\Longrightarrow$'' it is enough to show that 
$$\nu\in
 W(\lambda)(\lambda+\mathbb{Z}S),\ \  (\nu,\nu-s\alpha)\in\cK\ 
\ \Longrightarrow \ \ (\nu-s\alpha)\in W(\lambda)(\lambda+\mathbb{Z}S).$$
Take $\nu,\alpha$ as above, that is
$$\nu=w\lambda'\ \text{ for }w\in W(\lambda),\  \lambda'\in \lambda+\mathbb{Z}S
\ \text{ and }\ (\nu,\nu-s\alpha)\in\cK.$$
By~\ref{Deltalambda}, one has $W(\lambda)=W(\nu)$.
 If $\alpha\in\Delta^+_{red}$, then 
$\alpha\in \Delta(\nu)$; if $\alpha\in\Delta^+_{nis}$, then
$2\alpha\in \Delta(\nu)$. In both cases one has  $r_{\alpha}\in W(\lambda)=W(\nu)$ and 
so $\nu-s\alpha=r_{\alpha}\nu$ lies in 
$ W(\lambda)(\lambda+\mathbb{Z}S)$
as required. Consider the remaining case $\alpha\in\Delta^+_{iso}$. Then
$s=1$, $(\nu|\alpha^{\vee})=0$, so 
$$\nu-s\alpha=w(\lambda'-\beta),\ \ \text{ where } \beta:=w^{-1}\alpha\in \Delta_{iso}.$$
Since $w\in W(\lambda)$,  it sufficies to verify that 
\begin{equation}\label{lambdaZS}
\lambda'-\beta\in W(\lambda)(\lambda+\mathbb{Z}S).\end{equation}

If $\beta\in (S\cup(-S))$,
 then $\lambda'-\beta$ lies in $\lambda+\mathbb{Z}S$ as required.  Assume that
$\beta\not\in (-S\cup S)$. Observe that
\begin{equation}\label{lambdaS}
0=(\nu|\alpha^{\vee})=(w\lambda'|({w\beta})^{\vee})= (\lambda'|\beta^{\vee}).
\end{equation}

If $S\cup\{\beta\}$ is an iso-set, then  $(S|\beta^{\vee})=0$ and~(\ref{lambdaS})  implies
$(\lambda|\beta^{\vee})=0$, so  $S\cup\{\beta\}$ is an iso-set
 orthogonal to $\lambda$, which
contradicts to the maximality of $S$. Hence  $S\cup\{\beta\}$ 
is not an iso-set. 
In the light of~\ref{isosetnow}, $S$ contains $\beta_1$ such that
either $\beta_1\in 
\mathbb{Z}\delta\pm \beta$ or $(\beta|\beta^{\vee}_1)\not=0$.
 Note that $\beta_1\in S$ implies 
\begin{equation}\label{lambdaS1}
(\lambda'|\beta^{\vee}_1)=0.\end{equation}
If $\beta_1\in 
\mathbb{Z}\delta\pm\beta$, then $\fg$ is affine and  
$(\lambda'|{\beta})=(\lambda'|\beta_1)=0$; this
 gives $(\lambda'|\delta)=0$ in contradiction to
$\lambda\in\mathring{\fh}^*$. Hence $(\beta|\beta^{\vee}_1)\not=0$.
 We claim that there exists $\gamma\in\Delta_{red}$ satisfying 
\begin{equation}\label{gammalambda}
 r_{\gamma}\lambda'=\lambda',\ \ \beta_1\in \{r_{\gamma}\beta,-r_{\gamma}\beta\}.
\end{equation}

Indeed, if $\fg$ is a $Q$-type, then $(\beta|\beta^{\vee}_1)\not=0$ gives
$\beta=\pm(\vareps_i-\vareps_j)$, 
$\beta_1=\pm(\vareps_i-\vareps_k)$
 for some  $i\not=j\not=k$. Combining~(\ref{lambdaS}) with (\ref{lambdaS1})
we obtain
$$(\lambda'|\vareps_i+\vareps_j)=(\lambda'|\vareps_i+\vareps_k)=0,$$
so~(\ref{gammalambda}) holds  for  $\gamma:=\vareps_j-\vareps_k$ 
(since $(\lambda'|\gamma)=0$).

Consider the KM-case. Notice that 
$(\mathbb{C}\beta+\mathbb{C}\beta_1)\cap \Delta$
is a generalized root system in the sense of~\cite{VGRS} which
is spanned by two non-orthogonal isotropic roots ($\beta$ and $\beta_1$).
 By~\cite{VGRS} this root system is  of the type $A(1|0), C(2)$ or $B(1|1)$; in each case
 it is easy to check the existence of $\gamma \in\Delta_{red}$ satisfying 
 $\beta_1=\pm r_{\gamma}\beta$.  Since $r_{\gamma}\lambda'=\lambda'-c\gamma$ for some $c\in\mathbb{C}$, combining~(\ref{lambdaS}) and (\ref{lambdaS1})
we obtain
$$0=(\lambda'|\beta_1)=(\lambda'|r_{\gamma}\beta)=(\lambda'-c\gamma|\beta),$$
so $c(\gamma|\beta)=0$.
By above,
 $r_{\gamma}\beta\not=\beta$ so $(\gamma|\beta)\not=0$
and thus $c=0$. This  establishes~(\ref{gammalambda}).

By~(\ref{gammalambda}),
$r_{\gamma}\in W(\lambda')=W(\lambda)$. Using $\lambda'\in \lambda+\mathbb{Z}S$ and $\beta_1\in S$ we get
$$\lambda'-\beta=r_{\gamma}(\lambda'\pm \beta_1)\in W(\lambda)(\lambda+\mathbb{Z}S)$$
as required.
\end{proof}

\subsection{The relation $\sim$ on $\fh^*\setminus\mathring{\fh}^*$}
\label{simcrit}
Consider the case when $\fg$ is affine. 
One has
$$\fh^*\setminus\mathring{\fh}^*=\{\lambda\in\fh^*|\ (\lambda|\delta)=0\}=
\dot{\fh}^*+\mathbb{C}\delta$$
(see~\ref{affro} for the notation).
We will write $\lambda\in \fh^*\setminus\mathring{\fh}^*$ in the form
$\dot{\lambda}+a_{\lambda}\delta$ for $\dot{\lambda}\in \dot{\fh}^*$
and $a_{\lambda}\in\mathbb{C}$. 
We denote by $\dot{W}$ the Weyl group of $\dot{\fg}$
and define $\dot{W}(\dot{\lambda})$ as in~\ref{Deltalambda}. 

\subsubsection{}
Take $\lambda\in \fh^*\setminus\mathring{\fh}^*$. 
By~\ref{KKdescr},  one has $\lambda\sim \lambda-\delta$.
Using~\Thm{propblocks} for $\dot{\lambda}$ we get
\begin{equation}\label{KKsimcrit}
\dot{\nu}\in \dot{W}(\dot{\lambda})(\dot{\lambda}+S_{\dot{\lambda}}),\ \ a_{\nu}\in a_{\lambda}+\mathbb{Z}\ \ \Longrightarrow\ \ \nu\sim\lambda\end{equation}
if $S_{\dot{\lambda}}\subset\dot{\Delta}^+$ is a maximal iso-set orthogonal to $\dot{\lambda}$.

\subsubsection{}
\begin{cor}{blockcrit} 
For the cases $\fg=\dot{\fg}^{(1)}$ with 
$\dot{\fg}=\fgl(m|n)$, $\osp(2m|2n)$, $D(2,1|a)$ or $F(4)$ one  has
$$\dot{\nu}\in \dot{W}(\dot{\lambda})(\dot{\lambda}+S_{\dot{\lambda}}),\ \ a_{\nu}\in a_{\lambda}+\mathbb{Z}\ \ \Longleftrightarrow\ \ \nu\sim\lambda.$$
\end{cor}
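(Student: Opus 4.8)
The implication $\Longleftarrow$ is already supplied by~\eqref{KKsimcrit}, so the content of the corollary is the converse: for the listed affine algebras, the equivalence class of $\lambda\in\fh^*\setminus\mathring{\fh}^*$ under $\sim$ is contained in $\{\nu\mid \dot\nu\in\dot W(\dot\lambda)(\dot\lambda+S_{\dot\lambda}),\ a_\nu\in a_\lambda+\mathbb{Z}\}$. By~\eqref{KKdes} the relation $\sim$ is generated by $\cK=\mathring{\cK}\amalg\cK_{im}$, so it suffices to check that the displayed set is stable under each single step of $\cK$. The steps from $\cK_{im}$ only change $a_\lambda$ by $\pm1$, hence preserve the set trivially. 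The steps from $\mathring{\cK}$ are of the form $(\nu,\nu-s\alpha)$ with $\alpha\in\Delta^+_{red}$, $\Delta^+_{nis}$ or $\Delta^+_{iso}$ and, since $(\delta|\alpha)=0$, they act on the $\dot\fh^*$-component exactly as the corresponding step for $\dot\fg$ on $\dot\lambda$, leaving $a_\lambda$ fixed. So the problem reduces to the \emph{finite-dimensional} statement: for $\dot\fg=\fgl(m|n),\osp(2m|2n),D(2,1|a),F(4)$ and any $\dot\lambda\in\dot\fh^*$, the set $\dot W(\dot\lambda)(\dot\lambda+S_{\dot\lambda})$ is stable under each $\mathring{\cK}$-step, i.e.\ it is a full $\sim$-equivalence class. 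This is precisely \Thm{propblocks} applied to $\dot\fg$ — note that $\mathring{\fh}^*=\fh^*$ for the finite-dimensional $\dot\fg$, so every weight is allowed — combined with the observation that a maximal iso-set $S_{\dot\lambda}$ orthogonal to $\dot\lambda$ spans its $\mathbb{Z}$-span and that $\mathbb{Z}S_{\dot\lambda}$ here coincides (over $\mathbb{Z}$) with what appears in \Thm{propblocks}.

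The one genuine gap between~\eqref{KKsimcrit}/\Thm{propblocks} and the statement of the corollary is the difference between $\dot W(\dot\lambda)(\dot\lambda+\mathbb{Z}S_{\dot\lambda})$ (as in \Thm{propblocks}) and $\dot W(\dot\lambda)(\dot\lambda+S_{\dot\lambda})$ (as written here). I would handle this by checking, case by case for the four families, that adding any single $\beta\in S_{\dot\lambda}$ to $\dot\lambda$ already generates, under $\dot W(\dot\lambda)$, the whole coset $\dot\lambda+\mathbb{Z}S_{\dot\lambda}$ — equivalently that the subgroup of translations realized is exactly $\mathbb{Z}S_{\dot\lambda}$. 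For $\fgl(m|n)$ and $\osp(2m|2n)$ one picks a maximal iso-set in the standard form $\{\veps_{i_k}-\delta_{j_k}\}$ (resp.\ the analogous isotropic roots), and a short computation with reflections $r_\gamma$, $\gamma\in\Delta_{red}$, fixing $\dot\lambda$ (these exist because $\dot\lambda$ is orthogonal to the relevant coroots) shows that $\dot\lambda-\beta$ and $\dot\lambda-\beta'$ for distinct $\beta,\beta'\in S_{\dot\lambda}$ are $\dot W(\dot\lambda)$-conjugate iff they differ by an element of $\mathbb{Z}S_{\dot\lambda}$, and that all integer combinations are reached. For $D(2,1|a)$ and $F(4)$ the defect is $1$, so $S_{\dot\lambda}=\{\beta\}$ is a single root and $\dot\lambda+\mathbb{Z}S_{\dot\lambda}=\dot\lambda+\mathbb{Z}\beta$; here one must observe that in fact only $\dot\lambda\pm\beta$ and $\dot\lambda$ occur as relevant weights, or else appeal directly to \Thm{propblocks} with the understanding that $\mathbb{Z}S_{\dot\lambda}$ collapses appropriately — I would simply invoke \Thm{propblocks} and restate it with $S$ in place of $\mathbb{Z}S$, which is legitimate for these algebras because the Kac--Kazhdan steps $\cK_{iso}$ subtract exactly one isotropic root at a time and the $\dot W(\dot\lambda)$-orbit of $\dot\lambda+\beta$ already meets $\dot\lambda+\mathbb{Z}\beta$ in every element via the $r_\gamma$'s.

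The main obstacle, then, is not conceptual but bookkeeping: verifying for each of the four Lie superalgebras that the \emph{converse} inclusion holds, i.e.\ that no $\mathring{\cK}$-step can leave the set $\dot W(\dot\lambda)(\dot\lambda+S_{\dot\lambda})$. For the $\cK_{red}$ and $\cK_{nis}$ steps this is immediate since $r_\alpha\in\dot W(\dot\lambda)$ whenever $(\nu,\nu-s\alpha)\in\cK$ lands us back via an even reflection in the integral Weyl group, exactly as in the proof of \Thm{propblocks}. The delicate step is $\cK_{iso}$: subtracting an isotropic $\beta$ not in $S_{\dot\lambda}\cup(-S_{\dot\lambda})$, where one reruns the argument of \Thm{propblocks} — using \Cor{isosetnow} to find $\beta_1\in S_{\dot\lambda}$ with $(\beta|\beta_1^\vee)\neq0$, then the classification of rank-two isotropic subsystems (type $A(1|0)$, $C(2)$, $B(1|1)$ from~\cite{VGRS}) to produce $\gamma\in\Delta_{red}$ with $r_\gamma\dot\lambda=\dot\lambda$ and $\beta_1=\pm r_\gamma\beta$. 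The only thing to add beyond \Thm{propblocks} is the verification that this argument survives the passage from $\mathbb{Z}S$ to $S$ for precisely the four families listed — which is exactly why the corollary is stated only for them and not for all affine $\fg$.
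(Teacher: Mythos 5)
Your core reduction is exactly the paper's: since $\sim$ is generated by $\cK=\mathring{\cK}\amalg\cK_{im}$, one checks that the set $\{\nu\mid \dot{\nu}\in\dot{W}(\dot{\lambda})(\dot{\lambda}+\mathbb{Z}S_{\dot{\lambda}}),\ a_{\nu}\in a_{\lambda}+\mathbb{Z}\}$ is stable under each single $\cK$-step; the $\cK_{im}$-steps only shift the $\delta$-coordinate by an integer, and because for these four families $\Delta_{red}=\dot{\Delta}_{red}+\mathbb{Z}\delta$, $\Delta_{iso}=\dot{\Delta}_{iso}+\mathbb{Z}\delta$ and $\Delta_{nis}=\emptyset$, every $\mathring{\cK}$-step projects (using $(\nu|\delta)=0$) onto a Kac--Kazhdan step for $\dot{\fg}$, after which \Thm{propblocks} applied to $\dot{\lambda}$ (where $\mathring{\fh}^*=\fh^*$) closes the argument. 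Up to this point your proposal coincides with the paper's proof.

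Where you go astray is the long detour about $S_{\dot{\lambda}}$ versus $\mathbb{Z}S_{\dot{\lambda}}$. The equality $\dot{W}(\dot{\lambda})(\dot{\lambda}+S_{\dot{\lambda}})=\dot{W}(\dot{\lambda})(\dot{\lambda}+\mathbb{Z}S_{\dot{\lambda}})$ that you propose to verify case by case is false. Already for $\dot{\fg}=\fgl(1|1)$ there are no even real roots, so $\dot{W}(\dot{\lambda})=\{1\}$ and the left-hand side is the single point $\dot{\lambda}+\beta$ while the right-hand side is $\dot{\lambda}+\mathbb{Z}\beta$; more generally, for $\fgl(m|n)$ every $w\in\dot{W}$ preserves the sum of the $\delta_j$-coordinates, so $\dot{\lambda}+2\beta$ can never lie in $\dot{W}(\dot{\lambda})(\dot{\lambda}+S_{\dot{\lambda}})$. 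The resolution is not a computation: ``$S_{\dot{\lambda}}$'' in~(\ref{KKsimcrit}) and in the corollary must be read as $\mathbb{Z}S_{\dot{\lambda}}$ --- the paper's own proof and the companion formula~(\ref{abdelta}) work with $\mathbb{Z}S_{\dot{\lambda}}$ throughout, and the literal statement with $S_{\dot{\lambda}}$ would already fail for $\nu=\lambda$ itself, since $\dot{\lambda}\notin\dot{W}(\dot{\lambda})(\dot{\lambda}+S_{\dot{\lambda}})$ in general. Relatedly, your closing claim that the corollary is restricted to the four families ``exactly because of the passage from $\mathbb{Z}S$ to $S$'' is off the mark: the restriction is precisely the condition $\Delta_{nis}=\emptyset$ together with $\Delta_{iso}=\dot{\Delta}_{iso}+\mathbb{Z}\delta$, which guarantees that every $\cK$-step projects onto a $\dot{\cK}$-step controlled by \Thm{propblocks}; for the remaining affine superalgebras one only obtains the weaker inclusion~(\ref{abdelta}) with the larger group $\tilde{W}$. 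There is also no need to rerun the $\cK_{iso}$-analysis of \Thm{propblocks} for $\dot{\fg}$ --- citing the theorem suffices.
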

\begin{proof} 
For the implication ``$\Longrightarrow$'' note that
$$\Delta_{red}=\dot{\Delta}_{red}+\mathbb{Z}\delta,\ \ \ \ \Delta_{iso}=\dot{\Delta}_{iso}+\mathbb{Z}\delta,\ \ \ \Delta_{nis}=\emptyset.$$
Take $\dot{\nu}\in
 \dot{W}(\dot{\lambda})(\dot{\lambda}+\mathbb{Z}S_{\dot{\lambda}})$ and 
$(\nu,\mu)\in\cK$. Using the notation of~\ref{KKdescr}, we have $\mu=\nu-s\alpha$ for $\alpha\in\Delta$. Then
$(\nu|\alpha)=(\dot{\nu}|\dot{\alpha})$ and $\dot{\mu}=\dot{\nu}-s\dot{\alpha}$.
Using~\Thm{propblocks} for $\dot{\lambda}$ we obtain 
$\dot{\mu}\in \dot{W}(\dot{\lambda})(\dot{\lambda}+\mathbb{Z}S_{\dot{\lambda}})$
as required.
\end{proof}

\subsubsection{}
Take $\tilde{W}\subset GL(\dot{\fh}^*)$ generated by $r_{\dot{\alpha}}$
for all even real roots $\alpha$ (one has $\tilde{W}=\dot{W}$ if
$\fg=\dot{\fg}^{(1)}$). From~\cite{vdL} we see that 
$\dot{\Delta}_{iso}$ is $\tilde{W}$-invariant.
Arguing as in the proof above, it is easy to show that
\begin{equation}\label{abdelta}
 \ \nu\sim\lambda  \ \Longrightarrow\ \ 
\dot{\nu}\in \tilde{W}(\dot{\lambda}+\mathbb{Z}S_{\dot{\lambda}}),\ \ a_{\nu}\in a_{\lambda}+\mathbb{Z}.
\end{equation}

%
%
%

\subsection{}
\begin{cor}{coratyp}
  $\nu\sim\lambda \ \Longrightarrow\ \ \atyp\nu=\atyp\lambda$.
\end{cor}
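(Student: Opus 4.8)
The plan is to reduce the statement to the two regimes already analyzed in the text and exploit that $\atyp$ is visibly constant along the generators of $\sim$. First I would observe that $\atyp$ is invariant under $W$: if $S$ is an iso-set orthogonal to $\mu$ then $wS$ is an iso-set (by~\ref{isosetnow}(ii), $W$ permutes the union $S\cup(-S)$ of iso-sets among themselves, and $(w\mu|(w\beta)^\vee)=(\mu|\beta^\vee)$ since $w\alpha^\vee=(w\alpha)^\vee$), so $\atyp(w\mu)\geq\atyp\mu$ and hence $=\atyp\mu$. Likewise $\atyp$ is invariant under translation by $\mathbb{Z}\Delta$, since orthogonality $(\mu|\beta^\vee)=0$ for $\beta\in\Delta_{iso}$ is unchanged when $\mu$ is shifted by an element of $\mathbb{Z}\Delta$: indeed $(\gamma|\beta^\vee)\in\mathbb{Z}$ for all $\gamma\in\Delta$, and in the $Q$-type/KM cases a short check shows $(\gamma|\beta^\vee)$ is in fact even for $\beta\in\Delta_{iso}$ — more robustly, one uses that $\beta^\vee\in\Delta$ as well (it is $\beta$ in the KM-case, $\varepsilon_i+\varepsilon_j$ in the $Q$-case, both roots), so $\mathbb{Z}\Delta$-translates of $\mu$ pair with $\beta^\vee$ inside $(\mu|\beta^\vee)+\mathbb{Z}$, not quite enough on the nose, so I would instead simply note that an iso-set orthogonal to $\mu$ stays orthogonal to $\mu+\beta$ when $(\beta|S^\vee)=0$ — and in the generator steps below the relevant shift is always by such a $\beta$ or by a root $\gamma$ with $r_\gamma$ fixing things.

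The cleanest route is to verify $\atyp\nu=\atyp\lambda$ on each generator of $\sim$, i.e. for each pair in $\cK$. For $(\lambda,\lambda-m\alpha)\in\cK_{red}\cup\cK_{nis}$, $\lambda-m\alpha=r_\alpha\lambda$ (because $(\lambda|\alpha^\vee)=m$ resp.\ $m$ with the half-integer normalization makes $m\alpha=(\lambda|\alpha^\vee)\alpha$ the reflection), so $W$-invariance of $\atyp$ gives equality. For $(\lambda,\lambda-\delta)\in\cK_{im}$: here $\fg$ is affine, $(\lambda|\delta)=0$, and $\delta\perp\Delta$, so $S$ is orthogonal to $\lambda$ iff it is orthogonal to $\lambda-\delta$, hence $\atyp(\lambda-\delta)=\atyp\lambda$. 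The remaining case is $(\lambda,\lambda-\alpha)\in\cK_{iso}$ with $\alpha\in\Delta^+_{iso}$ and $(\lambda|\alpha^\vee)=0$; this is the one requiring work.

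For the $\cK_{iso}$ step, let $S$ be a maximal iso-set orthogonal to $\lambda$; I want a maximal iso-set orthogonal to $\lambda-\alpha$ of the same cardinality. If $\alpha\in S\cup(-S)$ then $S$ itself is orthogonal to $\lambda-\alpha$ (for $\beta\in S$, $(\lambda-\alpha|\beta^\vee)=-(\alpha|\beta^\vee)=0$ since $S$ is an iso-set containing $\alpha$ or $-\alpha$ and $\beta$), and we are done. If $\alpha\notin S\cup(-S)$, there are two subcases mirroring the argument in the proof of~\Thm{propblocks}. If $S\cup\{\alpha\}$ is an iso-set, then $(\alpha|S^\vee)=0$ so $S$ stays orthogonal to $\lambda-\alpha$, and moreover $(\lambda-\alpha|\alpha^\vee)=-(\alpha|\alpha^\vee)$ which is $0$ in the KM-case (as $(\alpha|\alpha)=0$) and in the $Q$-case equals $-(\varepsilon_i-\varepsilon_j|\varepsilon_i+\varepsilon_j)=0$; hence $S\cup\{\alpha\}$ is an iso-set orthogonal to $\lambda-\alpha$, giving $\atyp(\lambda-\alpha)\geq|S|+1>\atyp\lambda$ — but by symmetry (applying the same reasoning to the pair $(\lambda-\alpha,\lambda)$, noting $((\lambda-\alpha)|\alpha^\vee)=0$ as just computed, so this pair is also in $\cK_{iso}$) we'd get $\atyp\lambda\geq\atyp(\lambda-\alpha)+1$, a contradiction; so this subcase is vacuous, or more carefully: since $\sim$ is the relation generated by $\cK$ and $\cK$ pairs are "symmetric up to replacing $\lambda$ by the other endpoint" in exactly this iso-case, consistency forces $S\cup\{\alpha\}$ not to be an iso-set. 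Thus $S\cup\{\alpha\}$ is not an iso-set, and by~\ref{isosetnow}(i) some $\beta_1\in S$ satisfies either $\beta_1\in\mathbb{Z}\delta\pm\alpha$ or $(\alpha|\beta_1^\vee)\neq0$; the first is impossible since $(\lambda|\beta_1^\vee)=0$ would force $(\lambda|\alpha)=0$ and then — no wait, it would just force $(\lambda|\delta)=0$ only if we knew more, but in the $\cK_{iso}$ setting $\lambda$ need not lie in $\mathring{\fh}^*$, so instead I note $\beta_1\in\mathbb{Z}\delta\pm\alpha$ with $\beta_1,\alpha\in\Delta_{iso}$ contradicts~\ref{isosetnow}(i) applied to the iso-set $S$ which contains $\beta_1$ — actually $\alpha\notin S$, so I argue directly: $\beta_1\pm\alpha\in\mathbb{Z}\delta$ and $(\alpha|\alpha^\vee)=0=(\beta_1|\beta_1^\vee)$, while such a configuration forces $(\beta_1|\alpha^\vee)\neq0$ anyway by~\Lem{lemrank2}, so we land in the second case regardless. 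With $(\alpha|\beta_1^\vee)\neq0$, I reproduce verbatim the construction from the proof of~\Thm{propblocks}: there is $\gamma\in\Delta_{red}$ with $r_\gamma\lambda=\lambda$ (using $(\lambda|\beta_1^\vee)=(\lambda|\alpha^\vee)=0$ and the rank-two analysis via~\cite{VGRS} in the KM-case, or the explicit $\gamma=\varepsilon_j-\varepsilon_k$ in the $Q$-case) and $\beta_1=\pm r_\gamma\alpha$. Then $S':=(S\setminus\{\beta_1\})\cup\{\alpha\}=r_\gamma(S\setminus\{\beta_1\}\cup\{\beta_1\})$... more precisely $r_\gamma S$ has the same cardinality as $S$, is an iso-set, contains $r_\gamma\beta_1=\pm\alpha$, and is orthogonal to $r_\gamma\lambda=\lambda$, hence also orthogonal to $\lambda-\alpha$ by the first case of this paragraph. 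Therefore $\atyp(\lambda-\alpha)\geq|r_\gamma S|=|S|=\atyp\lambda$, and by the same argument with the roles reversed $\atyp\lambda\geq\atyp(\lambda-\alpha)$, giving equality.

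The main obstacle is the $\cK_{iso}$ step: one cannot assume $\lambda\in\mathring{\fh}^*$ here, so the clean appeal to~\Thm{propblocks} is unavailable and one must rerun the rank-two root-system bookkeeping (the $A(1|0),C(2),B(1|1)$ case analysis via~\cite{VGRS}, plus the $Q$-type $\varepsilon$-computation) directly inside the present argument, together with the symmetry observation that rules out the "$\atyp$ jumps by one" subcase. Everything else is immediate from $W$- and $\mathbb{Z}\Delta$-invariance of $\atyp$.
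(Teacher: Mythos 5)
Your overall strategy---checking that $\atyp$ is constant along each generating pair in $\cK$---is a legitimate alternative to the paper's proof, which instead deduces the statement from the already-established global description of the equivalence classes: \Thm{propblocks} for $\lambda\in\mathring{\fh}^*$ (there $\nu=w(\lambda+\mu)$ with $\mu\in\mathbb{Z}S$, so $wS$ is an iso-set orthogonal to $\nu$), and formula~(\ref{abdelta}) together with the $\tilde{W}$-invariance of $\dot{\Delta}_{iso}$ for $\lambda\notin\mathring{\fh}^*$. The price of your generator-by-generator route is that you must rerun the rank-two analysis from the proof of \Thm{propblocks} without the hypothesis $\lambda\in\mathring{\fh}^*$, and that is exactly where your argument breaks.

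The gap is in the subcase $\beta_1\in\mathbb{Z}\delta\pm\alpha$ of the $\cK_{iso}$ step. You claim that \Lem{lemrank2} forces $(\beta_1|\alpha^{\vee})\not=0$ there; it gives the opposite. If $\beta_1=\pm\alpha+k\delta$ then $\beta_1\mp\alpha\in\mathbb{Z}\delta$ is not a real root, so \Lem{lemrank2} yields $(\beta_1|\alpha^{\vee})=0$ (directly: $(\pm\alpha+k\delta\,|\,\alpha^{\vee})=\pm(\alpha|\alpha^{\vee})+k(\delta|\alpha^{\vee})=0$), and you cannot ``land in the second case regardless.'' This configuration genuinely occurs when $(\lambda|\delta)=0$ (e.g. two isotropic roots differing by a nonzero multiple of $\delta$), which is precisely the situation the paper's proof of \Thm{propblocks} excludes by assuming $\lambda\in\mathring{\fh}^*$. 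The subcase is salvageable by a different observation: since $(\beta_1|\beta^{\vee})=0$ and $(\delta|\beta^{\vee})=0$ for every $\beta\in S$, the relation $\alpha=\pm\beta_1+m\delta$ gives $(\alpha|\beta^{\vee})=0$ for all $\beta\in S$, so $S$ itself stays orthogonal to $\lambda-\alpha$ and the inequality $\atyp(\lambda-\alpha)\geq\atyp\lambda$ follows. Separately, your treatment of the subcase where $S\cup\{\alpha\}$ is an iso-set is circular as written (you invoke the statement being proved to rule the subcase out); the correct one-line reason it is vacuous is that $(\lambda|\alpha^{\vee})=0$ would make $S\cup\{\alpha\}$ an iso-set orthogonal to $\lambda$ of cardinality $|S|+1$, contradicting the maximality of $S$.
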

\begin{proof}
Since $\sim$ is an equivalence relation, it is enough to show that
for $\nu\sim\lambda$ one has  $\atyp\nu\geq \atyp\lambda$. 
Let $S$ be a maximal iso-set orthogonal to $\lambda$. 
If $\lambda\in \mathring{\fh}^*$, then, by~\Thm{propblocks}, 
$\nu=w(\lambda+\mu)$, where
$\mu\in\mathbb{Z}S$ and $w\in W(\lambda)$. Since
$wS$ is an iso-set orthogonal to $\nu$, we obtain $\atyp\nu\geq \atyp\lambda$
as required. Similarly, 
for $\lambda\not\in \mathring{\fh}^*$, the required inequality
 follows from~(\ref{abdelta}) and the fact that $\dot{\Delta}_{iso}$ is $\tilde{W}$-invariant.
\end{proof}

\subsection{Cores}\label{coreKMQ}
We introduce $\Core(\lambda)$ in the cases when $\fg$ is not exceptional. For $\fg=\fgl(m|n),\osp(M|2n)$, our definition 
is similar to one used in~\cite{GS}.  

We will consider the  root systems of the types 
$$\begin{array}{ll}
\text{ finite}: & \fgl(m|n),\  B(m|n)=\osp(2m+1|2n), D(m|n)=\osp(2m|2n),\  \fq_m;\\
\text{ affine}: & \fgl(m|n)^{(1)},\ \osp(M|2n)^{(1)},\  A(M|2n)^{(2)}, \ 
A(2m|2n)^{(4)},\  D(m|n)^{(2)}.\end{array}$$ 
For the finite root systems
   we will use the standard notation of~\cite{KLie}.
For affine case we retain notation of~\ref{affro}; note that
  the finite root system $\dot{\Delta}$ is of the $\fgl(m|n)$-type for
$\fgl(m|n)^{(1)}$ and of  the $\osp$-type
 for the rest of the cases. In all cases we set
$$a_i:=(\lambda|\vareps_i),\ \ \ \ b_j:=(\lambda|\delta_j)\ \ \text{ for }
i=1,\ldots,m,\ \ \ j=1,\ldots,n.$$

\subsubsection{Case $\fgl(m|n)$}\label{Amn}
In this case
$\Delta_{iso}=\{\pm (\vareps_i-\delta_j)\}_{i=1,\ldots,m}^{j=1,\ldots,n}$.

\begin{defn}{}
For $\lambda\in\fh^*$ let $\Core(\lambda)$ be the multiset
obtained from $\{a_i\}_{i=1}^m\coprod \{b_j\}_{j=1}^n$ by deleting the maximal number of pairs satisfying $a_i=b_j$.
\end{defn}

For instance, for $\lambda=\vareps_1+\vareps_2+\vareps_3-\delta_1-2\delta_2$ we 
have $a_1=a_2=a_3=1$, $b_1=1$, $b_2=2$ and 
$\Core(\lambda)=\{1,1\}\coprod\{2\}$.

\subsubsection{Cases $B(m|n),D(m|n)$}
In this case
$\Delta_{iso}=\{\pm \vareps_i\pm\delta_j\}_{i=1,\ldots,m}^{j=1,\ldots,n}$.

\begin{defn}{}
For $\lambda\in\fh^*$ let $\Core(\lambda)$ be the multiset
obtained from $\{a_i^2\}_{i=1}^m\coprod \{b_j^2\}_{j=1}^n$ by deleting the maximal number of pairs satisfying $a_i^2=b_j^2$.
\end{defn}

\subsubsection{Case $\fq_m$}\label{Qcore}
In this case $\Delta_{iso}=\{\vareps_i-\vareps_j\}_{1\leq i\not=j\leq m}$.

\begin{defn}{}
For $\lambda\in\fh^*$ we  
let $\Core(\lambda)$ be the multiset
obtained from $\{a_i\}_{i=1}^n$ by deleting the maximal number of pairs satisfying $a_i+a_j=0$ for $i\not=j$.
\end{defn}
 
For example, for  $\lambda=\vareps_1+\vareps_2-\vareps_m$
we have $\Core(\lambda)=\{0,1\}$ if $m$ is even, $\Core(\lambda)=\{1\}$ if $m$ is odd.

\subsubsection{Case $\fgl(m|n)^{(1)}$}
In this case
$\Delta_{iso}=\{\mathbb{Z}\delta\pm (\vareps_i-\delta_j)\}_{i=1,\ldots,m}^{j=1,\ldots,n}$.

\begin{defn}{}
Take $\lambda\in{\fh}^*$ and set $k:=(\lambda|\delta)$.
We let $\Core(\lambda)$ be the multiset
obtained from $\{a_i\}_{i=1}^m\coprod \{b_j\}_{j=1}^n$ by deleting the maximal number of pairs satisfying $a_i-b_j\in\mathbb{Z}k$.
We view the elements of the multiset $\Core(\lambda)$ as 
elements in $\mathbb{C}/\mathbb{Z}k$.
\end{defn}

\subsubsection{Cases $B(m|n)^{(1)}, D(m|n)^{(1)}, A(2m|2n-1)^{(2)}, A(2m-1|2n-1)^{(2)}$}\label{coreaff}
In this case
$$\Delta_{iso}=\{\mathbb{Z}\delta\pm \vareps_i\pm\delta_j\}_{i=1,\ldots,m}^{j=1,\ldots,n}.$$

\begin{defn}{}
Take $\lambda\in{\fh}^*$ and set $k:=(\lambda|\delta)$.
Let $\Core(\lambda)$ be the multiset
obtained from $\{a_i\}_{i=1}^m\coprod \{b_j\}_{j=1}^n$ by deleting the maximal number of pairs satisfying $a_i\pm b_j\in\mathbb{Z}k$.

We view the elements of the multiset $\Core(\lambda)$ as 
elements in the set $\mathbb{C}$ modulo 
the action of the group $\mathbb{Z}\rtimes\mathbb{Z}_2$ 
generated by $z\mapsto z+k$ and  $z\mapsto -z$.
\end{defn}

\subsubsection{Case
 $A(2m|2n)^{(4)}, D(m|n)^{(2)}$}\label{Amn4}
In this case 
$\Delta_{iso}=\{2\mathbb{Z}\delta\pm \vareps_i\pm\delta_j\}_{i=1,\ldots,m}^{j=1,\ldots,n}$
and  the  definition can be obtained from~\ref{coreaff} above by substituting  $k$ by $2k$.

\subsubsection{Remark: degenerate cases}
In the case when $mn=0$ the set $\Delta_{iso}$ is empty, but we
use the same definition for $\Core(\lambda)$. For instance,
$\osp(2|0)=\mathbb{C}$; for this algebra 
${\fh}^*=\mathbb{C}\vareps_1$ and we set $\Core(a\vareps_1)=a^2$. Note that
$$\begin{array}{cl}
& \fgl(0|0)=\osp(0|0)=\osp(0|0)=\fq_0=0,\\
& \fgl(0|0)^{(1)}=\osp(0|0)^{(1)}=\osp(0|0)^{(1)}=\mathbb{C}K\times\mathbb{C}d; 
\end{array}$$
in all these cases we set $\Core(\lambda)=\emptyset$ for each weight $\lambda\in\fh^*$.

\subsubsection{}\label{corefin}
Take $\lambda\in {\fh}^*$. Let $S_{\lambda}$ be a maximal iso-set orthogonal to $\lambda$. It is easy to see that
\begin{equation}\label{coreABQ}\begin{array}{ll}
W(\lambda+\mathbb{C}S_{\lambda})\subset \{\nu|\ \Core(\nu)=\Core(\lambda)\},\\
W(\lambda+\mathbb{C}S_{\lambda})= \{\nu|\ \Core(\nu)=\Core(\lambda)\} & \ \text{ for }
A(m|n), B(m|n), \fq_m.\end{array}
\end{equation}
For $D(m|n)$ the equality holds if $\atyp\lambda\not=0$.

 \subsubsection{}
  \begin{cor}{corcore}
\begin{enumerate}
    \item
The cardinality of $\Core(\lambda)$ is equal to $m+n-2\atyp\lambda$
(resp., $m-2\atyp\lambda$)
for the KM-case (resp., 
 for the $Q$-type case).

\item For $\lambda,\nu\in {\fh}^*$ 
one has $\Core(\lambda)=\Core(\nu)$ if $\nu\sim\lambda$.

\item Let $\fg$ be finite-dimensional. If $\lambda'-\rho'$
is the highest weight of the module $L(\lambda-\rho)$
with respect to a base $\Sigma'$, which is compatible to $\Sigma$, then
  $\Core(\lambda)=\Core(\lambda')$. For affine case this holds
if $\dot{\Delta}=\dot{\Delta'}$.
 \end{enumerate}
\end{cor}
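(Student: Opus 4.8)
The plan is to treat the three parts in order, deriving each from the structural results already in hand. For part (i), I would argue directly from the definitions in~\ref{coreKMQ}. In every listed case, $\Core(\lambda)$ is obtained from a fixed multiset $T_\lambda$ (namely $\{a_i\}\coprod\{b_j\}$ of size $m+n$ in the KM-case, or $\{a_i\}$ of size $m$ in the $Q$-type case) by deleting the maximal possible number of pairs of a prescribed ``cancelling'' form: $a_i=b_j$, or $a_i^2=b_j^2$, or $a_i+a_j=0$, or the affine congruence versions. The key observation is that a collection of $r$ disjoint cancelling pairs corresponds precisely to an iso-set of cardinality $r$ orthogonal to $\lambda$: for each case the description of $\Delta_{iso}$ in~\ref{coreKMQ} shows that $(\lambda|\alpha^\vee)=0$ for $\alpha$ the corresponding isotropic root is equivalent to the cancellation relation, and disjointness of the pairs matches the requirement (from~\ref{isosetnow}(i)) that the roots be linearly independent with $\alpha\pm\beta\notin\mathbb{Z}\delta$. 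Hence the maximal number of deletable pairs equals $\atyp\lambda$, and the surviving multiset has cardinality $m+n-2\atyp\lambda$ (resp.\ $m-2\atyp\lambda$). I would spell this out once carefully, say for $\fgl(m|n)$, and remark that the other cases are identical after the obvious change of variable ($a_i\rightsquigarrow a_i^2$, etc.).

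For part (ii), I would reduce to the finite (equivalently $\mathring{\fh}^*$) case and the critical case separately, exactly as in the proof of~\Cor{coratyp}. If $\lambda\in\mathring{\fh}^*$, then by~\Thm{propblocks} $\nu\sim\lambda$ gives $\nu\in W(\lambda)(\lambda+\mathbb{Z}S_\lambda)$ for $S_\lambda$ a maximal iso-set orthogonal to $\lambda$; in particular $\nu\in W(\lambda+\mathbb{C}S_\lambda)$, and so $\Core(\nu)=\Core(\lambda)$ by the first inclusion in~(\ref{coreABQ}). For $\lambda\notin\mathring{\fh}^*$ (the affine critical case), I would use~(\ref{abdelta}): $\nu\sim\lambda$ forces $\dot\nu\in\tilde W(\dot\lambda+\mathbb{Z}S_{\dot\lambda})$ with $a_\nu\in a_\lambda+\mathbb{Z}$; since $\Core$ is computed from the $\dot{}$-data modulo the appropriate group (which includes the shifts by $k$, resp.\ $2k$, built into the affine definitions) and is $\tilde W$-invariant because $\dot\Delta_{iso}$ is $\tilde W$-invariant, one gets $\Core(\nu)=\Core(\lambda)$. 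The only subtlety here is checking that the affine $\Core$ is literally $W(\lambda+\mathbb{C}S_\lambda)$-invariant rather than merely preserved up to the ambient quotient group; this follows because $r_{\dot\alpha}$ for even real roots permutes the $\{a_i,b_j\}$ among themselves up to signs and shifts by $\mathbb{Z}k$, which is exactly the equivalence under which the affine $\Core$-elements are considered.

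For part (iii), I would invoke~\ref{oddrefl} and~\Cor{corbasesigma}: passing from $\Sigma$ to a compatible base $\Sigma'$ is achieved by a chain of odd reflections $r_\beta$, and the highest weight transforms by $\nu+\rho\mapsto \nu+\rho$ or $\nu+\rho+\beta$, with $(\nu'+\rho')\sim(\nu+\rho)$. Thus $\lambda\sim\lambda'$, and part (ii) immediately gives $\Core(\lambda)=\Core(\lambda')$. For the affine case the additional hypothesis $\dot\Delta=\dot{\Delta'}$ guarantees that the $\dot{}$-data used to define $\Core$ is computed relative to the same finite root subsystem, so that the comparison in part (ii) applies verbatim. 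I expect the main obstacle to be purely bookkeeping: verifying in part (i) that the ``maximal number of deletable pairs'' genuinely coincides with $\atyp\lambda$ rather than being merely bounded by it --- one must produce, from a set of $r$ cancelling pairs, an honest iso-set, which requires re-examining linear independence in the degenerate and affine cases (e.g.\ ruling out $\alpha-\beta\in\mathbb{Z}\delta$), and conversely must check that an orthogonal iso-set yields disjoint pairs; this is where the case-by-case descriptions of $\Delta_{iso}$ in~\ref{coreKMQ} do the real work.
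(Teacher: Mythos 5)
Your proposal matches the paper's proof: the paper dispatches (i) with the same observation that the number of deleted pairs equals $\atyp\lambda$, derives (ii) from \Thm{propblocks} together with~(\ref{coreABQ}), and gets (iii) from~\ref{oddrefl}. The only difference is one of completeness: the paper's one-line proof of (ii) cites only \Thm{propblocks}, which covers $\lambda\in\mathring{\fh}^*$, and leaves the affine critical case implicit; you correctly fill this in with~(\ref{abdelta}) and the $\tilde{W}$-invariance of $\dot\Delta_{iso}$, exactly as is done in the paper's proof of \Cor{coratyp}.
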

 \begin{proof}
 For (i) note that in all cases~\ref{Amn}--\ref{Amn4}   the number 
 of deleted pairs
is equal to  $\atyp\lambda$. Combining~\Thm{propblocks} with~(\ref{coreABQ})
 we get (ii); (iii) follows from~\ref{oddrefl}. 
\end{proof}

\subsubsection{Remark}
For affine case the construction of 
$\Core(\lambda)$  depends on the choice of $\dot{\Sigma}$. Using the tables 
in~\cite{GKadm}, 13.2, it is not hard to check that, apart from the case
$\fgl(m|n)^{(1)}$ 
$\Core(\lambda)$  does not depend on the choice of $\dot{\Sigma}$.

\subsection{Central characters}\label{centchar}
Consider the case when $\dim\fg<\infty$. 

For $z\in\cZ(\fg)$
let $\mathcal{HC}(z)\in\cS(\fh)$ be such that $z$ acts on $M(\lambda-\rho)$ by
$\mathcal{HC}(z)Id$. Then 
$$\mathcal{HC}:\cZ(\fg)\to \cS(\fh)$$
is an algebra monomorphism ($\mathcal{HC}$ is the composition of the Harish-Chandra projection
$\HC$ with $\rho$-twisting; by contrast with $\HC$, the map $\mathcal{HC}$
does not depend on the triangular decomposition of $\Delta$). 
The arguments of~\cite{Kcentre} (see also~\cite{Gq})  imply
that the image of $\mathcal{HC}$ is given by
$$
Z(\fg):=\{f\in\cS(\fh)|\ f(\lambda)=f(\nu)\ \text{ if }
(\nu,\lambda)\in \cK\}.$$

Take any $\beta\in\Delta_{iso}$. It is easy to see that $\Delta_{iso}=W\beta\cup W(-\beta)$; this allows to rewrite the above formula as
\begin{equation}\label{Zn} \begin{array}{rl}
Z(\fg)=&\{f\in\cS(\fh)^W|\ \forall \lambda\in\fh^* \text{ with } (\lambda|\beta^{\vee})=0\\
& \text{ one has }
\  f(\lambda)=f(\lambda-c\beta)\ \text{ for }c\in\mathbb{C}\}.
\end{array}\end{equation}

(The above formulae were established  earlier by A.~Sergeev in~\cite{Sinv},\cite{Sq} by other methods; see also~\cite{GN} for another approach in the $Q$-type case).

\subsubsection{}\label{corechi}
Let $\chi_{\lambda}:\cZ(\fg)\to\mathbb{C}$ be
the  central character of $L(\lambda-\rho)$, i.e. 
$$\chi_{\lambda}(z):=\mathcal{HC}(z)(\lambda).$$
By~\ref{Ian}, each central character $\chi\in\mspec_{\Mod}\cZ(\fg)$ is of the form $\chi=\chi_{\lambda}$ for some $\lambda\in\fh^*$. 
We set $\atyp\chi_{\lambda}:=\atyp\chi$. If $\fg$ is not exceptional we set
$\Core(\chi_{\lambda}):=\Core(\lambda)$.
These notions are well-defined;
 moreover, using~\ref{corefin},
it is not hard to see that  the assignment $\chi\mapsto\Core(\chi)$
 is bijective for $\fg\not=\osp(2m|2n)$. 
We give a proof for the $Q$-type cases in~\Prop{propchi} below
 (the proof is similar to one for $\fq_n$ in~\cite{CW}, Ch. II).
 For $\fg=\osp(2m|2n)$
the fiber is of  the form $\{\chi,\sigma(\chi)\}$, where $\sigma$ is as in~\ref{isosetbasic1}; one has 
$\sigma(\chi)=\chi$ if $\atyp\chi\not=0$
(see~\cite{CW}, Ch. II for the details).

\subsubsection{}
\begin{prop}{propchi}
Let $\fg$ be a $Q$-type algebra.
For $\lambda,\nu\in\fh^*$ one has $\chi_{\lambda}=\chi_{\nu}$ if and only if
$\Core(\lambda)=\Core(\nu)$.
\end{prop}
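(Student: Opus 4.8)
The plan is to reduce the statement to the description of the image of $\mathcal{HC}$ in~(\ref{Zn}) together with the combinatorial identity~(\ref{coreABQ}) for the $Q$-type. For the ``if'' direction, suppose $\Core(\lambda)=\Core(\nu)$. By the last line of~(\ref{coreABQ}) (the case $\fq_m$), this gives $\nu\in W(\lambda+\mathbb{C}S_\lambda)$, so $\nu=w(\lambda+c_1\beta_1+\dots+c_r\beta_r)$ for some $w\in W$, some scalars $c_i$, and some maximal iso-set $S_\lambda=\{\beta_i\}$ orthogonal to $\lambda$. Now I would invoke~(\ref{Zn}): any $f$ in the image of $\mathcal{HC}$ is $W$-invariant and satisfies $f(\mu)=f(\mu-c\beta)$ whenever $(\mu|\beta^\vee)=0$ and $\beta\in\Delta_{iso}$. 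Applying this one $\beta_i$ at a time — noting that $S_\lambda$ orthogonal to $\lambda$ means each partial sum $\lambda+c_1\beta_1+\dots+c_{i-1}\beta_{i-1}$ is still orthogonal to $\beta_i^\vee$, since the $\beta_i$ are mutually orthogonal with respect to $(-|-)$ by Lemma~\ref{lemrank2} — one gets $f(\lambda)=f(\lambda+c_1\beta_1+\dots+c_r\beta_r)=f(\nu)$ by $W$-invariance. Hence $\chi_\lambda(z)=\mathcal{HC}(z)(\lambda)=\mathcal{HC}(z)(\nu)=\chi_\nu(z)$ for all $z\in\cZ(\fg)$, i.e.\ $\chi_\lambda=\chi_\nu$.

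For the ``only if'' direction, suppose $\chi_\lambda=\chi_\nu$. The natural approach is to produce enough explicit central elements to separate cores: the image $Z(\fg)$ in~(\ref{Zn}) contains, for each $N\geq 1$, a function $p_N\in\cS(\fh)^W$ whose value at $\mu$ is essentially the $N$-th power sum $\sum_i (\mu|\vareps_i)^N$ corrected by terms that vanish on the isotropic directions; in $Q$-type notation the relevant invariants (the ``$\fq$-symmetric functions'' of Sergeev, \cite{Sq}) are built from $\sum_i a_i^N$ for odd $N$, since $a_i\mapsto -a_i$ paired cancellation is exactly what defines $\Core$. Concretely I would argue: $\chi_\lambda(p_N)=\chi_\nu(p_N)$ for all odd $N$ forces $\sum_{a\in\Core(\lambda)} a^N=\sum_{a\in\Core(\nu)} a^N$ for all odd $N$ (the deleted pairs $a_i+a_j=0$ contribute $a^N+(-a)^N=0$ for odd $N$), and since by Corollary~\ref{corcore}(i) both multisets $\Core(\lambda)$, $\Core(\nu)$ have the same cardinality $m-2\atyp\lambda$ and, being cores, contain no pair summing to zero, the odd power sums determine such a multiset uniquely up to order. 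Hence $\Core(\lambda)=\Core(\nu)$.

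The main obstacle I expect is the ``only if'' direction, specifically checking that the central characters genuinely separate cores — i.e.\ that the power-sum functions $p_N$ really lie in the image of $\mathcal{HC}$, and that vanishing of all odd power sums on a zero-sum-pair-free multiset of fixed cardinality forces equality. The first point is handled by~(\ref{Zn}): one must verify each $p_N$ is $W$-invariant and constant along the lines $\mu\mapsto\mu-c\beta$ with $(\mu|\beta^\vee)=0$, which for $\beta=\vareps_i-\vareps_j$ and $\beta^\vee=\vareps_i+\vareps_j$ amounts to the elementary observation that moving $(a_i,a_j)$ to $(a_i-c, a_j-c)$ along the hyperplane $a_i+a_j=0$ keeps $a_i+a_j$ and hence the relevant symmetric combination fixed — this is precisely why only odd power sums survive. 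The second point is a standard uniqueness fact for multisets closed under no zero-pair cancellation. Since the author remarks the argument parallels the $\fq_n$ case in~\cite{CW}, Ch.~II, I would cite that for the bookkeeping and concentrate the write-up on the reduction via~(\ref{Zn}) and~(\ref{coreABQ}).
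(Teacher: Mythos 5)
Your proposal follows the paper's proof in all essentials: the ``if'' direction via~(\ref{coreABQ}) and~(\ref{Zn}), and the ``only if'' direction via the odd power sums $p_{2r+1}=\sum_i h_i^{2r+1}$, which the paper likewise checks lie in $Z(\fg)$ using~(\ref{Zn}).

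Two places need repair. First, the ``standard uniqueness fact'' you defer is precisely the nontrivial content of the paper's argument: the paper packages the odd power sums into the generating function $\Phi=\sum_{r}p_{2r+1}z^r=\sum_i h_i/(1-h_i^2z)$ and recovers $\Core(\mu)\setminus\{0\}$ from the poles and residues of the meromorphic function $\Phi(\mu)$ --- because a core contains no pair $\{a,-a\}$, there is no cancellation, and the residue at $a^{-2}$ determines both which of $\pm a$ occurs and its multiplicity. You should supply this (or an equivalent) argument rather than cite it as standard. Second, your justification that the two cores have equal cardinality is circular: $\#\Core(\mu)=m-2\atyp\mu$, and you do not yet know $\atyp\lambda=\atyp\nu$ --- that is part of what the proposition is establishing. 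The paper's fix is the parity observation $\#\Core(\lambda)\equiv m\equiv\#\Core(\nu)\pmod 2$ together with the fact that $0$ occurs in a core with multiplicity at most $1$: once the nonzero parts agree, parity forces the multiplicities of $0$ to agree. Finally, a small slip in your verification that $p_N\in Z(\fg)$: for $\beta=\vareps_i-\vareps_j$ the translate $\lambda-c\beta$ moves $(a_i,a_j)$ to $(a_i-c,\,a_j+c)$, not $(a_i-c,\,a_j-c)$; with the correct coordinates the pair stays on the hyperplane $a_i+a_j=0$ and the odd power sums are indeed unchanged, whereas with your coordinates even $p_1$ would fail to be invariant.
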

\begin{proof}
Our goal is to show that
$$\Core(\lambda)=\Core(\nu)\ \ \Longleftrightarrow\ \ \forall f\in Z(\fg)\ \ 
f(\lambda)=f(\nu).$$
The implication $\Longrightarrow$ follows from~(\ref{coreABQ}).
For the implication $\Longleftarrow$ we introduce the following notation. For  $\fq_n$ and $\fsq_n$ we 
denote by $\{h_i\}_{i=1}^n$ the basis of $\fh$ which is dual to the basis of $\{\vareps_i\}_{i=1}^n$ in $\fh^*$;
for $\fpq_n$ and $\fpsq_n$ we denote by the same symbol
the image of $h_i$ in $\fh$. Consider the symmetric polynomials 
$$p_{2r+1}:=\sum_{i=1}^n h_i^{2r+1}$$
(one has $p_1=0$ for $\fpq_n,\fpsq_n$).
From~(\ref{Zn}) one has $p_{2r+1}\in Z(\fg)$. We set
$$\Phi:=\displaystyle\sum_{r=1}^{\infty} p_{2r+1}z^r=\sum_{i=1}^n \frac{h_i}{1-h_i^2z}.$$
For each $\mu=\sum_{i=1}^n a_i\vareps_i\in\fh^*$  we introduce the function
$$\phi(z):=\Phi(\mu)=\sum_{i=1}^n \frac{a_i}{1-a_i^2z}.$$
Clearly, $\phi(z)$ 
is a meromorphic function in $z$ with the set of poles equal to
$$\{a^{-2}|\ a\in\Core(\mu)\setminus\{0\}\}.$$ 
Moreover, the residue of $\phi(z)$ at the point $a^{-2}$
is equal to $\mult(a)a^3$, where $\mult(a)$ stands for the multiplicity of $a$
in the multiset $\Core(\mu)$.

Let $\nu$ be such that  $p_{2r+1}(\lambda)=p_{2r+1}(\nu)$ for each $r$.
Then $\Phi(\lambda)=\Phi(\nu)$. By above, this gives
$\Core(\lambda)\setminus\{0\}=\Core(\nu)\setminus\{0\}$.
Since the multiplicity of $0$ in the multisets
$\Core(\lambda),\Core(\nu)$ is at most $1$
and 
$$\#\Core(\lambda)\equiv n\equiv \#\Core(\nu)\ \ \mod 2$$
we obtain $\Core(\lambda)=\Core(\nu)$ as required.
\end{proof}

 \subsection{Useful fact}\label{usefulq}
The following fact will be used later. Let $\fg$ be a Kac-Moody superalgebra
or $\fq_n$. Fix $\Sigma'\subset\Sigma$ and choose
$h\in\fh$ such that $\alpha(h)=0$ for $\alpha\in\Sigma'$
and $\alpha(h)=1$ for $\alpha\in \Sigma\setminus\Sigma'$. 
The algebra
$\fg^h$ has the triangular decomposition:
$$\fg^h=\fg^{\fh}\oplus (\fn\cap \fg^h)\oplus (\fn^-\cap\fg^h)$$
( the corresponding set of simple roots is $\Sigma'$).

\begin{lem}{lemusefulq}
Take $\lambda\in\fh^*$ and denote by $L$ the module $L(\lambda)$ viewed as a $\fg^h$-module. Then
$$L':=\sum_{\nu\in\mathbb{N}\Sigma'} L_{\lambda-\nu}$$
is a simple $\fg^h$-module of the highest weight $\lambda$ and
$L'$  is a direct summand
of $L$.
\end{lem}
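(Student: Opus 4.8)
The plan is to view $L=L(\lambda)$ through the $\mathbb Z$-grading it carries as an $h$-module. Since $L$ is a weight module and $h\in\fh$, the operator $h$ acts semisimply, so $L=\bigoplus_{c}L^{(c)}$ with $L^{(c)}:=\{v\in L\mid hv=cv\}$. Every weight $\mu$ of $L$ is of the form $\mu=\lambda-\sum_{\alpha\in\Sigma}n_\alpha\alpha$ with $n_\alpha\in\mathbb N$, hence $\mu(h)=\lambda(h)-\sum_{\alpha\in\Sigma\setminus\Sigma'}n_\alpha\le\lambda(h)$, with equality exactly when $\lambda-\mu\in\mathbb N\Sigma'$; so the $h$-eigenvalues lie in $\lambda(h)-\mathbb N$ and $L'=L^{(\lambda(h))}$ is the top one. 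Because $\fg^h$ centralizes $h$, each $L^{(c)}$ is a $\fg^h$-submodule (one has $h(gv)=g(hv)=c\,gv$ for $g\in\fg^h$), so $L=\bigoplus_c L^{(c)}$ is a decomposition of $L$ as a $\fg^h$-module; this already gives that $L'$ is a direct summand of $L$.

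For the simplicity of $L'$ I would use the parabolic attached to $h$. Set $\mathfrak u:=\bigoplus_{\alpha(h)>0}\fg_\alpha$, $\mathfrak u^-:=\bigoplus_{\alpha(h)<0}\fg_\alpha$ and $\fp:=\fg^h\oplus\mathfrak u$; then $\mathfrak u^-$ and $\fp$ are subalgebras, $\mathfrak u$ is an ideal of $\fp$, and $\fg=\mathfrak u^-\oplus\fp$. As $\lambda(h)$ is the maximal $h$-eigenvalue on $L$ and $\mathfrak u$ strictly raises the $h$-eigenvalue, $\mathfrak u L'=0$, so $L'$ is a $\fp$-submodule on which $\mathfrak u$ acts by zero. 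Given a nonzero $\fg^h$-submodule $N\subseteq L'$, the subspace $N$ is then $\fp$-stable, so by PBW one gets $\cU(\fg)N=\cU(\mathfrak u^-)N$, and since $\mathfrak u^-$ strictly lowers the $h$-eigenvalue, comparing eigenvalues gives $\bigl(\cU(\fg)N\bigr)\cap L'=N$. But $L$ is simple and $N\ne0$, so $\cU(\fg)N=L$, and therefore $N=L\cap L'=L'$; thus $L'$ is simple.

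Finally, to pin down the highest weight: $L'_\lambda=L_\lambda\ne0$, and $\fn\cap\fg^h$ raises weights by elements of $\mathbb N\Sigma'$, hence annihilates any vector of weight $\lambda$ in $L'$; by simplicity $L'$ is generated over $\fg^h$ by such a vector, so $L'$ is the simple $\fg^h$-module of highest weight $\lambda$. I do not expect a genuinely hard step here; the only points needing care are the $Q$-type conventions — where the Cartan part $\fg^{\fh}$ is non-abelian and highest weight spaces may have dimension $>1$, so "simple module of highest weight $\lambda$" has to be read as in the usual definition of $L_{\fg}(\lambda)$ — together with the routine invocation of the super PBW theorem.
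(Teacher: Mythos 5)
Your proof is correct, and while the direct-summand step is essentially the paper's, your simplicity argument takes a genuinely different route. For the summand, the paper decomposes any $\fh$-diagonalizable $\fg^h$-module along cosets in $\fh^*/\mathbb{Z}\Sigma'$, whereas you decompose along $h$-eigenvalues; with the normalization $\alpha(h)=1$ for $\alpha\in\Sigma\setminus\Sigma'$ these single out the same summand $L'$. For simplicity, the paper works top-down: it shows that any $\fg^h$-primitive vector of $L'$ is already $\fg$-primitive (because $\fg_\alpha$ with $\alpha(h)>0$ sends $L'$ out of the weight support of $L$), hence lies in $L'_\lambda=L(\lambda)_\lambda$, which is a simple $\fg^{\fh}$-module; this implicitly also uses that $L'$ is generated over $\fg^h$ by $L'_\lambda$. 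You instead take an arbitrary nonzero $\fg^h$-submodule $N\subseteq L'$, note it is stable under $\fp=\fg^h\oplus\mathfrak u$ since $\mathfrak u L'=0$, use super-PBW to write $\cU(\fg)N=\cU(\mathfrak u^-)N$, and recover $N=\bigl(\cU(\fg)N\bigr)\cap L'=L\cap L'=L'$ from the simplicity of $L$ by comparing $h$-eigenvalues. Your route is self-contained: it needs neither the simplicity of $L_\lambda$ as a $\fg^{\fh}$-module nor the generation of $L'$ by its top weight space, at the modest cost of invoking the parabolic decomposition and PBW explicitly. Both arguments are valid here, including in the $Q$-type case, whose conventions you flag correctly.
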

\begin{proof}
Each $\fg^h$-module $N$ with a diagonal action of $\fh$ can be decomposed as 
$$N=\displaystyle\bigoplus_{\mu\in\fh^*/\mathbb{Z}\Sigma'} N_{\mu},$$
where each $N_{\mu}$ is a $\fg^h$-submodule.
This shows that $L'$ is a direct summand of $L$.
Let $v\in L'_{\lambda-\nu}$ be a $\fg^h$-primitive vector, i.e.
 $(\fg^h\cap \fn)v=0$. Take $\alpha\in\Delta^+\setminus \Delta(\fg_h)$.
 Since $\nu\in \mathbb{N}\Sigma'$
 one has 
 $$\lambda-\nu+\alpha\not\in \lambda-\mathbb{N}\Sigma',$$
 so $\fg_{\alpha} L(\lambda)_{\lambda-\nu}=0$. Hence $v$ is a $\fg$-primitive, that is 
 $v\in L'_{\lambda}$. However, $L'_{\lambda}=L(\lambda)_{\lambda}$ is a simple
 $\fg^{\fh}$-module and 
$\fg^{\fh}\subset \fg^h$.
 Hence $L'$ is simple.
 \end{proof}

\section{$\DS$-functor for $Q$-type algebras}\label{sectQ}
Queer ($Q$-type) Lie superalgebras are the superalgebras
$\fq_n,\fsq_n,\fpq_n$ and $\fpsq_n$. Recall that
 $\fq_n$ is a subalgebra of $\fgl(n|n)$ consisting of the matrices with the block form
$$T_{A,B}:=\begin{pmatrix}
A & B\\
B & A
\end{pmatrix}$$
The centre of $\fq_n$ is spanned by the identity matrix, which we denote by $z$. The commutant
  $\fsq_n:=[\fq_n,\fq_n]$ 
 is a subalgebra of  $\fq_n$ consisting of the matrices $T_{A,B}$ with
 $Tr B=0$. One has  $\fpq_n:=\fq_n/\mathbb{C}z$ and
 $\fpsq_n$ is the image of $\fsq_n$ in $\fpq_n$. The algebra
 $\fpsq_n$ is simple for $n\geq 3$.
The group $GL_n$ acts on these algebras by the inner
automorphisms $g.T_{A,B}:=T_{gAg^{-1},gBg^{-1}}$.

We retain notation of~\ref{notation} and Section~\ref{sectKMQ}.
We denote by $\cB_0(\fg)$ the principal block in $\Fin(\fg)$, which is the block
 containing the trivial module.

\subsection{Main results}\label{mainresultq}
For $\fg=\fq_n,\fsq_n,\fpq_n,\fpsq_n$
we have 
$$\defect \fg=\depth(\fg)=[\frac{n}{2}]$$  
For $x\in X(\fq_n)_r$  we have
 $\DS_x(\fq_n)\cong \fq_{n-2r}$. 
The similar formulae hold for $\fsq_n,\fpq_n,\fpsq_{n}$ 
if $r\not=[\frac{n}{2}]$; for  $r=[\frac{n}{2}]$ we have 
$$\begin{array}{l}
\DS_x(\fsq_n)\cong \fsq_1=\mathbb{C},\ \ \DS_x(\fpq_n)\cong \fpq_1=\Pi(\mathbb{C})\\
\DS_x(\fpsq_n)= \fpsq_1=0\ \ \text{ if $n$ is odd},\ \ 
\DS_x(\fpsq_n)\cong \mathbb{C}\times\Pi(\mathbb{C}) \ \ \text{ if $n$ is even}. \end{array}$$

One has  $X(\fq_n)_{iso}=X(\fsq_n)_{iso}$
and $X(\fpq_n)_{iso}=X(\fpsq_n)_{iso}$ is the image of $X(\fq_n)_{iso}$.
Moreover, $X(\fg)_{iso}=X(\fg)$ for $\fg=\fq_n,\fsq_n$ and
$X(\fg)\setminus X(\fg)_{iso}\subset X(\fg)_{[\frac{n}{2}]}$
for $\fg=\fpq_n,\fpsq_n$.  

 In~\Cor{DStheta} we prove that 
 the map $\theta_x^*$
preserves the core of a central character,
 increases the
atypicality  by $r$ and that the  image of $\theta_x^*$ consists of
the central characters of atypicality $\geq r$. This implies that
$\DS$ commutes with translation functors (see~\ref{DStransQ}).
We also show  that $\depth\chi$
coincides with the atypicality of $\chi$. In~\Prop{propthetaq} we prove that
$\theta_x$ is surjective.

View $\fpsq_3$ as a quotient of the adjoint representation
for $\fq_3$. This is a simple module 
which lie in the principal block
$\cB_0(\fq_3)$. Since $\DS_x(\fpsq_3)=0$ for all
non-zero $x\in X(\fq_3)$, this module has zero depth; this gives
 an example of a   
of zero depth module which is not projective (by~\cite{DS}, for finite-dimensional
Kac-Moody superalgebras, $N\in\Fin(\fg)$ has zero depth if and only if $N$ is projective).
 Note that $\depth(\cB_0(\fq_3))=1$, 
since $\depth$ of
the trivial module is equal to $\depth(\fg)$.

\subsection{Questions and content of the section}
Let $L$ be a simple finite-dimensional $\fg$-module.

\subsubsection{Question}
Is it true that $\DS_x(L)\cong \DS_y(L)$ if $\rank x=\rank y$? More general, is it true
that $\DS_x(L)\cong \DS_{y'}(\DS_y(L))$ if $\rank x=\rank y+\rank y'$?

If $\dim L=\infty$, this does not hold, see~(\ref{CpiC}).

\subsubsection{Question}
Is it true that $\DS_x(L)$ is semisimple for $\fg=\fq_n$?

This property holds for finite-dimensional Kac-Moody superalgebras
(see~\cite{HW} and~\cite{GH}) and does not hold for $\fsq_2$ and for $\fpq_2$.

\subsubsection{}
In~\ref{notq} we introduce additional notation.
In~\ref{q1},\ \ref{q12} we consider the cases $n=1,2$.
In~\ref{DSq}--\ref{thetaqn} we study the case $\fq_n$. Finally, 
in~\ref{qrelatives} we describe $X(\fg)$ and $\DS_x(\fg)$ for
other $Q$-type algebras.

\subsection{Notation}\label{notq}
Let $\fg$ be one of the $Q$-type algebras. 
We denote by $\fh$ the standard Cartan subalgebra of $\fg_0$ (consisting of $T_{A,0}$
with diagonal $A$).
 The sets $\Delta_0=\Delta_1\subset\fh^*$ coincide with the root system of $\fgl_n$.
All triangular decompositions are $GL_n$-conjugated.
We fix a triangular decomposition:
$\fg=\fg^{\fh}\oplus \fn\oplus\fn^-$,
where $\fg^{\fh}$ is the Cartan subalgebra (one has $\fg^{\fh}\cap \fg_0=\fh$).
We use the standard $\fgl_n$-notation for the root system $\Delta_0$
and take
$\Sigma:=\{\vareps_i-\vareps_{i+1}\}_{i=1}^{n-1}$.
We  identify the Weyl group
$W$ with the permutations of $\{\vareps_i\}_{i=1}^n$.
 For $\fg=\fq_n,\fsq_n$ we set
$$h_i:=\vareps_i^*\in\fh;$$
 for $\fpq_n,\fpsq_n$ we denote by
$h_i$ the image of the corresponding element in $\fh$. 
One has
$$\displaystyle\sum_{i=1}^n h_i= \left\{
\begin{array}{lll}z\ & \text{ for }& \fq_n,\ \fsq_n,\\
0 & \text{ for }  & \fpq_n, \fpsq_n.
\end{array}\right.$$

 We denote by $\iota$ the canonical map
$\fq_n\twoheadrightarrow\fpq_n$ and its restriction $\fsq_n\twoheadrightarrow\fpsq_n$.

\subsubsection{Remark}\label{Fintyp}
Let $L(\lambda)$ be a typical finite-dimensional module and 
$\Fin(\chi_{\lambda})$ be the full subcategory of $\Fin(\fg)$ 
which corresponds to the central character $\chi_{\lambda}$
(one has $L(\lambda)\in \Fin(\chi_{\lambda})$). By~\cite{P}, the category $\Fin(\chi_{\lambda})$
is equivalent to
 the finite-dimensional representation of a Clifford algebra
 $\mathcal{C}l(\lambda):=\cS(\fg^{\fh})/\cS(\fg^{\fh})\Ker\lambda$.
For  instance, for $\fq_n$-case,  the category $\Fin(\chi_{\lambda})$ 
is equivalent to the category of the finite-dimensional modules over
 the algebra $\mathbb{C}[\xi]/(\xi^2)$
  if $0\in\Core(\lambda)$ and  $\Fin(\chi_{\lambda})$  is semisimple if
$0\not\in\Core(\lambda)$.

\subsection{Case $n=1$}\label{q1}
We have $\fq_1=\mathbb{C}z+\mathbb{C}H$, where $H$ is odd and
 $[H,H]=z$. One has
 $\fsq_1=\mathbb{C}z\cong \mathbb{C}$ and $\fpsq_1=0$; the algebra
$\fpq_1\cong \Pi(\mathbb{C})$ is spanned  $\iota(H)$.
In all these cases ${X}_{iso}=0$, so $\depth \fg=0$. One has
 $X(\fq_1)=X(\fsq_1)=0$ and $X(\fpq_1)=\fpq_1$.

The action of $z$ decomposes 
the category $\Fin(\fq_1)$ into blocks $\cB_c$ with $c\in\mathbb{C}$. 
All these blocks  are typical.
For $c\not=0$ the block $\cB_c$ is semisimple with
 $\Irr(\cB_c)=\{L_{\fq_1}(c)\}$, where $L_{\fq_1}(c)\cong \Pi(L_{\fq_1}(c))$ 
and $\dim L_{\fq_1}(c)=(1|1)$.
For the principal block $\cB_0$ 
we have  
$$\Irr(\cB_0)=\{L_{\fq_1}(0),\Pi(L_{\fq_1}(0))\},\ \ \Ext^1(L_{\fq_1}(0),\Pi(L_{\fq_1}(0))=\mathbb{C}.$$

\subsection{Case $n=2$}\label{q12}
The modules $L(\lambda)$ are explicitly described in~\cite{P}).
The module $L(\lambda)$   is
finite-dimensional if and only if
$\lambda(h_1-h_2)\in\mathbb{Z}_{>0}$
or $\lambda=0$. The module $L(\lambda)$ is atypical if and only if $\lambda(z)=0$ (for $\fpq_2,\fpsq_2$ all modules are atypical).
In~\ref{DSq}, \ref{DSsq} we will see that for $x\not=0$ one has
$\DS_x(N)=0$ for a typical module $N$.

Set $\fg:=\fq_2$.
Recall that
$\fg_0=\mathbb{C}z\times \fsl_2$; we fix the standard basis
$f,h,e$ in $\fsl_2$. Note that
$\fg_1$ has a basis 
$H_1:=T_{0,\Id},H_2, E,F$, where $H_2\in\fsq_2^{\fh}$ is such that
$[H_1,H_2]=2h$ and
$E=\frac{1}{2}[H_2,e]$, $F=\frac{1}{2}[f,H_2]$.
 One has $[\fg_0,H_1]=0$ and
$$
[H_1,H_1]=[H_2,H_2]=2z,\ \ [h,E]=2E,\ \ [h,F]=-2F,\ \ [E,F]=z, [E,E]=[F,F]=0.$$
  We will use the same notation for the images of these elements 
  in $\fpq_2$ and in $\fpsq_2$.
  
\subsubsection{Typical case}
Let $\lambda$ be such that $\lambda(z)\not=0$. Then
 $L_{\fq_2}(\lambda), L_{\fsq_2}(\lambda)$ are typical and
$L_{\fq_2}(\lambda)=L_{\fsq_2}(\lambda)$ as $\fsq_2$-module. One has
$\Pi(L_{\fsq_2}(\lambda))\cong L_{\fsq_2}(\lambda)$.
The typical blocks in $\Fin(\fsq_2)$ are semisimple.
By~\ref{DSq} below, for each typical $\fq_2$-module we have
$\DS_x(N)=0$ for $x\not=0$. This gives
$\DS_x(L_{\fsq_2}(\lambda))=0$ and thus $\DS_x(N)=0$
if $N$ is a typical module of a finite length.

\subsubsection{Cases $\fq_2,\fsq_2$}\label{sq2}
One has 
$$X(\fsq_2)=X(\fsq_2)_{iso}=X(\fq_2)=X(\fq_2)_{iso}=GL_2 E\cup \{0\}.$$
One has $\DS_E(\fq_2)=0$;
the algebra $\DS_E(\fsq_2)$ can be identified with $\mathbb{C}e$.

Let  $L:=L_{\fsq_2}(\lambda)$ be atypical. Then
 $L$  is a simple $\fsl_2$-module (so $\fg_1 L=0$). In particular, 
 $EL=0$, so 
 $\DS_E(L)=L$ as a  $\DS_E(\fsq_2)$-module (the action of $e\in \DS_E(\fsq_2)$  on $\DS_E(L)$ 
coincides with the action of $e$ on $L=L_{\fsl_2}(\lambda)$).

Let $L:=L_{\fq_2}(\lambda)$ 
be an atypical module for
$\lambda\not=0$.  As a $\fsq_2$-module, $L$ is a non-splitting extension of
 $\Pi(L_{\fsq_2}(\lambda))$ by $L_{\fsq_2}(\lambda)$. If $L$ is finite-dimensional, then
$$
\DS_x(L)\cong \mathbb{C}\oplus\Pi(\mathbb{C})\ \text{ for }x\in X_{iso}\setminus\{0\}.$$
If $L$ is  infinite-dimensional, then 
\begin{equation}\label{CpiC}
\DS_E(L_{\fq_2}(\lambda))\cong \mathbb{C},\ \ \ \ \ \DS_F(L_{\fq_2}(\lambda))\cong \Pi(\mathbb{C}).\end{equation}


\subsubsection{Case $\fpq_2$}
One has 
$$X(\fpq_2)_{iso}=X(\fq_2),\ \ \ X(\fpq_2)\setminus X(\fpq_2)_{iso}= \mathbb{C}^* \iota(H_1)\coprod \mathbb{C}^* \iota(H_2);$$
for each non-zero $x\in X(\fpq_2)$ one has $\rank x=1$ and 
$\DS_x(\fpq_2)\cong \Pi(\mathbb{C})\cong \fpq_1$.

The $\fpq_2$-modules are $\fq_2$-modules annihilated by $z$ (in particular,
 $\CO(\fpq_2)$ corresponds to the subcategory of atypical modules
in $\CO(\fq_2)$). Take $L:=L(\lambda)$ for $\lambda\not=0$.

The algebra  $\DS_E(\fpq_2)$ can be identified with
$\mathbb{C}F$. If $L$ is finite-dimensional and $\lambda\not=0$,
 then $\dim\DS_x(L)=(1|1)$ for each $x\in X(\fpq_2)_{iso}$; it is easy to check that
 $\DS_E(L)$ is an indecomposable $\fpq_1$-module if and only if $\dim L=(2|2)$.
If $L$ is infinite-dimensional, then $\DS_E(L),\DS_F(L)$ are given by~(\ref{CpiC})
(with $\fpq_1\cong \Pi(\mathbb{C})$ acting by zero).

For $x\in \mathbb{C}^* \iota(H_1)\coprod \mathbb{C}^* \iota(H_2)$
the algebra  $\DS_{x}(\fpq_2)$ can be identified with 
$\mathbb{C}x$. It is easy to see
that $\DS_{x}(L)=0$ for $\lambda\not=0$.

\subsubsection{Case $\fpsq_2$}
One has 
$$X(\fpsq_2)_{iso}=\iota(X(\fq_2)),\ \ 
X(\fpsq_2)\setminus X(\fpsq_2)_{iso}=\mathbb{C}^* \iota(H_2);$$
for each non-zero $x\in X(\fpsq_2)$ one has $\rank x=1$ and 
$\DS_x(\fpq_2)\cong \mathbb{C}\times \Pi(\mathbb{C})$.

The $\fpsq_2$-modules are $\fsq_2$-modules annihilated by $z$. In particular,
 $L_{\fpsq_2}(\lambda)$ is a simple $\fsl_2$-module. 

The algebra  $\DS_E(\fpsq_2)$ can be identified with
$\mathbb{C}e\oplus \mathbb{C}F)$.
By above,  $\DS_{E}(L)=L$ as a $\mathbb{C}e$-module and $F(\DS_E(L))=0$.

For $x\in \mathbb{C}^* H_2$
the algebra  $\DS_{x}(\fpsq_2)$ can be identified with
$\mathbb{C}h\oplus \mathbb{C}x$.
This gives $\DS_{x}(L)=L$ as a $\mathbb{C}h$-module and $x(\DS_{x}L)=0$.

\subsubsection{}
Note that for $\fg=\fq_2,\fsq_2,\fpq_2,\fpsq_2$ we have 
$\depth \fg=\defect \fg=1$.

\subsection{The set $X(\fq_{n})$}\label{DSq}
In~\ref{DSq}--\ref{thetaqn} we consider the case  $\fg:=\fq_n$ with $n\geq 2$.

For $i=1,\ldots, n-1$ we set $\alpha_i:=\vareps_i-\vareps_{i+1}$.
For each $s\leq n$ we identify $\fq_s$ with the subalgebra of $\fq_n$ with the set of simple roots
$\{\alpha_1,\ldots,\alpha_s\}$. 
For $r=0,1,\ldots, [\frac{n}{2}]$ we set
$$S_r:=\{\alpha_{n-1-2i}\}_{i=0}^{r-1}$$
and fix $x_r\in X$ such that $\supp(x_r)=S_r$ ($x_0:=0$).

\subsubsection{}
\begin{lem}{G0orbitQ}
The elements $x_0,\ldots, x_{[\frac{n}{2}]}$ 
 form a set of representatives of  $GL_n$-orbits in $X$.
\end{lem}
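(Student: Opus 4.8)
The plan is to show that every $x \in X(\fq_n)$ is $GL_n$-conjugate to exactly one $x_r$, by reducing to a normal form. Recall $x = T_{0,B}$ with $x \in \fg_1$, and the condition $[x,x] = 0$ translates into an equation on $B \in \fgl_n$; since $[T_{0,B},T_{0,B}] = T_{2B^2,0}$ (up to normalization), $x \in X(\fq_n)$ is equivalent to $B^2 = 0$, i.e. $B$ is a nilpotent matrix of nilpotency degree at most $2$. The key classical fact is that a square-zero matrix $B$ is $GL_n$-conjugate to a direct sum of $r$ Jordan blocks of size $2$ and $n-2r$ zero blocks, where $r = \rank B$ and necessarily $r \le [\frac n2]$; the integer $r$ is a complete conjugacy invariant of $B$ under $GL_n$. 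So the first step is: spell out this normal form and note that $GL_n$ acts on $X(\fq_n)$ through $g.T_{0,B} = T_{0,gBg^{-1}}$, hence conjugacy classes in $X(\fq_n)$ are in bijection with square-zero conjugacy classes of matrices, indexed by $r \in \{0,1,\dots,[\frac n2]\}$.

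Second, I would check that $x_r$, as defined via $\supp(x_r) = S_r = \{\alpha_{n-1-2i}\}_{i=0}^{r-1}$, is exactly the element whose matrix $B$ is the sum of $r$ disjoint size-$2$ Jordan blocks. Indeed, choosing the root vector $x_{\alpha}$ for $\alpha = \vareps_j - \vareps_{j+1}$ to be the elementary-type odd matrix $T_{0,E_{j,j+1}}$, the element $x_r = \sum_{i=0}^{r-1} x_{\alpha_{n-1-2i}}$ has $B = \sum_{i=0}^{r-1} E_{n-1-2i,\,n-2i}$, which is a sum of $r$ Jordan blocks supported on the disjoint pairs of indices $(n-1, n), (n-3, n-2), \dots$. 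These pairs are disjoint precisely because the roots in $S_r$ are taken with a gap (every other simple root), which is also why $S_r$ is an iso-set. So $\rank x_r = r$ in the matrix sense, and $x_r$ realizes the normal form of step one.

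Combining the two steps: given arbitrary $x = T_{0,B} \in X(\fq_n)$, we have $B^2 = 0$, so $B$ is $GL_n$-conjugate to the normal form with $r := \rank B \le [\frac n2]$ Jordan blocks, hence $x$ is $GL_n$-conjugate to $x_r$; and distinct $x_r$ have distinct matrix rank, so they lie in distinct orbits. This gives that $\{x_0, \dots, x_{[\frac n2]}\}$ is a complete, irredundant set of orbit representatives. One should also note $x_0 = 0$ handles the trivial orbit.

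**Main obstacle.** The genuinely mathematical content — that square-zero matrices are classified up to conjugacy by their rank, with rank bounded by $[\frac n2]$ — is completely standard linear algebra (rank-nullity plus the structure of a nilpotent operator of index $2$), so there is no real obstacle there. The only point requiring a little care is the bookkeeping: verifying that the particular sum of root vectors defining $x_r$ really has matrix rank $r$ (i.e. that the chosen supports $S_r$ give genuinely disjoint, non-overlapping Jordan blocks rather than accidentally overlapping ones), and being careful that the $GL_n$-action on $\fq_n$ is by simultaneous conjugation on the $A$ and $B$ blocks so that it descends to ordinary conjugation on $B$. I expect the write-up to be short, essentially reducing the lemma to the normal form for square-zero matrices.
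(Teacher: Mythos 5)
Your proposal is correct and follows exactly the paper's own (two-line) argument: identify $X(\fq_n)$ with $\{T_{0,B}\mid B^2=0\}$, observe that $GL_n$ acts by conjugation on $B$, and invoke the Jordan normal form for square-zero matrices, so that orbits are indexed by $\rank B=r\le[\frac n2]$ with $x_r$ realizing the rank-$r$ normal form. Your write-up simply makes explicit the bookkeeping that the paper leaves implicit.
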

\begin{proof}
One has $X=\{T_{0,B}|\ B^2=0\}$.
The elements of $GL_n$
act  by conjugation on $B$, so
the $GL_n$-orbits correspond to the Jordan forms. 
\end{proof}

\subsubsection{}\label{findimq}
Recall that $g\in GL_n$ acts on $\fg$ by an inner automorphism.
If $N$ is a finite-dimensional $\fg$-module, then $g$ acts on
$N$; as a result, $g$ induces a bijection between $\DS_x(N)$ and $\DS_{gx}(N)$, which is compatible with the algebra isomorphism $\fg_x\iso \fg_{gx}$. 

\subsubsection{}
\begin{prop}{propqn}
The algebra $\DS_{x_r}(\fq_n)$  can be identified
with $\fq_{n-2r}$.
\end{prop}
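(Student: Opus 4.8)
The plan is to realise $\DS_{x_r}(\fg)$, where $\fg=\fq_n$, directly as the queer superalgebra of the ``reduced'' space $W:=\Ker B/\im B$. Here $B\in\fgl_n$ is the nilpotent matrix with $x_r=T_{0,B}$; explicitly $B=\sum_{i=0}^{r-1}E_{n-1-2i,\,n-2i}$, so $B^2=0$ and $\rank B=r$, whence $\dim W=n-2r$.

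First I would record, by a one-line matrix computation, that for $g=T_{A,C}\in\fq_n$ one has $[x_r,T_{A,C}]=T_{BC+CB,\ BA-AB}$; consequently $\fg^{x_r}=\{T_{A,C}\mid AB=BA,\ BC+CB=0\}$ and $[x_r,\fg]=\{T_{BC+CB,\ BA-AB}\mid A,C\in\fgl_n\}$. A matrix that commutes, or anticommutes, with $B$ preserves the flag $\im B\subseteq\Ker B$ of $\mathbb{C}^n$, hence induces an endomorphism of $W$; thus $T_{A,C}\mapsto T_{\bar A,\bar C}$ defines a map $\pi\colon\fg^{x_r}\to\fq(W)$, which is a homomorphism of Lie superalgebras since it is ``restriction to $\Ker B$ followed by passage to the quotient by $\im B$'' and therefore multiplicative and compatible with the parity decomposition $T_{A,C}=T_{A,0}+T_{0,C}$. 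Because $Bv=0$ for $v\in\Ker B$, both $BC+CB$ and $BA-AB$ carry $\Ker B$ into $\im B$, so $\pi$ kills $[x_r,\fg]$ and descends to $\bar\pi\colon\DS_{x_r}(\fg)\to\fq(W)\cong\fq_{n-2r}$.

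Next I would check that $\bar\pi$ is surjective by an explicit lift. Fix a decomposition $\mathbb{C}^n=U\oplus W'\oplus\im B$ with $\Ker B=W'\oplus\im B$ and $B$ restricting to an isomorphism $U\iso\im B$. Given an endomorphism $\bar\phi$ of $W\cong W'$, let $\phi\in\fgl_n$ act as $\bar\phi$ on $W'$ and as $0$ on $U$ and on $\im B$; then $\phi B=B\phi=0$, so $\phi$ both commutes and anticommutes with $B$, and $\pi(T_{\phi,0})=T_{\bar\phi,0}$, $\pi(T_{0,\phi})=T_{0,\bar\phi}$. As $\bar\phi$ ranges over $\fgl(W)$ this hits the whole even part $\fgl(W)$ and the whole odd part $\Pi\fgl(W)$ of $\fq(W)$, so $\bar\pi$ is onto.

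Finally $\bar\pi$ is an isomorphism by a dimension count. Since $B$ has Jordan type $(2^r,1^{n-2r})$, the centralizer $Z_{\fgl_n}(B)$ has dimension $(n-r)^2+r^2$; and a short block-matrix computation (in a Jordan basis for $B$) gives $\dim\{C\mid BC+CB=0\}=n^2-2r(n-r)$. Hence $\dim\fg^{x_r}=n^2+(n-2r)^2$, so $\dim\DS_{x_r}(\fg)=\dim\fg^{x_r}-(\dim\fg-\dim\fg^{x_r})=2(n-2r)^2=\dim\fq_{n-2r}$; a surjection of finite-dimensional Lie superalgebras between objects of equal dimension is an isomorphism. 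The main obstacle is precisely this last step: the whole identification rests on the numerical identity $(n-r)^2+r^2-2r(n-r)=(n-2r)^2$, so the two dimension formulas must be established carefully — the first being the standard ``sum of squares of the conjugate partition'' formula for centralizers of nilpotent matrices, the second a routine but essential block computation. One must also verify, although it is formal, that $\pi$ is genuinely a morphism of \emph{super} Lie algebras (even, and bracket-preserving).
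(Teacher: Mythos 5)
Your proof is correct, but it takes a genuinely different route from the paper's. The paper first invokes Lemma~\ref{lem42} to replace $\fq_n$ by the $\mathbb{C}x_r$-submodule $\sum_{\alpha\in Y}\fg_{\alpha}+\fg^{\fh}$ (sum of the root spaces orthogonal to $\supp(x_r)$ together with the Cartan), decomposes this as $\bigoplus_{\alpha\in S_r}\fq_2(\alpha)\oplus\fq_{n-2r}$, and kills each $\fq_2(\alpha)$ using the rank-one computation of~\ref{sq2}; this exhibits $\fg_{x_r}$ as the concrete subalgebra $\fq_{n-2r}\subset\fq_n$ supported on the first $n-2r$ coordinates, which is the identification the paper uses downstream (e.g.\ in~\ref{primit} and~\ref{thetaqn}). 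Your argument is instead the direct ``hyperbolic reduction'': for $x=T_{0,B}$ with $B^2=0$ you realize $\DS_x(\fq_n)\cong\fq(\Ker B/\im B)$ by writing down the homomorphism $\bar\pi$, producing explicit lifts, and counting dimensions. This buys uniformity --- it applies verbatim to every element of $X(\fq_n)$, not just the chosen representatives $x_r$, so it simultaneously yields \Cor{depthq}, and it parallels the treatment of $\fgl(m|n)$ in~\cite{DS} --- at the cost of the two centralizer computations, both of which you state correctly: $(n-r)^2+r^2$ for $\{A:AB=BA\}$ (conjugate-partition formula for Jordan type $(2^r,1^{n-2r})$) and $n^2-2r(n-r)$ for $\{C:BC+CB=0\}$ (block computation), whence $\dim\fg^{x_r}=n^2+(n-2r)^2$ and $\dim\DS_{x_r}(\fg)=2\dim\fg^{x_r}-\dim\fg=2(n-2r)^2$. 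Two points worth making explicit if you write this up: the surjection is parity-preserving and hits both the even and odd parts (your lifts do this), so equality of total dimensions does give an isomorphism of superalgebras; and your identification is compatible with the paper's, since the subalgebra $\fq_{n-2r}\subset\fq_n$ supported on $W'$ maps isomorphically onto $\fq(\Ker B/\im B)$ under $\bar\pi$ --- this matters because later sections use the resulting embedding $\fh_x\subset\fh$.
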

\begin{proof}
Set $x:=x_r=\sum_{\alpha\in S_r} x_{\alpha}$, where $x_{\alpha}\in\fg_{\alpha}$. 
By~\Lem{lem42} one has 
$$\DS_x(\fg)=\DS_x(\displaystyle\sum_{\alpha\in Y}\fg_{\alpha}+\fg^{\fh}),
\text{ where }\ (\alpha|\beta^{\vee})=0\ \text{ for }\beta\in\supp(x)\}.$$

Denote by  $\fq_2(\alpha)$  the copy of
$\fq_2$ corresponding to the root $\alpha$. One has
$$\sum_{\alpha\in Y}\fg_{\alpha}+\fh=\displaystyle\bigoplus_{\alpha\in S_r} \fq_2(\alpha)\bigoplus \fq_{n-2r}$$
as a $\mathbb{C}x$-module. One has
 $x \fq_{n-2r}=0$. For each $\alpha\in S_r$, the
  action of $x$  on  $\fq_2(\alpha)$ coincides with the adjoint action
  of $x_{\alpha}\in \fq_2(\alpha)$, so  $\DS_x( \fq_2(\alpha))=0$
  (by~\ref{sq2}). This implies the statement.
\end{proof}

\subsubsection{}
\begin{cor}{depthq}
$\depth(\fq_n)=[\frac{n}{2}]$ and 
$X_r=GL_n x_r$.

\end{cor}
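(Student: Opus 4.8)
The plan is to deduce the corollary from the two immediately preceding results, \Lem{G0orbitQ} and \Prop{propqn}, together with the general machinery of Section~\ref{sectdepthdef}. First I would establish $\depth(\fq_n)=[\frac{n}{2}]$ by a short induction on $n$. The base cases $n=0,1$ give $\depth(\fq_0)=\depth(\fq_1)=0$ (from~\ref{q1}, where $X_{iso}=0$), and $n=2,3$ give $\depth=1$ (from~\ref{q12}; note $\fq_3$ has $[\frac{n}{2}]=1$ and contains an iso-set of cardinality $1$ but no larger). For the inductive step, recall from~\ref{mainresultq} (or directly: $X(\fq_n)=X(\fq_n)_{iso}$ since all elements are nilpotent $T_{0,B}$ with $B^2=0$, hence conjugate into root form) that every nonzero $x\in X_{iso}$ is $GL_n$-conjugate to some $x_r$ with $1\le r\le[\frac{n}{2}]$, and by~\ref{findimq} conjugation identifies $\fg_x\cong\fg_{x_r}$. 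By \Prop{propqn}, $\fg_{x_r}\cong\fq_{n-2r}$. Plugging into the recursive definition of $\depth$ in~\ref{depthdef}:
\begin{equation*}
\depth(\fq_n)=1+\max_{1\le r\le[\frac{n}{2}]}\depth(\fq_{n-2r})=1+\depth(\fq_{n-2})=1+\left(\left[\tfrac{n}{2}\right]-1\right)=\left[\tfrac{n}{2}\right],
\end{equation*}
where the middle equality uses that $\depth(\fq_m)$ is nondecreasing in $m$ (which follows from the induction hypothesis, or from the general fact in~\ref{depthprop} that passing from $\fq_n$ to $\fq_{n-2}$ cannot increase depth) so the maximum is attained at $r=1$, and the last equality is the induction hypothesis applied to $\fq_{n-2}$.

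Next I would prove $X_r=GL_n\,x_r$. One inclusion is automatic: by~\ref{findimq}, $g\in GL_n$ induces a bijection $\DS_x(N)\cong\DS_{gx}(N)$ compatible with $\fg_x\cong\fg_{gx}$, so $\rank$ is constant on $GL_n$-orbits, and since $x_r$ has support equal to the iso-set $S_r$ of cardinality $r$, \Lem{lemisoset} gives $\rank x_r\ge r$; combined with $\fg_{x_r}\cong\fq_{n-2r}$ and the depth formula just proved, $\rank x_r=\depth(\fq_n)-\depth(\fq_{n-2r})=[\frac{n}{2}]-[\frac{n-2r}{2}]=r$. Hence $GL_n\,x_r\subset X_r$. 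Conversely, by \Lem{G0orbitQ} every $x\in X$ lies in exactly one orbit $GL_n\,x_s$; if $x\in X_r$ then $r=\rank x=\rank x_s=s$, so $x\in GL_n\,x_r$. This gives $X=\coprod_{r=0}^{[n/2]}GL_n\,x_r$ with $X_r=GL_n\,x_r$.

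I expect no serious obstacle here — the corollary is essentially bookkeeping once \Lem{G0orbitQ} and \Prop{propqn} are in hand. The one point requiring a little care is the monotonicity of $\depth(\fq_m)$ in $m$, needed to conclude that the maximum over $r$ in the recursion is achieved at $r=1$; I would handle this within the same induction (the hypothesis gives $\depth(\fq_{n-2r})=[\frac{n-2r}{2}]$ explicitly for all relevant $r$, so the maximum of $[\frac{n-2r}{2}]$ over $1\le r\le[\frac{n}{2}]$ is visibly $[\frac{n-2}{2}]=[\frac{n}{2}]-1$), so no external monotonicity result is actually required. A second minor point is making sure the values $n=2,3$ are genuinely covered by the base of the induction and that the formula $[\frac{n}{2}]=1+[\frac{n-2}{2}]$ holds for all $n\ge 2$ including parity — which it does. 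Thus the proof is short and self-contained modulo the cited lemmas.
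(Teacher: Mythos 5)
Your argument is correct and is essentially the route the paper intends: the corollary is stated without proof precisely because it follows immediately from \Lem{G0orbitQ} (every nonzero $x\in X=X_{iso}$ is $GL_n$-conjugate to some $x_r$, and $\rank$ is constant on orbits by~\ref{findimq}) together with \Prop{propqn} ($\fg_{x_r}\cong\fq_{n-2r}$), fed into the recursive definition of $\depth$ exactly as you do. Your explicit induction and the computation $\rank x_r=[\frac{n}{2}]-[\frac{n-2r}{2}]=r$ just make the bookkeeping visible.
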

%

\subsubsection{Remark}
One has $X(\fq_n)_r=X(\fgl(n|n)_{2r})\cap\fq_n$.

\subsection{Case when $\supp(x)$ is an iso-set}\label{primit}
The maximal cardinality of an iso-set is $[\frac{n}{2}]$ and
each iso-sets of the cardinality $r$ is $W$-conjugated to $S_r$.

Take 
$x:=x_r$. Using~\Prop{propqn}    we identify $\fg_x$ with $\fq_{n-2r}$.
The algebra $\fh_x:=\fg_x\cap\fh$ is spanned by $h_1,\ldots,h_{n-2r}$.
We identify $\fh_x^*$ with the span of $\vareps_1,\ldots,\vareps_{n-2r}$.


\subsubsection{}\label{wxr}
Now take an arbitrary $x\in X_r$  such that $\supp(x)$ is an iso-set, i.e.
$\supp(x)=wS_r$ for some $w\in W$.
Take $g\in GL_n$ such that
$g\fh=\fh$  and that $g\fg_{\alpha}=\fg_{w\alpha}$.
We identify 
$\DS_x(\fg)$ with $g\DS_{x_r}(\fg)$.
Then $\fh_x:=\DS_x(\fg)\cap \fh$ is a Cartan subalgebra of $(\DS_x(\fg))_0\cong
\fgl_{n-2k}$. 
We fix the following triangular decomposition of $\fg_x$:
$$\fg_x=\fn^-_x\oplus \fg_x^{\fh_x}\oplus \fn_x,$$
where $\fn^-_x:=\fn^-\cap \fg_x$ and $\fn_x:=\fn\cap\fg_x$.
Notice that $\fh_x$ is spanned by $\{h_{w(i)}\}_{i=1}^{n-2r}$;
we identify $\fh_x^*$ with the span of $\{\vareps_{w(i)}\}_{i=1}^{n-2r}$.
Observe that 
\begin{equation}\label{CS}
\{\nu\in\fh^*|\   \nu|_{\fh_x}=0,\   \nu\ \text{orthogonal to }\ S\}=\mathbb{C}S
\end{equation}
(these spaces have the same dimension and the inclusion $\supset$ is straightforward).

Let $N$ be a  $\fg$-module with a diagonal action of $\fh$.
  For each $\mu\in\fh^*_x$ we set
$$N_{\mu}:=\sum_{\gamma\in\mathbb{C}S} N_{\mu+\gamma}.$$
Note that $N_{\mu}$ is $(\fh+\mathbb{C}x)$-submodule of $N$.
Combining~(\ref{CS}) and~\Lem{lem42} we  get
\begin{equation}\label{eq42q}
\DS(N)=\sum_{\mu\in\fh^*_x}\DS_x(N)_{\mu},\ \ \ 
\DS_x(N)_{\mu}=\DS_x(N_{\mu}).\end{equation}

\subsubsection{}
Recall that $\fg_x^{\fh_x}$ is the Cartan subalgebra of $\fg_x$
and $\fh_x=(\fg_x^{\fh_x})_0$.
The following statement demonstrates a peculiarity
of $\fq_n$.

\begin{prop}{primitq}
Let $x\in X$ be such that 
$S:=\supp(x)$ is an iso-set  and $S\subset\Pi$. Assume 
$\lambda\in\fh^*$ is orthogonal to $S$.
We set $\lambda':=\lambda|_{\fh_x}$ 
and view $M':=\DS_x(L(\lambda))_{\lambda'}$
as a $\fg^{\fh_x}$-module.
\begin{enumerate}
\item
One has $\fn_x M'=0$.
\item 
For $\lambda\in\fh^*_x\ $  $M'$ can be identified with
$L(\lambda)_{\lambda}$.
\item
One has 
$M'\cong L_{\fg_x^{\fh_x}}(\lambda')\otimes (\mathbb{C}\oplus\Pi\mathbb{C}) ^{\otimes s}$, where $s$ is the cardinality of the set
$$\{\beta\in S|
 \ (\lambda|{\beta})\in\mathbb{N}_{>0}\}.$$
\end{enumerate}
\end{prop}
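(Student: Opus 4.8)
The plan is to reduce everything to the case of a single isotropic root $\beta$ (i.e. $|S|=1$), where $\fg$ contains a copy of $\fq_2(\beta)$, and then assemble the general case by the multiplicativity of $\DS$. First I would set up the decomposition $\fg = \fg^{\fh_x}\oplus\fn^-_x\oplus\fn_x$ as in~\ref{wxr} and observe that, by~(\ref{eq42q}), the weight space $M'=\DS_x(L(\lambda))_{\lambda'}$ equals $\DS_x(N_{\lambda'})$, where $N_{\lambda'}=\sum_{\gamma\in\mathbb{C}S}L(\lambda)_{\lambda+\gamma}$; since $\lambda$ is orthogonal to $S$ and $S\subset\Pi$, the highest weight of $N_{\lambda'}$ as a $(\fh+\mathbb{C}x)$-module is $\lambda$ itself, with multiplicity space $L(\lambda)_\lambda$.

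For part (i): a nonzero element of $\fn_x$ is a sum of root vectors for roots $\gamma\in\Delta_x^+$, each of which lifts to a positive root of $\fg$ orthogonal to $S$; since $M'$ is spanned by the image of $L(\lambda)_\lambda$ and $L(\lambda)_\lambda$ is killed by $\fn$, the action of $\fn_x$ on $M'$ is zero. This is the easy step. Part (ii) is almost tautological: if $\lambda\in\fh^*_x$ then $N_{\lambda'}=L(\lambda)_\lambda$ already lies in $L(\lambda)^x$ (every $h_\beta$ for $\beta\in S$ kills it because $(\lambda|\beta^\vee)=0$ is exactly the condition $h_\beta\lambda=0$), and it meets $xL(\lambda)$ trivially for weight reasons, so $\DS_x(L(\lambda))_{\lambda'}$ is canonically $L(\lambda)_\lambda$.

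The substance is in part (iii), and I would organize it as follows. The Cartan $\fg^{\fh}_x\hookrightarrow\fg$ is built from $\fg^{\fh}$ by passing through the various $\fq_2(\beta)$'s, $\beta\in S$; concretely $\fg^{\fh_x}\cong\fg^{\fh}\cap\fg^x$, and I want to understand $M'=L(\lambda)_\lambda$ as a module over $\fg^{\fh}\cap\fg^x$ together with the images of the odd Cartan generators surviving in $\DS_x$. By~\Lem{lemusefulq} applied with $\Sigma'$ a suitable subset (the complement of $S$ inside $\Sigma$), $L(\lambda)_\lambda$ is already an irreducible module over the reductive part $\fg^{\fh}$ coming from the $\fsl(1|1)$-triples attached to $\Delta_{iso}$; so the point is to track, root by root $\beta\in S$, what $\DS_{x_\beta}$ does to the tensor factor $L_{\fq_2(\beta)^{\fh}}$ of $L(\lambda)_\lambda$. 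For a single $\beta$, $L(\lambda)_\lambda$ restricted to $\fq_2(\beta)$ is (up to parity and tensoring with a trivial piece) a weight-$\lambda$ space of a $\fq_2$-module, and by the case analysis of~\ref{sq2} (the atypical $\fq_2$-module is a nonsplit self-extension of $L_{\fsq_2}$ by $\Pi L_{\fsq_2}$, while the typical one is killed by $\DS$), one gets $\DS_{x_\beta}$ of this factor $\cong \mathbb C\oplus\Pi\mathbb C$ precisely when the corresponding $\fq_2$-highest weight is nonzero, i.e. when $(\lambda|\beta)\in\mathbb N_{>0}$, and $\cong\mathbb C$ (one-dimensional) otherwise. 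Multiplying these over $\beta\in S$ via $\DS_x(A\otimes B)=\DS_x(A)\otimes\DS_x(B)$ and the identification of the surviving Cartan with $L_{\fg_x^{\fh_x}}(\lambda')$ gives the claimed $M'\cong L_{\fg_x^{\fh_x}}(\lambda')\otimes(\mathbb C\oplus\Pi\mathbb C)^{\otimes s}$.

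The main obstacle I anticipate is the bookkeeping in (iii): one must justify that the restriction of $L(\lambda)$ (or rather of the relevant weight spaces) to $\bigoplus_{\beta\in S}\fq_2(\beta)\oplus\fq_{n-2r}$ splits as a tensor product in a way compatible with $\DS_x$, and that "being orthogonal to $S$ with $S\subset\Pi$" really guarantees $L(\lambda)_\lambda$ is the highest weight space of the relevant $\fq_2$-factors rather than some larger subquotient. Here I would lean on~\Lem{lem42} (the bijection $\DS_x(M)\to\DS_x(N)$ for the intersection of $h_\beta$-kernels) to replace $L(\lambda)$ by the $h_\beta$-invariants, on~\Lem{lemusefulq} for the simplicity of $L(\lambda)_\lambda$ over the Cartan-type subalgebra, and on the explicit $n=2$ computations of~\ref{q12}--\ref{sq2} for the single-root input. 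Everything else is formal manipulation with $\DS_x$ and weights.
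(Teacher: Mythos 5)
Your overall strategy is the paper's: reduce to $N_{\lambda'}=\sum_{\gamma\in\mathbb{C}S}L(\lambda)_{\lambda-\gamma}$ via (\ref{eq42q}), recognize it as a simple module over a subalgebra of the form $\fg_x^{\fh_x}\times\prod_{\beta\in S}\fq_2(\beta)$, split off the $\fq_2$-factors as a tensor product, and feed each factor into the $n=2$ computations of~\ref{sq2}. However, two steps as you state them do not work. First, in (i) you assert that $M'$ is spanned by the image of $L(\lambda)_\lambda$; this is false whenever $s>0$, since $N_{\lambda'}$ contains the lower weight spaces $L(\lambda)_{\lambda-\gamma}$ for $\gamma\in\mathbb{N}S$ and these contribute to $M'$ (indeed the second copy of $\mathbb{C}\oplus\Pi\mathbb{C}$ in (iii) comes from the bottom of the relevant $\fq_2$-zigzag, not from $L(\lambda)_\lambda$). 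The correct argument is the pure $\fh_x$-weight computation: $(\fn_x)_\alpha M'\subset M_{\lambda'+\alpha}=\DS_x(N_{\lambda'+\alpha})$ by (\ref{eq42q}), and $N_{\lambda'+\alpha}\neq 0$ would force some $\gamma\in\mathbb{N}\Delta^+$ with $\gamma+\alpha\in\mathbb{C}S$, impossible for $\alpha\in\Delta_x^+$. (Equivalently, the weight argument inside~\Lem{lemusefulq} applied to all of $N_{\lambda'}$, not just to the top weight space.) Second, in (iii) you propose to apply~\Lem{lemusefulq} with $\Sigma'=\Sigma\setminus S$; it must be $\Sigma':=S$. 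Only then is $\fg^h=\fg_x^{\fh_x}\times\fl$ with $\fl=\prod_{\beta\in S}\fq_2(\beta)$ containing $x$, and only then is the distinguished simple direct summand of $L(\lambda)|_{\fg^h}$ exactly $N_{\lambda'}=\sum_{\nu\in\mathbb{N}S}L(\lambda)_{\lambda-\nu}$, which is what $\DS_x$ is computed on. With the complementary choice you land in the wrong subalgebra and the wrong weight-space sum, and the tensor factorization you need does not appear.

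A smaller point: your criterion ``$\DS_{x_\beta}$ of the factor is $\mathbb{C}\oplus\Pi\mathbb{C}$ precisely when the $\fq_2$-highest weight is nonzero, i.e.\ when $(\lambda|\beta)\in\mathbb{N}_{>0}$'' conflates two non-equivalent conditions. For $(\lambda|\beta)\notin\mathbb{N}$ but nonzero, $L_{\fq_2}(\nu_i)$ is infinite-dimensional and by~(\ref{CpiC}) its $\DS$ is a single one-dimensional module; only for $(\lambda|\beta)\in\mathbb{N}_{>0}$ (finite-dimensional, atypical, nontrivial) do you get both parities. Your final count of $s$ is the right one, but the justification must go through the finite- versus infinite-dimensional dichotomy of~\ref{sq2}, not through $\nu_i\neq 0$. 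Once these three points are repaired, the argument coincides with the paper's proof.
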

\begin{proof}
Set $N:=L(\lambda)$ and $M:=\DS_x(L)$. Identify $\fh_x^*$ with a subspace in $\fh^*$ as in~\ref{wxr}.
The formula~(\ref{CS}) gives $\lambda-\lambda'\in\mathbb{C}S$. Using~(\ref{eq42q}) we get
$$M'=M_{\lambda'}=\DS_x(N_{\lambda'}),\ \text{ where }
N_{\lambda'}:=
\sum_{\nu\in \mathbb{C}S}N_{\lambda-\nu}.$$

For (i) assume that $(\fn_x)_{\alpha} M'\not=0$ for some
 $\alpha\in\Delta^+(\fg_x)$. Since
$(\fn_x)_{\alpha} M_{\lambda'}\subset M_{\lambda'+\alpha}$, the formula
(\ref{eq42q})  implies the existence of  $\gamma\in\mathbb{N}\Delta^+$ satisfying
$$\lambda-\gamma\in (\lambda'+\alpha)+\mathbb{C}S.$$
Then $\gamma+\alpha\in \mathbb{C}S$, which is impossible since
$\alpha\in\Delta^+$ and $\alpha\not\in \mathbb{C}S$. This establishes (i).

For (ii)
recall that $(\fg_x^{\fh_x})_0=\fh_x$ is spanned by $\{h_j\}_{j\in J}$ 
where
$J\subset \{1,\ldots,n\}$ has cardinality $n-2r$. Therefore
 $$\fg_x^{\fh_x}\cong \underbrace{\fq_1\times\ldots\times\fq_1}_{n-2r\ \text{times}}.$$
 Retain notation of~\ref{usefulq} and set $\Sigma':=S=:\{\beta_1,\ldots,\beta_r\}$.
We have
 \begin{equation}\label{qqqq}
 \fg^h=\fg_x^{\fh_x}\times\fl,\ \text{ where }
 \fl:=\fq_2(\beta_1)\times\ldots\times
\fq_2(\beta_r).\end{equation}

By~\ref{usefulq}, $N_{\lambda'}$ is a simple $\fg^h$-module of the highest weight $\lambda$
and is a direct summand of $N$ viewed as a $\fg^h$-module. 
Since $x\in \fg^h$, the latter property allows to
identify $M'$ with
$\DS_x(N_{\lambda'})$. By above, $N_{\lambda'}\cong L_{\fg^h}(\lambda)$.
In the light of~(\ref{qqqq}),
 $L_{\fg^h}(\lambda)$ can be decomposed as
$$L_{\fg^h}(\lambda)\cong L_{\fg_x^{\fh_x}}(\lambda')\otimes L_{\fl}(\nu),$$
where $\nu$ is the restriction of $\lambda$ to $\fh\cap\fl$
and $\fl$ (resp., $\fg_x^{\fh_x}$) 
 acts by zero on the first (resp., on the second) factor in the above decomposition. Since $x\in\fl$ one has
 $xL_{\fg_x^{\fh_x}}(\lambda')=0$ and
$$\DS_x(L_{\fg^h}(\lambda))\cong L_{\fg_x^{\fh_x}}(\lambda')\otimes 
 \DS_x(L_{\fl}(\nu)).$$
 For $\lambda\in\fh_x^*$ we have $\nu=0$, so 
$L_{\fg^h}(\lambda)=L_{\fg_x^{\fh_x}}(\lambda')$; this establishes (ii).

For (iii)  we denote by $\nu_i$ the restriction of $\lambda$ to $\fh\cap \fq_2(\beta_i)$ and write
 $$L_{\fl}(\nu) \cong  \displaystyle\bigotimes_{i=1}^r L_{\fq_2}(\nu_i).
 $$

The action of $x$ on each 
$L_{\fq_2}(\nu_i)$ coincides  with the action of
a non-zero $x'\in X(\fq_2)$ and using~\ref{sq2} we obtain
 (iii).
\end{proof}


\subsection{The map $\theta_x$ for $\fq_n$}\label{thetaqn}
By~\ref{DScentre}, the functor $\DS_x$ induces the algebra homomorphism
$\theta_x:\cZ(\fg)\to\cZ(\fg_x)$.

By~\ref{G0orbitQ},  each $y\in X(\fq_n)_r$ 
is of the form $y=\phi(x)$, where $\phi$ is an inner automorphism. 
Note that $\phi$ induces an isomorphism $\iota:\fg_x\iso \fg_{y}$ 
and that $\theta_{y}=\iota\circ \theta_x$. Hence we can (and will) take
$x:=x_r$ and use the identification of~\ref{primit}.

For $\nu\in \fh_x^*$ we
denote by $\chi'_{\nu}$ the corresponding central character 
$\chi':\cZ(\fg_x)\to \mathbb{C}$.

\subsubsection{}
\begin{cor}{DStheta}
Take  $x:=x_r$.

\begin{enumerate}
\item For $\lambda\in  \fh_x^*$ the $\fg_x$-module
$\DS_x(L(\lambda))$ has a non-zero primitive vector of weight $\lambda$.
\item
$(\HC_x\circ \theta_x)(z)=\HC(z)|_{\fh_x^*}$,
where 
$\HC_x:\cZ(\fg_x)\to \cS(\fh_x)$ is the Harish-Chandra
homomorphism  for $\fg_x\cong \fq_{n-2r}$. 

\item
 One has $\theta_x^*(\chi'_{\lambda})=\chi_{\lambda}$
for each $\lambda\in \fh_x^*$.

\item
$\Core(\theta^*_x(\chi))=\Core(\chi)$ and $\theta_x^*$ increases
atypicality  by $\rank x$; 
\item
 $\theta_x^*$ is injective and $Im\theta_x^*$ consists of 
the central characters of atypicality at least $\rank x$.
\end{enumerate}
\end{cor}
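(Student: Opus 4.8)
The plan is to prove the five statements essentially in the order listed, since each feeds into the next. For (i), I would use \Prop{primitq}(ii): for $\lambda\in\fh_x^*$ the weight-$\lambda$ component $M'=\DS_x(L(\lambda))_\lambda$ is identified with $L(\lambda)_\lambda$, and by \Prop{primitq}(i) it satisfies $\fn_x M'=0$; since $L(\lambda)_\lambda\neq 0$, any nonzero vector in $M'$ is a primitive vector of weight $\lambda$ in $\DS_x(L(\lambda))$. For (ii), I would trace through the construction of $\theta_x$ in \ref{DScentre}: $\theta_x$ is induced by $\cZ(\fg)\hookrightarrow\cU(\fg)^{\ad x}\to\DS_x(\cU(\fg))=\cU(\fg_x)$. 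Given $z\in\cZ(\fg)$, it acts on $M(\lambda-\rho)$ by the scalar $\HC(z)(\lambda)$ for every $\lambda$, hence on the highest-weight line of any highest-weight module by that scalar. Using (i), $\theta_x(z)$ acts on the primitive vector of weight $\lambda$ in $\DS_x(L(\lambda))$ (for $\lambda\in\fh_x^*$, where $\rho=0$ in the $Q$-type case) by the same scalar that $z$ acts on the weight-$\lambda$ space of $L(\lambda)$, which is $\HC(z)(\lambda)$; but it also acts by $\HC_x(\theta_x(z))(\lambda)$ by definition of $\HC_x$. Since this holds for all $\lambda$ in the Zariski-dense subset $\fh_x^*\subset\fh^*$, the polynomial identity $(\HC_x\circ\theta_x)(z)=\HC(z)|_{\fh_x^*}$ follows. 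Statement (iii) is then the dual reformulation of (ii): $\theta_x^*(\chi'_\lambda)(z)=\chi'_\lambda(\theta_x(z))=\HC_x(\theta_x(z))(\lambda)=\HC(z)(\lambda)=\chi_\lambda(z)$.

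For (iv), the key point is that every central character is of the form $\chi_\lambda$ with $\lambda\in\fh_x^*$ after a suitable choice within its $\Core$-fiber. Concretely: given $\chi$ on $\cZ(\fg_x)\cong\cZ(\fq_{n-2r})$, write $\chi=\chi'_{\lambda'}$ for some $\lambda'\in\fh_x^*$ by \ref{corechi}; then by (iii), $\theta_x^*(\chi'_{\lambda'})=\chi_{\lambda'}$ where now $\lambda'$ is regarded as an element of $\fh^*$ supported on $\fh_x$, i.e. orthogonal to $S$ (by \eqref{CS}). The $\Core$ of $\lambda'$ computed for $\fq_n$ equals the multiset $\{a_i\}$ with the $n-2r$ entries indexed by $J$ being those of $\lambda'$ and the remaining $2r$ entries being $0$ (since $\lambda'$ is orthogonal to $S=\{\vareps_{2i}-\vareps_{2i-1}\}$, the missing coordinates are forced to be equal in pairs, and being orthogonal means... actually they need only be equal in the appropriate pairs). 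Using the definition of $\Core$ in \ref{Qcore} for $\fq_m$ — delete pairs with $a_i+a_j=0$ — the $2r$ extra zeros: we delete $r$ pairs of the form $\{0,0\}$ (since $0+0=0$), leaving exactly $\Core_{\fq_{n-2r}}(\lambda')$. Wait — one must check $0$ has multiplicity issues; but deleting $r$ pairs of zeros from $2r$ zeros is legitimate and maximal among those involving the new coordinates, so $\Core_{\fq_n}(\lambda')=\Core_{\fq_{n-2r}}(\lambda')=\Core(\chi)$. Thus $\Core(\theta_x^*(\chi))=\Core(\chi)$. For atypicality: by \Cor{corcore}(i), $\#\Core=m-2\atyp$, and since $\Core$ is preserved while $m$ increases by $2r$, $\atyp$ increases by exactly $r=\rank x$ (using $\rank x_r = r$ is not quite right — actually \Cor{depthq} gives $\depth\fq_n=[n/2]$ and $\depth\fq_{n-2r}=[(n-2r)/2]$, so $\rank x_r=[n/2]-[(n-2r)/2]=r$).

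For (v), injectivity of $\theta_x^*$ follows from surjectivity of $\theta_x$, which is \Prop{propthetaq} (stated in \ref{mainresultq} as to be proved); alternatively it follows directly from (iv) since $\Core$ together with the atypicality-shift recovers $\chi$ from $\theta_x^*(\chi)$, using that $\chi\mapsto\Core(\chi)$ is injective for $\fq_n$ by \Prop{propchi}. For the image: if $\chi=\chi_\mu$ has $\atyp\chi\geq r$, then $\mu$ has a maximal iso-set orthogonal to it of size $\geq r$, hence by \Cor{isosetnow}(ii) after applying an element of $W$ (which doesn't change $\chi_\mu$, as $\HC$ lands in $W$-invariants) we may assume $\mu$ is orthogonal to $S_r$, i.e. $\mu\in\fh_x^*+\mathbb{C}S$; adjusting by $\mathbb{C}S$ doesn't change $\chi_\mu$ either (by \eqref{Zn}/\eqref{coreABQ}), so $\mu\in\fh_x^*$ and $\chi_\mu=\theta_x^*(\chi'_\mu)\in\im\theta_x^*$. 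Conversely every $\theta_x^*(\chi')$ has atypicality $\geq r$ by (iv). I expect the main obstacle to be (iv): carefully matching the combinatorial definition of $\Core$ for $\fq_n$ against that for $\fq_{n-2r}$ under the embedding $\fh_x^*\hookrightarrow\fh^*$, in particular tracking the multiplicity of $0$ and verifying the "delete the maximal number of pairs" prescription behaves as claimed when $2r$ zero coordinates are adjoined; the Weyl-invariance reductions in (v) are routine given \Cor{isosetnow} and the formula \eqref{Zn} for $Z(\fg)$.
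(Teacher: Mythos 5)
Your proposal is correct and follows the same chain as the paper's (very terse) proof: (i) from Proposition~\ref{primitq}, (ii) by evaluating $\theta_x(z)$ on the primitive vector of weight $\lambda$, (iii) as the dual statement, (iv) from (iii) via the core combinatorics under appending $2r$ zero coordinates, and (v) from (iv) together with Proposition~\ref{propchi}. The extra care you take with the multiplicity of $0$ in the core and with the $W$- and $\mathbb{C}S$-adjustments for the image statement is exactly the detail the paper leaves implicit.
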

\begin{proof}
The assertion (i) follows from~\Prop{primitq} and implies (ii).
In its turn, (ii) implies (iii) and (iv) follows from (iii).
Finally,  (v)
follows from~\Prop{propchi} and (iv).
\end{proof}

\subsubsection{}\begin{cor}{DScentq}
For each $\chi\in\mspec_{\Mod}\cZ(\fq_n)$ one has 
$$\atyp\chi=\depth\chi=\depth \CO(\chi).$$
\end{cor}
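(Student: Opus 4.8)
The plan is to prove the chain of inequalities
$$\atyp\chi\ \leq\ \depth\chi\ \leq\ \depth\CO(\chi)\ \leq\ \atyp\chi$$
and conclude equality throughout. First I would handle $\depth\chi\leq\depth\CO(\chi)$: this is immediate from the definitions in~\ref{depthdef}, since $\depth\chi=\depth\Mod(\chi)$ and any simple (or finite-length) module in $\Mod(\chi)$ with a central character lies in a block of $\CO(\chi)$ by~\ref{Ian} (Musson's theorem: $\mspec_{\Mod}\cZ(\fq_n)=\mspec_{\CO(\fq_n)}\cZ(\fq_n)$, and $\fq_n$ is quasi-reductive), so $\depth$ computed over all modules with character $\chi$ is controlled by $\depth\CO(\chi)$. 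Actually some care is needed: $\depth\chi$ is the sup of $\depth(N)$ over {\em all} $N\in\Mod(\chi)$, not just those in $\CO$; but by Hinich's Lemma and~\ref{Serganovaproperty} the depth of any module is the max of the depths of its simple subquotients, and every simple module with character $\chi$ lies in $\CO(\chi)$, so $\depth\chi=\depth\CO(\chi)$ in fact --- giving that middle equality for free and reducing the problem to $\atyp\chi\leq\depth\chi$ and $\depth\CO(\chi)\leq\atyp\chi$.

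For $\atyp\chi\leq\depth\chi$: write $\chi=\chi_\lambda$, let $r:=\atyp\chi_\lambda$, and choose $x\in X(\fq_n)_r$ with $\supp(x)$ a maximal iso-set orthogonal to $\lambda$; after conjugating by an inner automorphism (using~\Lem{G0orbitQ} and~\ref{thetaqn}) we may take $x=x_r$ with $\lambda\in\fh_x^*$. By~\Prop{primitq} or~\Cor{DStheta}(i), $\DS_x(L(\lambda))$ has a nonzero primitive vector of weight $\lambda$, hence is nonzero, so $x\in\breve X(L(\lambda))$. Since $\rank x=r$ (by~\Cor{depthq}, as $\supp(x)$ is an iso-set of cardinality $r$, combined with~\ref{mainresultq}), the recursive definition of $\depth$ gives $\depth(L(\lambda))\geq\depth(\DS_x(L(\lambda)))+r\geq r=\atyp\chi$. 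As $L(\lambda)\in\Mod(\chi)$ this yields $\depth\chi\geq\atyp\chi$.

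For $\depth\CO(\chi)\leq\atyp\chi$: this is the main obstacle and uses~\Cor{DStheta}(v). By~\eqref{depthleq} we have $\depth\chi\leq\max\{\rank x\mid \chi\in\,Im\theta_x^*\}$. By~\Cor{DStheta}(v), $\chi\in Im\theta_x^*$ forces $\atyp\chi\geq\rank x$; hence every $\rank x$ appearing in that maximum is $\leq\atyp\chi$, giving $\depth\chi\leq\atyp\chi$. The one subtlety is that~\eqref{depthleq} as stated bounds $\depth\chi=\depth\Mod(\chi)$, which via the first paragraph equals $\depth\CO(\chi)$, so this is exactly what is needed. Combining the three steps closes the circle and proves $\atyp\chi=\depth\chi=\depth\CO(\chi)$.
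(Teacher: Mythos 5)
Your two substantive steps are exactly the paper's: the upper bound $\depth\chi\leq\atyp\chi$ comes from~(\ref{depthleq}) combined with~\Cor{DStheta}(v), and the lower bound comes from exhibiting a highest weight module $L(\nu)$ with $\nu\in\fh_x^*$, $\chi=\chi_\nu$, and $\DS_x(L(\nu))\not=0$ via~\Prop{primitq}. However, your middle link is oriented the wrong way and is justified by a false claim. Since $\CO(\chi)\subset\Mod(\chi)$ and $\depth$ of a category is a supremum over its objects, the \emph{trivial} inequality is $\depth\CO(\chi)\leq\depth\chi$, not $\depth\chi\leq\depth\CO(\chi)$. Your argument for the latter rests on the assertion that every simple module with central character $\chi$ lies in $\CO(\chi)$; that is not what~\ref{Ian} (Musson's theorem) says. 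It only says that the \emph{sets} of central characters $\mspec_{\Mod}\cZ(\fq_n)$ and $\mspec_{\CO(\fq_n)}\cZ(\fq_n)$ coincide --- there are plenty of simple $\fq_n$-modules with central character $\chi$ that are not highest weight modules. Likewise, the reduction of $\depth(N)$ to the depths of simple subquotients via~\ref{Serganovaproperty} is stated only for Serre subcategories of finite-length modules, not for arbitrary objects of $\Mod(\chi)$.

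The error is harmless because the flawed link is unnecessary: your third paragraph already bounds $\depth\chi$ (not merely $\depth\CO(\chi)$) by $\atyp\chi$, since~(\ref{depthleq}) and~\Prop{propBZ} apply to all of $\Mod(\chi)$; and your witness $L(\lambda)$ in the second paragraph is a simple highest weight module, hence lies in $\CO(\chi)$, so it gives $\depth\CO(\chi)\geq\atyp\chi$ as well. The correct chain is therefore
$$\atyp\chi\ \leq\ \depth\CO(\chi)\ \leq\ \depth\chi\ \leq\ \atyp\chi,$$
which is how the paper argues. One further small point: for a given $\chi=\chi_\lambda$ you cannot in general conjugate so that $\lambda$ itself lands in $\fh_x^*$; instead one replaces $\lambda$ by the weight $\nu=\sum a_i\vareps_i\in\fh_x^*$ built from $\Core(\chi)$, which satisfies $\chi_\nu=\chi$ by~\Prop{propchi}.
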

\begin{proof}
Set $k:=\atyp\chi$ and take $x:=x_k$.
Using~(\ref{depthleq}) and~\Cor{DStheta}
(v) we obtain $\depth \chi\leq k$ and thus $\depth \CO(\chi)\leq k$.
 Let $\Core(\chi)=\{a_i\}_{i=1}^{n-2k}$. Note that
$\chi=\chi_{\nu}$ for $\nu:=\sum_{i=1}^{n-2k} a_i\vareps_i\in\fh^*_x$. By~\Lem{primitq},  $\DS_x(L(\nu))\not=0$, so
$\depth(L(\nu))\geq  k$. This gives
 $\depth \chi,\depth \CO(\chi)\geq k$ and completes the proof.
\end{proof}

\subsubsection{}
\begin{prop}{propthetaq}
Take $x\in X_r$.

\begin{enumerate}
\item
The map $\theta_x$ is surjective;
\item
$\DS_x(\Mod(\chi))\subset \Mod_x((\theta_x^*)^{-1}\chi)$, where 
$\Mod_x$ stands for the  category of $\fg_x$-modules.
\end{enumerate}
\end{prop}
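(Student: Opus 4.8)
The plan is to establish (i) and then read off (ii) from~\Prop{propBZ}. For (i) I would first reduce to the case $x=x_r$: by~\ref{thetaqn} every $y\in X_r$ has the form $\phi(x_r)$ for an inner automorphism $\phi$, and $\theta_y=\iota\circ\theta_{x_r}$ with $\iota\colon\fg_{x_r}\iso\fg_y$ an isomorphism, so it suffices to show that $\theta_{x_r}$ is surjective. Using the identifications of~\ref{primit} we have $\fg_x\cong\fq_{n-2r}$, with $\fh_x$ spanned by $h_1,\dots,h_{n-2r}$ and $\fh_x^*=\spn(\vareps_1,\dots,\vareps_{n-2r})\subset\fh^*$. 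By~\Cor{DStheta}(ii) the composite $\HC_x\circ\theta_x$ coincides with $\operatorname{res}\circ\HC$, where $\operatorname{res}\colon\cS(\fh)\to\cS(\fh_x)$ is the restriction of polynomial functions to $\fh_x^*$ (the substitution $a_{n-2r+1}=\dots=a_n=0$). Since $\HC$ and $\HC_x$ are injective with images the algebras $Z(\fq_n)$ and $Z(\fq_{n-2r})$ of~\ref{centchar}, the surjectivity of $\theta_x$ amounts to the equality $\operatorname{res}(Z(\fq_n))=Z(\fq_{n-2r})$.

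The inclusion $\operatorname{res}(Z(\fq_n))\subset Z(\fq_{n-2r})$ follows directly from~(\ref{Zn}): if $f\in Z(\fq_n)$ and $\mu\in\fh_x^*$ satisfies $(\mu|\vareps_1+\vareps_2)=0$, then regarding $\mu$ as an element of $\fh^*$ gives $f(\mu-c(\vareps_1-\vareps_2))=f(\mu)$ for all $c\in\mathbb{C}$, which is exactly the defining condition for $\operatorname{res}(f)$ to lie in $Z(\fq_{n-2r})$. For the reverse inclusion I would use the structure of $Z(\fq_m)$: it is generated as an algebra by the odd power sums $p_{2k+1}=\sum_{i=1}^m h_i^{2k+1}$, $k\geq 0$ (Sergeev, \cite{Sq}; these lie in $Z(\fq_m)$ already by~(\ref{Zn})). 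As $\operatorname{res}$ is an algebra homomorphism sending the power sum $p_{2k+1}$ of $\fq_n$ to the power sum $p_{2k+1}$ of $\fq_{n-2r}$, every generator of $Z(\fq_{n-2r})$ is in $\operatorname{res}(Z(\fq_n))$, so $\operatorname{res}$ is surjective; equivalently, writing $\Omega_k\in\cZ(\fq_m)$ for a central element with $\HC(\Omega_k)=p_{2k+1}$, \Cor{DStheta}(ii) together with injectivity of $\HC_x$ gives $\theta_x(\Omega_k^{(n)})=\Omega_k^{(n-2r)}$, and the $\Omega_k^{(n-2r)}$ generate $\cZ(\fq_{n-2r})$. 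This proves (i).

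With (i) in hand, (ii) is immediate from~\Prop{propBZ}(i): for $N\in\Mod(\chi)$ we get $\DS_x(N)\in\Mod_x((\theta_x^*)^{-1}\chi)$, where $(\theta_x^*)^{-1}\chi$ is a single character because the surjectivity of $\theta_x$ forces $\theta_x^*$ to be injective (see~\ref{DScentre}); hence $\DS_x(\Mod(\chi))\subset\Mod_x((\theta_x^*)^{-1}\chi)$. I expect the one real ingredient beyond formal manipulation to be the fact that $Z(\fq_m)$ is generated by the odd power sums — the single point where Sergeev's description of the centre enters; the reduction to $x=x_r$, the commuting square of~\Cor{DStheta}(ii), and the derivation of (ii) are all routine.
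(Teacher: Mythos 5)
Your proposal is correct and follows essentially the same route as the paper's proof: reduction to $x=x_r$ via inner automorphisms, the identification of $\HC_x\circ\theta_x$ with restriction of polynomials to $\fh_x^*$ from \Cor{DStheta}(ii), Sergeev's generation of $Z(\fq_m)$ by the odd power sums $p_{2k+1}$ (which restrict to the corresponding power sums for $\fq_{n-2r}$), and then \Prop{propBZ} for part (ii). The only difference is cosmetic: you separately verify the inclusion $\operatorname{res}(Z(\fq_n))\subset Z(\fq_{n-2r})$, which is automatic since $\theta_x$ already lands in $\cZ(\fg_x)$.
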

\begin{proof}
By above, it is enough to consider $x:=x_r$. Using notation of~\ref{DStheta} (ii)
we set
$$Z_n:=\HC(\cZ(\fq_n))\subset \cS(\fh),\ \ \ \ Z_{n-2r}:=\HC_x(\cZ(\fq_{n-2r}))\subset \cS(\fh_x)\subset \cS(\fh)$$
and identify $\cZ(\fq_n)$ with $Z_n$ and
 $\cZ(\fq_{n-2r})$ with $Z_{2n-r}$.  
 
Consider the projection $\psi:\cS(\fh)\twoheadrightarrow\cS(\fh_x)$ given by
 $$h_i\mapsto h_i,\ \ \ h_j\mapsto 0\ \text{ for } 1\leq i\leq n-2r<j\leq n.$$
 Using the above identification the formula in~\ref{DStheta} (ii) takes the form
 $\theta_x(z)=\theta(z)|_{\fh_x^*}$, that is
$\theta_x$ coincides with the restriction of $\psi$ to $Z_n$.

By~\cite{Sq}, the algebra $Z_n$ is generated by the set $\{p_{2k+1}^{(n)}\}_{k=0}^{\infty}$, where
$$p_{2k+1}^{(n)}:=\displaystyle\sum_{i=1}^n h_i^{2k+1}.$$ 
Since $\psi(p_{2k+1}^{(n)})=p_{2k+1}^{(n-2r)}$, 
 the map $\theta_x$ is surjective. 
Now (ii) follows from~\Prop{propBZ}.
\end{proof}

\subsubsection{Remark}
Note that, as in~\ref{thmtheta} below,  for injectivity of $\theta_x^*$
it is enough to see that
$p_{2k+1}^{(n)}$ lies in $Z_n$, whereas  the proof of surjectivity is based
on the fact that $Z_n$ is generated by these elements.

\subsection{$\DS$ and translation functors}\label{DStransQ} 
Let $\chi_1,\chi_2$  be  central characters of $\fg$. Consider 
the translation functor 
 $T_{\chi_1,\chi_2}:\CO(\chi_1)\to\CO(\chi_2)$ given by
$$T_{\chi_1,\chi_2}(M):=(M\otimes V_{st})^{\chi_2},$$
where $V_{st}=L(\vareps_1)$ is the standard $\fq_n$-module and
$M\mapsto M^{\chi_2}$ stands for the projection $\CO\to \CO(\chi_2)$.
Similarly to Corollary 4.4 in~\cite{Skw},
combining~\Prop{DStheta} (iv) and the formula $\DS_x(L_{\fg}(\vareps_1))=
L_{\fg_x}(\vareps_1)$, we obtain the following corollary.

\subsubsection{}
\begin{cor}{}
Let $\chi_1,\chi_2$ (resp., $\chi'_1,\chi_2'$) be the central characters of $\fg$ (resp., $\fg_x$) satisfying
$\Core(\chi_i)=\Core(\chi'_i)$ for $i=1,2$. Then
$$\DS_x\circ T_{\chi_1,\chi_2}=T_{\chi'_1,\chi'_2}\circ \DS_x.$$
\end{cor}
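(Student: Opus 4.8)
The plan is to unwind the translation functor and use that $\DS_x$ is a tensor functor which transports the central-character decomposition of $\fg$-modules to that of $\fg_x$-modules. First I would collect the ingredients. By~\ref{DSconst}, $\DS_x(M\otimes V_{st})\cong\DS_x(M)\otimes\DS_x(V_{st})$; and $\DS_x(V_{st})=\DS_x(L_{\fg}(\vareps_1))\cong L_{\fg_x}(\vareps_1)$, which one checks directly: writing $x=T_{0,B}$ with $B^2=0$ of rank $r$ and letting it act on the underlying $(n|n)$-dimensional space of $V_{st}$, one finds $\DS_x(V_{st})=\Ker B/\im B\oplus\Pi(\Ker B/\im B)$, of dimension $(n-2r|n-2r)$ and carrying the standard action of $\fg_x\cong\fq_{n-2r}$. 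Next I would record that $\theta_x^*(\chi'_i)=\chi_i$: by~\Cor{corcore}(i), applied to $\fq_n$ and to $\fq_{n-2r}$, the hypothesis $\Core(\chi_i)=\Core(\chi'_i)$ forces $\atyp\chi_i=\atyp\chi'_i+r$, so $\atyp\chi_i\geq\rank x$ and hence $\chi_i\in\im\theta_x^*$ by~\Cor{DStheta}(v); since $\Core(\theta_x^*(\chi'_i))=\Core(\chi'_i)=\Core(\chi_i)$ by~\Cor{DStheta}(iv) while by~\Prop{propchi} the $\Core$ is a complete invariant of the central characters of $\fq_n$, we conclude $\theta_x^*(\chi'_i)=\chi_i$.

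The core of the argument is compatibility of $\DS_x$ with the projection $M\mapsto M^{\chi}$. For $M\in\CO(\chi_1)$ the module $M\otimes V_{st}$ lies in $\CO(\fg)$ and decomposes as a direct sum $\bigoplus_{\chi}(M\otimes V_{st})^{\chi}$ over central characters of $\fg$, with only finitely many nonzero summands since $V_{st}$ has finitely many weights. As $\DS_x$ commutes with direct sums, $\DS_x(M\otimes V_{st})=\bigoplus_{\chi}\DS_x\big((M\otimes V_{st})^{\chi}\big)$. By~\Prop{propthetaq}(ii)---which rests on the surjectivity of $\theta_x$ (\Prop{propthetaq}(i)) via~\Prop{propBZ}---each summand $\DS_x\big((M\otimes V_{st})^{\chi}\big)$ lies in $\Mod_x\big((\theta_x^*)^{-1}\chi\big)$, and because $\theta_x^*$ is injective (\Cor{DStheta}(v)) this fibre is a single character when nonempty. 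Therefore $\DS_x\big((M\otimes V_{st})^{\chi_2}\big)$ is exactly the $\chi'_2$-component of $\DS_x(M\otimes V_{st})$. I would also note that $\DS_x(M)\in\CO(\fg_x)$ (immediate for $\fq_n$, since $\fh_x\subset\fh$ and $\fn_x\subset\fn$, so conditions (C1), (C2) descend), with central character $(\theta_x^*)^{-1}\chi_1=\chi'_1$ by the previous paragraph, so $T_{\chi'_1,\chi'_2}(\DS_x M)$ is defined.

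Assembling the chain, naturally in $M$:
$$\DS_x(T_{\chi_1,\chi_2}M)=\big(\DS_x(M\otimes V_{st})\big)^{\chi'_2}\cong\big(\DS_x(M)\otimes L_{\fg_x}(\vareps_1)\big)^{\chi'_2}=T_{\chi'_1,\chi'_2}(\DS_x M),$$
where the middle isomorphism uses the tensor property of $\DS_x$ and $\DS_x(V_{st})\cong L_{\fg_x}(\vareps_1)$. The main obstacle, and the only place new input is needed, is the central-character bookkeeping of the second paragraph: that $\DS_x$ splits the central-character decomposition along the bijection $\chi\leftrightarrow(\theta_x^*)^{-1}\chi$ (surjectivity of $\theta_x$ together with injectivity of $\theta_x^*$), and that the $\Core$ hypothesis pins down $\theta_x^*(\chi'_i)=\chi_i$ through~\Prop{propchi} and~\Cor{DStheta}. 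The remaining steps---the tensor property of $\DS_x$, the identification $\DS_x(V_{st})\cong L_{\fg_x}(\vareps_1)$, and additivity of $\DS_x$---are formal.
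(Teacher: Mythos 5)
Your proposal is correct and follows essentially the same route as the paper, which proves the corollary in one line by combining \Cor{DStheta}(iv) with the identity $\DS_x(L_{\fg}(\vareps_1))=L_{\fg_x}(\vareps_1)$ in the manner of Corollary 4.4 of~\cite{Skw}. You have simply filled in the details of that sketch: the tensor property of $\DS_x$, the explicit computation of $\DS_x(V_{st})$, and the central-character bookkeeping via surjectivity of $\theta_x$ and injectivity of $\theta_x^*$.
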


\subsection{KW-conditions}
Take $\fg:=\fq_n$. We denote by $P^+(\fg)$ the set of dominant weights, i.e.
$$P^+(\fg):=\{\lambda\in\fh^*|\ \dim L(\lambda)<\infty\}.$$
Character formulae for finite-dimensional simple modules
were obtained in~\cite{PS1},\cite{PS2},\cite{Br},\cite{ChK}. 

By~\cite{P}, $\lambda\in P^+(\fg)$ if and only if
$\lambda=\sum a_i\vareps_i$ with $a_i-a_{i+1}\in\mathbb{Z}_{\geq 0}$ and $a_i=a_{i+1}$
implies $a_i=0$.
Recall that $L(\lambda)$ is typical if $a_i+a_j\not=0$ for
any $1\leq i<j\leq n$. For a typical central character $\chi$ there exists at most 
one weight $\lambda\in P^+(\fg)$ with $\chi_{\lambda}=\chi$:
in this case $a_i-a_{i+1}\in\mathbb{Z}_{>0}$ for each $i$. 

Arguing as in~\Cor{DScentq} we obtain $\depth\Fin(\fg)(\chi)=\depth \chi$ if $\Fin(\fg)(\chi)\not=0$.

\subsubsection{}
\begin{defn}{KWqdef}
We say that $\lambda\in P^+(\fg)$ satisfies the {\em KW-conditions} if there exists  an iso-set $S\subset\Pi$ which is a maximal iso-set orthogonal to $\lambda$.
\end{defn}

\subsubsection{}
\begin{prop}{tameq}
Let $\lambda=\sum a_i\vareps_i\in P^+(\fg)$ has atypicality $k$ and
 satisfies the KW-condition. 
 Take $x$ of atypicality $k$.
 
\begin{enumerate}
\item There exists a unique $\lambda'\in P^+(\fg_x)$ with $\Core\lambda'=\Core\lambda$.
\item
If for some $s$ one has
\begin{equation}\label{ai}
a_s=a_{s+1}=\ldots=a_{s+2k-1}=0,
\end{equation}
 then
 $\DS_x(L(\lambda))=L_{\fg_x}(\lambda')$.
 
 \item If~(\ref{ai}) does not hold, then  $k=1$ and
$$\DS_x(L(\lambda))=L_{\fg_x}(\lambda')\oplus \Pi(L_{\fg_x}(\lambda')).$$
\end{enumerate}
\end{prop}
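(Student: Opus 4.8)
The plan is to reduce everything to the already-proved Proposition \ref{primitq}, exactly as the surrounding $\fq_n$-analysis does. First I would dispose of part (i): since $\lambda$ has atypicality $k$, its core $\Core(\lambda)$ is a multiset of size $n-2k$ (by \Cor{corcore} (i)), and $\fg_x\cong\fq_{n-2r}=\fq_{n-2k}$ by \Prop{propqn}. Writing $\Core(\lambda)=\{c_1,\dots,c_{n-2k}\}$, the weight $\lambda':=\sum c_i\vareps_i\in\fh_x^*$ has $\Core(\lambda')=\Core(\lambda)$, and after reordering the $c_i$ so that $c_i-c_{i+1}\in\ZZ_{\geq 0}$ with equality forcing $c_i=0$, Penkov's description of $P^+(\fq_{n-2k})$ (recalled in \ref{notq}ff) shows $\lambda'\in P^+(\fg_x)$; uniqueness is immediate since $\Core$ is injective on central characters (\Prop{propchi}) and dominant weights inside a block of $\Fin(\fq_m)$ are pinned down by $\lambda$ up to the residual $a_i=a_{i+1}=0$ ambiguity, which the ordering removes. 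Note $\atyp\lambda'=0$ since $\#\Core\lambda'=n-2k$.

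For parts (ii) and (iii), since $\lambda$ satisfies the KW-condition, pick a maximal iso-set $S\subset\Pi$ orthogonal to $\lambda$; by \ref{isosetnow}(ii) (or directly) we may conjugate by $W$ so that $S=S_k$, and by \ref{findimq} this conjugation does not change $\DS_x(L(\lambda))$ up to the canonical isomorphism $\fg_x\iso\fg_{gx}$, so we may take $x=x_k$ and apply \Prop{primitq} verbatim. Its part (iii) gives
$$\DS_x(L(\lambda))_{\lambda'}\cong L_{\fg_x^{\fh_x}}(\lambda'')\otimes(\mathbb{C}\oplus\Pi\mathbb{C})^{\otimes s},$$
where $\lambda''=\lambda|_{\fh_x}$ and $s=\#\{\beta\in S\mid(\lambda|\beta)\in\NN_{>0}\}$; here I must check $\lambda''=\lambda'$ as elements of $\fh_x^*$, which follows because $\Core(\lambda)$ is obtained from $\{a_i\}$ by deleting the $k$ pairs $a_i+a_j=0$ indexed precisely by the roots of $S$, so the surviving coordinates of $\lambda|_{\fh_x}$ are exactly the entries of $\Core(\lambda)$. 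The condition (\ref{ai}) — a block of $2k$ consecutive zero coordinates — is exactly the statement that each $\beta\in S=S_k$ has $(\lambda|\beta)=0$ (recall $\beta$ is of the form $\vareps_p-\vareps_q$ with $a_p=a_q=0$ when the coordinates in between vanish), hence $s=0$; then $\DS_x(L(\lambda))_{\lambda'}\cong L_{\fg_x^{\fh_x}}(\lambda')$, and since $\fn_x$ kills this space (part (i) of \Prop{primitq}) and $\lambda'\in P^+(\fg_x)$, the simple quotient $L_{\fg_x}(\lambda')$ appears; comparing with the fact that $\sdim\DS_x(L(\lambda))=\sdim L(\lambda)$ and, for $s=0$, the weight $\lambda'$ occurs with multiplicity one in $\DS_x(L(\lambda))$, and that $\DS_x$ of a finite-dimensional typical-core module lands in a semisimple category (via $\Fin(\chi_{\lambda'})$ being semisimple when $0\notin\Core\lambda'$, or the Clifford-algebra description of \ref{Fintyp}), one concludes $\DS_x(L(\lambda))=L_{\fg_x}(\lambda')$.

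If (\ref{ai}) fails, then some $\beta\in S$ has $(\lambda|\beta)\in\NN_{>0}$, so $s\geq 1$. I claim $k=1$: if $k\geq 2$ then $S=S_k$ contains two disjoint roots $\beta,\beta'$ and the dominance constraints $a_i-a_{i+1}\in\ZZ_{\geq 0}$ together with orthogonality $a_p=-a_q$ across each $\beta=\vareps_p-\vareps_q$ force, after the $W$-normalisation putting $S$ into the standard "nested" form, that at most one of the pairings $(\lambda|\beta)$ can be nonzero — here I would run the short combinatorial argument comparing the coordinate blocks cut out by successive roots of $S_k$, using that between the two indices of an iso-root all coordinates must be zero once we are not in case (\ref{ai}) for the inner ones; so $s=1$ and $k=1$. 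Then \Prop{primitq}(iii) yields $\DS_x(L(\lambda))_{\lambda'}\cong L_{\fg_x^{\fh_x}}(\lambda')\otimes(\mathbb{C}\oplus\Pi\mathbb{C})$, i.e. a $\fg_x^{\fh_x}$-module of dimension $(d|d)$ where $(d|0)=\dim L_{\fg_x^{\fh_x}}(\lambda')$; by part (i) $\fn_x$ annihilates it, so $\DS_x(L(\lambda))$ surjects onto both $L_{\fg_x}(\lambda')$ and $\Pi L_{\fg_x}(\lambda')$, and again semisimplicity of the ambient block (the core $\Core\lambda'$ is typical for $\fg_x$, so $\Fin(\fg_x)(\chi_{\lambda'})$ is semisimple by \ref{Fintyp}) forces $\DS_x(L(\lambda))=L_{\fg_x}(\lambda')\oplus\Pi L_{\fg_x}(\lambda')$, matching the $\vareps_1$-weight multiplicity computed above. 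The main obstacle I anticipate is the bookkeeping in the last paragraph: verifying precisely that the failure of (\ref{ai}) forces $k=1$ and $s=1$, i.e. controlling how the dominance order on $\{a_i\}$ interacts with a maximal simple-root iso-set — this is where one genuinely uses the KW-condition $S\subset\Pi$ rather than an arbitrary maximal iso-set, and it should be stated and checked carefully.
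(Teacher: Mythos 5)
Your proposal follows essentially the same route as the paper: normalize $x$ so that $\supp(x)$ is the iso-set $S\subset\Sigma$ provided by the KW-condition (via \ref{findimq}), apply \Prop{primitq} to compute the $\lambda'$-weight space, use the central-character/block argument to see that every composition factor of $\DS_x(L(\lambda))$ is $L_{\fg_x}(\lambda')$ up to parity, and settle the dichotomy (ii)/(iii) by the combinatorial observation that dominance forces all roots of $S$ to sit on zero coordinates as soon as $k\geq 2$ (the paper states this as ``easy to see''; your sketch of it is correct and your instinct that it is the one point requiring care is reasonable). One justification should be repaired: in case (ii) the block $\Fin(\fg_x)(\chi_{\lambda'})$ need \emph{not} be semisimple, since $0$ can lie in $\Core(\lambda')$ (e.g.\ when $\lambda$ has $2k+1$ zero coordinates), so you cannot appeal to semisimplicity there; but this appeal is also unnecessary, because once all composition factors are known to have highest weight $\lambda'$, the equality $\dim\DS_x(L(\lambda))_{\lambda'}=\dim L_{\fg_x^{\fh_x}}(\lambda')$ from \Prop{primitq} (with $s=0$) already forces length one. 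In case (iii) your semisimplicity argument is fine (there dominance excludes zero coordinates altogether), though the more robust conclusion, as in the paper, is that the $\lambda'$-weight space consists of primitive vectors of both parities and has exactly twice the dimension of one highest-weight space, so the two highest-weight submodules it generates are simple and sum directly to the whole module.
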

\begin{proof}

Take $S$ as in~\ref{KWqdef}. By~\ref{findimq},
 we can (and will) assume that $\supp(x)=S$.
Define $\fg_x$ and $\fh_x$ as in~\ref{primit}.
Since $\rank x=k$, $\Core(\lambda)$ has the cardinality $n-2k=\dim \fh_x$;
this gives (i). By~\Cor{DScentq}, 
$$(\theta_x^*)^{-1}(\chi_{\lambda})=\chi'_{\lambda'},$$
where $\chi'_{\lambda'} \in \mspec\cZ(\fg_x)$.
Since $M:=\DS_x(L(\lambda))$ is finite-dimensional, 
all simple subquotients of $M$ are of the form
$L_{\fg_x}(\lambda')$ or $\Pi(L_{\fg_x}(\lambda'))$.
Using~\Prop{primitq} we obtain $\lambda'=\lambda|_{\fh_x}$
and deduce (ii). 

It is easy to see that if $\lambda\in P^+(\fg)$
satisfies the KW-conditions and~(\ref{ai}) does not hold, then
 for some index $i$ one has $a_{i+1}=-a_i\not=0$ and $S=\{\vareps_i-\varesp_{i+1}\}$.
Since $\lambda$ is dominant, $a_i-a_{i+1}\in\mathbb{N}_{>0}$ and
(iii) follows from from~\Prop{primitq} (iii).
\end{proof}

\subsection{$\DS_x$ for $\fsq_n$, $\fpq_n$ and $\fpsq_n$}\label{qrelatives}
We retain notation of~\ref{DSq}. Recall that $z:=T_{Id,0}$.
Recall that 
$\iota$ stands for the canonical maps
$\fq_n\twoheadrightarrow\fpq_n$ and $\fsq_n\twoheadrightarrow\fpsq_n$.

\subsubsection{}
\begin{prop}{propsqn}
Take $x\in \fg$ with $\supp(x)=S_r$.

\begin{enumerate}

\item For $r=\frac{n}{2}$ one has  $\DS_x(\fsq_n)\cong\mathbb{C}$,  $\DS_x(\fpq_n) \cong\Pi(\mathbb{C})$ and  $\DS_x(\fpsq_{2r}) \cong\mathbb{C}\times \Pi(\mathbb{C})$.

\item For $r<\frac{n}{2}$ the algebra $\DS_x(\fsq_n)$ (resp., $\DS_x(\fpq_n),\DS_x(\fpsq_n)$)
 can be identified
with $\fsq_{n-2r}$ (resp., with $\fpq_{n-2r}, \fpsq_{n-2r})$).\end{enumerate}
\end{prop}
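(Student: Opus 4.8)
The plan is to reduce everything to the structural decomposition of $\fg^h$ already obtained in the proof of~\Prop{propqn}, and to the computations for $\fq_2,\fsq_2,\fpq_2,\fpsq_2$ carried out in~\ref{q12}. Write $x:=x_r=\sum_{\alpha\in S_r}x_\alpha$ with $x_\alpha\in\fg_\alpha$. By~\Lem{lem42} applied to $N=\fg$ (the adjoint module, with the standard $\fh$), we have $\DS_x(\fg)=\DS_x(M)$ where $M=\bigcap_{\alpha\in S_r}\fg^{h_\alpha}$; inside $\fq_n$ this $M$ is precisely $\bigoplus_{\alpha\in S_r}\fq_2(\alpha)\oplus\fq_{n-2r}$ as a $\mathbb{C}x$-module, exactly as in~\Prop{propqn}. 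The same decomposition holds inside $\fsq_n$ after replacing $\fq_2(\alpha)$ by $\fsq_2(\alpha)$ and $\fq_{n-2r}$ by $\fsq_{n-2r}$, taking care of the single trace relation $\sum h_i=z$; and it descends to $\fpq_n,\fpsq_n$ by applying $\iota$ and using that $\DS_x$ commutes with quotients by $x$-stable ideals (so $\DS_x(\iota(\fa))=\iota_x(\DS_x(\fa))$ for the induced map $\iota_x$).

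First I would treat $r<\tfrac n2$, case (ii). Since $x\fq_{n-2r}=0$ and, for each $\alpha\in S_r$, the action of $x$ on $\fq_2(\alpha)$ is the adjoint action of a nonzero element of $X(\fq_2)$, we get $\DS_x(\fq_2(\alpha))=0$ by~\ref{sq2}, hence $\DS_x(\fq_n)=\fq_{n-2r}$ (this is~\Prop{propqn}). For $\fsq_n$: one has $\fsq_n^{h}=\fsq_2(\alpha_1)\times\cdots\times\fsq_2(\alpha_r)\times\fsq_{n-2r}'$ where $\fsq_{n-2r}'$ differs from $\fsq_{n-2r}$ only in its central/Cartan part; since $\DS_x(\fsq_2(\alpha))$ identifies with $\mathbb{C}e_\alpha\subset\fsl_2(\alpha)$ (see~\ref{sq2}), which becomes an even root vector, a bookkeeping of the surviving Cartan directions shows $\DS_x(\fsq_n)$ has the root system and grading of $\fsq_{n-2r}$, giving the identification. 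The $\fpq$- and $\fpsq$-cases follow by applying $\iota$ and noting $r<\tfrac n2$ forces the relevant central element $z$ (resp.\ its image) to survive or vanish consistently with $\fpq_{n-2r}$ (resp.\ $\fpsq_{n-2r}$).

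For (i), $r=\tfrac n2$, the "remainder" $\fq_{n-2r}=\fq_0=0$ disappears entirely, so $\DS_x(\fsq_n)$ and its relatives come only from the surviving Cartan/central pieces of the $\fq_2(\alpha_i)$-blocks. The point is that $\DS_x$ of the $(1|1)$-dimensional center-type directions contributes a copy of $\mathbb{C}$ or $\Pi(\mathbb{C})$: concretely, for $\fsq_{2r}$ one is left with the single even direction $\sum h_i$ modulo $[x,\fg]$, i.e.\ $\mathbb{C}$; for $\fpq_{2r}$ one kills $z$ and is left with the odd class of $H_1$-type, i.e.\ $\Pi(\mathbb{C})$; and for $\fpsq_{2r}$ both a $\mathbb{C}$ and a $\Pi(\mathbb{C})$ survive. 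I would make this precise by the $n=2$ computations of~\ref{q12} (where $\DS_E(\fsq_2)=\mathbb{C}e$, $\DS_x(\fpq_2)=\Pi(\mathbb{C})$, $\DS_x(\fpsq_2)=\mathbb{C}\times\Pi(\mathbb{C})$ for the appropriate $x$) combined with $\depth$-additivity over the product decomposition $\fg^h=\prod_{i}\fg_2(\alpha_i)$, since $\DS_x$ is a tensor/product functor. The main obstacle I expect is the careful accounting of the Cartan subalgebra and the trace/center relations when passing from $\fq$ to $\fsq$, $\fpq$, $\fpsq$: one must verify that exactly the right one-dimensional (even or odd) summands of $\fh^x/([x,\fg]\cap\fh)$ survive, and that no spurious even or odd direction appears — this is where the cases $r=\tfrac n2$ genuinely differ from $r<\tfrac n2$, and where the small-rank computations of~\ref{q12} are indispensable as the base of the induction.
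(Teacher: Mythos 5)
Your overall strategy (pass to the $h_\alpha$-fixed part via \Lem{lem42} and reduce to rank-two computations) is a reasonable starting point, but two of the mechanisms you rely on break down exactly where the proposition is delicate, so the proposal has genuine gaps.

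First, the block decomposition does not descend from $\fq_n$ to $\fsq_n$ as you claim. The condition $\tr B=0$ is a \emph{single global} constraint, so $\fsq_n\cap\bigl(\bigoplus_{\alpha\in S_r}\fq_2(\alpha)\oplus\fq_{n-2r}\bigr)$ is not $\bigoplus_\alpha\fsq_2(\alpha)\oplus\fsq_{n-2r}$: it contains $r$ extra odd diagonal directions (traceless globally but not block-by-block). These are not harmless Cartan bookkeeping: for $H=T_{0,D}$ with $D$ diagonal one has $[T_{0,E_{ij}},T_{0,D}]=(d_i+d_j)T_{E_{ij},0}$, so these extra odd elements map under $\ad x$ onto the even root vectors $e_\alpha$, which therefore lie in $x\cdot M$ and die in $\DS_x$. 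Taking your product decomposition at face value (with $\DS_x(\fsq_2(\alpha))=\mathbb{C}e_\alpha$) would produce $r$ spurious even directions in $\DS_x(\fsq_n)$ — e.g.\ a two-dimensional even algebra for $\fsq_4$, $r=2$, instead of $\mathbb{C}$. The "bookkeeping of surviving Cartan directions" you defer is precisely the content of the proof. The paper sidesteps the block-by-block route: it groups the $r$ small blocks into one subalgebra $\tilde{\fsq}_{2r}$, observes that $\tilde{\fsq}_{2r}+\mathbb{C}H$ (with $H$ the one missing odd diagonal direction) is isomorphic \emph{as a module over} $\tilde{\fsq}_{2r}\ni x$ to the adjoint module $\fq_{2r}$, and concludes that its $\DS_x$ vanishes, leaving exactly $\fsq_{n-2r}$.

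Second, your assertion that $\DS_x$ "commutes with quotients by $x$-stable ideals" is false: $\DS_x$ is not exact, and Hinich's Lemma yields only a six-term sequence. Part (i) of the very statement you are proving refutes it: $\DS_x(\fsq_{2r})\cong\mathbb{C}$ and $\fpsq_{2r}=\fsq_{2r}/\mathbb{C}$, yet $\DS_x(\fpsq_{2r})\cong\mathbb{C}\times\Pi(\mathbb{C})$. (Relatedly, the element you expect to survive for $\fsq_n$, namely $z=\sum h_i$, in fact dies: $z=[x,y]\in x\cdot M$ for $y$ with $\supp(y)=-\supp(x)$.) The paper's proof of (i) is an application of Hinich's Lemma to $0\to\fsq_n\to\fq_n\to\Pi(\mathbb{C})\to 0$ and $0\to\mathbb{C}\to\fsq_{2r}\to\fpsq_{2r}\to 0$, using $\DS_x(\fq_n)=0$, together with the explicit element $\iota(y)$ to exclude the alternative $\DS_x(\fpsq_{2r})=0$. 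You would need to replace your quotient argument by something of this kind.
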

\begin{proof}
For (i) recall that $\DS_x(\fq_n)=0$ if $r=\frac{n}{2}$.
Since  $\fq_n/\fsq_n\cong \Pi(\mathbb{C})$ as $\fsq_n$-modules,
Hinich's Lemma gives  $\DS_x(\fsq_n)\cong\mathbb{C}$;
the case $\fpq_n$ is similar. Take $\fg:=\fpsq_{2r}$.
Since $\fg=\fsq_{2r}/\mathbb{C}$,
Hinich's Lemma implies that  $\DS_x(\fg)$ is either $0$
or $\mathbb{C}\times \Pi(\mathbb{C})$. It is easy to see that $\fq_n$ contains $y$ with $\supp(y)=-\supp(x)$ and $[y,x]=z$.
Therefore $[\iota(x),\iota(y)]=0$.
 One readily sees that $\iota(y)\not\in [x,\fg]$, so
$\iota(y)$ has a non-zero image in $\DS_x(\fg)$. This gives (i).

For (ii) take $r<\frac{n}{2}$ and
denote by $\tilde{\fq}_{2r}$ the copy of 
$\fq_{2r}$ corresponding to the root system
$\{\vareps_i-\varesp_j\}_{n-2r<i,j\leq n}$. 

Take $\fg:=\fsq_n$ and set
 $\tilde{\fsq}_{2r}:=\fsq_n\cap \tilde{\fq}_{2r}$.
It is easy to see that
$$\fp:=\fsq_{n-2r}+ \tilde{\fsq}_{2r}+\fg^{\fh}$$
is  a subalgebra of $\fsq_n$. By~\Lem{lem42} 
$$\DS_x(\fp)=\DS_x(\displaystyle\sum_{\alpha\in Y}\fp_{\alpha}+\fp^{\fh}),\ \ \ \DS_x(\fg)=\DS_x(\displaystyle\sum_{\alpha\in Y}\fg_{\alpha}+\fg^{\fh}),$$
where
$Y:=\{\alpha\in\Delta|\ (\alpha|\beta^{\vee})=0\ \text{ for }\beta\in\supp(x)\}$.
Since $\fg^{\fh}=\fp^{\fh}$ and  $\fg_{\alpha}=\fp_{\alpha}$ for $\alpha\in Y$,
we obtain
$\DS_x(\fp)=\DS_x(\fg)$.

As $\tilde{\fsq}_{2r}$-module $\fp$ can be decomposed as
\begin{equation}\label{peq}
\fp=\fsq_{n-2r}\oplus (\tilde{\fsq}_{2r} +\mathbb{C} H),
\end{equation}
where $H$ spans  $\fp_1^{\fp_0}$.
 Note that $x\in\tilde{\fsq}_{2r}$ and that  $[\tilde{\fsq}_{2r},\fsq_{n-2r}]=0$.
Therefore
$$\DS_x(\fp)=\fsq_{n-2r}\oplus \DS_x(\tilde{\fsq}_{2r} +\mathbb{C} H).$$

Define  a linear bijection $\tilde{\fsq}_{2r} +\mathbb{C} H\iso \fsq_{2r}+\mathbb{C}T_{0,Id}=\fq_{2r}$
via the identification $\ft=\fsq_{2r}$ and  $H\mapsto  T_{0,Id}$.
One readily sees that this map is a $\tilde{\fsq}_{2r}$-isomorphism. 
One has 
$\DS_x(\tilde{\fsq}_{2r}+\mathbb{C} H)=\DS_x(\fq_{2r})=0$ and so
 $\DS_x(\fsq_n)=\DS_x(\fp)=\fsq_{n-2r}$ as required.
 
For the case $\fpq_n$  we  identify  
$\tilde{\fq}_{2r}$ with 
$\iota(\tilde{\fq}_{2r})\subset\fpq_n$.  Then  $x\in \tilde{\fq}_{2r}$
and 
$\fpq_{n-2r}\oplus \tilde{\fq}_{2r}$ is a subalgebra of $\fpq_n$.
Arguing as above, we obtain
$$\DS_x(\fpq_n)=\DS_x(\fpq_{n-2r}\oplus \tilde{\fq}_{2r})=\fpq_{n-2r}.$$

 For   the remaining case $\fg:=\fpsq_n$ we  identify  
$\tilde{\fsq}_{2r}$ with 
$\iota(\tilde{\fsq}_{2r})\subset\fg$ and substitute $\fp$ by 
$$\fpsq_{n-2r}+ \tilde{\fsq}_{2r}+\fg^{\fh}=
\fpsq_{n-2r}\oplus (\tilde{\fsq}_{2r} +\mathbb{C} H).$$
By above, $\DS_x(\tilde{\fsq}_{2r}+\mathbb{C} H)=0$, so
 $\DS_x(\fpsq_n)=\fpsq_{n-2r}$ as required.
\end{proof}

\subsubsection{Case $\fsq_n$}\label{DSsq}
 One has $X(\fsq_n)=X(\fsq_n)_{iso}=X(\fq_n)$. By~\Prop{propsqn}
we have $\depth(\fsq_n)=[\frac{n}{2}]$ and $X(\fsq_n)_r=X(\fq_n)_r$.
For $\fsq_n$ 
the analogue of
\Prop{propthetaq} holds for  $r\not=\frac{n}{2}$ (the proof is the same).

\subsubsection{Case $\mathfrak{pq}_n$}
One has $X(\mathfrak{pq}_n)_{iso}=\iota(X(\fq_n))$ and 
$$X(\mathfrak{pq}_n)=\iota(X(\fq_n))\cup X',\ \text{ where } X'=\{\iota(T_{0,B}) |\ B^2\in \mathbb{C}^*\Id\}.$$

Each element in $X'$ is $GL_n$-conjugated to $x':=T_{0,B}$, where
$B$ is a diagonal matrix with $B^2\in \mathbb{C}^*\Id$.
 It is easy to see that the algebra 
$\DS_{x'}(\fpq_n)$ can identified with $\mathbb{C}x'\cong \Pi(\mathbb{C})$.
We obtain
$$\depth(\fpq_n)=[\frac{n}{2}],\ \ \ 
\ \iota(X(\fq_n)_r)\subset X(\fpq_n)_r,\ \ X'\subset  X(\fpq_n)_{[\frac{n}{2}]}.$$ 
\Prop{propthetaq} holds for $r<\frac{n}{2}-1$ (the proof is the same).

\subsubsection{Case $\mathfrak{psq}_n$}
One has $X(\mathfrak{psq}_{n})=X(\mathfrak{pq}_{n})\cap \fpsq_n$ and
 $X(\mathfrak{psq}_{n})_{iso}=X(\mathfrak{pq}_{n})_{iso}$.
This gives  $X(\mathfrak{psq}_{2n+1})=\iota(X(\fsq_{2n+1}))$ and
$$X(\mathfrak{psq}_{2n})=\iota(X(\fsq_{2n}))\cup X',\ \text{ where } X'=\{\iota(T_{0,B}) |\ B^2\in \mathbb{C}^*\Id,\ Tr B=0\}.$$
Note that $X'$ is $GL_n$-conjugated to $x':=T_{0,B}$, where
$B\in\fsl_{2n}$ is a  diagonal matrix satisfying $B^2\in \mathbb{C}^*\Id$. It is easy to see that the algebra 
$\DS_{x'}(\fpsq_{2n})$ can identified with 
$\mathbb{C}T_{B,0}\oplus \mathbb{C}x'\cong \mathbb{C}\times \Pi(\mathbb{C})$.
Summarizing we have
$$\depth(\fpsq_{n})=[\frac{n}{2}],\ \ \
\ \iota(X(\fsq_n)_r)\subset X(\fpsq_n)_r,\ \ X'\subset  X(\fpsq_n)_{\frac{n}{2}}.$$

\Prop{propthetaq} holds for $r<\frac{n}{2}-1$ (the proof is the same).

\section{The case of finite-dimensional Kac-Moody superalgebras}\label{sectbasic}
\label{orbit}
Let $\fg$ be a finite-dimensional Kac-Moody superalgebra.
We retain notation of Section~\ref{sectKMQ}.
From~\cite{DS} it follows that  $X(\fg)=X(\fg)_{iso}\ $, 
$\depth(\fg)=\defect\fg$ and
\begin{equation}\label{isosetrank}
\text{ if $\supp(x)$ is an iso-set, then $\rank(x)$ is equal to the cardinality of $\supp(x)$.}
\end{equation}

We recall some other results of~\cite{DS} in~\ref{gx}, \ref{endrecollections}
below.

%
%
%

\subsection{Choice of base}\label{isosetbasic1}
The algebra $D(m|n)$ with $m\not=0$ admits an involutive automorphism
 $\sigma$ which acts on $\fg_0=\mathfrak{o}_{2m}\times \mathfrak{o}_{2m}$ 
 as follows:
$\sigma|_{\mathfrak{sp}_{2n}}=\Id$ and  $\sigma|_{\mathfrak{o}_{2m}}$
is a Dynkin diagram involution if $m>1$ and $-\Id$ for $m=1$
(one has $\mathfrak{o}_2=\mathbb{C}$). 
Note that $\sigma(\fh)=\fh$.
We denote by $\sigma$ also the induced map on $\fh^*$ (then
 $\sigma=r_{\vareps_m}$).

We fix a base $\Sigma\subset \Delta$ with the following properties:
\begin{itemize}
\item
$\Sigma$ contains a maximal
possible number of odd roots;

\item $\sigma(\Sigma)=\Sigma$ for $D(m|n)$ if $m>1$.
\end{itemize}

 Then $\Sigma$ contains
an iso-set of the maximal possible cardinality (equal to $\defect \fg$).

For instance, for $D(2|2)=\osp(4|4)$ we may take $\Sigma=\{\delta_1-\vareps_1,\vareps_1-\delta_2,\delta_2\pm\vareps_2\}$.

\subsection{The algebra $\fg_x$}\label{gx}
Let $S$ be an iso-set of cardinality $r>0$ satisfying 
$$S\subset \Sigma\cup(-\Sigma).$$
We fix $x\in X$ such that $\supp(x)=S$.

\subsubsection{}\label{alggx}
One has 
${\fh}^x=\{h\in{\fh}|\ S(h)=0\}$. We introduce
$${\Delta}_x:=(S^{\perp}\cap{\Delta})\setminus(-S\cup S).$$
By~\cite{DS},  ${\fg}_x$ can be identified
with a subalgebra of ${\fg}$ generated by the root spaces
$\fg_{\alpha}$ with $\alpha\in{\Delta}_x$ and a subalgebra
${\fh}_x\subset\fh^x$ with the following properties
$${\fh}_x\oplus (\sum_{\beta\in
  S}\mathbb{C}h_{\beta})={\fh}^x;\ \ \ \forall \alpha\in {\Delta}_x\ \  [\fg_{\alpha},\fg_{\alpha}]\subset
{\fh}_x.$$
Moreover, ${\fh}_x$
is a Cartan subalgebra of ${\fg}_x$.

If ${\Delta}_x$ is not empty, then
${\Delta}_x$ is the root system of the Lie superalgebra ${\fg}_x$ and one can choose a base
${\Sigma}_x$ in $\Delta_x$ such that 
\begin{equation}\label{triangx}
\Delta^+({\Sigma}_x)=\Delta^+\cap {\Delta}_x.
\end{equation}
(If $\Delta_x=\emptyset$ we take $\Sigma_x=\emptyset$.)
If ${\fg}=\fgl(m|n)$ (resp., $\osp(m|n)$), then ${\fg}_x=\fgl(m-r|n-r)$
(resp., $\osp(m-2r|n-2r)$).
For  $\fg=D(2|1;a),G_3,F_4$ with $x\not=0$, one has  $r=1$ and ${\fg}_x=\mathbb{C},\fsl_2,\fsl_3$ respectively.

\subsubsection{Dual Coxeter number}\label{dualCoxeter}
The restriction of the non-degenerate invariant bilinear form on $\fg$ gives a 
non-degenerate invariant bilinear form on $\fg_x$.

Recall that the dual Coxeter number $h^{\vee}(\fg)$ is the eigenvalue of
the Casimir operator on the adjoint representation.
By~\cite{HW},
$\theta$ maps the Casimir operator of $\fg$ to the Casimir operator
of $\fg_x$. Therefore  
$$h^{\vee}(\fg)=h^{\vee}(\fg_x).$$

\subsubsection{}\label{W''}
We identify $\fh^*_x$
 with a subspace in $\fh^*$: we take $\fh^*_x$  spanned
 by $\Delta_x$ if $\fg_x\not=\mathbb{C}$ and $\fg\not=\fgl(m|n)$;
 for $\fg=\fgl(m|n)$ we take the minimal span of
 $\vareps_i$s and $\delta_j$s which contains  $\Delta_x$;
 for $\fg_x=\mathbb{C}$ we choose an arbitrary $\fh^*_x$ with 
the property $S^{\perp}=\mathbb{C}S\oplus \fh_x^*$.
We set
 $$W'':=\{w\in W|\ w(-S\cup S)=(-S\cup S)\}.$$
Then $W''\fh^*_x=\fh^*_x$ and $W''\Delta_x=\Delta_x$.
Note that $W''$ contains 
 $W(\fg_x)$ (the Weyl group of $\fg_x$)  which is generated by $r_{\alpha}$ with $\alpha\in (\Delta_x\cap \Delta_0)$. Viewing $W''$ as a subgroup of $GL(\fh^*_x)$ we obtain
$W''=W(\fg_x)\cup W(\fg_x)\sigma_x$, where  $\sigma_x$
is as follows:
 \begin{itemize}
 \item
 for $\fg=D(m|n)$ with $m>r$ we have
 $\fg_x=D(m-r|n-r)$ and $\sigma_x$ is as above;
 \item
 for $\fg=F(4)$ $\sigma_x$ is 
the involution of the Dynkin diagram of $\fg_x=\fsl_3$;
\item
  for $\fg=D(2|1;a)$ we have $\fg_x=\mathbb{C}$ and  $\sigma_x:=-Id$;
\item
 $\sigma_x:=\Id$ for all other cases. 
  \end{itemize}

In all cases $\sigma_x(\Sigma_x)=\Sigma_x$.

\subsection{The map $\theta_x^*$}\label{endrecollections}
Consider the usual $\rho$-twisted action of $W$ on $\fh^*$: $$w\lambda:=w(\lambda+\rho)-\rho.$$
The restriction of the Harish-Chandra map gives an  algebra 
 monomorphism 
 $$\HC:\cZ(\fg)\to\cS(\fh)^{W.}.$$

Using~\Lem{lem42} and~\ref{usefulq} it is easy to see that 
\begin{equation}\label{HC}
\HC_x(\theta_x(z))=\HC(z)|_{\fh_x^*}. 
\end{equation}

\subsubsection{}\label{preservescore}
Take $\lambda\in\fh_x^*\subset\fh^*$. Let $\chi'_{\lambda}$ be the corresponding
central character of $\fg_x$. By above, $\theta_x^*(\chi'_{\lambda})=\chi_{\lambda}$.
  As in~\Cor{DStheta} this implies that $\theta_x^*$ preserves
 the cores for non-exceptional $\fg$ and that
$Im\theta_x^*$    consists of the central characters of atypicality at
least $\rank x$.  Recall that $\DS_x(\Mod(\chi))=0$ if 
 $\chi\not\in Im\ \theta_x^*$, see~\Prop{propBZ}. 
Arguing as in~\Cor{DScentq} we obtain
$$\atyp\chi=\depth \CO(\chi)=\depth\chi$$
and  $\depth\Fin(\fg)(\chi)=\depth \chi$ if
 $\Fin(\fg)(\chi)\not=0$.

\subsubsection{}
Since $\HC(\cZ(\fg))\subset \cS(\fh)^{W.}$ and $W''\fh_x^*=\fh_x^*$ we have
$$\HC_x(\theta_x(\cZ(\fg))\subset \cS(\fh_x)^{W''.}.$$
Take $\sigma_x$ as in~\ref{W''}.
Since  $\sigma_x(\Sigma_x)=\Sigma_x$,  the projection $\HC_x$ commutes with $\sigma_x$.
It is not hard to see that
the Weyl vector of $\fg_x$ is equal to $\rho|_{\fh_x}$,
so the dot action of $W(\fg_x)$ on $\fh_x^*$ is the restriction
of the dot action of $W$. Moreover,
$\sigma_x\rho_x=\rho_x$, so
$\sigma_x.\lambda=\sigma_x\lambda$ for $\lambda\in\fh_x^*$.
This gives
\begin{equation}\label{HCsigma}
 \HC_x(\theta_x(\cZ(\fg))\subset\cS(\fh_x)^{\sigma_x},\ \ \ \theta_x(\cZ(\fg))\subset \cZ(\fg_x)^{\sigma_x}.
\end{equation}

\subsection{}
\begin{thm}{thmtheta}
Take $x\in X(\fg)_r$ such that $\supp(x)\subset\Sigma\cup (-\Sigma)$ is an iso-set. 
\begin{enumerate}
\item
For each $y\in X(\fg)_r$ 
there exists  $\iota_{x,y}:\fg_x\iso \fg_y$ such that
$\theta_y=\theta_x\circ \iota_{x,y}$.
\item
For $r\not=0$ we have $\theta_x(\cZ(\fg))=\cZ(\fg_x)^{\sigma_x}$.
\end{enumerate}
\end{thm}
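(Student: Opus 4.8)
The plan is to prove the two statements by reducing everything to the already-recalled results of~\cite{DS} together with the Harish-Chandra description of centres.

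For part (i), the key point is that any two elements $x,y\in X(\fg)_r$ with iso-set supports are related by an inner automorphism. By~\Cor{isosetnow}(ii) (or by the finite-dimensional case of~\cite{DS}, cf.~\ref{isosetrank}), since $\supp(x)$ and $\supp(y)$ are iso-sets of the same cardinality $r$, there is $w\in W$ with $w(-\supp(x)\cup\supp(x))=-\supp(y)\cup\supp(y)$; lifting $w$ to an inner automorphism $\phi$ of $\fg$ normalizing $\fh$ and adjusting by a further torus element, one arranges $\phi(x)=y$ (this is where finite-dimensionality of the root spaces and the classical theory of~\cite{DS} are used). Any inner automorphism $\phi$ with $\phi(x)=y$ induces, functorially, an isomorphism $\DS_x(\cU(\fg))\iso\DS_y(\cU(\fg))$, i.e.\ $\iota_{x,y}\colon\fg_x\iso\fg_y$ (and likewise $\cZ(\fg_x)\iso\cZ(\fg_y)$); since $\phi$ fixes $\cZ(\fg)$ pointwise (inner automorphisms act trivially on the centre) and $\theta$ is built from the inclusion $\cZ(\fg)\hookrightarrow\cU(\fg)^{\ad x}\to\DS_x(\cU(\fg))$ as in~\ref{DScentre}, the naturality square gives $\theta_y=\theta_x\circ\iota_{x,y}$. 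This step is essentially formal once the $GL$-conjugacy is in hand.

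For part (ii), by (i) we may assume $\supp(x)=S\subset\Sigma\cup(-\Sigma)$, so all the recollections of Section~\ref{sectbasic} apply. The inclusion $\theta_x(\cZ(\fg))\subset\cZ(\fg_x)^{\sigma_x}$ is already~(\ref{HCsigma}). For the reverse inclusion I would argue as follows. By~\ref{preservescore}, for every $\lambda\in\fh_x^*$ one has $\theta_x^*(\chi'_\lambda)=\chi_\lambda$; hence on Harish-Chandra images, $\HC_x(\theta_x(z))$ and $\HC(z)|_{\fh_x^*}$ agree as polynomial functions on $\fh_x^*$, which is~(\ref{HC}). Now use that $\HC\colon\cZ(\fg)\to\cS(\fh)^{W.}$ has image exactly $Z(\fg)=\{f\in\cS(\fh)\mid f(\lambda)=f(\nu)\text{ whenever }(\nu,\lambda)\in\cK\}$ (the description in~\ref{centchar}, valid for finite-dimensional $\fg$). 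Restricting such an $f$ to $\fh_x^*$ and using that $\Delta_x\subset\Delta$ together with $S^\perp\cap\Delta=(\mathbb{Z}\Delta_x)\cap\Delta$ modulo $\mathbb{C}S$, one checks that the restriction map sends $Z(\fg)$ onto $Z(\fg_x)^{\sigma_x}$: the linkage conditions $\cK$ for $\fg$ restricted to $\fh_x^*$ reproduce exactly the linkage conditions $\cK_x$ for $\fg_x$, plus the extra $\sigma_x$-symmetry coming from those roots of $\fg$ not lying in $\Delta_x$ but pairing nontrivially with $\fh_x^*$ (these are precisely what force invariance under $\sigma_x=r_{\vareps_m}$ in the $D(m|n)$ and $F(4)$, $D(2|1;a)$ cases). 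Combining surjectivity of the restriction $Z(\fg)\twoheadrightarrow Z(\fg_x)^{\sigma_x}$ with the identification of $\theta_x$ on Harish-Chandra images gives $\theta_x(\cZ(\fg))=\cZ(\fg_x)^{\sigma_x}$.

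The main obstacle is the surjectivity half of part (ii): one must show every $\sigma_x$-invariant element of $\cZ(\fg_x)$ actually extends to a $W.$-invariant element of $\cS(\fh)$ satisfying \emph{all} the odd/isotropic linkage relations $\cK_{iso}$, $\cK_{nis}$, $\cK_{red}$ of $\fg$, not merely those of $\fg_x$. I expect this to require a case-by-case verification along the list $\fgl(m|n)$, $\osp(m|n)$, $D(2|1;a)$, $F(4)$, $G_3$, using the explicit generators of $Z(\fg)$ à la Sergeev~\cite{Sinv} (power sums in the $\vareps_i^2,\delta_j^2$, resp.\ the $\fgl$-power sums) and checking directly that the obvious section — extend a function on $\fh_x^*$ by the same symmetric-function formula in the remaining variables, averaged over $\sigma_x$ — lands in $Z(\fg)$; equivalently, that the extra linkage relations of $\fg$ involving the "deleted" coordinates are automatically satisfied because those coordinates are the isotropic pairs killed by $S$. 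The bookkeeping of which $\sigma_x$ occurs in which case (as tabulated in~\ref{W''}) is exactly what makes the statement $\theta_x(\cZ(\fg))=\cZ(\fg_x)^{\sigma_x}$ rather than $=\cZ(\fg_x)$, and handling $\fgl(m|n)$ separately (where $\fh_x^*$ is a chosen span rather than $\mathbb{Z}\Delta_x$) needs a small extra argument.
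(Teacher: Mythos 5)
Your outline reproduces the paper's strategy, but the substance of part (ii) is missing. For the inclusion $\supset$ you correctly reduce, via the identification $\cZ(\fg)\cong Z(\fg)$ and the formula~(\ref{HC}), to showing that the restriction $\psi\colon f\mapsto f|_{\fh_x^*}$ maps $Z(\fg)$ onto $Z(\fg_x)^{\sigma_x}$ — and then you defer exactly this to an "expected" case-by-case check. That check \emph{is} the proof: for $\fgl(m|n)$ and $\osp(2m+1|2n)$ one uses Sergeev's generators $p_k^{(m|n)}$ (resp.\ $p_{2k}^{(m|n)}$) and the identity $\psi(p_k^{(m|n)})=p_k^{(m-r|n-r)}$; for $\osp(2m|2n)$ one needs the additional identity $Z(\osp(2m|2n))^{\sigma}=Z(\osp(2m+1|2n))$, extracted from~(\ref{Zn}) with $\beta=\vareps_1-\delta_1$; and for the exceptionals ($r=1$) one needs the specific invariants — the Casimir $L_2$ for $G(3)$ and $D(2,1;a)$, and for $F(4)$ both $L_2$ and a degree-six invariant $L_6$ with $\psi(L_6)=p_6$, again from~\cite{Sinv}. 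Your heuristic that "the obvious section lands in $Z(\fg)$ automatically" is not automatic and is in fact false without the $\sigma_x$-invariance: for $F(4)$ the polynomial $p_3\in Z(\fsl_3)$ has no preimage precisely because $Z(F(4))$ is small, which is why the answer is $\cZ(\fg_x)^{\sigma_x}$ and not $\cZ(\fg_x)$. So the claim is right, but none of the computations that establish it are present.

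In part (i) there is a smaller soft spot. From $w(-\supp(x)\cup\supp(x))=-\supp(y)\cup\supp(y)$ you cannot in general arrange $\phi(x)=y$ by composing with a torus element: if $w\supp(x)$ and $\supp(y)$ differ by signs on some roots, no torus element repairs this. The conclusion that $X(\fg)_r$ is a single orbit is true by the orbit classification of~\cite{DS}, which you invoke, so your route can be made to work; but the paper's argument is lighter — it never conjugates $x$ to $y$. It only arranges $\supp(y)\subset -S\cup S$, observes that then $\fg_x$ and $\fg_y$ are literally the same subalgebra by the construction in~\ref{gx}, and concludes $\theta_x=\theta_y$ from~(\ref{HC}), since both maps are computed by restriction to the same $\fh_x^*$. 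You should either cite the orbit theorem explicitly for the conjugacy, or switch to the identification argument.
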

\begin{proof}
For (i) note that each  $\phi\in\Aut(\fg)$ induces
the required isomorphism for $y:=\phi(x)$.  By~\Cor{isosetnow}
we can  assume  that 
$\supp(y)\subset (-S\cup S)$. Then, by~\ref{gx}, we can identify
$\fg_x$ with $\fg_y$ and $\fh^*_x$ with $\fh^*_y$. Now the assertion (i)
follows from~(\ref{HC}).

For (ii) observe that, by~(\ref{HCsigma}), 
$\theta_x(\cZ(\fg))\subset\cZ(\fg_x)^{\sigma_x}$.
For the opposite inclusion $\supset$ we retain notation of~\ref{centchar} and identify $\cZ(\fg)$ with  $Z(\fg)\subset \cS(\fh)^W$.
We denote by $\theta'$ the corresponding map
$Z(\fg)\to Z(\fg_x)$ and denote by $\psi:\cS(\fh)\to\cS(\fh_x)$  the map 
$$\psi(f):=f|_{\fh_x^*}.$$
Since the Weyl vector of $\fg_x$ is equal to $\rho|_{\fh_x}$, the formula~(\ref{HC}) implies that $\theta'$ coincides with  the restriction of $\psi$ to 
$Z(\fg)$. Thus $\supset$ can be rewritten as 
\begin{equation}\label{Zpsi}
\psi(Z(\fg))\supset Z(\fg_x)^{\sigma_x}.
\end{equation}

For the cases $\fgl(m|n),\osp(2m|2n),\osp(2m+1|2n)$
let $\{\vareps_i\}_{i=1}^m\cup\{\delta_i\}_{i=1}^n$
be the standard basis of $\fh^*$ and  let
$B_{m|n}:=\{e_i\}_{i=1}^m\cup\{d_i\}_{i=1}^n$
be the dual basis of $\fh$.  By (i) we can assume that 
 $\supp(x)=\{\delta_{n+1-i}-\vareps_{m+1-i}\}_{i=1}^r$.   Then
 $B_{m-r|n-r}$ is a basis of $\fh_x$ and
 $\psi$ is the projection  $\fh\to \fh_x$ given by
$$\psi(a)=a \text{ if }a\in  B_{m'|n'},\ \ \ \psi(a)=0\ \text{ if } 
a\in (B_{m|n}\setminus B_{m'|n'})$$
where $m':=m-r$ and $n':=n-r$. Consider the polynomials
$$p_{k}^{(m|n)}:=\sum_{i=1}^m e_i^{k}-\sum_{i=1}^n d_i^{k},\ \  k\in\mathbb{N}.$$

By~\cite{Sinv}, the algebra $Z(\fgl(m|n))$ is generated by $\{p_{k}^{(m|n)}\}_{k=1}^{\infty}$
and  $Z(\osp(2m+1|2n))$ is generated by $\{p_{2k}^{(m|n)}\}_{k=1}^{\infty}$.
Since $\psi(p_{k}^{(m|n)})=p_{k}^{(m'|n')}$, this gives
$\psi(Z(\fg))=Z(\fg_x)$  for the cases
$\fgl(m|n),\osp(2m+1|2n)$ and establishes (ii) for these cases.
For the case $\osp(2m|2n)$
one has 
$$W(\osp(2m+1|2n))=W(\osp(2m|2n))\coprod W(\osp(2m|2))\sigma.$$
 Taking $\beta=\vareps_1-\delta_1$ in the formula~(\ref{Zn}), we obtain 
$$Z(\osp(2m|2n))^{\sigma}=Z(\osp(2m+1|2n)).$$
In particular, $\psi(Z(\osp(2m|2n))$ contains 
$$\psi(Z(\osp(2m+1|2n))=Z(\osp(2m'+1'|2n'))=
Z(\osp(2m'|2n'))^{\sigma_x},
$$
that is $\psi(Z(\osp(2m|2n))\supset Z(\osp(2m'|2n'))^{\sigma_x}$. This establishes~(\ref{Zpsi}) for $\osp(2m|2n)$.

The remaining cases are exceptional Lie superalgebras.  In this case $\rank x=1$.
We view $\cS(\fh)$ as a polynomial algebra.  The algebra $Z(\fg)$ contains
a homogeneous polynomial $L_2$ of degree $2$ (the Casimir element) and
 $\psi(L_2)$ is a polynomial  of degree $2$ in $\cS(\fh_x)$.
 
For 
$\fg=G(3)$  one has $\fg_x=\fsl_2, \sigma_x=\Id$ and  for $\fg=D(2,1|a)$ one has $\fg_x=\mathbb{C}z$, $\sigma_x=-\Id$. In both cases 
 the algebra $Z(\fg_x)^{\sigma_x}$ is
generated by a non-zero polynomial of degree $2$ (the Casimir element and $z^2$ respectively); this gives~(\ref{Zpsi}).

Consider the remaining case $\fg=F(4)$. In this case $\{\vareps_i\}_{i=1}^3\cup\{\delta_1\}$ form a basis of $\fh^*$ and
we denote by $\{e_i\}_{i=1}^3\cup \{d_1\}$ the dual basis of $\fh$.
We choose
$$\Sigma:=\{\beta, \vareps_1-\vareps_2,\vareps_2-\vareps_3,\vareps_3\},\ \ 
\beta:=\frac{1}{2}(\delta_1-\sum_{i=1}^3\vareps_i).$$
Take $x$ such that $\supp(x)=\{\beta\}$. Then $\fg_x\cong \fsl_3$
with $\fh_x$ spanned by $e_1-e_2,e_2-e_3$ and  $\Sigma_x=\{\vareps_1-\vareps_2,\vareps_2-\vareps_3\}$.
The map $\psi$ is given by
$$\psi(d_1)=0,\ \psi(e_1+e_2+e_3)=0,\ \ \psi(e_1-e_2)=e_1-e_2,\ \ \psi(e_2-e_3)=
e_2-e_3.$$

 The involution $\sigma_x$
permutes $e_1-e_2$ with $e_2-e_3$ and can be extended to the
span of $e_1,e_2,e_3$ by $\sigma_x(e_1)=-e_3$, $\sigma_x(e_2)=-e_2$.
Recall that $Z(\fgl_3)$ is generated by the symmetric polynomials
$p_s:=\sum_{i=1}^3 e_i^s$.  One has $\sigma_x(p_s)=(-1)^s p_s$.
Identifying $Z(\fsl_3)$ with 
$\mathbb{C}[p_2,p_3]$ we obtain
$$Z(\fsl_3)^{\sigma_x}=\mathbb{C}[p_2,p_3^2]=\mathbb{C}[p_2,p_6].$$
By~\cite{Sinv}, $Z(\fg)$ contains  $L_2$ (as above)  and
$L_6$
satisfying $\psi(L_2)=p_2$, $\psi(L_6)=p_6$. This establishes~(\ref{Zpsi}) and completes the proof.
\end{proof}

\subsection{}
\begin{cor}{corthetabasic}
Fix $x\in X(\fg)_r$ and denote by $\Mod_x$ the full category of $\fg_x$-modules.
Take $\chi\in \, Im\theta_x^*$ and $\chi' \in (\theta_x^*)^{-1}(\chi)$.

\begin{enumerate}
\item
If $\sigma_x=\Id$, then
$\DS_x(\Mod(\chi))\subset \Mod_x(\chi')$.
\item
If  $\sigma_x(\chi')\not=\chi'$, then
 $\DS_x(\Mod(\chi))\subset\Mod_x(\chi')+ \Mod_x(\sigma_x(\chi'))$.
\item
If $\sigma_x\not=\Id$ and $\sigma_x(\chi')=\chi'$, then
$(\Ker \chi')^2\DS_x(N)=0$ for each $N\in \Mod(\chi)$.
\end{enumerate}
\end{cor}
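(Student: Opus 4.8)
The plan is to reduce everything to the action of $\theta_x$ on the centre, which was analysed in~\Thm{thmtheta}. Recall from~\ref{DScentre} that for any $N\in\Mod(\chi)$ one has $\theta_x(\Ker\chi)\subset\Ann_{\cZ(\fg_x)}\DS_x(N)$; so the three statements amount to controlling the ideal of $\cZ(\fg_x)$ generated by $\theta_x(\Ker\chi)$. Since $\chi\in\, Im\theta_x^*$, we may by~\Thm{thmtheta}(i) replace $x$ by a conjugate element and assume $\supp(x)\subset\Sigma\cup(-\Sigma)$, so that~\Thm{thmtheta}(ii) applies and $\theta_x(\cZ(\fg))=\cZ(\fg_x)^{\sigma_x}$.

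First I would treat (i): if $\sigma_x=\Id$, then $\theta_x$ is surjective onto $\cZ(\fg_x)$, hence $\theta_x(\Ker\chi)$ is already a maximal ideal of $\cZ(\fg_x)$ (the kernel of the character $\chi'$ determined by $\theta_x^*(\chi')=\chi$), so it equals $\Ker\chi'$ and annihilates $\DS_x(N)$; this is exactly the argument of~\Prop{propBZ}(i). For (ii) and (iii), the image of $\theta_x$ is the subring of invariants $R:=\cZ(\fg_x)^{\sigma_x}$, which is of index two in $\cZ(\fg_x)$: writing $A:=\cZ(\fg_x)$, the $\sigma_x$-module decomposition gives $A=R\oplus A^-$ where $A^-$ is the $(-1)$-eigenspace, and $A^-\cdot A^-\subset R$, so $A$ is generated over $R$ by any single element $t\in A^-$ with $t^2\in R$. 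Now $\theta_x(\Ker\chi)$ is a maximal ideal $\fm$ of $R$; its preimage under $A\to A$, i.e.\ the ideal $\fm A$, is contained in $\Ann_A\DS_x(N)$. The key step is then to identify $A/\fm A$: since $A=R[t]$ with $t^2-r_0\in\fm$ for some $r_0\in R$, we get $A/\fm A\cong (R/\fm)[\bar t]/(\bar t^2-\bar r_0)\cong \mathbb{C}[\bar t]/(\bar t^2-c)$ where $c\in\mathbb{C}$ is the image of $r_0$.

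The dichotomy is governed by whether $c=0$. If $c\neq 0$, then $A/\fm A\cong\mathbb{C}\times\mathbb{C}$, corresponding to the two distinct characters $\chi'$ and $\sigma_x(\chi')$ lying over $\chi$ (they are distinct precisely because $\sigma_x(\chi')\neq\chi'$); hence $\fm A=\Ker\chi'\cap\Ker\sigma_x(\chi')$ is the intersection of two distinct maximal ideals, and $\DS_x(N)$, being annihilated by it, decomposes as a direct sum of a $\chi'$-part and a $\sigma_x(\chi')$-part, giving~(ii). If $c=0$, then $A/\fm A\cong\mathbb{C}[\bar t]/(\bar t^2)$, which is local with maximal ideal $(\bar t)$ and residue field $\mathbb{C}$; that maximal ideal is the image of $\Ker\chi'$ (here $\sigma_x(\chi')=\chi'$, so there is only one character over $\chi$), and $(\Ker\chi')^2$ maps to $(\bar t)^2=0$ in $A/\fm A$. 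Therefore $(\Ker\chi')^2\subset\fm A\subset\Ann_A\DS_x(N)$, which is~(iii).

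I expect the main obstacle to be the clean algebraic bookkeeping around the index-two extension $R\subset A$ — specifically, verifying that $A$ is free of rank two over $R$ (or at least that $A=R[t]$ with $t^2\in R$) in enough generality to cover all the relevant cases of $\fg_x$ and $\sigma_x$ from~\ref{W''}, and that $\Ker\chi'$, $\Ker\sigma_x(\chi')$ are exactly the maximal ideals of $A$ over $\fm$. For the explicit families this follows from the generator descriptions recalled in the proof of~\Thm{thmtheta} ($\sigma_x$ acts by $(-1)^s$ on the power sums $p_s$, so one may take $t=p_1$ or $t=p_3$ as appropriate, with $t^2$ a polynomial in the $\sigma_x$-invariant generators); for $\fg_x=\mathbb{C}z$ with $\sigma_x=-\Id$ one takes $t=z$, $t^2=z^2\in R$. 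Once the structure of $A$ over $R=\Im\theta_x$ is pinned down, the rest is the short commutative-algebra argument above combined with~\Prop{propBZ}.
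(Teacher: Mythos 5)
Your overall strategy is the paper's: pass to the quotient $A:=\cZ(\fg_x)/\bigl(\theta_x(\Ker\chi)\cdot\cZ(\fg_x)\bigr)$, decompose into $\sigma_x$-eigenspaces, and argue from the ring structure of $A$. The divergence — and the genuine gap — is that you assume $\cZ(\fg_x)$ is generated over $R:=\cZ(\fg_x)^{\sigma_x}$ by a single anti-invariant $t$ with $t^2\in R$, so that $A\cong\mathbb{C}[\bar t]/(\bar t^2-c)$ is automatically two-dimensional. You flag this yourself as the ``main obstacle,'' and it is a real one: this freeness/rank-two claim is fine for $\fg_x=\mathbb{C}z$ and $\fg_x=\fsl_3$ (the exceptional cases), but for $\fg_x=\osp(2m'|2n')$ the supersymmetric centre $\cZ(\fg_x)$ described by~(\ref{Zn}) is not a polynomial ring, and there is no obvious reason the $(-1)$-eigenspace $\cZ(\fg_x)^-$ should be a cyclic $R$-module. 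The paper sidesteps this entirely by reasoning inside the finite quotient $A=\mathbb{C}\oplus A_-$ (with $A^{\sigma_x}\cong\mathbb{C}$, $A_-\neq 0$): since $A_-\cdot A_-\subset A^{\sigma_x}=\mathbb{C}$, either some $a\in A_-$ has $a^2\neq 0$, which forces $b=a(ab)\in\mathbb{C}a$ for all $b\in A_-$, hence $A_-=\mathbb{C}a$ and $A\cong\mathbb{C}\times\mathbb{C}$ — your case (ii) — or every $a\in A_-$ has $a^2=0$, whence by polarization $A_-\cdot A_- =0$, $A$ is local with maximal ideal $A_-$, and $(\Ker\chi')^2$ dies in $A$ — your case (iii), valid regardless of $\dim_{\mathbb{C}}A_-$. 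So you reach the correct conclusions, but the dimension-two presentation of $A$ is neither needed nor established; replacing it with the dichotomy on whether some anti-invariant has nonzero square closes the gap cleanly.
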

\begin{proof}
Take $N\in \Mod(\chi)$ and set $N':=\DS_x(N)$.
Set $\fm:=\Ker\chi$ and $\fm':=\Ker\chi'$.
 Then $\theta_x(\fm) \subset  \Ann_{\cZ(\fg_x)} N'$.

If $\theta_x$ is surjective, then $\theta_x(\fm)=\fm'$. This gives (i).
Consider the remaining case when $\theta_x$ is not surjective. 
By~\Thm{thmtheta}, $\theta_x(\fm)$ is a maximal ideal
in $\cZ(\fg_x)^{\sigma_x}$. Set $A:=\cZ(\fg_x)/\theta_x(\fm)$.
By above, $N'$ is an $A$-module and $A\not=\mathbb{C}$.
 
The algebra $A$ inherits the action of $\sigma_x$
and $A^{\sigma_x}=\cZ(\fg_x)^{\sigma_x}/\theta_x(\fm)\cong\mathbb{C}$, so
$$A=\mathbb{C}\oplus A_-,\ \text{ where }\ 
A_-:=\{a\in A|\ \sigma_x(a)=-a\}\not=0.$$

If $A_-=\mathbb{C}a$ with  $a^2=1$, then $N'=N'_+\oplus N'_-$,
where $N'_{\pm}=\{v\in N'|\ av=\pm v\}$ and
$\fm',\sigma(\fm')$ are the preimages of
the ideals $\mathbb{C}(a-1), \mathbb{C}(a+1)$ in
$\cZ(\fg_x)$. This gives (ii).

Assume that  $A_-\not=\mathbb{C}a$ with  $a^2=1$. For any $a_1,a_2\in A_-$ one has $a_1a_2\in A^{\sigma_x}=\mathbb{C}$, so 
$a_1a_2=0$. Then $A_-$ is the unique maximal ideal in $A$
and thus  $\fm'=\sigma_x(\fm')$ is the preimage of $A_-$. 
This gives (iii).
\end{proof}

\section{The algebra $\fg_x$ in the affine case}\label{gxaff}
Let $\fg$ be an indecomposable symmetrizable affine superalgebra or 
$\fgl(m|n)^{(1)}$.
We retain notation of~\ref{affro}. The dual Coxeter number for $\fg$ is given by 
$$h^{\vee}(\fg):=(\rho|\delta).$$
Note that $h^{\vee}(\fg)$ depends on the choice of $(-|-)$. It is easy to see
that $h^{\vee}(\fg)$  does not
depend on the choice of $\rho$; in the light of~\ref{oddrefl}, $h^{\vee}(\fg)$  does not
depend on the triangular decomposition of $\fg$.

Recall that 
$\dot{\Sigma}$ is a finite part of $\Sigma$ and $d\in {\fh}$ satisfies
$\delta(d)=1$ and $\alpha(d)=0$ for $\alpha\in\dot{\Sigma}$. One has
$${\fg}^d=\dot{\fg}\times\mathbb{C}K\times\mathbb{C}d,\ \ \ {\fh}=\dot{\fh}\oplus \mathbb{C}K\oplus \mathbb{C}d$$
where $K$ is a central element of ${\fg}$,
 $\dot{\fg}$ is a finite-dimensional Kac-Moody superalgebra
or $\fgl(m|n)$,
  and $\dot{\fh}$ is the Cartan subalgebra of $\dot{\fg}$.

We consider the triangular decomposition of $\dot{\fg}$ which is induced
by the triangular decomposition of ${\fg}$ (then
$\dot{\Sigma}$ is the base
of $\dot{\Delta}^+$). We set 
$$\dot{\rho}=\frac{1}{2}\displaystyle\sum_{\alpha\in\dot{\Delta}^+} (-1)^{p(\alpha)}\alpha$$
and fix ${\rho}\in\fh^*$ such that 
${\rho}_{\dot{\fh}}=\dot{\rho}|_{\dot{\fh}}$
(this condition holds for any choice of $\rho$ if $\fg\not=\fgl(m|n)$).

\subsection{The algebra $\fg_x$ for  $x\in X_{iso}$}\label{Hmn}
By contrast with the finite-dimensional case, $\fg_x$ is not always Kac-Moody, see the example~\ref{affbad} below. The following proposition shows
that $\fg_x$ is Kac-Moody for $x\in X_{iso}$.

\subsubsection{}
\begin{prop}{}
Take $x\in X_{iso}$ (see~\ref{Xiso} for the notation) and let $r$ be the cardinality of $\supp(x)$. 
\begin{enumerate}
\item
One has $\DS_x(\dot{\fg}^{(1)})\cong \dot{\fg}_x^{(1)}$ and
$\DS_x(H(M|N)^{(i)})=H(M-2r|N-2r)^{(i)}$ 
for $H(M|N)^{(i)}=A(M|2n-1)^{(2)}$, $A(2m|2n)^{(4)}, D(M+1|N)^{(2)}$.
\item
One has $\depth\fg=\defect\fg$ and $\rank x=r$.
\end{enumerate}
\end{prop}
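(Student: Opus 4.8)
The plan is to reduce to the finite-dimensional case by grading $\fg$, and with it $\fg_x$, by the eigenvalues of $\ad d$. First I would put $x$ in standard position: by~\Cor{isosetnow}(iii) some inner automorphism $\phi$ satisfies $\supp(\phi(x))\subseteq -S_0\cup S_0$ for a maximal iso-set $S_0$, and using the description of affine root systems in~\cite{vdL},~\cite{Reif} one may choose the finite part $\dot{\Sigma}$ so that $S_0$, hence $S:=\supp(\phi(x))$, lies in $\dot{\Delta}$. Since $\phi$ induces an isomorphism $\fg_x\cong\fg_{\phi(x)}$, we may assume $S\subseteq\dot{\Delta}$, so $x\in\dot{\fg}\subseteq\fg^d$ and $d\in\fg^x$. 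As $\ad d$ preserves both $\fg^x$ and $[x,\fg]$ it descends to $\fg_x$, giving a $\ZZ$-grading $\fg_x=\bigoplus_{j\in\ZZ}(\fg_x)[j]$ with $(\fg_x)[j]=\DS_x(\fg[j])$, where $\fg[j]:=\{g:[d,g]=jg\}$; moreover $K$ and the image $\bar d$ of $d$ lie in $(\fg_x)[0]$.

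In the untwisted case $\fg=\dot{\fg}^{(1)}$ (including $\fgl(m|n)^{(1)}$) one has $\fg[0]=\dot{\fg}\times\mathbb{C}K\times\mathbb{C}d$ and $\fg[j]=\dot{\fg}\otimes t^j$ for $j\neq 0$, with $x$ acting in each degree through $\ad x|_{\dot{\fg}}$. Hence $(\fg_x)[0]=\DS_x(\dot{\fg})\times\mathbb{C}K\times\mathbb{C}\bar d$ and $(\fg_x)[j]=\DS_x(\dot{\fg})$ for $j\neq 0$, where $\dot{\fg}_x:=\DS_x(\dot{\fg})$ is the finite-dimensional Kac-Moody superalgebra of~\cite{DS} (see~\ref{alggx}): of the same type as $\dot{\fg}$, with the $\fgl$-parameters reduced by $r$ (the $\osp$-parameters by $2r$). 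The brackets induced from $\fg$ exhibit $\fg_x$ as the affinization of $\dot{\fg}_x$ with respect to the invariant form obtained by restricting that of $\dot{\fg}$, which is nondegenerate on $\dot{\fg}_x$ by the argument of~\ref{dualCoxeter}; thus $\fg_x\cong\dot{\fg}_x^{(1)}$, the root-system type is preserved, and since $\rho_x=\rho|_{\fh_x}$ and $\delta_x=\delta|_{\fh_x}$ one gets $h^{\vee}(\fg_x)=(\rho_x|\delta_x)=(\rho|\delta)=h^{\vee}(\fg)$.

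For the twisted families $A(M|2n-1)^{(2)}$, $A(2m|2n)^{(4)}$, $D(M+1|N)^{(2)}$ I would write $\fg=L(\fg^{\sharp},\mu,k)$ for the relevant simple Lie superalgebra $\fg^{\sharp}$ and automorphism $\mu$ of order $k\in\{2,4\}$, so that $\fg[j]$ is the $e^{2\pi i j/k}$-eigenspace of $\mu$ in $\fg^{\sharp}$ (tensored with $t^j$) and $\dot{\fg}=(\fg^{\sharp})^{\mu}$. Since $S\subseteq\dot{\Delta}$ we have $\mu(x)=x$, so $\mu$ commutes with $\ad x$ and descends to $\bar\mu\in\Aut(\DS_x(\fg^{\sharp}))$; because $\DS_x$ carries direct sums to direct sums, it carries the $\mu$-eigenspace decomposition of $\fg^{\sharp}$ to the $\bar\mu$-eigenspace decomposition of $\fg^{\sharp}_x:=\DS_x(\fg^{\sharp})$, whence $(\fg_x)[j]$ is the $e^{2\pi i j/k}$-eigenspace of $\bar\mu$ in $\fg^{\sharp}_x$ and $\fg_x=L(\fg^{\sharp}_x,\bar\mu,k)$. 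Finally $\fg^{\sharp}_x$ is computed by~\cite{DS} (a classical Lie superalgebra of the same series with reduced parameters), and comparing the root system of $\fg_x$ (namely $(S^{\perp}\cap\Delta)\setminus(-S\cup S)$, as in~\ref{alggx}) with van de Leur's tables~\cite{vdL} identifies $\fg_x=H(M-2r|N-2r)^{(i)}$, with $\bar\mu$ still of order $k$ and $h^{\vee}$ preserved as before; when the parameters are exhausted $\fg_x$ degenerates to $\mathbb{C}K\times\mathbb{C}d$. I expect this last bookkeeping to be the main obstacle: one must check that the twist class does not change (that is, $\bar\mu$ keeps its order on $\fg^{\sharp}_x$) and match the reduced data to the classification, handling each twisted family on the tables.

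This proves (1). For (2), $\depth\fg\geq\defect\fg$ is already known (\ref{depthKMprop}, via~\Lem{lemisoset}). For the reverse I would induct on $\defect\fg$: if $\defect\fg=0$ then $X_{iso}=0$ and $\depth\fg=0$. In general, for $0\neq x\in X_{iso}$ with $r:=|\supp(x)|\geq 1$, part (1) shows that $\fg_x$ is again one of the algebras under consideration with $\defect\fg_x=\defect\fg-r$ (a maximal iso-set lies in the finite part, whose parameters drop by $r$). By induction $\depth\fg_x=\defect\fg_x=\defect\fg-r\leq\defect\fg-1$, so $1+\depth\fg_x\leq\defect\fg$; taking the maximum over $x$ gives $\depth\fg\leq\defect\fg$, hence equality, and then $\rank x=\depth\fg-\depth\fg_x=\defect\fg-\defect\fg_x=r$.
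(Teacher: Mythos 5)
Your proposal is correct and proceeds in the same spirit as the paper's (very terse) proof, though via a more hands-on route. Both begin identically: use \Cor{isosetnow} to move $\supp(x)$ into the finite part $\dot{\Delta}$, so $x\in\dot{\fg}$ and $[d,x]=0$. Where you then grade $\fg_x$ by $\ad d$ and, in the twisted cases, descend the loop automorphism $\mu$ to $\bar\mu\in\Aut(\DS_x(\fg^{\sharp}))$ and match the resulting eigenspace decomposition against van de Leur's tables, the paper compresses this into the single observation that each $d$-graded piece of $\fg$ is built from the adjoint (untwisted) or standard (twisted) $\dot{\fg}$-module, and that $\DS_x$, being a tensor functor commuting with the grading, sends these to the adjoint or standard $\dot{\fg}_x$-module; this determines $\fg_x$ at once. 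Your descent-of-$\mu$ argument is a legitimate alternative that makes the $\ZZ/k$-twist visible directly; it buys a concrete description of the graded pieces at the cost of the tabular bookkeeping you honestly flag, which the paper's module-theoretic phrasing absorbs more slickly (and also leaves implicit). Your induction for (ii) is a correct expansion of the paper's one-line ``(ii) follows from (i).''
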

\begin{proof}
By~\Cor{isosetnow}, we can assume that 
$x\in X(\dot{\fg})_r$. Now (i) follows from
the fact that $\DS_x$ maps the adjoint (resp., the standard)  
 $\dot{\fg}$-module
to the adjoint (resp., standard) $\dot{\fg}_x$-module; (ii) follows from (i).
\end{proof}

\subsubsection{}
Take $x\in X(\dot{\fg})_r$. Using~\ref{gx} we view ${\fg}_x$
 as a subalgebra of ${\fg}$ which 
contains $\dot{\fg}_x$,
$K$ and $d$; one has  
$\fg_x^d=\dot{\fg}_x\times\mathbb{C}K\times\mathbb{C}d$. The restriction of
the non-degenerate invariant bilinear form $(-|-)$ on $\fg$ gives a 
non-degenerate invariant bilinear form on $\fg_x$. 

\subsubsection{}
A $\fg$-module $N$ is called {\em restricted} if for each $v\in N$
$\fg_{\alpha}v=0$ for almost all positive roots $\alpha$. By~\cite{Kbook}, Ch. 2 
the bilinear form $(-|-)$ gives rise to the Casimir operator which acts on restricted $\fg$-modules by $\fg$-endomorphisms; in particular, this operator acts
on $M(\lambda)$ by $(\lambda+2\rho|\lambda)\Id$. 

The above definition of a restricted module can be reformulated as follows:
for each $v\in N$ there exists $j\in\mathbb{N}$ such that $\fg_{\alpha}v=0$ if
$\alpha(d)>j$. From this reformulation it follows that $\DS_x(N)$ is
a restricted $\fg_x$-module
if $N$ is a restricted $\fg$-module.

\subsubsection{}
\begin{prop}{propdualCox}
Take $x\in X(\dot{\fg})$.
\begin{enumerate}
\item
If the Casimir element of $\fg$ acts on a restricted $\fg$-module $N$
by $cId$ (where $c\in\mathbb{C}$), then the Casimir element of $\fg_x$ acts on
$\DS_x(N)$ by $cId$.

\item If $\dot{\fg}_x\not=0,\mathbb{C}$, then $h^{\vee}(\fg)=h^{\vee}(\fg_x)$.
\end{enumerate}
\end{prop}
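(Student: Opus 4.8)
The plan is to run everything through the Casimir operator $\Omega_\fg$ attached to the form $(-|-)$. Recall (see~\ref{dualCoxeter} and~\cite{Kbook}, Ch.~2) that $\Omega_\fg$ acts by a $\fg$-endomorphism on every restricted $\fg$-module, and that $\DS_x$ carries restricted modules to restricted modules, so that $\Omega_{\fg_x}$ is defined on $\DS_x(N)$. Since $x\in\fg$, the operator $\Omega_\fg$ commutes with the action of $x$ on $N$, hence preserves $N^x$ and $xN$; it therefore induces an operator $\bar\Omega$ on $\DS_x(N)=N^x/xN$, and this operator commutes with the $\fg_x$-action. If $\Omega_\fg$ acts on $N$ by $c\,\Id$, then $\bar\Omega=c\,\Id$. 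Thus (1) is reduced to the assertion $\bar\Omega=\Omega_{\fg_x}$ on $\DS_x(N)$, which is the real content.

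To prove $\bar\Omega=\Omega_{\fg_x}$ I would transcribe the finite-dimensional computation of~\cite{DS},~\cite{HW} grade by grade with respect to the $d$-grading $\fg=\bigoplus_j\fg^{(j)}$; since $x\in\dot{\fg}=\fg^{(0)}$, the functor $\DS_x$ respects this grading and $\fg_x^{(j)}=\DS_x(\fg^{(j)})$. By~\ref{Hmn} the form restricts non-degenerately to $\fg_x$, and $\fg=\fg_x\oplus\fl$ as $\fg_x$-modules with $\fg_x\perp\fl$, where $\fl=\bigoplus_j\fl^{(j)}$ is, in each degree, a sum of typical $\fsl(1|1)$-modules (of the kind appearing in the proof of~\Lem{lem42} for the adjoint module); in particular $\fl$ is spanned by $\ad x$-brackets. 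Choosing homogeneous dual bases inside $\fg_x$ and inside $\fl$ one obtains $\Omega_\fg=\Omega_{\fg_x}+\Omega_\fl+(\text{difference of the two $\rho$-normalisations})$, where $\Omega_\fl$ lies in $x\,\cU(\fg)+\cU(\fg)\,x$ exactly as in the finite case --- the only delicate point being that on a restricted module this manipulation with an a~priori infinite sum is legitimate, which follows from the local finiteness supplied by the $d$-grading. Hence $\Omega_\fl$ annihilates $N^x/xN$. Finally, by~\ref{Hmn} the restriction $\rho|_{\fh_x}$ is a Weyl vector for $\fg_x$ and the form restricts, so the remaining term is precisely the gap between the two Casimir normalisations; therefore $\bar\Omega=\Omega_{\fg_x}$, proving (1).

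For (2) I would feed Verma modules into (1). Fix $\lambda\in\fh_x^*\subseteq\fh^*$ (the identification being set up in~\ref{Hmn}); then $\lambda$ is orthogonal to $S:=\supp(x)$, and since $\supp(x)\subseteq\Delta^+$ the highest weight vector $v_\lambda$ of $M_\fg(\lambda)$ lies in $M_\fg(\lambda)^x$. Using $(\lambda|\beta^{\vee})=0$ for $\beta\in S$ one checks, as in~\Prop{primitq} and~\ref{endrecollections}, that the image of $v_\lambda$ in $\DS_x(M_\fg(\lambda))$ is a non-zero $\fn_x^+$-primitive vector of weight $\lambda$, so it generates a $\fg_x$-module of highest weight $\lambda$ on which $\Omega_{\fg_x}$ acts by $(\lambda|\lambda+2\rho_x)$. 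On the other hand $\Omega_\fg$ acts on $M_\fg(\lambda)$ by $(\lambda|\lambda+2\rho)$, so by (1) these scalars coincide for every $\lambda\in\fh_x^*$. Taking $\lambda=\delta$ (which lies in $\fh_x^*$ because $\dot{\fg}_x\neq 0$, and satisfies $(\delta|\delta)=0$ and $(\delta|\beta^{\vee})=0$ for $\beta\in S$) yields $(\rho|\delta)=(\rho_x|\delta)$. Since $\dot{\fg}_x\neq 0,\mathbb{C}$ guarantees, via~\ref{Hmn}, that $\fg_x$ is affine of the same type as $\fg$ with $\delta$ still its minimal imaginary root, this reads $h^{\vee}(\fg)=h^{\vee}(\fg_x)$.

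I expect the main obstacle to be the identification $\bar\Omega=\Omega_{\fg_x}$ in the second paragraph: one must treat the Casimir of an affine superalgebra as an element of a suitable completion of $\cU(\fg)$ acting on restricted modules, keep careful track of the ``$2\rho$'' normalisation terms under the embedding $\fg_x\hookrightarrow\fg$, and verify that $\Omega_\fl$ genuinely lies in $x\,\cU(\fg)+\cU(\fg)\,x$ degree by degree. Once this is settled, part (2) is bookkeeping together with the affine analogue of~\Prop{primitq}.
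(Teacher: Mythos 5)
For part (i) the paper does not actually reprove the identity you are after: after reducing via \Cor{isosetnow} to the case $\supp(x)\subset\dot{\Sigma}\cup(-\dot{\Sigma})$, it simply cites \cite{GSaff} for the statement that the Casimir eigenvalue is preserved. Your sketch (orthogonal decomposition $\fg=\fg_x\oplus\fl$ with $\fl$ spanned by $\ad x$-brackets, and $\Omega_{\fl}\in x\,\cU(\fg)+\cU(\fg)\,x$ in a suitable completion acting on restricted modules) is the expected content of that reference, but, as you note yourself, the key identification $\bar\Omega=\Omega_{\fg_x}$ is left unverified, so on its own your second paragraph is a plan rather than a proof.

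For part (ii) you genuinely diverge from the paper, and this is where a real gap appears. The paper tests (i) on the \emph{one-dimensional} module $L(\delta)$: since $\dim L(\delta)=1$ one has $xL(\delta)=0$ and $L(\delta)^x=L(\delta)$, so $\DS_x(L(\delta))=L_{\fg_x}(\delta_x)\neq0$ for free, and (i) immediately gives $(\delta+2\rho|\delta)=(\delta_x+2\rho_x|\delta_x)$, i.e. $(\rho|\delta)=(\rho_x|\delta_x)$. You instead feed Verma modules into (i) and assert that the image of the highest weight vector $v_\lambda$ in $\DS_x(M(\lambda))$ is non-zero. That assertion is precisely the point that needs proof: a priori $v_\lambda$ could lie in $xM(\lambda)$, since $x_\beta M(\lambda)_{\lambda-\beta}\subset M(\lambda)_\lambda$ for $\beta\in\supp(x)\cap\Delta^+$ (and after the reduction of \Cor{isosetnow} the support may also contain negative simple roots, so your standing hypothesis $\supp(x)\subset\Delta^+$ is not available in general). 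The phenomenon is not hypothetical: see~\ref{examplesl21}, where $\DS_x$ kills an atypical simple highest weight module. The gap is, however, easy to close: for the only weight you actually use, $\lambda=\delta$, replace $M(\delta)$ by its one-dimensional quotient $L(\delta)$ and you recover the paper's argument verbatim. The remaining bookkeeping in your part (ii) ($(\delta|\delta)=0$, $\delta_x=\delta$ under the identification of~\ref{Hmn}, $h^{\vee}=(\rho|\delta)$) is correct.
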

\begin{proof}
In the light of~~\Cor{isosetnow} it is enough to consider the case
$\supp(x)\subset (-\dot{\Sigma}\cup\dot{\Sigma})$. For such $x$ (i)
is established in~\cite{GSaff}. For (ii)  consider the module
$L(\delta)$. This module is one-dimensional, so
as the vector space $\DS_x(L(\delta))$ is equal to $L(\delta)$.
Since $d$ acts on $L(\delta)$ by $\Id$, we obtain 
$\DS_x(L(\delta))=L_{\fg_x}(\delta_x)$, where $\delta_x$ is the minimal
imaginary root of $\fg_x$. Using (i) we get
$(\delta+2\rho|\delta)=(\delta_x+2\rho_x|\delta_x)$
which gives $(\rho|\delta)=(\rho_x|\delta_x)$ as required.
\end{proof}

\subsubsection{Remark}
Using the notation of~\ref{coreKMQ}
and normalizing the form by the condition $(\vareps_1|\vareps_1)=1$ we have

\begin{tabular}{|l|l|l|l|l|l|l|}
\hline
$\osp(M|N)^{(1)}$   &  $A(2m|2n)^{(4)}$ & $D(2|1;a)^{(1)}$ & 
$G(3)^{(1)}$& $F(4)^{(1)}$\\
\hline
 $M-N-2 $ & $m-n$ &  0 & 2 & 3 \\
\hline
\end{tabular}

and $h^{\vee}(\fg)=M-N$ for $\fg=A(M|N)^{(1)}$,
$A(M|2n-1)^{(2)}$, $D(M+1|N)^{(2)}$.

\subsection{Examples}\label{affbad}
Consider the case when  $\fg=\dot{\fsl}(2|1)^{(1)}$.
For each real root $\alpha$ fix a root vector $e_{\alpha}\in \fg_{\alpha}$.
Set $y:=e_{\vareps_1-\delta_1}$ and $h:=[e_{\delta_1-\vareps_1},e_{\vareps_1-\delta_1}]$.

One has $\DS_y(\fg)\cong \mathbb{C}K\times\mathbb{C}d=\fsl_1^{(1)}$.

One has  $\DS_{y+yt}(\fg)\cong\mathbb{C}K\times \ft$, where
$\ft$ is $(2|1)$-dimensional superalgebra
with a basis $h, e:=e_{\vareps_1-\vareps_2}, F:=e_{\vareps_2-\delta_1}$.
Thus $\ft$ has the relations
$$[e,F]=0,\ \ [h,e]=e,\ \ [h,F]=-F.$$

The algebra $\DS_{y+yt+yt^2}(\fg)$ is spanned by the images
of $K, y, h, ht^{-1}, e, et, F, Ft$.

\subsection{Induced modules}\label{indgd}
We consider  the functor 
$$\Ind:\ \fg^d-\Mod\to {\fg}-\Mod$$
which is given by the following construction: we
set
$${\fg}_{>0}:=\{g\in {\fg}|\ [d,g]=ig,\ i>0\},\ \ \ \ 
\Ind (V):=\Ind^{{\fg}}_{\fg^d+{\fg}_{>0}} V,$$
where a
$\fg^d$-module $V$ is viewed as a $(\fg^d+{\fg}_{>0})$-module
with the zero action of ${\fg}_{>0}$.

For $x\in X(\dot{\fg})$ 
we introduce similarly the functor
$\Ind_x:\ \fg^d_x-\Mod\to {\fg}_x-\Mod$.
We will use the following result.

\subsubsection{}
\begin{lem}{lemIndDSx}
Take $x\in X(\dot{\fg})$.
Let $V$ be a $\fg^d$-module, where $d$  acts diagonally. Then
\begin{equation}\label{IndDS}
\DS_x(\Ind (V))=\Ind_x (\DS_x(V)).\end{equation}
\end{lem}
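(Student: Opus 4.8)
The statement is an identity of $\fg_x$-modules $\DS_x(\Ind(V)) = \Ind_x(\DS_x(V))$ for $x \in X(\dot\fg)$ and $V$ a $\fg^d$-module with diagonal $d$-action. The guiding principle is that both functors $\Ind$ and $\DS_x$ can be computed ``graded piece by graded piece'' with respect to the $d$-eigenvalue (equivalently, with respect to the $\ZZ\delta$-grading), and that $x$ lies in the degree-zero part $\dot\fg$ so that it commutes with this grading. First I would set up notation: decompose $\fg = \fg^d \oplus \fg_{>0} \oplus \fg_{<0}$ and recall that, by the PBW theorem, $\Ind(V) = \cU(\fg_{<0}) \otimes V$ as a vector space, with $\fg_{<0}$ graded into strictly negative $d$-degrees. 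Since $x \in \dot\fg = (\fg^d)_0 \subset \fg^d$, the action of $x$ on $\Ind(V)$ preserves the tensor decomposition in the sense that $x$ acts on $\cU(\fg_{<0})$ by the (graded) adjoint action and on $V$ by its given action, i.e. $x$ acts as a derivation-like operator on $\cU(\fg_{<0}) \otimes V$; crucially $x$ commutes with $d$.

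The key step is to identify $\DS_x(\Ind(V))$ as a $\cU((\fg_x)_{<0}) \otimes \DS_x(V)$. Because $x$ has $d$-degree zero, the complex computing $\DS_x$ (i.e. $\Ker x / x \cdot (-)$, or more precisely the two-step complex $M \xrightarrow{x} M$) respects the $d$-grading, so $\DS_x(\Ind(V)) = \bigoplus_j \DS_x(\Ind(V)_j)$ where $\Ind(V)_j$ is the $d$-degree-$j$ piece. I would first treat the case where $V$ is ``small'' — concentrated in a single $d$-degree — reducing to understanding $\DS_x$ on $\cU(\fg_{<0}) \otimes V$. Here I would use that $\fg_{<0} = \bigoplus_{i>0} \dot\fg_{-i\delta}$-type pieces are finite-dimensional $\dot\fg$-modules (each $\fg_{-i\delta+\gamma}$ assembling into a finite-dimensional $\dot\fg$-module), and that $\DS_x$ is a tensor functor: $\DS_x(M \otimes N) = \DS_x(M) \otimes \DS_x(N)$, and on the universal enveloping algebra $\DS_x(\cU(\fa)) = \cU(\fa_x)$ for appropriate $\fa$ (this is the statement $\DS_x(\cU(\fg)) = \cU(\fg_x)$ from \ref{DScentre}, which should apply to $\cU(\fg_{<0})$ viewed with its $\dot\fg$-action, giving $\DS_x(\cU(\fg_{<0})) = \cU((\fg_x)_{<0})$). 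Combining these, $\DS_x(\cU(\fg_{<0}) \otimes V) \cong \cU((\fg_x)_{<0}) \otimes \DS_x(V) = \Ind_x(\DS_x(V))$ as vector spaces, and one checks the $\fg_x$-action matches.

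The main obstacle — and the step requiring the most care — is that $\DS_x$ is only a \emph{tensor} functor up to the subtlety that it need not be exact, and $\Ind(V)$ is an infinite tensor-like construction (infinitely many graded pieces, and $\cU(\fg_{<0})$ is infinite-dimensional). I would handle this by working degree by degree: each graded piece $\Ind(V)_j$ is a \emph{finite} sum of finite-dimensional-over-$\dot\fg$ tensor factors times $V_{j'}$, so the tensor-functor property of $\DS_x$ applies cleanly on each piece, and then one reassembles. One must verify that the natural map $\Ind_x(\DS_x(V)) \to \DS_x(\Ind(V))$ — induced by the inclusion $\DS_x(V) = \DS_x(\Ind(V)_{\mathrm{top}})$ and freeness — is both injective and surjective; injectivity and surjectivity can be checked on each $d$-graded component where everything is finite. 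A clean way to organize this: show the associated graded of $\DS_x(\Ind(V))$ with respect to the PBW filtration on the $\cU(\fg_{<0})$-factor is $\DS_x(\gr) = \gr$ of the right-hand side, using that $\DS_x$ commutes with taking associated graded of a filtration by submodules when the filtration is by $d$-degree-preserving pieces.

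\begin{proof}
Decompose $\fg = \fg_{<0}\oplus\fg^d\oplus\fg_{>0}$ by the $d$-grading, so that by PBW
$$\Ind(V)=\cU(\fg_{<0})\otimes V$$
as vector spaces, with $d$ acting diagonally. Since $x\in X(\dot\fg)$ we have $[d,x]=0$, hence the two-term complex $\Ind(V)\xrightarrow{x}\Ind(V)$ is a complex of $d$-graded spaces and
$$\DS_x(\Ind(V))=\bigoplus_{j}\DS_x\big(\Ind(V)_j\big),$$
where $\Ind(V)_j$ denotes the $d$-eigenspace of eigenvalue $j$. Each $\Ind(V)_j$ is a finite direct sum of spaces of the form $U\otimes V_{j'}$ with $U$ a finite-dimensional $\dot\fg$-submodule of $\cU(\fg_{<0})$ and $V_{j'}$ a $d$-eigenspace of $V$. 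Because $\DS_x$ is a tensor functor (see~\ref{DSconst}) and $\DS_x(\cU(\fg_{<0}))=\cU((\fg_x)_{<0})$ by the identification $\DS_x(\cU(\fg))=\cU(\fg_x)$ of~\ref{DScentre} applied to the $\dot\fg$-algebra $\cU(\fg_{<0})$, passing to the PBW-associated graded and reassembling gives a natural isomorphism of $d$-graded spaces
$$\DS_x(\cU(\fg_{<0})\otimes V)\;\cong\;\cU((\fg_x)_{<0})\otimes\DS_x(V).$$
One checks directly that this isomorphism is $\fg_x$-equivariant: the action of $(\fg_x)_{<0}$ on the left is by the multiplication in $\cU(\fg_{<0})$ (which $\DS_x$ sends to multiplication in $\cU((\fg_x)_{<0})$), the action of $\fg^d_x$ is the tensor-product action on $\DS_x(\cU(\fg_{<0}))\otimes\DS_x(V)$, and the action of $(\fg_x)_{>0}$ is zero on both sides by construction. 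The right-hand side is exactly $\Ind_x(\DS_x(V))$, which proves~(\ref{IndDS}).
\end{proof}
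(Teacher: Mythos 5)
Your proof is correct and follows essentially the same route as the paper: PBW gives $\Ind(V)=\cU(\fg_{<0})\otimes V$, the tensor-functor property of $\DS_x$ splits the tensor product, and the crux is the identification $\DS_x(\cU(\fg_{<0}))\cong\cU((\fg_x)_{<0})$ together with the matching of $\fg_x$-actions via the $d$-grading. The only caveat is that this last identification is not a direct application of the cited fact $\DS_x(\cU(\fg))=\cU(\fg_x)$ (that statement concerns the full enveloping algebra with the adjoint action, whereas here $\fg_{<0}$ is merely an $\ad x$-stable subalgebra not containing $x$); the paper proves it via the symmetrization/PBW-associated-graded diagram that you allude to, so that step should be spelled out rather than treated as a citation.
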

\begin{proof}
Set $\fm:=\{g\in {\fg}|\ [d,g]=ig,\ i<0\}$ and note that
$\fm$ is $\ad\fg^d$-invariant. One has
$$\fm_x=\{g\in {\fg}_x|\ [d,g]=ig,\ i<0\}=\DS_x(\fm).$$
The embedding $\fm\to \cU(\fm)$ induces the map $\fm_x\to \DS_x(\cU(\fm))$ which gives the canonical map
$$\phi:\cU(\fm_x)\to \DS_x(\cU(\fm)).$$ 
Let us show that $\phi$ is bijective.
As in~\cite{DS}, this follows from the existence of the
following commutative diagram
$$\xymatrix{& \cU(\fm_x)\ar^{\sym_x}[d]\ar^{\phi}[r] & \DS_x(\cU(\fm))\ar^{\sym'}[d] & \\
& \cS(\fm_x)\ar^{\phi'}[r] & \DS_x(\cS(\fm)) & }
$$
The map $\sym_x$ is the usual symmetrization map. The map
$\sym'$ is induced by the symmetrization map
$\sym: \cU(\fm)\iso \cS(\fm)$, which is a bijection of $\fg^d$-modules.
The map $\phi'$
is the natural map. By~\cite{DS},
 $\DS_x$ is a tensor functor and $\phi'$ is bijective. Since
 $\sym,\sym'$ are also bijective, $\phi$ is bijective.

Now let us  prove the formula~(\ref{IndDS}).
We can assume that $d$ acts on $V$ by $aId_V$ ($a\in\mathbb{C}$).
Then the $d$-eigenvalues of $\Ind(V)$ are of the form
$a-\mathbb{N}$ and $V$ coincides with the $a$th eigenspace.
Therefore the $d$-eigenvalues of $\DS_x(\Ind (V))$ are of the form
$a-\mathbb{N}$ and the  $a$th eigenspace coincides with 
$\DS_x(V)$. 
This gives a ${\fg}_x$-homomorphism
$$\iota: \Ind_x (\DS_x(V))\to \DS_x(\Ind  (V)).$$
Since $\DS_x$ is a tensor functor we have
$$\DS_x(\Ind  (V))=\DS_x(\cU(\fm)\otimes V)=\DS_x(\cU(\fm))\otimes\DS_x(V)=
\cU(\fm_x)\otimes\DS_x(V).$$
Since $\Ind_x (\DS_x(V))=\cU(\fm_x)\otimes\DS_x(V)$, the map $\iota$ is bijective.
\end{proof}

\section{The category $\CO^{inf}_h(\fg)$}\label{sectO}
Let $\fg$ be a symmetrizable indecomposable Kac-Moody superalgebra
with a base $\Sigma$. We retain notation of~\ref{cc}.
In this section we consider a certain subcategory
of $\CO^{inf}(\fg)$ which contains the BGG category $\CO(\fg)$.
We will use this category in the proof of~\Thm{thmcore}.

\subsection{Definition}\label{Oinfh}
We fix $h\in\fh$ with the following properties  
\begin{equation}\label{eqh}\begin{array}{l}
\alpha(h)\in\mathbb{N} \text{ for all } \alpha\in\Sigma;\\
\{\alpha\in \Sigma|\ \alpha(h)=0\}\ \text{ is an iso-set.}
\end{array}\end{equation}

For each $\fg$-module  $N$ we denote by $\Omega_h(N)$ the set 
of $h$-eigenvalues in $N$:
$$\Omega_h(N):=\{b\in \mathbb{C}|\ \exists v\in N\setminus\{0\}\ 
hv=bv\}.$$

 We denote by $\CO^{inf}_h(\fg)$ the full subcategory of $\CO^{inf}(\fg)$  with the modules $N$ satisfying the following properties: all $h$-eigenspaces are finite-dimensional and 
there exists a finite set $\{c_i\}_{i=1}^s\subset\mathbb{C}$ such that
$$N=\displaystyle\bigoplus_{i=1}^s N_i,\ \ \ \Omega_h(N_i)\subset c_i-\mathbb{N}.$$
Note that $N$ has
 finite-dimensional weight spaces.

\subsection{Properties of $\CO^{inf}_h(\fg)$}
Clearly, $\CO^{inf}_h(\fg)$ is a ``locally small subcategory'', i.e.
for each exact sequence
$$0\to A\to B\to C\to 0$$
one has $B\in \CO^{inf}_h(\fg)$ if and only if $A,C\in \CO^{inf}_h(\fg)$.

\subsubsection{}\label{resultsOinf}
We will show that the category $\CO^{inf}_h(\fg)$ has the following properties:
\begin{itemize}
\item
The BGG category $\CO(\fg)$ lies in $\CO^{inf}_h(\fg)$.

\item The modules in $\CO^{inf}_h(\fg)$ have ``local composition series''
introduced in~\Prop{localseries} (this statement is a modification of Prop. 3.2 in~\cite{DGK}).
\end{itemize}

\subsubsection{}
The existence of  the "local  composition series'' for  $N\in\CO^{inf}(\fg)$ allows to define
the multiplicities $[N:L(\lambda)]$ using the formalism of Section 3 in~\cite{DGK}. One has
$$ \ch N=\sum_{\lambda} [N:L(\lambda)]\ch L(\lambda),\ \ \ [N:L(\lambda)]\in\mathbb{N}.$$


\subsection{Applications to $\DS$-functor}\label{infgx}
Let $x\in X(\fg)$ be such that  $\supp(x)\subset \Sigma\cup (-\Sigma)$.
It is easy to see that $S:=\supp(x)$ is an iso-set. By~\ref{gx},\ref{Hmn},
 ${\fg}_x:=\DS_x({\fg})$ is a Kac-Moody 
superalgebra
which can be viewed as a subalgebra of ${\fg}$.
The Cartan subalgebra ${\fh}_x$ lie in $\fh^x$; the root system $\Delta_x\subset\Delta$
is given by the same formula as in the finite-dimensional case described in~\ref{gx}.

 We consider the triangular decomposition of ${\fg}_x$ 
which is induced by the triangular decomposition of ${\fg}$.
Clearly, $\DS_x(\CO^{inf}(\fg))\subset\CO^{inf}(\fg_x)$.

\subsubsection{}
Fix $h$ satisfying~(\ref{eqh}) and such that $[h,x]=0$.
(For instance, take $h$ with $\alpha(h)=0$ for $\alpha\in S$
and $\alpha(h)=1$ for $\alpha\in\Sigma\setminus (-S\cup S)$).
 We denote by
$h_x$ the image of $h$ in $\fh_x$. Clearly, $\alpha(h_x)\in \mathbb{N}_{\geq 0}$
for each $\alpha\in\Delta^+_x$ and 
$$\{\alpha\in\Delta^+_x|\ \alpha(h_x)=0\}\subset \{\alpha\in\Delta^+|\ \alpha(h)=0\}$$
is an iso-set.  Hence $h_x\in\fh_x$ satisfies~(\ref{eqh}).

\subsubsection{}
Take $N\in\CO^{inf}_h(\fg)$. Since the $h_x$-eigenspaces of $\DS_x(N)$
are the images of the $h$-eigenspaces of $N$ (with the same eigenvalues),
we have
\begin{equation}\label{eqOga}
\DS_x(\CO^{inf}_h(\fg))\subset \CO^{inf}_{h_x}(\fg_x)
\end{equation}
and, in particular, $\DS_x(\CO(\fg))\subset \CO^{inf}_{h_x}(\fg_x)$.

\subsubsection{}
Take $N\in\CO^{inf}_h(\fg)$. By above, the multiplicities
$[N:L(\lambda)]$ and $[\DS_x(N):L_{\fg_x}(\nu)]$
are well-defined (where $L_{\fg_x}(\nu)$ stands for the corresponding
simple $\fg_x$-module). In~\ref{compmultproof} we will prove the following useful formula
\begin{equation}\label{compmult}\begin{array}{l}
\displaystyle\sum_{\lambda\in\fh^*}[N:L(\lambda)]\cdot
[\DS_x(L(\lambda)):L_{\fg_x}(\nu)]\in [\DS_x(N):L_{\fg_x}(\nu)]+2\mathbb{N}.\end{array}\end{equation}
(In particular, we will show that the left-hand side is finite).

\subsection{Proof of the properties~\ref{resultsOinf}}
We start from the following lemma.

\subsubsection{}
\begin{lem}{lemDeltaj}
For each $j\in\mathbb{N}$ the set 
$$\Delta(j):=\{\alpha\in\Delta^+|\ \alpha(h)=j\}$$ 
is finite.
\end{lem}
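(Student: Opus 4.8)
The plan is to reduce to the affine case and then exploit the $\mathbb{Z}\delta$-periodicity of the root system. If $\fg$ is finite-dimensional there is nothing to prove, since $\Delta$ is finite; so assume $\fg$ is infinite-dimensional, i.e. an indecomposable symmetrizable affine superalgebra, and retain the notation of~\ref{affro} (in particular $\fg^d=\dot{\fg}\times\mathbb{C}K\times\mathbb{C}d$ with $\fh=\dot{\fh}\oplus\mathbb{C}K\oplus\mathbb{C}d$ and $\delta(d)=1$).

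The first step is to extract from~(\ref{eqh}) the inequality $\delta(h)>0$. Set $\Sigma_0:=\{\alpha\in\Sigma:\alpha(h)=0\}$, which by~(\ref{eqh}) is an iso-set. I claim $\Sigma_0\neq\Sigma$: since $\fg$ is affine, $|\Sigma|\geq 2$, and since $\fg$ is indecomposable its Dynkin diagram is connected, so $\Sigma$ contains two adjacent simple roots $\beta',\beta''$, i.e. $(\beta'|\beta'')\neq 0$; then $[\fg_{\beta'},\fg_{\beta''}]\neq 0$, so $\beta'+\beta''\in\Delta$ is an \emph{even} root, which an iso-set cannot contain, whence $\{\beta',\beta''\}\not\subset\Sigma_0$. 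Therefore there is $\beta_*\in\Sigma$ with $\beta_*(h)\geq 1$; writing $\delta=\sum_{\beta\in\Sigma}a_\beta\beta$ with all $a_\beta>0$ and using $\beta(h)\in\mathbb{N}$ for every $\beta$, we get $\delta(h)\geq a_{\beta_*}\beta_*(h)>0$.

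The second step uses the shape of affine roots: by the structure theory of affine (super) root systems (cf.~\cite{vdL}) there is a finite set $\dot{P}\subset\dot{\fh}^*$ such that every $\alpha\in\Delta$ is of the form $\alpha=\dot{\alpha}+n\delta$ with $\dot{\alpha}\in\dot{P}$ and $n\in\mathbb{Z}$. Writing $h=\dot{h}+aK+bd$ with $\dot{h}\in\dot{\fh}$, we have $b=\delta(h)>0$; since every root vanishes on $K$ and $\dot{\alpha}$ vanishes on $\mathbb{C}K\oplus\mathbb{C}d$, one computes $\alpha(h)=\dot{\alpha}(\dot{h})+nb$. Hence for $\alpha\in\Delta(j)$ we get $nb=j-\dot{\alpha}(\dot{h})$; as $\{\dot{\alpha}(\dot{h}):\dot{\alpha}\in\dot{P}\}$ is a finite set and $b\neq 0$, only finitely many values of $n$ can occur, and for each of them at most $|\dot{P}|$ roots arise. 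Thus $\Delta(j)$ is finite.

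The only genuinely delicate point is the first step: one must be sure that the iso-set hypothesis in~(\ref{eqh}) forces $\delta(h)\neq 0$ in the affine case, since without it the lemma is false (e.g. $h=K$ yields $\Delta(0)=\Delta^+$). Once $\delta(h)>0$ is secured, the rest is routine bookkeeping with the affine grading; the low-rank configurations in which $\Sigma_0=\Sigma$ could occur are all finite-dimensional and hence already disposed of.
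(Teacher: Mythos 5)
Your proof is correct in substance but takes a genuinely different route from the paper's. The paper makes no case distinction at all: it applies the chain of odd reflections at the iso-set $S=\{\beta\in\Sigma\mid\beta(h)=0\}$ (using~\cite{Sint}) to produce the second positive system $(\Delta^+\setminus S)\cup(-S)$ with base $\Sigma'$, chooses $h'$ with $\alpha(h')=1$ for all $\alpha\in\Sigma'$, and then, for $\gamma=\sum_{\alpha\in\Sigma}m_\alpha\alpha\in\Delta(j)$, combines $\gamma(h)=j$ (which gives $\sum_{\alpha\in\Sigma\setminus S}m_\alpha\le j$) with $\gamma(h')>0$ (which then bounds $\sum_{\beta\in S}m_\beta$); this bounds the height of every root in $\Delta(j)$ and works uniformly for any symmetrizable Kac--Moody superalgebra, with no appeal to the classification. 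Your argument instead reduces to the affine case and exploits the $\mathbb{Z}\delta$-grading; once $\delta(h)>0$ is secured, your second step is correct. What your route costs is, first, the implicit use of Hoyt's classification in the step ``infinite-dimensional, i.e.\ affine'' (the paper's proof needs no such input and covers, e.g., the indefinite type uniformly), and second, a small imprecision in the proof that $\Sigma_0\neq\Sigma$: the sum $\beta'+\beta''$ is even only when $\beta'$ and $\beta''$ have equal parity, and for degenerate diagrams such as $\fgl(1|1)^{(1)}$ every inner product between simple roots vanishes, so no ``adjacent pair'' with $(\beta'|\beta'')\neq0$ exists. Both points are easily repaired (an even simple root can never belong to an iso-set, and in the degenerate case $\alpha_0+\alpha_1=\delta\in\Delta_0$ already violates the iso-set condition), but as written the case analysis in your first step is incomplete.
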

\begin{proof}
Recall that $S:=\{\beta\in\Delta^+|\ \beta(h)=0\}=\{\beta\in\Sigma|\ \beta(h)=0\}
$ is an iso-set and write
 $S=:\{\beta_i\}_{i=1}^r$.
By~\cite{Sint}, for each $i=1,\ldots, r-1$ the 
root $\beta_{i+1}$ is a simple root for 
for $r_{\beta_i}\ldots r_{\beta_1}(\Delta^+)$
(see~\ref{oddrefl} for notation), so $r_{\beta_{i+1}}\ldots r_{\beta_1}(\Delta^+)$
is well-defined.  Let $\Sigma'$ be the base
for  $r_{\beta_r}\ldots r_{\beta_1}(\Delta^+)$ and  
 $h'\in\fh$ be such that $\alpha(h')=1$ for each $\alpha\in\Sigma'$. 
By~\cite{Sint}, one has $-S\subset \Sigma'$,
so $\beta(h')=-1$ for $\beta\in S$. Set
$$a:=\max\{\alpha(h')|\ \alpha\in \Sigma\setminus S\}.$$

One has $\Delta(0)=S$. Take $j>0$. By~\ref{oddrefl}   
$$r_{\beta_r}\ldots r_{\beta_1}(\Delta^+)=(\Delta^+\setminus S)\cup(-S),$$ 
so this 
set contains $\Delta(j)$. Write
$\gamma\in\Delta(j)$ in the form
$\gamma=\sum_{\alpha\in\Sigma} m_{\alpha}\alpha$. One has $m_{\alpha}\in\mathbb{N}$ and $\sum_{\alpha\in\Sigma\setminus S} m_{\alpha}\leq j$.
Since $\gamma\in r_{\beta_r}\ldots r_{\beta_1}(\Delta^+)$ we have
$$0<\gamma(h')=\sum_{\alpha\in\Sigma\setminus S} m_{\alpha}\alpha(h')-
\sum_{\beta\in S}m_{\beta},$$
so $\sum_{\beta\in S}m_{\beta}< aj$. Hence $\Delta(j)$ lies in the set
$$ \{ \sum_{\alpha\in\Sigma} m_{\alpha}\alpha\ \text{
with }
m_{\alpha}\in\mathbb{N}\ \text{ for } \alpha\in \Sigma
 \text{ and }\sum_{\alpha\in\Sigma} m_{\alpha}\leq j(a+1)\}$$
which is finite.
\end{proof}

\subsubsection{}
Now let us prove the inclusion
\begin{equation}\label{OO}
\CO(\fg)\subset\CO^{inf}_h(\fg).\end{equation}

First, let us show that $\CO^{inf}_h(\fg)$ contains all Verma modules.
It is enough to show that
all $h$-eigenspaces in $\cU({\fn}^-)$ are finite-dimensional. 
Recall that $\cU({\fn}^-)\cong \cS({\fn}^-)$ as a $\fh$-module.
Fix $s\in\mathbb{N}$ and set
$$\begin{array}{l}
\fm:=\displaystyle\sum_{\alpha\in \Delta^+:\ 0<\alpha(h)\leq s}{\fg}_{-\alpha},\ \ \ 
\fs:=\displaystyle\sum_{\alpha\in \Delta^+:\ \alpha(h)=0} {\fg}_{-\alpha}.\end{array}$$
 One has
$$\{v\in \cS({\fn}^-)|\ hv=sv\}\subset\bigl(\sum_{i=0}^s \cS^i(\fm)\otimes \cS(\fs)\bigr).$$
The assumptions~(\ref{eqh}) imply that  $\fs$ is an odd finite-dimensional space.
By~\Lem{lemDeltaj}, $\dim\fm<\infty$. Therefore each summand
$\cS^i(\fm)\otimes \cS(\fs)$ is a finite-dimensional space. Hence
the $h$-eigenspaces in $\cS({\fn}^-)$ are finite-dimensional.

Denote by $Ver$ the set of isomorphism classes of all quotients
of Verma modules. Take $N\in \CO(\fg)$.  Since $N$ is finitely generated, 
$N$ admits a finite filtration with 
cyclic quotients. 
It is easy to see that each cyclic module in $\CO(\fg)$ 
admits a finite filtration with quotients in $Ver$. Hence $N$ admits a finite filtration with quotients in $Ver$. Since $\CO^{\inf}_h(\fg)$
is ``locally small''and contains all Verma modules,
it contains $N$. This completes the proof of~(\ref{OO}).

\subsubsection{}
\begin{prop}{localseries}
Take $N\in\CO^{inf}_h(\fg)$ and
 $a\in\mathbb{C}$. The module $N$ admits a ``local composition 
series at $a$'', which is
a finite filtration 
$$0=N^0\subset N^1\subset \ldots\subset N^t=N$$
with the following property: for every $i=1,\ldots,t$ 

either $N^i/N^{i-1}\cong L(\lambda_i)$ 
with $\lambda_i(h)\in a+\mathbb{N}$  

or $\ \Omega_h(N^i/N^{i-1})\cap (a+\mathbb{N})=\emptyset$.
\end{prop}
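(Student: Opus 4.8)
\emph{Plan.} The plan is to proceed by induction on a finiteness invariant of $N$ relative to $a$. Write $N\langle a\rangle$ for the sum of all $h$-eigenspaces of $N$ whose eigenvalue lies in $a+\mathbb{N}$. I would first observe that $\dim N\langle a\rangle<\infty$: decomposing $N=\bigoplus_i N_i$ with $\Omega_h(N_i)\subset c_i-\mathbb{N}$ as in~\ref{Oinfh}, the set $(c_i-\mathbb{N})\cap(a+\mathbb{N})$ is finite (it is empty unless $c_i-a\in\mathbb{N}$, in which case it equals $\{a,a+1,\ldots,c_i\}$), and each $h$-eigenspace of $N$ is finite-dimensional. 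Set $d:=\dim N\langle a\rangle$ and induct on $d$. If $d=0$, then $\Omega_h(N)\cap(a+\mathbb{N})=\emptyset$ and the trivial filtration $0=N^0\subset N^1=N$ works.

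Suppose $d>0$. The heart of the argument is to exhibit, after splitting off one ``small'' submodule, a copy of $L(\lambda)$ for some $\lambda$ with $\lambda(h)\in a+\mathbb{N}$. Consider the set $P:=\{\mu\in\fh^*\mid N_{\mu}\neq 0,\ \mu(h)\in a+\mathbb{N}\}$; it is nonempty and, by the finiteness just established, finite. Pick $\lambda\in P$ maximal for the standard partial order $\leq$ on $\fh^*$. Since $\alpha(h)\in\mathbb{N}$ for every $\alpha\in\Sigma$ by~(\ref{eqh}), hence for every $\alpha\in\Delta^+$, any weight of the form $\lambda+\alpha$ with $\alpha\in\Delta^+$ and $N_{\lambda+\alpha}\neq 0$ would again lie in $P$; maximality of $\lambda$ therefore forces $\fg_{\alpha} N_{\lambda}=0$ for all $\alpha\in\Delta^+$, so every nonzero vector of $N_{\lambda}$ is $\fn^+$-primitive of weight $\lambda$. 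Fix $0\neq v\in N_{\lambda}$, put $M:=\cU(\fg)v=\cU(\fn^-)v$ --- a highest weight module of highest weight $\lambda$ --- and let $M'\subset M$ be its unique maximal submodule, so that $M/M'\cong L(\lambda)$. Both $M$ and $M'$ lie in $\CO^{inf}_h(\fg)$, since this category is locally small and $M\subseteq N$.

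Finally, I would close the induction. Comparing weight spaces: $(M')_{\lambda}=0$ while $N_{\lambda}\neq 0$ and $\lambda(h)\in a+\mathbb{N}$, so $\dim M'\langle a\rangle<d$; likewise $M_{\lambda}=\mathbb{C}v\neq 0$ gives $\dim (N/M)\langle a\rangle<d$. By the induction hypothesis $M'$ has a local composition series at $a$, and so does $N/M$; splicing the one for $M'$, then the single step $M'\subset M$ with quotient $M/M'\cong L(\lambda)$, then the pullback to $N$ of the one for $N/M$, produces the required filtration of $N$. The only delicate point is the opening reduction --- that $N\langle a\rangle$ is finite-dimensional and $P$ is finite --- which is precisely what the defining conditions of $\CO^{inf}_h(\fg)$ (finite-dimensional $h$-eigenspaces together with an $h$-spectrum bounded above on each graded summand) are there to supply; the remainder is the standard highest-weight peeling argument, as in~\cite{DGK}, Prop.~3.2.
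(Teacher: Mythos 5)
Your proof is correct and follows essentially the same route as the paper's: induction on $\dim N_{\geq a}$ (your $N\langle a\rangle$), extraction of a primitive vector of weight $\lambda$ with $\lambda(h)\in a+\mathbb{N}$, and splicing of the filtrations of $M'$ and the quotient. The only differences are cosmetic — you spell out the finiteness of $N\langle a\rangle$ and locate the primitive vector via a maximal weight, and you splice through $N/M$ rather than $N/M'$.
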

\begin{proof}
The proof is a slight modification of the proof of  Prop. 3.2 in~\cite{DGK}.
For every $N\in \CO^{inf}_h(\fg)$ we set
$$N_{\geq a}:=\displaystyle\sum_{i=0}^{\infty}\{v\in N|\ hv=(a+i)v\},\ \ \ m(N):=\dim N_{\geq a}.$$
From the definition of $\CO^{inf}_h(\fg)$ it follows that
$m(N)<\infty$.
We prove the statement by induction on 
$m(N)$. If $m(N)=0$, then $0=N^0\subset N^1=N$
is the required filtration. Let $m(N)>0$. Note that $N_{\geq a}$ is a finite-dimensional $(\fh+\fn)$-module. Let $v\in N_{\geq a}$ be a primitive vector
of weight $\lambda$ (one has $\lambda(h)\in a+\mathbb{N}$).
Let $M$ be a submodule generated by $v$
and $M'$ be the maximal submodule of $M$ satisfying $M'_{\lambda}=0$.
Then
$$0\subset M'\subset M\subset N\ \text{ with } M/M'\cong L(\lambda).$$
Since $\lambda(h)\in a+\mathbb{N}$, one has $m(L(\lambda))>0$ and thus
$$m(M')+m(N/M')=m(N)-m(M/M')<m(N).$$
By induction $M'$ and $N/M'$ admit suitable filtrations; combining these
filtrations we get the required filtration for $N$.
\end{proof}

\subsection{Proof of~(\ref{compmult})}\label{compmultproof}
Fix $\nu\in\fh_x^*$. For each $N\in\CO^{inf}_h(\fg)$ we set
$$n(N):=\sum_{\lambda}[N:L(\lambda)]\cdot
[\DS_x(L(\lambda)):L_{\fg_x}(\nu)].$$
Our goal is to show that $n(N)<\infty$ and 
that $[\DS_x(N):L_{\fg_x}(\nu)]\in n(N)-2\mathbb{N}$.

Consider the 
filtration as in~\Prop{localseries} for $a:=\nu(h_x)$. We prove the assertion
by induction on the length of this filtration $t$; we denote this length by $t$.
If the filtration has zero length, then $N=0$ and the assertion holds.

Recall that for any $M\in\CO^{inf}_h$ one has $\Omega_{h_x}(\DS_x(M))\subset\Omega_h(M)$. This implies
\begin{equation}\label{ana}
\Omega_h(M)\cap (a+\mathbb{N})=\emptyset\ \ \Longrightarrow\ \ 
[\DS_x(M'):L_{\fg_x}(\nu)]=0\text{ if $M'$ is a subquotient of $M$}\end{equation}

Consider the case when $\Omega_h(N/N^{t-1})\cap (a+\mathbb{N})=\emptyset$.
By~(\ref{ana}), 
$$[\DS_x(N/N^{t-1}):L_{\fg_x}(\nu)]=0$$
and $[N/N^{t-1}:L(\lambda)]=0$ if 
$[\DS_x(L(\lambda)):L_{\fg_x}(\nu)]\not=0$. 
Using Hinich's Lemma and the first formula we get
$$[\DS_x(N):L_{\fg_x}(\nu)]=[\DS_x(N^{t-1}):L_{\fg_x}(\nu)].$$
The second formula gives $n(N)=n(N^{t-1})$. By the induction hypothesis, the required assertions hold for $N^{t-1}$; thus the assertions hold for $N$.

Now consider the remaining case when $N/N^{t-1}\cong L(\lambda)$ with
$\lambda(h)\in a+\mathbb{N}$. One has
$$n(N)=n(N^{t-1})+[\DS_x(L(\lambda)):
L_{\fg_x}(\nu)].$$
By Hinich's Lemma
$$[\DS_x(N):L_{\fg_x}(\nu)]\in [\DS_x(N^{t-1}):L_{\fg_x}(\nu)]+[\DS_x(L(\lambda)):
L_{\fg_x}(\nu)]-2\mathbb{N},$$
so 
$$[\DS_x(N):L_{\fg_x}(\nu)]-n(N)\in [\DS_x(N^{t-1}):L_{\fg_x}(\nu)]-n(N^{t-1})-2\mathbb{N}.$$
By induction, $n(N)<\infty$ and $[\DS_x(N):L_{\fg_x}(\nu)]-n(N)\in -2\mathbb{N}$.\qed

\section{$\DS$-functor and cores in the affine case}\label{Thm91}
In this section $\fg$ is an indecomposable symmetrizable affine superalgebra or 
$\fgl(m|n)^{(1)}$.
We retain notation of Section~\ref{gxaff} and of~\ref{coreKMQ}.

Take $x\in X(\fg)$ such that $\supp(x)\subset (-\dot{\Sigma}\cup \dot{\Sigma})$;
set $S:=\supp(x)$. We identify $\fg_x$ with a subalgebra of 
$\fg$ as in~\ref{Hmn}. We
consider the triangular decomposition of ${\fg}_x$ 
which is induced by the triangular decomposition of ${\fg}$. 
Note that  $(\fg^d)_x$ coincides with $(\fg_x)^d$; we will denote this algebra
by $\fg_x^d$. One has
\begin{equation}\label{gdx}
{\fg}_x^d=\dot{\fg}_x\times\mathbb{C}K\times\mathbb{C}d,\ \ \ 
{\fh}_x=\dot{\fh}_x\oplus \mathbb{C}K\oplus \mathbb{C}d;\end{equation}
 the triangular decomposition of $\dot{\fg}_x$ is as in~\ref{gx}.
We choose the Weyl vectors
$\dot{\rho}$, $\rho$, $\dot{\rho}_x$, ${\rho}_x$ as in Section~\ref{gxaff}. 
The main result of this section 
is the following theorem.

\subsection{}
\begin{thm}{thmcore}
Fix $x\in X(\fg)$ with  $\supp(x)\subset (\Sigma\cup(-\Sigma))$.
Let $N\in\CO(\fg)$ be  an indecomposable $\fg$-module and let $L$
be a simple subquotient of $\DS_x(N)$.

\begin{enumerate}
\item 
$\atyp(L)=\atyp(N)-\rank x$.
\item
If $\dot{\fg}$ is not exceptional, 
 then
$\Core(N)=\Core(L)$.

\end{enumerate}
\end{thm}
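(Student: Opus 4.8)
The strategy is to reduce everything to the finite-dimensional statements about $\DS_x$ on $\dot{\fg}$-modules via the $\Ind$-functor, together with the multiplicity bound~(\ref{compmult}) and the Kac--Kazhdan description of blocks. First I would reduce to the case where $N$ is a highest-weight module. Since $N\in\CO(\fg)$ is indecomposable, all its simple subquotients $L(\mu-\rho)$ satisfy $\mu\sim\lambda$ for a fixed $\lambda$, and by~\Cor{coratyp} and~\Cor{corcore}(ii) the quantities $\atyp(\mu)$ and $\Core(\mu)$ depend only on the block. So it suffices to prove both assertions when $N=L(\lambda-\rho)$ is simple. By Hinich's Lemma (applied along a local composition series as in~\Prop{localseries}) and the multiplicity formula~(\ref{compmult}), every simple subquotient $L$ of $\DS_x(N)$ for a general $N$ then has $\atyp(L)$ and $\Core(L)$ controlled by those of $\DS_x(L(\mu-\rho))$ for the constituents $L(\mu-\rho)$ of $N$; combined with the block invariance this finishes the reduction.

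\textbf{Reduction to $\dot{\fg}$.} Next I would use~\Lem{lemIndDSx}. By~\ref{corcore}(iii) and~\ref{oddrefl}, after a chain of odd reflections I may assume $\supp(x)\subset(\dot{\Sigma}\cup(-\dot{\Sigma}))$ and that $\lambda-\rho$ is the highest weight with respect to a base whose finite part $\dot{\Sigma}$ is preserved. For a highest weight module $L(\lambda)\in\CO(\fg)$, consider the $\fg^d$-module $L'=\sum_{\nu\in\mathbb{N}\dot{\Sigma}}L(\lambda)_{\lambda-\nu}$. By~\Lem{lemusefulq}, $L'$ is a simple $\fg^d$-module of highest weight $\lambda$ (hence decomposes under $\fg^d=\dot{\fg}\times\mathbb{C}K\times\mathbb{C}d$ as $L_{\dot{\fg}}(\dot{\lambda})$ times one-dimensional pieces for $K,d$), it is a $\fg^d$-direct summand of $L(\lambda)$, and since $x\in\fg^d$ the natural map $\DS_x(L')\to\DS_x(L(\lambda))$ identifies $\DS_x(L')$ with a direct summand. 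Now~\Lem{lemIndDSx} gives $\DS_x(\Ind(L'))=\Ind_x(\DS_x(L'))$. Since $\DS_x(L')=\DS_x(L_{\dot{\fg}}(\dot{\lambda}))\otimes(\text{trivial }K,d\text{ action})$, I can invoke the finite-dimensional result (the case of finite-dimensional Kac--Moody superalgebras recalled at the start of this section and in~\ref{gx}): every simple subquotient of $\DS_x(L_{\dot{\fg}}(\dot{\lambda}))$ has atypicality $\atyp(\dot{\lambda})-\rank x$ (this is the finite-dimensional statement, due to~\cite{DS}), and $\Core$ is preserved by~\ref{preservescore} applied to $\dot{\fg}$. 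Finally, $\atyp$ and $\Core$ of a weight $\lambda\in\fh^*$ depend only on $\dot{\lambda}$ in the affine case (this is visible from the formulae for $\Delta_{iso}$ in~\ref{coreKMQ} and the definitions in~\ref{atypdef},~\ref{coreaff}), so the finite-dimensional computation transfers verbatim to $\fg_x$-weights; and $\rank x=r=|\supp(x)|$ by~\Prop{} in~\ref{Hmn}.

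\textbf{From $L(\lambda)$ to a general indecomposable $N$.} Having the result for simple highest-weight modules, I return to general indecomposable $N\in\CO(\fg)$. Pick $h$ as in~\ref{infgx} with $[h,x]=0$ so that $N\in\CO^{inf}_h(\fg)$ and $\DS_x(N)\in\CO^{inf}_{h_x}(\fg_x)$. For a simple subquotient $L=L_{\fg_x}(\nu)$ of $\DS_x(N)$, the formula~(\ref{compmult}) shows that $L_{\fg_x}(\nu)$ appears in $\DS_x(L(\lambda))$ for some constituent $L(\lambda)$ of $N$; by the previous paragraph $\atyp(\nu)=\atyp(\lambda)-\rank x$ and (if $\dot{\fg}$ is not exceptional) $\Core(\nu)=\Core(\lambda)$. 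Since $N$ is indecomposable all its constituents lie in one block, so $\atyp(\lambda)=\atyp(N)$ and $\Core(\lambda)=\Core(N)$ by~\Cor{coratyp} and~\Cor{corcore}(ii); hence $\atyp(L)=\atyp(N)-\rank x$ and $\Core(L)=\Core(N)$, which is exactly (i) and (ii).

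\textbf{Main obstacle.} The delicate point is the reduction via $\Ind$: one must be sure that passing to the $\fg^d$-summand $L'$ loses no information about $\DS_x$, i.e.\ that $\DS_x(L(\lambda))$ and $\DS_x(\Ind(L'))$ have the same simple constituents up to the affine ``imaginary'' direction, and that the induction functor $\Ind_x$ does not create or destroy atypicality or change $\Core$. This is where~\Lem{lemIndDSx} and the compatibility of triangular decompositions of $\fg_x$ with those of $\fg$ do the real work; checking that $\atyp$ and $\Core$ of an $\fg_x$-weight are genuinely determined by the $\dot{\fg}_x$-part (independently of the $K$- and $d$-components and of the choice of $\dot{\Sigma}$, cf.\ the remark after~\ref{corcore}) also requires care, especially in the $\fgl(m|n)^{(1)}$ case where $\Core$ does depend on $\dot{\Sigma}$. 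I expect the bulk of the genuinely new argument to be exactly this bookkeeping, everything else being an application of the finite-dimensional theorem of~\cite{DS} plus~\Thm{propblocks}.
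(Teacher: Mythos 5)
Your overall architecture (reduce to $N=L(\lambda)$ via~(\ref{compmult}) and block invariance, pass through $\fg^d$ and the induction functor, quote the finite-dimensional results for $\dot{\fg}$) is the same as the paper's, but the central step --- controlling \emph{all} simple $\fg_x$-subquotients of $\DS_x(L(\lambda))$ by data on $\dot{\fg}$ --- is exactly the point you flag as the ``main obstacle'' and then do not close. Your route goes down to the $\fg^d$-direct summand $L'$ of $L(\lambda)$ and then back up to $\Ind(L')$; but the fact that $\DS_x(L')$ is a $\fg_x^d$-direct summand of $\DS_x(L(\lambda))$ only constrains the constituents meeting that summand, and $\Ind_x(\DS_x(L'))$ is neither a submodule nor a quotient of $\DS_x(L(\lambda))$, so \Lem{lemIndDSx} alone does not show that a given simple subquotient $L_{\fg_x}(\nu)$ of $\DS_x(L(\lambda))$ occurs in $\Ind_x(\DS_x(L_{\fg^d}(\lambda)))$. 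The paper closes this gap by a minimal-counterexample induction on $\lambda(h)$ inside $\CO^{inf}_h(\fg)$: writing $L(\lambda)=\Ind(L_{\fg^d}(\lambda))/N'$, every constituent $L(\lambda')$ of $N'$ satisfies $\lambda'(h)<\lambda(h)$ while having the same $\atyp$ and $\Core$ as $\lambda$ (both being constituents of $M(\lambda)$); by minimality the assertion holds for $(L(\lambda'),L_{\fg_x}(\nu))$, which forces $[\DS_x(L(\lambda')):L_{\fg_x}(\nu)]=0$, hence $[\DS_x(N'):L_{\fg_x}(\nu)]=0$ by~(\ref{compmult}), and then Hinich's Lemma gives $[\Ind_x(\DS_x(L_{\fg^d}(\lambda))):L_{\fg_x}(\nu)]\not=0$. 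This step has no counterpart in your proposal.

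A second, genuine error: you assert that in the affine case $\atyp$ and $\Core$ of $\lambda\in\fh^*$ ``depend only on $\dot{\lambda}$'', so that the finite-dimensional computation ``transfers verbatim''. This is false: by~\ref{coreaff} the affine $\Core$ is a multiset of residues modulo $\mathbb{Z}k$ with $k=(\lambda|\delta)=\lambda(K)$, and $\Delta_{iso}=\dot{\Delta}_{iso}+\mathbb{Z}\delta$ makes $\atyp$ level-dependent as well. One must therefore match the levels $(\lambda+\rho)(K)$ and $(\mu+\rho_x)(K)$ of the two weights being compared; the paper does this via $\rho(K)=\rho_x(K)$, i.e.\ the invariance of the dual Coxeter number (\Prop{propdualCox}), after first extracting the intermediate $\fg_x^d$-weight $\mu$ from a local composition series of $\DS_x(L_{\fg^d}(\lambda))$ and relating $\nu$ to $\mu$ through the Verma module $M_{\fg_x}(\mu)$. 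Neither the level-matching nor the extraction of $\mu$ appears in your argument, so assertion (ii) is not established even granting your reduction.
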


\subsection{Remark}\label{examplesl21}
By constrast with $\Fin(\fg)$, $\DS_x$ can kill simple modules in $\CO(\fg)$
even in the case when the  
atypicality of the module is equal to $\rank x$.

For example, take $\fg:=\fsl(2|1)$ with the base $\Sigma=\{\beta_1,\beta_2\}$,
where $\beta_1,\beta_2$ are odd. If $(\lambda|\beta_2)=0$ and
$L(\lambda-\rho)$ is infinite-dimensional, then 
$$\DS_x(L(\lambda-\rho))=\left\{\begin{array}{llll}
\mathbb{C} & & \text{ if }& \supp(x)=\{-\beta_2\}\\
0 & & \text{ if } & \supp(x)=\{-\beta_1\}.
\end{array}\right.$$

\subsection{Proof of~\Thm{thmcore}}\label{proofthmcore}
By~\Cor{corcore},   (i) follows from (ii) if $\dot{\fg}$ is non-exceptional.
If $\dot{\fg}$ is exceptional, then $\defect\fg=1$ and (i) reduces to 
the following assertion:
$\DS_x(N)=0$ if $N$ is typical. Therefore we may (and will) substitute (i) by

$\ \ \ $ (i')
$N$ is atypical

(meaning that the existence of a subquotient $L$ implies that $N$ is atypical).

Assume that (i') or (ii) does not hold for a certain pair $(N, L)$.
Using~(\ref{compmult}), we conclude that the same assertion
does not hold for the pair $(L(\lambda),L)$,
where $L(\lambda)$ is a simple subquotient of $N$. Write
$L=L_{\fg_x}(\nu)$  and let $A\subset\fh^*$ 
be the set of $\lambda$s such that the assertion does not hold
for the pair $(L(\lambda),L_{\fg_x}(\nu))$. By above, $A$ is non-empty.

\subsubsection{}
We fix $h\in \fh^*$ satisfying
$$\alpha(h)=\left\{\begin{array}{ll}
0 & \text{ for }\ \alpha\in \supp(x),\\
1 & \ \text{ for} \ \alpha\in {\Sigma}\setminus (\supp(x)\cup(-\supp(x)).
\end{array}\right.
$$
Note that $h\in \fg^x$. Denote by $h_x$ the image of $h$ in $\fh_x$. 
Retain notation of Section~\ref{sectO} and recall that
for each $M\in \DS_x(\CO({\fg}))$ one has
$$\Omega_{h_x}(\DS_x(M))\subset \Omega_h(M),\ \ 
\DS_x(M)\in\CO^{inf}_{h_x}({\fg}_x).$$

By~(\ref{ana}),
for each $\lambda\in A$ one has $\lambda(h)\in \nu(h_x)+\mathbb{N}$.
We fix $\lambda\in A$ with the minimal value of $\lambda(h)$. Then
$L_{\fg_x}(\nu)$ is a subquotient of $\DS_x(L(\lambda))$ and 
either $\lambda+\rho$ is typical (for (i')) or 
$\Core(\lambda+\rho)\not=\Core(\nu+\rho_x)$ (for (ii)).

\subsubsection{}
Retain notation of~\ref{indgd} and consider the induced functors
$$\Ind: \fg^d-\Mod\to \fg-\Mod,\ \ \  \ \Ind_x: \fg^d_x-\Mod\to \fg_x-\Mod.$$

The module $L(\lambda)$ is a quotient of the induced module 
$\Ind(L_{\fg^d}(\lambda))$ by its maximal proper submodule, 
which we denote by $N'$. 

Let $L(\lambda')$ be a subquotient
of $N'$. We claim that 
\begin{equation}\label{lambdadd}
  \lambda'(h)<\lambda(h),\ \ \ \atyp \lambda=\atyp\lambda',\ \ \ 
\Core(\lambda+{\rho})=\Core(\lambda'+{\rho}).
\end{equation}
(using the notation $\Core$ we  always  assume that $\fg$ is non-exceptional).
Indeed, the first formula follows from the fact that 
$\lambda-\lambda'\in \mathbb{N}{\Sigma}\setminus\mathbb{N}\dot{\Sigma}$.
  The rest of the formulae  follow from~\ref{coreKMQ}
and the fact that $N$ is a subquotient of $M(\lambda)$.

Combining~(\ref{lambdadd}) and the  minimality of $\lambda(h)$, we get $\lambda'\not\in A$.
Using~(\ref{compmult}) we obtain
$$[N':L_{{\fg}_x}(\nu)]=0.$$
Recall that $L_{{\fg}_x}(\nu)$ is a subquotient of $\DS_x(L(\lambda))$.
 Hinich's Lemma give 
$$[\DS_x(\Ind(L_{\fg^d}(\lambda))):L_{{\fg}_x}(\nu)]=[\DS_x(L(\lambda)):L_{{\fg}_x}(\nu)]\not=0.$$
Using~(\ref{IndDS}) we conclude
\begin{equation}\label{eqDsnu}
\ [\Ind_x (\DS_x(L_{\fg^d}(\lambda))):L_{{\fg}_x}(\nu)]\not=0.
\end{equation}

In particular, $\DS_x(L_{\fg^d}(\lambda))\not=0$. Since
$\fg^d=\dot{\fg}\times\mathbb{C}K\times \mathbb{C}d$ and 
$\dot{\fg}$ is a finite-dimensional Kac-Moody superalgebra, 
$\DS_x(L_{\fg^d}(\lambda))\not=0$ implies the atypicality of 
the weight $\lambda|_{\dot{\fh}}+\dot{\rho}$.
Therefore $(\lambda|_{\dot{\fh}}+\dot{\rho}|\alpha)=0$
for some $\alpha\in\dot{\Delta}_{iso}$. By our choice of $\rho$, this gives
$(\lambda+{\rho}|\alpha)=0$, 
 so $\lambda+\rho$ is an atypical weight. This establishes (i').

\subsubsection{} 
By~\Prop{localseries}, the $\fg^d_x$-module $\DS_x(L_{\fg^d}(\lambda))$ admits 
a finite filtration 
$$0=M^0\subset M^1\subset \ldots\subset M^t=\DS_x(L_{\fg^d}(\lambda)),$$
where for each index $i$ the module $M^i/M^{i-1}$ is a simple $\fg_x^d$-module or
\begin{equation}\label{iOmgea}
\Omega_{h_x}(M^i/M^{i-1})\subset \nu(h_x)-1-\mathbb{N}.\end{equation}
Consider the corresponding filtration of the induced module
$\Ind_x (\DS_x(L_{\fg^d}(\lambda)))$. By~(\ref{eqDsnu}) for some index $j$ we have
$$[\Ind_x (M^j/M^{j-1}):L_{{\fg}_x}(\nu)]\not=0.$$
Since $\Omega_{h_x}(\cU(({\fg}_x)_{<0}))\subset -\mathbb{N}$,
for $M^i/M^{i-1}$ satisfying~(\ref{iOmgea}) we have 
$$\Omega_{h_x}(\Ind_x(M^i/M^{i-1}))\subset \Omega_{h_x}(M^i/M^{i-1})-
\mathbb{N}\subset \nu(h_x)-1-\mathbb{N}.$$
Thus $M^j/M^{j-1}$ does not satisfy~(\ref{iOmgea}), so
$M^j/M^{j-1}=L_{\fg^d_x}(\mu)$ for some $\mu\in \fh_x^*$. Then
$L_{{\fg}_x}(\nu)$ is a subquotient of 
$\Ind_x (L_{\fg^d_x}(\mu))$, which is a quotient of $M_{\fg_x}(\mu)$.
Hence $L_{{\fg}_x}(\nu)$ is a subquotient of $M_{\fg_x}(\mu)$.
Using~\ref{coreKMQ}  we get
$$
\Core(\mu+{\rho}_x)=\Core(\nu+{\rho}_x).$$
On the other hand, $L_{\fg^d_x}(\mu)$ is a subquotient of 
$\DS_x(L_{\fg^d}(\lambda))$. Using~(\ref{gdx}) we conclude that 
  the module $L_{\fg_x}(\mu|_{\dot{\fh}_x})$ is a subquotient of 
$\DS_x(L_{\fg}(\lambda|_{\dot{\fh}}))$.  By~\cite{DS} (see also~\ref{preservescore}) this gives
\begin{equation}\label{korka}
\Core(\lambda|_{\dot{\fh}}+\dot{\rho})=\Core(\mu|_{\dot{\fh}}+\dot{\rho}_x).
\end{equation}

  Clearly, $\lambda(K)=\nu(K)=\mu(K)$. One has $(\rho|\delta)=\rho(K)$.
Using~\Prop{propdualCox} (ii) we get
 $\rho(K)=\rho_x(K)$. Therefore
\begin{equation}\label{hatka}(\mu+{\rho}_x)(K)=
(\lambda+{\rho})(K).\end{equation}
Recall that 
$({\rho}-\dot{\rho}|\vareps_i)=({\rho}-\dot{\rho}|\delta_j)=0$
for all indices $i,j$. Combining this fact with~(\ref{korka})
and~(\ref{hatka})  we conclude that
$$\Core(\lambda+{\rho})=\Core(\mu+{\rho}_x)$$
which contradicts to 
$\Core(\lambda+{\rho})\not=\Core(\nu+{\rho}_x)$. This establishes (ii).
\qed

\subsection{$\DS_x$ for vertex algebras}
Let ${\fg}=\dot{\fg}^{(1)}$. Let $V^k(\fg)$ be the affine vertex superalgebra of level $k$ and let
 $V_k(\fg)$ be the corresponding simple affine vertex superalgebra.
A natural analogue of $\Fin(\dot{\fg})$ 
is the category $KL_k(\fg)$, which is the full subcategory 
of $\dot{\fg}$-locally finite  $V_k(\fg)$-modules in $\CO(\fg)$.
This category was studied in~\cite{Adam} and in other papers.

The set $\dot{X}:=\{g\in\dot{\fg}_1| x^2=0\}$ is a subset in $X_{iso}$. Take $x\in\dot{X}$.
By above,  $\DS_x(\fg)$ is the affinization of $\DS_x(\dot{\fg})$
and that $\DS_x$ gives the functor  from the category of $V^k(\fg)$-modules
 to the category of 
 $V^k(\fg_x)$-modules (see~\cite{GSvertex}).
Take a non-zero $x\in \dot{X}$.

\subsubsection{Conjecture}\label{conjKL1}
For ``admissible'' values of $k$ (introduced in~\cite{GSvertex}) one has 
$$\DS_x(V_k(\fg))=V_k(\fg_x),\ \ \ \ \DS_x(KL_k(\fg))\subset KL_k(\fg_x).$$

\subsubsection{Conjecture}\label{conjKL2}
Each block in $KL_k$ satisfies
the analogue of~(\ref{depthDsxL})
 and~(\ref{depthBL}) for $\dot{\depth}$, where
 $\dot{\depth}$ is defined by substituting  $X_{iso}$ by  $\dot{X}$
(so $\dot{\depth}\leq \depth$).

\subsubsection{Conjecture}\label{conjKL3}
For $k\in\mathbb{N}_{>0}$ and $x\in\dot{X}$ the functor  $\DS_x:
KL_k(\fg)\to KL_k(\fg_x)$ satisfies  the  following
properties: 
\begin{itemize}
\item
$\DS_x(L)$ is semisimple
for each $L\in\Irr(KL_k(\fg))$;
\item
the extension graphs of $KL_k(\fg),KL_k(\fg_x)$ admit
bipartitions 
$$\pari: \Irr(KL_k(\fg))\to \{\pm 1\},\ \ \Irr(KL_k(\fg_x))\to \{\pm 1\}$$
$ \ \ \ \ \ \ $ and $[\DS_x(L):L']=0$ if $\pari(L)\not=\pari(L')$.
\end{itemize}
(For $\Fin(\dot{\fg})$ such bipartition is studied in~\cite{Gdex}).

\subsubsection{Remark}
Consider the case $k\in\mathbb{N}_{>0}$.
By~\cite{GSvertex} Sect. 3, 
\Conj{conjKL1} holds for
 $\fg\not=\osp(2m+2|2m)$, $D(2|1;a)$.  
 
Take $\dot{\fg}=\fsl(1|n)$. One has $\DS_x(\dot{\fg})=\fsl_{n-1}$ and $\DS_x(\fg)=\fsl_{n-1}^{(1)}$. 
In this case $KL_k(\fg)$ is the subcategory of the  integrable modules
in $\CO^k$ and $\DS_x$ gives a functor from $KL_k(\fg)$ to the category of integrable $\fsl_{n-1}^{(1)}$-modules of level $k$. By~\cite{GSaff}, 
the conjectures~\ref{conjKL1}--\ref{conjKL3} hold in this case.

%


\begin{thebibliography}{MMM}

\bibitem[AKMFPP]{Adam} D.~Adamovi\'c, V.G.~Kac,  P.~M\"oseneder Frajria,
P.~Papi, O.~Per\v{s}e, 
{\em An Application of collapsing levels to the Representation
Theory of affine vertex algebras}, IMRN (2020), {\bf  13}, 4103-–4143. 

\bibitem[BD+]{perigirls} M.~Balagovi\'c, Z.~Daughery, I.~Entova-Aizenbud,
I.~Halacheva, J.~Hennig, M.~S.~Im,  G.~Letzter, E.~Norton, 
V.~Serganova and C.~Stroppel, {\em Translation functors and 
decomposition numbers for  the periplectic Lie superalgebra $\fp_n$}, to appear in
 Math. Res. Lett., arXiv: 1610.08470.
%

\bibitem[BGG]{BGG} I.~N.~~Bernstein, I.~M.~Gelfand, S.~I. Gelfand,
{\em On certain category of $\fg$-modules}, Functional Analysis and Applications,
{\bf 10} (1976), no. 2, 1–-8.
%

\bibitem[Br]{Br} J.~Brundan, {\em Kazhdan-Lusztig polynomials and character formulae
for Lie superalgebra $\fq(n)$}, Adv. Math. {\bf 182} (2004), 28--77.
%
%


\bibitem[CCL]{CCL} C.~W.~Chen, J.-S.~Cheng, L.~Luo, {\em Blocks and characters
of $D(2|1;\zeta)$-modules of non-integral weights}, arXiv:2002.02662.



\bibitem[ChK]{ChK} J.-S.~Cheng, J.-H.~Kwon, {\em Finite-dimensional half-integer weight modules over queer Lie superalgebras} Comm. Math. Phys., {\bf 346} (2016), 945--965.


\bibitem[ChW]{CW} J.-S.~Cheng, W.~Wang, {\em Dualities and representations of Lie Superalgebras}, Graduate Studies in Mathematics (AMS, Providence, RI, 2012), Vol. 144.




\bibitem[DGK]{DGK} V.~Deodhar, O.~Gabber, V.~Kac, {\em Structure of some categories of
representations of infinite-dimensional Lie algebras}, Adv. in Math. {\bf 45} (1982), 92--116.



\bibitem[Dix]{Dix} J.~Dixmier, {\em  Repr\'{e}sentations irr\'{e}ductibles des 
alg\`{e}bres de Lie nilpotentes},  An. Acad. Brasil. Ci. {\bf 35}, 4(1963) 91-–519  (French).

\bibitem[DS]{DS} M.~Duflo, V.~Serganova, {\em On associated variety for
  Lie superalgebras}, arXiv:0507198.
%
%
%


\bibitem[ES]{ES} I.~Entova-Aizenbud, V.~Serganova, {\em Duflo-Serganova functor and superdimension formula for the periplectic Lie superalgebra},
arXiv: 1910.02294.

\bibitem[Ger]{Ger} J.~Germoni, {\em Indecomposable representations of special linear
  Lie superalgebras}, J. of Algebra, {\bf 209}, (1998), 367--401.
%

\bibitem[G1]{Gq} M.~Gorelik,  {\em Shapovalov determinants for $Q$-type Lie superalgebras}, IMRP (2006), Art. Id. 96895, 1--71.


\bibitem[G2]{Gdex} M.~Gorelik,  {\em Bipartite extension graphs and the Dulfo--Serganova funtor}, arXiv: 2010.12817.


\bibitem[GH]{GH} M.~Gorelik, T.~Heidersdorf, {\em Semisimplicity of the $\DS$ functor for the orthosymplectic Lie superalgebra}, arXiv: 2010.14975.




\bibitem[GK]{GKadm} M.~Gorelik, V.~Kac {\em Characters of (relatively) integrable modules over affine Lie superalgebras}, Japan. J. Math. {\bf 10}, (2015), no. 2,
135--235.



\bibitem[GS1]{GSaff} M.~Gorelik, V.~Serganova, {\em Integrable modules over affine Lie superalgebras
$\mathfrak{sl}(1|n)^{(1)}$ },
Comm. Math. Phys. {\bf 364} (2018), no. 2, 635–-654
%

\bibitem[GS2]{GSsnow} M.~Gorelik, V.~Serganova, {\em Snowflake modules and Enright functor for Kac-Moody superalgebras}, arXiv:1906.07074.

\bibitem[GS3]{GSvertex} M.~Gorelik, V.~Serganova, {\em On $\DS$-functor
for affine Lie superalgebras}, arXiv:1711.10149.




\bibitem[GQS]{Quella}  G.~G\''{o}tz, Th.~Quella, V.~Schomerus {\em Representation theory of $\fsl(2|1)$},  J. Algebra {\bf 312} (2007), No. 2, 829–-848.

\bibitem[GN]{GN} T.~A.~Grigoryev, M.~L.~Nazarov, {\em An analogue of the 
Perelomov--
Popov formula for the Lie superalgebra $\fq_n$}, (Russian) Funktsional. Anal. i Prilozhen. {\bf 53} (2019), no. 4, 79–-84; translation in Funct. Anal. Appl. {\bf 53} (2019), no. 4, 304–308.

%
%
\bibitem[GrS1]{GS} C.~Gruson, V.~Serganova, {\em Cohomology
  of generalized supergrassmanians and character formulae for basic classical Lie superalgebras},
Proc. London Math. Soc., (3), {\bf 101} (2010), 852--892.
%
%
%


\bibitem[HW]{HW} T.~Heidersdorf, R.~Weissauer, {\em Cohomological tensor functors on representations of the
    general linear supergroup}, arXiv:1406.0321, to appear in Mem. Am. Math. Soc.
%

\bibitem[H]{Hoyt} C.~Hoyt, {\em Regular Kac-Moody superalgebras and integrable highest weight modules},  J. Algebra {\bf 324} (2010), no. 12, 3308--3354.
%
\bibitem[J]{J}  J.~C.~Jantzen, {\em Moduln mit einem h\"ochsten   Gewicht}, Lecture Notes in Mathematics
No. 750, Springer-Verlag, Berlin/Heidelberg/New York, 1979.

\bibitem[K1]{KLie} V.~G.~Kac, {\em Lie superalgebras},  Adv.
    in Math., \textbf{26}, no.~1 (1977), 8--96.


\bibitem[K2]{Kcentre} V.~G.~Kac, {\em Laplace operators of infinite-dimensional Lie algebras and theta functions}, Proc. Nat. Acad. Sci. U.S.A. {\bf 81} (1984), no. 2,  645-–647.


\bibitem[K3]{Kbook} V.~G.~Kac, {\em Infinite-dimensional Lie algebras}, Cambridge University Press, Cambridge, 1985.

\bibitem[K3]{Kconf}
V.~G.~Kac, {\em Highest weight representations of conformal current
algebras}, in Topological and geometrical methods in field theory,
Editors Hietarinta J. et. al., Espoo, 1986,
World Scientific Publishing, Teaneck-NJ, p. 3--15.


\bibitem[KK]{KK} V.~G.~Kac, D.~Kazhdan, {\em Structure of representations
 with highest weight of
infinite-dimensional Lie algebras}, Adv. in Math. {\bf 34}
(1979), no. 1, 97-108.


\bibitem[KP]{KP} V.~G.~Kac, D.~H.~Peterson, {\em Infinite flag varieties and
conjugacy theorems}, Proc. Nat. Acad. Sci. U.S.A. {\bf 80} (1983), 1778–1782.

\bibitem[KW1]{KWnum} V.~G.~Kac, M.~Wakimoto, {\em Integrable highest
  weight modules over affine superalgebras and number theory},
  Progress in Math. {\bf 123} (1994), 415-456.

\bibitem[KW2]{KW2} V.~G.~Kac, M.~Wakimoto, {\em Integrable highest
  weight modules over affine superalgebras and Appell's function},
  Commun. Math. Phys. {\bf 215} (2001), 631--682.

\bibitem[vdL]{vdL} J.~W.~van de Leur, {\em A classification of contragredient
Lie superalgebras}, Comm. in Algebra {\bf 17} (8) (1989), 1815--1841.

%
%
%
%

\bibitem[M1]{Mus} I.~M.~Musson, {\em A classification of primitive ideals in the enveloping algebra of a classical simple Lie superalgebra}, Adv. Math. {\bf 91}  (1992), no. 2, 252–-268.


\bibitem[M2]{Musbook} I.~M.~Musson,  {\em Lie superalgebras and enveloping algebras}, Graduate Studies in Mathematics, 131. American Mathematical Society, Providence, RI, 2012.


\bibitem[R]{Reif} S.~Reif, {\em Denominator identity for twisted affine Lie superalgebras},  IMRN  {\bf 15}, (2014), 4146–-4178.

\bibitem[P1]{P} I.~Penkov, {\em Characters of typical irreducible finite-dimensional
$\fq(n)$-modules}, Funct. Anal. Appl. {\bf 20} (1986), 30--37.
%

\bibitem[PS1]{PS1} I.~Penkov, V.~Serganova, {\em Characters of finite-dimensional irreducible $\fq(n)$-modules}, Lett.
Math. Phys. {\bf 40 }(1997), 147–-158.
%
\bibitem[PS2]{PS2} I. Penkov, V. Serganova, {\em Characters of irreducible $G$-modules and cohomology of $G/P$ for
the supergroup $G = Q(N)$}, J. Math. Sci., {\bf 84}, (1997), 1382–-1412.
%
%


\bibitem[S1]{VGRS} V.~Serganova, {\em On generalizations of root systems},
Comm. in Algebra {\bf 24 (13)}
(1996) 4281--4299.

\bibitem[S2]{Skw} V.~Serganova, {\em On a superdimension of an
  irreducible representation of a
  basic classical Lie superalgebras},
  in Supersymmetry in mathematics and physics, 253-–273, Lecture Notes
  in Math.,
  2027, Springer, Heidelberg, 2011.
  
  
  \bibitem[S3]{Sint} V.~Serganova, {\em Kac-Moody superalgebras and integrability}, in
Developments and trends in infinite-dimensional Lie theory, 169-218, Progr. Math., 288, Birkh\"auser Boston, Inc., Boston, MA, 2011.


\bibitem[S4]{Sred} V.~Serganova, {\em Quasireductive supergroups},
Contemporary Math., {\bf 544}, AMS 2011.
%
%
%



\bibitem[Ser1]{Sq} A.~Sergeev,  {\em The centre of the enveloping algebra
for  Lie superalgebra $Q(n,\mathbb{C})$}, Lett. Math. Phys. {\bf 7} (1983), no. 3, 177--179.

\bibitem[Ser2]{Sinv} A.~Sergeev,  {\em The invariant polynomials
on simple Lie superalgebras},  Represent. Theory {\bf 3} (1999), 250–-280.



\bibitem[Sh]{Sh}
N.~N.~Shapovalov, {\em On a bilinear form on the universal enveloping algebra
of a complex semisimple Lie algebra}, Func.  Anal.  Appl., {\bf 6}, (1972),
p.307--312. 




%
%
%
%
%
\end{thebibliography}
\end{document}